\def\lbe{\kern -.025em}
\newcommand{\gr}{{\mathrm{Gr}\lbe}}
\newcommand{\sep}{{\rm sep}}
\newcommand{\id}{\mathrm{id}}
\newcommand{\et}{{\rm {\acute et}}}
\newcommand{\fppf}{{\rm fppf}}
\newcommand{\pf}{{\rm pf}}
\newcommand{\A}{{\mathbb A}}
\newcommand{\Z}{{\mathbb Z}}
\newcommand{\Q}{{\mathbb Q}}
\newcommand{\N}{{\mathbb N}}
\newcommand{\F}{{\mathbb F}}
\newcommand{\G}{{\mathbb G}}
\newcommand{\spec}{\mathrm{Spec}\,}
\newcommand{\Hom}{{\mathrm{Hom}}}
\newcommand{\ff}{{\mathbf F}}
\newcommand{\vv}{{\mathbf V}}
\newcommand{\rpsch}{{\mathrm{RPSch}}}
\newcommand{\rpaff}{{\mathrm{RPAff}}}
\newcommand{\rp}{{\mathrm{RP}}}
\newcommand{\cO}{{\mathcal O}}
\DeclareMathOperator{\Tor}{Tor}
\DeclareMathOperator{\Ext}{Ext}
\numberwithin{equation}{section}
\newtheorem{lemma}[equation]{Lemma} 
\newtheorem{theorem}[equation]{Theorem}
\newtheorem{proposition-definition}[equation]{Proposition-Definition}
\newtheorem{proposition}[equation]{Proposition}
\newtheorem{definition}[equation]{Definition}
\theoremstyle{definition}
\theoremstyle{remark}
\newtheorem{remark}[equation]{Remark}
\newtheorem{example}[equation]{Example}
\newtheorem*{acknowledgments}{Acknowledgments}
\newtheorem*{notation}{Notation and Conventions}
\newcommand{\Weil}{\mathfrak{R}} 
\newcommand{\Order}{\mathcal{O}}
\newcommand{\into}{\hookrightarrow}
\newcommand{\onto}{\twoheadrightarrow}
\newcommand{\isomto}{\overset{\sim}{\to}}
\newcommand{\compose}{\mathbin{\circ}}
\newcommand{\tensor}{\mathbin{\otimes}}
\newcommand{\closure}[1]{\overline{#1}}
\newcommand{\dirlim}{\varinjlim}
\newcommand{\invlim}{\varprojlim}
\newcommand{\ur}{\mathrm{ur}}
\newcommand{\ideal}[1]{\mathfrak{#1}}
\newcommand{\RPS}{\mathrm{RPS}}
\newcommand{\RP}{\mathrm{RP}}
\newcommand{\Et}{\mathrm{Et}}
\newcommand{\RPSch}{\mathrm{RPSch}}
\newcommand{\Sch}{\mathrm{Sch}}
\newcommand{\Gr}{\mathrm{Gr}}
\newcommand{\RPSSch}{\mathrm{RPSSch}}
\newcommand{\cl}{\mathrm{cl}}
\newcommand{\Ver}{\mathrm{\mathbf{V}}}
\newcommand{\Fr}{\mathrm{\mathbf{F}}}
\newcommand{\Affine}{\mathbb{A}}
\newcommand{\Gm}{\mathbb{G}_{m}}
\newcommand{\Ga}{\mathbb{G}_{a}}
\DeclareMathOperator{\Spec}{Spec}
\DeclareMathOperator{\Ab}{Ab}
\DeclareMathOperator{\Ker}{Ker}
\DeclareMathOperator{\Coker}{Coker}
\DeclareMathOperator{\Gal}{Gal}
\begin{document}

\input xy     
\xyoption{all}

\title[The relatively perfect Greenberg transform]{The relatively perfect Greenberg transform and cycle class maps}

\author{Alessandra Bertapelle}
\address{Universit\`a degli Studi di Padova, Dipartimento di Matematica, via Trieste 63, I-35121 Padova, Italia}
\email{alessandra.bertapelle@unipd.it}

\author{Takashi Suzuki}
\address{Department of Mathematics, Chuo University,
1-13-27 Kasuga, Bunkyo-ku, Tokyo 112-8551, Japan}
\email{tsuzuki@gug.math.chuo-u.ac.jp}
\thanks{The second author was a Research Fellow of Japan Society for the Promotion of Science and supported by JSPS KAKENHI Grant Number JP18J00415.}
\date{May 24, 2024}

\subjclass[2010]{Primary: 11G25. Secondary: 14G20, 14F30}
\keywords{Relative perfection, Greenberg transform,  $p$-adic nearby cycle, cycle class map}

\begin{abstract}
Given a scheme over a complete discrete valuation ring of mixed characteristic with perfect residue field, the Greenberg transform produces a new scheme over the residue field thicker than the special fiber.
In this paper, we will generalize this transform to the case of imperfect residue field.
We will then construct a certain kind of cycle class map defined on this generalized Greenberg transform applied to the N\'eron model of a semi-abelian variety, which takes values in the relatively perfect nearby cycle functor defined by Kato and the second author.
\end{abstract}

\maketitle

\tableofcontents


\section{Introduction}


\subsection{Aim of the paper}
The \emph{Greenberg transform} (also called \emph{Greenberg realization})
is a process that, given any scheme $X$ over a complete discrete valuation ring $\Order_{K}$, defines a canonical $k$-scheme structure on the set
$X(\Order_{X})$ of $\Order_{K}$-valued points, where $k$ is the residue field of $\Order_{K}$ assumed to be perfect of characteristic $p > 0$.
This transform (or functor) is originally defined by Greenberg \cite{Gre61}
and applied in many situations (\cite{Beg81}, \cite{BT14}, \cite{CLNS18}, \cite{Lip76}, \cite{NS08}, \cite{Ser61}, \cite{Suz20} for example).
A modern and thorough account on its foundation can be found in \cite{BGA18Gre}.
Our first goal in this paper is to generalize this transform
to the case where the residue field $k$ is not necessarily perfect
but has $[k : k^{p}] < \infty$.

Complete discrete valuation rings with such residue field show up in geometry.
A typical example is the completed local ring of a normal scheme flat of finite type over $\Z_{p}$ at the generic point of an irreducible component of the special fiber.
An explicit example is the $p$-adic completion of the local ring $\Z_{p}[t]_{(p)}$,
which is absolutely unramified with residue field $\F_{p}(t)$.
A more ``local'' type of example is the ring of integers of a higher-dimensional local field
(\cite{Kat79}, \cite{Par84}, \cite{FK00}).

The main difficulty with imperfect $k$ is the poor behavior of
the ring $W(k)$ of $p$-typical Witt vectors of $k$.
In particular, $\Order_{K}$ does not admit a canonical algebra structure over $W(k)$.
This makes it hard to utilize the functorial property of $W$ to define a functor
that a Greenberg transform should represent.

In this paper, we deal with this problem by defining our Greenberg transform
as a \emph{relatively perfect scheme} over $k$.
A relatively perfect $k$-scheme $Y$ is a $k$-scheme such that
the relative Frobenius morphism $Y \to Y^{(p)}$ over $k$ is an isomorphism
(\cite{Kat86}, \cite{Kat87}).
When $k$ is perfect, this agrees with the usual definition of perfect schemes,
and the perfect scheme version of the Greenberg transform
already appears in the literature
(see \cite{Beg81}, \cite[Section 12]{BGA18Gre} for example).
In several situations, what matters is the perfection of the Greenberg transform
rather than the Greenberg transform itself;
compare \cite{Ser61} with \cite[Appendix]{DG70} for example.
For general $k$, using Kato's notion of
\emph{canonical lifting} \cite[Definition 1]{Kat82}
of a relatively perfect $k$-scheme,
we will define our \emph{relatively perfect Greenberg transform}
and prove its existence.

Assuming the fraction field $K$ of $\Order_{K}$ has
characteristic $0$,  
our next goal is to apply our relatively perfect Greenberg transform to define
a certain kind of \emph{cycle class map}.
Our cycle class map takes values in the \emph{relatively perfect nearby cycle} $R^{m} \Psi$
defined in \cite{KS19}.
This nearby cycle functor is $p$-adic in nature,
while our Greenberg transform is more algebraic.
The Greenberg transform provides an object
``before $p$-adic completion'' of the $p$-adic nearby cycle.
We will define a canonical morphism from the relatively perfect Greenberg transform
of the N\'eron model of a semi-abelian variety $G$ over $K$
to $R^{1} \Psi(T_{p} G)$, where $T_{p} G$ is the $p$-adic Tate module of $G$.
We call this morphism the \emph{relatively perfect cycle class map} for $G$ in degree $1$.
We will show that its $p$-adic completion is a closed immersion for general $G$
and an isomorphism for any torus $G$.

Of course we want to treat more general motives than just semi-abelian varieties
and higher degree nearby cycles $R^{m} \Psi$ with $m > 1$.
But we do not have such a generalization at present.
From this motivic viewpoint,
our relatively perfect Greenberg transform should be considered as
an example of an ``\'etale motivic nearby cycle with $p$-torsion'';
compare this with the picture presented in the second author's article in \cite[pp.1747--1750, Section 3]{GHHKK19}.

We will develop a solid foundation for relatively perfect group schemes
in order to construct the relatively perfect cycle class map.
This foundational material is also applied to
arithmetic duality for two-dimensional local rings \cite{Suz22}
and existence of N\'eron models \cite{OS23}.


\subsection{Main results}
Now we formulate (a special case of) our results.
Let $K$ be a complete discrete valuation field with ring of integers $\Order_{K}$
and residue field $k$ such that $k$ has characteristic $p > 0$ and $[k : k^{p}] < \infty$.
For a relatively perfect $k$-algebra $R$, its Kato canonical lifting \cite[Definition 1]{Kat82} is a unique complete flat $\Order_{K}$-algebra $h(R)$
together with an isomorphism $h(R) \otimes_{\Order_{K}} k \cong R$.
In particular, $h(k)$ is just $\Order_{K}$.

\begin{theorem} \label{thm: Greenberg transform}
	Let $X$ be an $\Order_{K}$-scheme.
	Consider the functor
		\[
				F_{X}
			\colon
				R
			\mapsto
				X(h(R))
		\]
from the category of relatively perfect $k$-algebras to the category of sets.
This functor is representable by a relatively perfect $k$-scheme.
We call the object representing $F_{X}$ the \emph{relatively perfect Greenberg transform} of $X$ and denote it by $\gr^{\rp}(X)$.
\end{theorem}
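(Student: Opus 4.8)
\emph{Strategy.} I would first observe that $F_X$ is a sheaf for the Zariski (indeed fppf) topology on relatively perfect $k$-algebras — this uses that Kato's canonical lifting is compatible with flat base change, so that a cover of $\Spec R$ produces a cover relevant for $\Spec h(R)$ — and that if $X=\bigcup_i U_i$ is an open cover then, for relatively perfect $R$, a morphism $\Spec h(R)\to X$ factors through $U_i$ if and only if the composite $\Spec R\into\Spec h(R)\to X$ does (because $\pi$ lies in the Jacobson radical of $h(R)$, so $V(\pi)=\Spec R$ meets every nonempty open of $\Spec h(R)$). Hence the $F_{U_i}$ are open subfunctors of $F_X$ covering it locally on the source, and a standard gluing argument — treating separated $X$ first, so that the $U_i\cap U_j$ are affine — reduces Theorem~\ref{thm: Greenberg transform} to the case $X$ affine. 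Writing such an $X$ as a closed subscheme of an affine space $\A^I_{\Order_K}=\Spec\Order_K[t_i:i\in I]$ ($I$ possibly infinite), everything comes down to representing $R\mapsto h(R)$ and then cutting out the defining equations.

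\emph{The affine line; the main obstacle.} I expect the heart of the proof to be the construction, for a uniformizer $\pi$ of $\Order_K$, of relatively perfect $k$-schemes $\gr^{\rp}_n$ representing the truncations $R\mapsto h(R)/\pi^n h(R)$, by induction on $n$. For $n=1$ the functor is $R\mapsto R$, represented by the relative perfection $\Ga^{\rp}$ of $\A^1_k$ — an affine relatively perfect group (indeed ring) scheme over $k$; here I would invoke that Kato's relative perfection functor on $k$-schemes is right adjoint to the inclusion of relatively perfect $k$-schemes, so that $\Hom_{\RPSch}(\Spec R,\Ga^{\rp})=\Hom_{\Sch}(\Spec R,\A^1_k)=R$ for relatively perfect $R$. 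For the inductive step, $\Order_K$-flatness of $h(R)$ yields a functorial short exact sequence of abelian-group-valued functors on relatively perfect $k$-algebras
\[
	0\lra R\;\xrightarrow{\,\cdot\,\pi^{n-1}\,}\;h(R)/\pi^{n}h(R)\lra h(R)/\pi^{n-1}h(R)\lra 0 ,
\]
whose left-hand term is $\Ga^{\rp}$ and whose right-hand map is an epimorphism of fppf sheaves; thus $\gr^{\rp}_n\to\gr^{\rp}_{n-1}$ is a torsor under $\Ga^{\rp}$. Since $\Ga^{\rp}$ is affine over $k$ and $\gr^{\rp}_{n-1}$ is, by induction, a relatively perfect $k$-scheme, this torsor is representable by an affine morphism, so $\gr^{\rp}_n$ is again a (relatively perfect, affine over $\gr^{\rp}_{n-1}$) $k$-scheme — relative perfectness being preserved because the relative Frobenius of $\gr^{\rp}_n$ is a morphism of $\Ga^{\rp}$-torsors lying over the relative Frobenius of $\gr^{\rp}_{n-1}$ (an isomorphism by induction) and covering the isomorphism $F_{\Ga^{\rp}/k}$. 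The delicate points here — that the exact sequence is exact as fppf sheaves, that the total space of the torsor is a scheme, and that relative perfectness propagates through the induction and the limit below — are, I expect, the main obstacle; everything else rests on them together with the flatness and completeness of $h(R)$.

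\emph{Assembling the transform.} The transition maps $\gr^{\rp}_n\to\gr^{\rp}_{n-1}$ being affine, the inverse limit $\mathbf V:=\invlim_n\gr^{\rp}_n$ exists as a relatively perfect $k$-scheme, affine over $k$, and for relatively perfect $R$ one has $\mathbf V(R)=\invlim_n h(R)/\pi^n h(R)=h(R)$ because $h(R)$ is $\pi$-adically complete and separated; this $\mathbf V$ carries a natural $\Order_K$-algebra scheme structure. Consequently $R\mapsto h(R)^I$ is represented by $\mathbf V^I:=\invlim_{I'\subseteq I\text{ finite}}\mathbf V^{I'}$ (the transition maps being affine since $\mathbf V$ is affine over $k$), a relatively perfect $k$-scheme. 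Writing $X=\Spec(\Order_K[t_i:i\in I]/J)$ with $J$ generated by polynomials $f_j$, each $f_j$ (which involves only finitely many variables) defines a $k$-morphism $\mathbf V^I\to\mathbf V$ via the $\Order_K$-algebra operations, and the common vanishing locus $Z:=\bigcap_j f_j^{-1}(0)$ is a closed subscheme of $\mathbf V^I$ — an intersection of closed subschemes, using that the zero section $\Spec k\into\mathbf V$ is a closed immersion (which holds level by level) — with $\Hom_{\Sch}(\Spec R,Z)=\{(x_i)\in h(R)^I:f_j((x_i))=0\ \forall j\}=F_X(R)$ for every relatively perfect $R$. As $Z$ need not itself be relatively perfect, I would set $\gr^{\rp}(X):=Z^{\rp}$, the relative perfection of $Z$; by the right-adjointness of relative perfection, $\Hom_{\RPSch}(\Spec R,Z^{\rp})=\Hom_{\Sch}(\Spec R,Z)=F_X(R)$ for relatively perfect $R$, functorially, so $\gr^{\rp}(X)$ represents $F_X$. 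Gluing this construction over an affine open cover, as in the first step, produces $\gr^{\rp}(X)$ for an arbitrary $\Order_K$-scheme $X$; the remaining verifications — compatibility of $h$ with Zariski localization and the passage to $\pi$-adic completions in the sheaf condition — are routine and I would treat them separately.
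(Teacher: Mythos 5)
Your reduction to affine $X$ contains a genuine gap, and it is exactly the one the paper flags (in the remark following Proposition~\ref{p.adjinf}) as having crept into an earlier account of the perfect‐residue‐field case. To glue the affine Greenberg transforms $\gr^{\rp}(U_i)$ into $\gr^{\rp}(X)$, it is not enough to know that the $F_{U_i}$ are open subfunctors covering $F_X$; one also needs $F_X$ to satisfy the Zariski sheaf condition (including the gluing half). Concretely, given a relatively perfect $R$ with a Zariski cover $\{R_\lambda\}$ and a compatible family of morphisms $\Spec h(R_\lambda)\to X$, one must produce $\Spec h(R)\to X$. Unlike the finite‐level situation, the maps $\Spec h(R_\lambda)\to\Spec h(R)$ are \emph{not} open immersions: $h(R_\lambda)$ is the $\pi$-adic completion of a localization of $h(R)$, not the localization itself. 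So the naive descent argument does not apply, and your Jacobson‐radical observation (which, incidentally, should be phrased as \emph{every nonempty closed subset} of $\Spec h(R)$ meets $V(\pi)$, since closed points lie over $V(\pi)$ — not ``every nonempty open'') handles only the separatedness/openness of the subfunctors, not the gluing of sections. The gluing step is precisely where the paper invokes Bhatt--Gabber's algebraization theorem \cite[Theorem~4.1]{Bha16} to get $X(h(R))\isomto\invlim_n X(h^{A_n}(R))$: at each finite level $n$ the pieces $h^{A_n}(R_\lambda)$ \emph{do} form a genuine Zariski cover of $h^{A_n}(R)$, so the morphisms glue to $\Spec h^{A_n}(R)\to X$, and Bhatt--Gabber then algebraizes the resulting compatible system. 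Dismissing ``the passage to $\pi$-adic completions in the sheaf condition'' as routine is exactly the error the authors point out in \cite[Proposition~14.2]{BGA18Gre}.

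By contrast, your treatment of the affine case is correct and takes a different, arguably cleaner route than the paper's. The paper (Lemma~\ref{l.Aone}) explicitly decomposes $F_i \cong F_{i,i-1}\times\cdots\times F_{0,-1}$ by producing, via Witt‐vector manipulations with the twisted ghost map, a functorial set‐theoretic splitting of each short exact sequence, hence concluding that each finite‐level Greenberg transform of $\Affine^1$ is (the relative perfection of) an affine space. You instead observe directly that $\gr^{\rp}_n\to\gr^{\rp}_{n-1}$ is a $\Ga^{\rp}$-torsor (exactness of the $\pi$-adic filtration sequence following from flatness of $h(R)$), invoke fppf descent for affine morphisms to represent it, and propagate relative perfectness through the torsor structure since both the base Frobenius and the group Frobenius are isomorphisms. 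This is a valid and somewhat more conceptual argument — it buys you representability and relative perfectness without the explicit Witt‐vector computation — though it yields slightly less (the paper shows the torsor is in fact trivial, i.e.\ $\gr_i^{\rp}(\Affine^1)$ is literally the relative perfection of an affine space). Your subsequent step of cutting out a general affine $X$ as an equalizer/vanishing locus inside $\mathbf V^I$ and then taking $(\,\cdot\,)^{\RP}$ is essentially the paper's Lemma~\ref{l.GrAff}. So the affine case is fine; it is only the passage to non‐affine $X$ that needs the Bhatt--Gabber input you omitted.
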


In particular, $\Gr^{\RP}(X)(k) = X(\Order_{K})$.
Hence $\Gr^{\RP}(X)$ is a canonical relatively perfect $k$-scheme structure on $X(\Order_{K})$.

Actually, we prove more precise and general results.
The ring $\Order_{K}$ may be replaced by any complete noetherian local ring
with residue field $k$ or $W(k)$.
Also, for any $m > 0$, we will give a ``finite level'' relatively perfect Greenberg transform of an $\Order_{K} / \ideal{p}_{K}^{m}$-scheme $X$,
where $\ideal{p}_{K}$ is the maximal ideal of $\Order_{K}$.
View this construction as associated with the datum
$\Spec k \into \Spec \Order_{K} / \ideal{p}_{K}^{m}$.
This datum may be generalized to a certain more general nilpotent thickening $T \into S$ of not necessarily affine schemes (see the beginning of Sections \ref{s.gf} and \ref{s.existence} for the precise conditions needed).
In this setting, we will define a relatively perfect Greenberg transform
that assigns to any $S$-scheme a relatively perfect $T$-scheme.

We also define a certain finite level Greenberg transform ``before relative perfection'' associated with the datum $\Spec k \into \Spec \Order_{K} / \ideal{p}_{K}^{m}$.
Its relative perfection (see below) recovers the finite level relatively perfect Greenberg transform.
This should serve the needs of those who are unfamiliar with relatively perfect schemes and/or concerned with the information lost by relative perfection (such as nilpotents).
However, this construction has a problem:
it does not recover the usual Greenberg transform even if $k$ is perfect;
and for general $k$, its natural inverse limit in $m$ is automatically relatively perfect.
In this sense, the relatively perfect Greenberg transform still seems to be
the only possible construction in the infinite level, imperfect residue field situation.

Before the next theorem, we recall the relatively perfect nearby cycle functor $R^{m} \Psi$ from \cite{KS19}.
By \cite[Proposition 1.4]{Kat86}, the inclusion functor from the category of relatively perfect $k$-schemes to the category of all $k$-schemes admits a right adjoint $Y \mapsto Y^{\rp}$.
The scheme $Y^{\rp}$ is called the \emph{relative perfection} of $Y$.
We say that a $k$-scheme $Y$ is \emph{relatively perfectly smooth} (\cite[Section 2]{KS19})
if it is Zariski locally isomorphic to the relative perfection of a smooth $k$-scheme.
The category of relatively perfectly smooth $k$-schemes endowed with the \'etale Grothendieck topology is a site, which we denote by $\Spec k_{\RPS}$.
We denote by $\Ab(k_{\RPS})$ the category of sheaves of abelian groups on $\Spec k_{\RPS}$.
Let $\Spec K_{\Et}$ be the category of $K$-schemes endowed with the \'etale topology
and $\Ab(K_{\Et})$ its category of sheaves of abelian groups.
Assume that $K$ has characteristic $0$.
For any $m \ge 0$, by \cite[Corollary 3.3]{KS19}, the functor
	\[
			R^{m} \Psi
		\colon
			\Ab(K_{\Et})
		\to
			\Ab(k_{\RPS})
	\]
is given by sending $F \in \Ab(K_{\Et})$ to the \'etale sheafification of the presheaf
	\[
			R
		\mapsto
			H^{m}(h(R) \otimes_{\Order_{K}} K, F),
	\]
where $R$ runs through relatively perfectly smooth $k$-algebras.
All the N\'eron models in this paper are N\'eron lft (locally finite type) models
in the terminology of \cite[Chapter 10]{BLR90}.

\begin{theorem} \label{thm: RP cycle class map}
Let $G$ be a semi-abelian variety over $K$ with N\'eron model $\mathcal{G}$ over $\Order_{K}$.
	Denote the kernel of the multiplication-by-$p^{n}$ morphism on $G$ by $G[p^{n}]$.
	Let $N$ be a $p$-primary finite Galois module over $K$.
	\begin{enumerate}
		\item \label{item: representability of nearby cycle}
			For any $m \ge 0$, the sheaf $R^{m} \Psi N \in \Ab(k_{\RPS})$
			is represented by an affine relatively perfectly smooth $k$-scheme.
		\item \label{item: existence of cycle class map}
			Set $R^{1} \Psi(T_{p} G) = \invlim_{n} R^{1} \Psi(G[p^{n}])$.
			Then there exists a canonical morphism
				\[
						\cl_{p}
					\colon
						\Gr^{\RP}(\mathcal{G})
					\to
						R^{1} \Psi(T_{p} G)
				\]
                of relatively perfect group schemes over $k$, which we call the \emph{relatively perfect cycle class map} for $G$ in degree $1$.
		\item \label{item: points of cycle class map}
			Let $k^{\sep}$ be a separable closure of $k$.
			Let $\Hat{K}^{\ur}$ be the corresponding completed unramified extension of $K$.
			Then the map induced by $\cl_{p}$ on $k^{\sep}$-valued points is the canonical map
				\[
						G(\Hat{K}^{\ur})
					\to
						H^{1}(\Hat{K}^{\ur}, T_{p} G)
				\]
			given by the coboundary map of the Kummer sequence,
			where the right-hand side is the continuous cohomology of the $p$-adic Tate module $T_{p} G$.
		\item \label{item: inj and isom for cycle class map}
			The morphism
				\[
						\cl_{p}^{\wedge}
					\colon
						\Gr^{\RP}(\mathcal{G})^{\wedge_{p}}
					\to
						R^{1} \Psi(T_{p} G)
				\]
			induced by $\cl_{p}$ via $p$-adic completion is
			a closed immersion.
			It is an isomorphism if $G$ is a torus.
	\end{enumerate}
\end{theorem}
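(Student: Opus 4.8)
The plan is to prove the four items roughly in order, with items (1)–(3) providing the foundation for the analytic input needed in (4).

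For (1), I would combine the explicit description of $R^{m}\Psi N$ from \cite[Corollary 3.3]{KS19} with a dévissage on $N$. Since $N$ is $p$-primary finite, it has a filtration with subquotients killed by $p$, so by the long exact cohomology sequence and the fact that extensions of affine relatively perfectly smooth schemes (in the category of sheaves) remain of that type, it suffices to treat the case $pN = 0$. For such $N$, one reduces further, after a finite separable base change trivializing the Galois action and using étale descent, to $N = \Z/p\Z$ and $N = \mu_{p}$. For $\mu_{p}$ the Kummer sequence identifies $R^{m}\Psi\,\mu_{p}$ with the relevant cohomology of $\Gm$ over $h(R)\otimes_{\Order_{K}}K$; for $\Z/p\Z$ one uses Artin–Schreier–Witt theory over the lifting $h(R)$. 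In both cases the resulting sheaves are built from $\Gm$, $\Ga$, and finite group schemes pulled through the Greenberg transform machinery of Section \ref{s.gf}, hence representable by affine relatively perfectly smooth $k$-schemes; one then checks affineness is preserved under the (finite) inverse limits and extensions involved.

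For (2) and (3), I would produce $\cl_{p}$ functorially on $R$-points for a relatively perfectly smooth $k$-algebra $R$: an $h(R)$-point of $\mathcal{G}$ gives, via $\mathcal{G}(h(R))\to G(h(R)\otimes_{\Order_{K}}K)$ and the Kummer coboundary $G(h(R)\otimes_{\Order_K}K)\to H^{1}(h(R)\otimes_{\Order_K}K, G[p^{n}])$, a compatible system of classes in $R^{1}\Psi(G[p^{n}])$, and passing to the limit in $n$ yields the map to $R^{1}\Psi(T_{p}G)$. By Theorem \ref{thm: Greenberg transform} the source of this natural transformation is represented by $\Gr^{\RP}(\mathcal{G})$, so Yoneda gives the morphism of schemes; group-scheme structure is automatic since every step is a group homomorphism. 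Item (3) is then immediate: evaluating at $R = k^{\sep}$ we have $h(k^{\sep}) = \Order_{\Hat K^{\ur}}$ and $h(k^{\sep})\otimes_{\Order_K}K = \Hat K^{\ur}$, and $\mathcal{G}(\Order_{\Hat K^{\ur}}) = G(\Hat K^{\ur})$ by the Néron mapping property, so $\cl_{p}$ on $k^{\sep}$-points is exactly the displayed Kummer coboundary map; continuity of the cohomology is built into the inverse limit defining $R^{1}\Psi(T_{p}G)$.

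For (4), the idea is to check that $\cl_{p}^{\wedge}$ is a closed immersion on a suitable covering and then identify kernel and cokernel. On $R$-points (with $R$ relatively perfectly smooth) the kernel of $G(h(R)\otimes_{\Order_K}K)\to H^{1}(\cdots, T_{p}G)$ is the subgroup of points infinitely $p$-divisible in $G(h(R)\otimes K)$, so after $p$-adic completion the map becomes injective; making this precise at the level of schemes requires knowing that $\Gr^{\RP}(\mathcal{G})^{\wedge_{p}}$ is well-behaved and that the exact sequence of sheaves stays exact after completion, which is where the relatively perfectly smooth topology and the affine representability from (1) are used. For the torus case, surjectivity should follow from Hilbert 90: when $G = T$ is a torus, $H^{1}(h(R)\otimes_{\Order_K}K, T) $ vanishes fppf-locally (and the Kummer sequence $0\to T[p^n]\to T\to T\to 0$ gives $H^1(-,T[p^n])$ as $T(-)/p^n$), so $R^{1}\Psi(T_p T) = \invlim_n T(h(R)\otimes K)/p^n$, and one matches this, via completion, with $\Gr^{\RP}(\mathcal{G})^{\wedge_p}$ using that for a torus the Néron model's Greenberg transform captures all of $T(h(R)\otimes K)$ up to the part killed by completion. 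The main obstacle I expect is precisely this last identification: controlling $\invlim_n$ and $(-)^{\wedge_p}$ simultaneously — exchanging the limit in $n$ with sheafification and with $p$-adic completion, and ruling out lim$^1$ or connected-component pathologies in the lft Néron model (whose component group may be infinite). Establishing the needed exactness and finiteness statements in $\Ab(k_{\RPS})$, rather than the cycle-class construction itself, is where the real work lies.
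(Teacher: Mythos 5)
There is a genuine gap in your treatment of item (1). You propose to reduce, ``after a finite separable base change trivializing the Galois action and using étale descent,'' to $N = \Z/p\Z$ and $N = \mu_p$. This does not work in general: the finite Galois extension $L/K$ trivializing the $p$-primary Galois module $N$ (and adjoining $\zeta_p$) is typically \emph{ramified}, and its residue extension $k'/k$ may even be \emph{inseparable}. So there is no finite \emph{separable} extension of $k$ over which the situation becomes unramified, and étale descent along such an extension is not available. This is precisely the issue flagged in the introduction (``The minimal extension of $K$ that trivializes the Galois action on $N$ can have inseparable residue field extension. We will deal with this by Weil restrictions.''). The actual argument (Proposition \ref{prop: nearby cycle is RP algebraic group}) instead passes to the intermediate extension $M$ corresponding to a $p$-Sylow subgroup of $\Gal(L/K)$, uses \cite[Proposition 6.1]{KS19} to control $R^m\Psi_M(\Z/p\Z)$ in terms of the differential-form sheaves $\Omega^q_{k'}$, $d\Omega^q_{k'}$, $\nu(q)_{k'}$ over $k'$ (not in terms of Greenberg transforms of $\Gm$ and $\Ga$), then compares $\Weil_{k'/k}(R^m\Psi_M N)$ with $R^m\Psi_K(\Weil_{M/K}N)$ via Lemma \ref{lem: Weil restriction commutes with etale sheafification}, and finally uses that $[M:K]$ is prime to $p$ to split off $R^m\Psi_K(N)$ as a direct summand. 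Your outline omits all of the Weil-restriction machinery and so does not actually reduce to a tractable case.

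Your treatment of (2) and (3) is essentially the approach in the paper, though it understates one point the paper emphasizes: without Bhatt--Gabber's algebraization theorem there is a priori no map from $\invlim_n \mathcal{G}(h^{\Order_K/\ideal{p}_K^n}(R))$ to $G(h(R)_K)$, so Theorem \ref{thm: Greenberg transform} really is needed to even get the source of $\cl_p$ to talk to the generic fiber.

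For (4), your plan correctly identifies the obstacles (exchanging $\invlim_n$ with sheafification and $p$-adic completion, infinite component groups, $\lim^1$), but you do not resolve them, and the Hilbert~90 remark is not the right tool. The paper's actual route is more elementary: prove everything at finite level $m$ and then take limits. Injectivity of $\cl_{p,m}$ is checked on $k'$-points for finite separable $k'/k$ (using the Néron mapping property and the Kummer sequence over $h(k')_K$), and density of such points in a relatively perfectly smooth scheme (Proposition \ref{prop: RPS implies RPn of smooth, separable points are dense}) upgrades this to a closed immersion. For the torus case the key quantitative input is that $H^1(\Hat{K}^{\ur}, T)$ is annihilated by $[L:K]$ for a splitting extension $L$ (via inflation--restriction and $H^1(L,\Gm)=0$), so $H^1(\Hat{K}^{\ur},T)[p^\infty]$ is finite. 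This forces the cokernels $\Coker(\cl_{p,m})$ to stabilize and then die under the transition maps of the inverse system, which is what makes $\cl_p^\wedge$ an isomorphism. Your ``infinitely $p$-divisible points'' framing is a reasonable heuristic but does not by itself deliver the required exactness and finiteness statements in $\Ab(k_{\RPS})$.
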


Statements \eqref{item: existence of cycle class map} and \eqref{item: points of cycle class map} say that the coboundary map of the Kummer sequence preserves
the canonical relatively perfect scheme structures over the residue field $k$.

Statement \eqref{item: cycle class map is injective} is essentially proved in \cite[Proposition 6.1]{KS19} when $N$ is a Tate twist of $\Z / p \Z$.
We need to generalize it to include the case of $N = G[p^{n}]$ for all $G$ and $n$.
The minimal extension of $K$ that trivializes the Galois action on $N$ can have inseparable residue field extension.
We will deal with this by Weil restrictions.


\subsection{Organization}
In Section \ref{s.rp},
we recall relatively perfect schemes and relative perfection
and prove a key proposition (Proposition \ref{p.wh})
about a nice behavior of the Witt vector functor applied to relatively perfect algebras.
In Section \ref{s.cl},
we recall the Kato canonical lifting $h(Q)$ of a relatively perfect algebra $Q$
and generalize it to the non-affine case.
In Section \ref{s.gf},
we discuss sheaf-theoretic properties of the finite-level version of the functor $F_{X}$ (and its generalizations)
that appears in Theorem \ref{thm: Greenberg transform}.
In Section \ref{s.existence},
we prove the representability of the finite level $F_{X}$ in the cases of interest
and give some properties of the representing objects when $X$ is smooth.
In Section \ref{sec: Greenberg transform of infinite level},
we prove the representability of the infinite level $F_{X}$
as stated in Theorem \ref{thm: Greenberg transform}.
In Section \ref{sec: Greenberg transform before relative perfection},
we discuss a Greenberg transform before relative perfection
and its relation to the relatively perfect Greenberg transform.
In Section \ref{sec: Relatively perfectly smooth group schemes},
we give some foundational results about relative perfections of smooth group schemes over $k$.
In Section \ref{sec: Relatively perfect cycle class maps},
we construct the relatively perfect cycle class map and prove its properties.
Then Theorem \ref{thm: RP cycle class map} \eqref{item: representability of nearby cycle} follows from Proposition \ref{prop: nearby cycle is RP algebraic group} and the other assertions in the theorem are immediate consequences of Definition \ref{def: RP p-adic cycle class map} and  Propositions \ref{prop: Gr to nearby cycle} and \ref{prop: cycle class map for tori}.

\begin{acknowledgments}
	This work started from the second author's visit to the first author
	at the University of Padua in 2018.
	The second author is grateful for the hospitality and the excellent work environment
	provided by the University of Padua resulting this collaboration.
        Also, the authors would like to thank Otto Overkamp
        for pointing out that quasi-separateness is needed in
        Definition \ref{def: finiteness for relatively perfect schemes}
        and the referee for careful comments.
\end{acknowledgments}

\begin{notation}
	We fix a prime number $p$.
	We denote by $W_N$ the $\F_p$-group scheme of $p$-typical Witt vectors of length $N$.
	Rings are tacitly assumed to be associative and commutative with unit, and ring homomorphisms send unit to unit.
\end{notation}


\section{Relative perfection}
\label{s.rp}
Let $T$ be an $\F_p$-scheme. Recall that $T$ is said to be {\it perfect} (respectively, {\it semi-perfect}) if the absolute Frobenius $\ff_{T}\colon T\to T$ is an isomorphism (respectively, a closed immersion). More generally, a scheme $Y$ over $T$ is said to be {\it relatively perfect} (respectively, {\it relatively semi-perfect}) {\it over T} if the canonical morphism $\ff_{Y/T}\colon Y\to Y^{(p)}:=Y\times_{T,\ff_{T}} T$ (the relative Frobenius morphism) induced by the absolute Frobenius $\ff_{Y}$ is an isomorphism (respectively, a closed immersion).
\[\xymatrix{Y\ar@/^1pc/[rr]^{\ff_{Y}}\ar[r]_(0.4){\ff_{Y/T}}\ar[d] &Y^{(p)}\ar[d] \ar@{}[dr] | {\square} \ar[r]   &Y\ar[d] \\
T\ar@{=}[r]&T\ar[r]^{\ff_{T}}&T
}\] 
 Note that $Y$ is relatively perfect over $T$ if and only if $\ff_{Y}$ is the base change of $\ff_{T}$. 
 If $T$ is perfect, then any relatively perfect  scheme over $T$ is perfect and conversely.
Note that the notation $Y^{(p)}$ is not ambiguous as long as $T$ is fixed. 

Let $(\mathrm{Sch}/T)$ denote the category of $T$-schemes and let $(\rpsch/T)$ be the full subcategory of  $(\mathrm{Sch}/T)$ consisting of all relatively perfect schemes over $T$.
Let $(\rpaff / T)$ be its full subcategory consisting of affine schemes.
Note that if $Y$ is perfect and $T$ is  semi-perfect,  then $Y$ is relatively perfect over $T$. However there exist  perfect schemes $Y$  not relatively perfect over $T$:
for example, consider the field of rational functions in one variable $\F_p(x)$ and its algebraic closure $\F_p(x)^a$; then    $ Y=\spec \F_p(x)^a$ is perfect but not relatively perfect over $ T=\spec \F_p(x)$. More general counterexamples can be constructed using the fact that if   $Y$ is relatively perfect over $T$ then $Y\to T$ is formally \'etale \cite[Lemma 1.3]{Kat86}.  

\begin{proposition}\label{p.rp}  Assume that $\ff_{T}$ is a finite, locally free morphism.  Then the inclusion functor $(\rpsch/T)\to (\mathrm{Sch}/T) $ has a right adjoint $(\ )^\rp\colon  (\mathrm{Sch}/T)\to 	(\rpsch/T)$.
\end{proposition}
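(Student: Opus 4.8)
The plan is to construct $Y^{\rp}$ by hand as an inverse limit of Weil restrictions along the iterates of $\ff_T$, and then to verify two things: that this object represents $Z\mapsto\Hom_{T}(Z,Y)$ on relatively perfect $T$-schemes, and that it is itself relatively perfect. For $n\ge 0$ let $\ff_T^{\,n}\colon T\to T$ be the $n$-fold composite of $\ff_T$; being a composite of finite locally free morphisms, it is again finite and locally free. Write $\mathrm{Res}_{n}(Y)$ for the Weil restriction of $Y$ along $\ff_T^{\,n}$, so $\mathrm{Res}_{0}(Y)=Y$ and, on functors of points, $\mathrm{Res}_{n}(Y)(Z)=\Hom_{T}(Z^{(p^{n})},Y)$ for a $T$-scheme $Z$, where $Z^{(p^{n})}=Z\times_{T,\ff_T^{\,n}}T$. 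When $Y$ is affine over $T$ this Weil restriction exists and is affine over $T$ by the standard norm construction, and the general case follows by reduction to affine opens and gluing (compare \cite[\S7.6]{BLR90}). Precomposition with the $n$-fold relative Frobenius $\ff_{Z^{(p^{n})}/T}\colon Z^{(p^{n})}\to Z^{(p^{n+1})}$ defines a natural transformation $\mathrm{Res}_{n+1}(Y)\to\mathrm{Res}_{n}(Y)$, hence a $T$-morphism by Yoneda; these transition morphisms are affine, so
\[
	Y^{\rp}\ :=\ \varprojlim_{n}\mathrm{Res}_{n}(Y)
\]
exists as a $T$-scheme, equipped with the projection $\varepsilon\colon Y^{\rp}\to\mathrm{Res}_{0}(Y)=Y$; the whole construction is functorial in $Y$.

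For the adjunction, let $Z$ be relatively perfect over $T$. Then $\ff_{Z/T}$ is an isomorphism, and since the relative Frobenius is compatible with base change, $\ff_{Z^{(p^{n})}/T}$ is an isomorphism for every $n$ as well. Hence all the transition maps of the system $\bigl(\Hom_{T}(Z^{(p^{n})},Y)\bigr)_{n}$ are bijections, and
\[
	\Hom_{T}(Z,Y^{\rp})=\varprojlim_{n}\mathrm{Res}_{n}(Y)(Z)=\varprojlim_{n}\Hom_{T}(Z^{(p^{n})},Y)\ \cong\ \Hom_{T}(Z,Y),
\]
the last isomorphism being induced by $\varepsilon$ and natural in both $Z$ and $Y$. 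Thus $\varepsilon$ will be the universal morphism to $Y$ from a relatively perfect $T$-scheme, \emph{provided} $Y^{\rp}$ lies in $(\rpsch/T)$.

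Establishing that last point is the heart of the matter: one must show that $\ff_{Y^{\rp}/T}\colon Y^{\rp}\to(Y^{\rp})^{(p)}$ is an isomorphism. Since the transition maps of the tower $(\mathrm{Res}_{n}(Y))_{n}$ and the morphism $\ff_T$ are all affine, base change along $\ff_T$ commutes with the inverse limit, so $(Y^{\rp})^{(p)}=\varprojlim_{n}\mathrm{Res}_{n}(Y)^{(p)}$. I would then show that the two towers $(\mathrm{Res}_{n}(Y))_{n}$ and $(\mathrm{Res}_{n}(Y)^{(p)})_{n}$ are isomorphic as pro-objects. Writing $\mathrm{Res}_{1}$ for the Weil restriction along $\ff_T$ and using the adjunction $\Hom_{T}(W^{(p)},V)=\Hom_{T}(W,\mathrm{Res}_{1}(V))$ together with $\mathrm{Res}_{1}\circ\mathrm{Res}_{n-1}=\mathrm{Res}_{n}$, the identity morphism of $\mathrm{Res}_{n}(Y)$ corresponds to a canonical $T$-morphism $\mathrm{Res}_{n}(Y)^{(p)}\to\mathrm{Res}_{n-1}(Y)$, while the relative Frobenius gives $\mathrm{Res}_{n}(Y)\to\mathrm{Res}_{n}(Y)^{(p)}$; one checks, by unwinding the definitions on functors of points, that these two families of maps interleave the towers cofinally and that their composites recover the transition morphisms of each tower. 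Passing to the limit then yields an isomorphism $Y^{\rp}\xrightarrow{\sim}(Y^{\rp})^{(p)}$ induced by the relative Frobenius morphisms $\ff_{\mathrm{Res}_{n}(Y)/T}$, which are compatible with the transition maps and whose inverse limit is therefore $\ff_{Y^{\rp}/T}$. Hence $Y^{\rp}\in(\rpsch/T)$, and $Y\mapsto Y^{\rp}$, with counit $\varepsilon$, is the desired right adjoint.

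The step I expect to be the main obstacle is precisely this verification that $Y^{\rp}$ is relatively perfect — concretely, checking that the composite $\mathrm{Res}_{n}(Y)\to\mathrm{Res}_{n}(Y)^{(p)}\to\mathrm{Res}_{n-1}(Y)$ is the transition morphism and that $\mathrm{Res}_{n+1}(Y)^{(p)}\to\mathrm{Res}_{n}(Y)\to\mathrm{Res}_{n}(Y)^{(p)}$ is the Frobenius twist of the transition morphism, so that the two pro-objects are genuinely mutually inverse. This is where finiteness and local freeness of $\ff_T$ are used in an essential way, both to make base change commute with the inverse limit and to have the Weil restrictions at hand in the first place. A secondary, more routine point is the representability of $\mathrm{Res}_{n}(Y)$ for non-affine $Y$, handled by the reduction to affine opens noted above.
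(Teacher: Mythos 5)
Your proposal is correct and follows essentially the same route as the paper, which itself recalls Kato's construction from \cite[Proposition 1.4]{Kat86}: build a tower of Weil restrictions along Frobenius, take the inverse limit, and verify the adjunction plus relative perfectness of the limit. The superficial difference is that you define $\mathrm{Res}_n(Y)$ directly as the Weil restriction along the $n$-fold composite $\ff_T^{\,n}$, whereas the paper uses the $n$-fold iterate $\Weil_{\ff_T}^n(Y)$ of the single-step Weil restriction; these are canonically isomorphic (both right adjoint to $Z\mapsto Z^{(p^n)}$), and your transition maps agree with the paper's $g_{\Weil_{\ff_T}^{n}(Y)/T}=\rho\circ\ff_{\Weil_{\ff_T}^{n+1}(Y)/T}$. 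Your interleaving argument for relative perfectness of the limit is sound: the composite $\rho\circ\ff$ is the transition of one tower by definition, and the composite $\ff\circ\rho$ equals $g^{(p)}$ by naturality of relative Frobenius together with $\ff_{W^{(p)}/T}=(\ff_{W/T})^{(p)}$; the compatible family $(\ff_{\mathrm{Res}_n(Y)/T})_n$ then has inverse limit $\ff_{Y^{\rp}/T}$ precisely because relative Frobenius is functorial and the affine transition maps let base change along $\ff_T$ commute with $\varprojlim$. The one point worth flagging is the representability of $\mathrm{Res}_n(Y)$ for non-affine, non-quasi-projective $Y$: the usual gluing argument from \cite[Section 7.6]{BLR90} uses quasi-projectivity, and what actually makes it work for arbitrary $Y$ is that $\ff_T^{\,n}$ is not merely finite locally free but a \emph{universal homeomorphism}, so the fibers are topologically points and the affine patches glue; this is the content of \cite[Lemma 1.5]{Kat86} and its generalization \cite[Corollary A.5]{BGA18Gre}, and your "reduction to affine opens and gluing" should cite this rather than the generic Weil-restriction existence criterion. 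Also a trivial slip: $\ff_{Z^{(p^n)}/T}\colon Z^{(p^n)}\to Z^{(p^{n+1})}$ is the (single) relative Frobenius of $Z^{(p^n)}$ over $T$, not an $n$-fold one.
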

This proposition is proved in \cite[Proposition 1.4]{Kat86} and the relative perfect scheme $Y^\rp$ associated with a $T$-scheme $Y$ is called the {\it relative perfection of $Y$ over $T$}. 

\begin{remark} \label{rem: relative perfectness and Frobenius}
\begin{enumerate}
    \item 
    Note that the absolute Frobenius $\ff_{T}$ is finite and locally free    when $T$ is smooth over a perfect $\F_p$-scheme $T'$. This fact is immediate to prove if $T=\A^r_{T'}$. In the general case, locally on $T$ and $T'$, one may assume that $T$ is \'etale over $\A^r_{T'}$; then $\ff_{T}$ is   the base change of  $\ff_{\A^r_{T'}}$ \cite[Lemma 1.3]{Kat86}, thus the conclusion follows.
    \item \label{item: relatively perfect implies geom reduced}
	Take $T$ to be the spectrum of a field $k$.
	Then a relatively perfect $k$-scheme is geometrically reduced.
    Indeed, a relatively perfect $k$-algebra $R$ is reduced since
	$\ff_{R} \colon R \into R \otimes_{k, \ff_{k}} k \isomto R$
	is injective, where the first homomorphism is $\id \otimes 1$
	and the second homomorphism is
	$\ff_{R} \otimes \mathrm{can}$.
	Therefore $R \otimes_{k, \ff_{k}} k \isomto R$ is reduced.
	This implies that $R \otimes_{k} \closure{k}$ is reduced,
	so $R$ is geometrically reduced.
\end{enumerate}
\end{remark}

We recall here below some details on the proof of Proposition \ref{p.rp}.   From \cite[Lemma 1.5]{Kat86} one has:
\begin{lemma} \label{l.FrWeil}
Assume that $\ff_{T}$ is a finite locally free morphism. Then the base change functor along the absolute Frobenius $(\mathrm{Sch}/T)\to (\mathrm{Sch}/T), Y\mapsto Y^{(p)}$ has a right adjoint.
\end{lemma}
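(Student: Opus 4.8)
The plan is to recognize the desired right adjoint as a Weil restriction along the absolute Frobenius $\ff_{T}$ and to build it by gluing the elementary affine Weil restriction; the one feature of $\ff_{T}$ that makes the global construction go through is that it is a universal homeomorphism. First I would reformulate: a right adjoint to $Y\mapsto Y^{(p)}$ amounts to choosing, functorially in $Y$, a $T$-scheme representing the functor
\[
	R_{Y}\colon (\mathrm{Sch}/T)^{\mathrm{op}}\longrightarrow(\mathrm{Sets}),\qquad
	Z\longmapsto\Hom_{T}\bigl(Z^{(p)},Y\bigr),
\]
where $Z^{(p)}=Z\times_{T,\ff_{T}}T$; by Yoneda, the functoriality in $Y$ and the adjunction isomorphism $\Hom_{T}(Z,R_{Y})\cong\Hom_{T}(Z^{(p)},Y)$ come for free once each $R_{Y}$ is shown to be representable. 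So it suffices to prove representability of $R_{Y}$ for every $T$-scheme $Y$.

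For $T=\Spec A$ and $Y=\Spec B$ affine, unwinding the tensor product identifies $R_{Y}(Z)$ with $\Hom_{A\text{-alg}}\bigl(B,\Gamma(Z,\cO_{Z})\otimes_{A,\ff_{A}}A\bigr)$, so $R_{Y}$ is the classical Weil restriction of $B$ along $\ff_{A}\colon A\to A$. Since $\ff_{T}$ is finite and locally free, $A$ is a finite locally free module over itself via $\ff_{A}$, and this Weil restriction exists as an affine $T$-scheme: Zariski locally on $\Spec A$, choose a basis of $A$ over $A$-via-$\ff_{A}$ and a presentation of $B$, introduce the corresponding ``coordinate'' variables, and glue over $\Spec A$ (cf.\ \cite[\S7.6]{BLR90}). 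Localizing on $T$, the same gives, for every open $Y_{\alpha}\subseteq Y$ that is an affine scheme mapping into an affine open of $T$, a $T$-scheme $\mathrm{Res}(Y_{\alpha})$ representing $R_{Y_{\alpha}}$.

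The core step is then to glue these local pieces for a general $Y$. Cover $Y$ by opens $Y_{\alpha}$ of the above kind. I would use two structural facts: $(\ )^{(p)}=(\ )\times_{T,\ff_{T}}T$ commutes with fiber products and sends open immersions to open immersions; and, since $\ff_{T}$ is a universal homeomorphism, so is each projection $Z^{(p)}\to Z$, whence $W\mapsto W^{(p)}$ is an inclusion-preserving bijection from the open subschemes of $Z$ onto those of $Z^{(p)}$. From these one gets: $R_{Y}$ is a Zariski sheaf; each $R_{Y_{\alpha}}\hookrightarrow R_{Y}$ is representable by open immersions, since for $\phi\colon Z^{(p)}\to Y$ the open $\phi^{-1}(Y_{\alpha})\subseteq Z^{(p)}$ has the form $W_{\alpha}^{(p)}$ for a unique open $W_{\alpha}\subseteq Z$ and then $R_{Y_{\alpha}}\times_{R_{Y}}Z$ is represented by $W_{\alpha}$; and these open subfunctors cover $R_{Y}$, because the $W_{\alpha}$ cover $Z$ (the $W_{\alpha}^{(p)}=\phi^{-1}(Y_{\alpha})$ cover $Z^{(p)}$, which is homeomorphic to $Z$). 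A Zariski sheaf carrying such a cover by representable open subfunctors is representable --- glue the $\mathrm{Res}(Y_{\alpha})$ along the open subschemes $\mathrm{Res}(Y_{\alpha})\cap\mathrm{Res}(Y_{\beta})$ --- and the resulting scheme $\mathrm{Res}(Y)$ represents $R_{Y}$. Setting $\mathrm{Res}\colon Y\mapsto\mathrm{Res}(Y)$ then yields the right adjoint.

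The step I expect to be the main obstacle is this last gluing. For Weil restriction along a general finite locally free morphism it genuinely fails: the local Weil restrictions of an open cover need not cover the Weil restriction of the whole scheme. What rescues the Frobenius case is that $\ff_{T}$ is radicial --- indeed the identity on underlying topological spaces --- so that passing from $Z$ to $Z^{(p)}$ leaves the lattice of open subschemes unchanged, and one may therefore localize on the test scheme $Z$. The remaining ingredients, namely the affine Weil restriction and descent along Zariski covers, are routine.
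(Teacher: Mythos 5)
Your argument is correct and is the standard one: the paper does not reprove this statement but cites Kato's Lemma~1.5 of \cite{Kat86}, and notes immediately afterward that the result holds more generally along any finite, locally free universal homeomorphism (\cite[Corollary A.5]{BGA18Gre}). Your identification of the radicial/universal-homeomorphism property of $\ff_{T}$ as the feature that rescues the Zariski gluing of affine Weil restrictions --- which indeed fails for a general finite locally free morphism --- is precisely the point underlying those references, so this is essentially the same approach.
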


The above lemma asserts that for any $T$-scheme $Y$, the  Weil restriction  of $Y$ along the absolute Frobenius $\ff_{T}$  exists; a generalization of this fact to any finite and locally free universal homeomorphism was proved in \cite[Corollary A.5]{BGA18Gre}.  Kato denotes the Weil restriction of $Y$ along $\ff_{T}$ as  $G(Y)$; we prefer to write  $\Weil_{\ff_{T}}(Y)$, thus keeping notation similar to the one in  \cite[Section 7.6]{BLR90}.
 The Weil restriction of $Y$ along $\ff_{T}$ comes equipped with a canonical  morphism of $T$-schemes \[\rho_Y\colon 
 \Weil_{\ff_{T}}(Y)^{(p)} \longrightarrow Y\] such that the map
 \[\Hom_{T}(Y',\Weil_{\ff_{T}}(Y))\to \Hom_{T}(Y'^{(p)}, Y), \quad u\mapsto \rho_Y\circ u^{(p)}, \]
 is a bijection for any $T$-scheme $Y'$, where $u^{(p)}\colon Y'^{(p)}\to \Weil_{\ff_{T}}(Y)^{(p)}$ is the base change of $u$ along $\ff_{T}$. Note that the morphism $\rho_Y$ is associated with the identity morphism on $\Weil_{\ff_{T}}(Y)$  via the above natural bijection and its base change along any open immersion $U\to Y$ is $\rho_U$. Further $\Weil_{\ff_{T}}(U)$ is affine if $U$ is affine. 
Let \[g_{Y/T}:=\rho_{Y}\circ \ff_{\Weil_{\ff_{T}}(Y)/T}\colon \Weil_{\ff_{T}}(Y)\to Y. \]
By construction $g_{Y/T}$ is an affine morphism. Indeed $\rho_{Y}$ is affine and the relative Frobenius $ \ff_{\Weil_{\ff_{T}}(Y) / T} \colon \Weil_{\ff_{T}}(Y)\to \Weil_{\ff_{T}}(Y)^{(p)}$  is a universal homeomorphism,  in particular affine.
Hence the following inverse limit of $T$-schemes
\[Y^\rp:=\varprojlim (Y\stackrel{g_{Y/T}}{\longleftarrow} \Weil_{\ff_{T}}(Y)\stackrel{g_{\Weil_{\ff_{T}}(Y)/T}}{\longleftarrow}    \Weil_{\ff_{T}}(\Weil_{\ff_{T}}(Y)) \longleftarrow \dots)=\varprojlim_{n\in \N_0} \Weil_{\ff_{T}}^{n}(Y)\]
is a scheme, where $\Weil_{\ff_{T}}^{n}$ denotes the $n$-th iterate of the Weil restriction construction. Further, $Y^\rp$  is a relatively perfect scheme over $T$.

\begin{remark}\label{rem: properties of RP} \begin{enumerate}
\item  If $T$ is a perfect scheme, $\Weil_{\ff_{T}}(Y)=Y^{( \frac 1p)}:= Y\times_{T, \ff^{-1}_{T}}\!T$, $\rho_{Y}=\id_{Y}$ and $g_{\Weil_{\ff_{T}}}\!(Y)\colon Y^{(\frac 1p)}\to Y$ is the relative Frobenius so that $Y^\rp$ is the (inverse) perfection $Y^\pf$ of $Y$. Recall that $Y^\pf$ is also the inverse limit of numerable copies of $Y$ with $\ff_{Y}$ as transition maps \cite[Section 5]{BGA18Per}.  
\item \label{RP and limits}  Since relative perfection is right adjoint to the inclusion functor $(\rpsch/T)\to (\mathrm{Sch}/Y)$, it preserves limits and thus, if $Y$ is a group scheme over $T$, the same is $Y^\rp$ and the canonical map $Y^{\rp}\to Y$ is a morphism of group schemes over $T$.
\item \label{RP preserves qc} Open immersions and more generally \'etale morphisms are preserved by relative perfection \cite[Corollary 1.9]{Kat86}. Further, if $\ff_{T}$ is finite locally free, any property of morphisms of schemes that is preserved by Weil restriction and inverse limits  is preserved by relative perfection. For example, if  $f$ is a morphism of $T$-schemes that is separated or quasi-compact (with $T$ locally noetherian)  or a closed immersion, the same is $f^\rp$: see \cite[Section 7.6, Proposition 2 and Proposition 5]{BLR90} and \cite[Proposition 3.2]{BGA18Per}).

\end{enumerate}

\end{remark}

The morphism $g_{Y / T}$ commutes with \'etale morphisms.
More precisely:
\begin{proposition} \label{prop: Weil restriction for F commutes with etale}
    Assume that $\ff_{T}$ is a finite locally free morphism.
    Let $Y' \to Y$ be an \'etale morphism of $T$-schemes.
    Then the diagram
        \[
            \begin{CD}
                    \Weil_{\ff_{T}}(Y')
                @>>>
                    \Weil_{\ff_{T}}(Y)
                \\ @VV g_{Y' / T} V @VV g_{Y / T} V \\
                    Y' @>>> Y
            \end{CD}
        \]
    is cartesian.
\end{proposition}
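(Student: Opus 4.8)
The plan is to verify that the square is cartesian by Yoneda: it suffices to show that for every $T$-scheme $Z$ the canonical map
\[
\Hom_{T}\bigl(Z,\Weil_{\ff_{T}}(Y')\bigr)
\lra
\Hom_{T}\bigl(Z,\Weil_{\ff_{T}}(Y)\bigr)\times_{\Hom_{T}(Z,Y)}\Hom_{T}(Z,Y'),
\]
induced by the functoriality of $\Weil_{\ff_{T}}$ and by $g_{Y'/T}$, is bijective. This map is well defined because $g_{Y/T}$ is natural in $Y$, being the composite $\rho_{Y}\circ\ff_{\Weil_{\ff_{T}}(Y)/T}$ of two natural transformations; in particular $g_{Y/T}$ composed with the functorial map $\Weil_{\ff_{T}}(Y')\to\Weil_{\ff_{T}}(Y)$ equals $(Y'\to Y)\circ g_{Y'/T}$. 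First I would rewrite both sides through the defining adjunction $\Hom_{T}(Z,\Weil_{\ff_{T}}(W))\isomto\Hom_{T}(Z^{(p)},W)$, $u\mapsto\rho_{W}\circ u^{(p)}$. Tracking this identification --- using again that $\rho$ is natural and that $\ff_{W/T}\circ u=u^{(p)}\circ\ff_{Z/T}$ for every $u\colon Z\to W$ --- the functorial map $\Weil_{\ff_{T}}(Y')\to\Weil_{\ff_{T}}(Y)$ turns into post-composition with $Y'\to Y$, and $g_{Y/T}$ turns into pre-composition with the relative Frobenius $\ff_{Z/T}\colon Z\to Z^{(p)}$. So the displayed map becomes the natural map
\[
\Hom_{T}(Z^{(p)},Y')\lra\Hom_{T}(Z^{(p)},Y)\times_{\Hom_{T}(Z,Y)}\Hom_{T}(Z,Y'),
\]
in which both arrows landing in $\Hom_{T}(Z,-)$ are restriction along $\ff_{Z/T}$.

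Next I would unwind this into a statement about \'etale morphisms. Put $P=Z^{(p)}$ and let $\iota\colon Y'\to Y$ be the given \'etale morphism. Giving a $T$-morphism $P\to Y'$ is the same as giving a $T$-morphism $g\colon P\to Y$ together with a section over $P$ of the \'etale $P$-scheme $Y'\times_{Y,g}P$, and similarly over $Z$. Under this translation, an element of the fibre product on the right consists of a morphism $g\colon P\to Y$ together with a section over $Z$ of $(Y'\times_{Y,g}P)\times_{P}Z$, the base change of $Y'\times_{Y,g}P$ along $\ff_{Z/T}$. Hence the bijectivity to be proved reduces to the following: for every \'etale $P$-scheme $E$, base change along $\ff_{Z/T}$ sets up a bijection between the sections of $E$ over $P$ and the sections of $E\times_{P}Z$ over $Z$.

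This last point is where the real content lies, and I would settle it by the topological invariance of the \'etale site. The relative Frobenius $\ff_{Z/T}\colon Z\to Z^{(p)}$ is an integral universal homeomorphism: it is affine, and since $\ff_{Z}=(Z^{(p)}\to Z)\circ\ff_{Z/T}$ while both the absolute Frobenius $\ff_{Z}$ and the projection $Z^{(p)}\to Z$ (a base change of $\ff_{T}$) are integral universal homeomorphisms, the standard cancellation properties for affine, universally closed, universally injective and surjective morphisms force $\ff_{Z/T}$ to be one too. Consequently base change along $\ff_{Z/T}$ is an equivalence from the category of \'etale $Z^{(p)}$-schemes to the category of \'etale $Z$-schemes; its full faithfulness gives exactly the bijection on sections required above, and the proof is complete.

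The hard part will be the last paragraph --- checking that $\ff_{Z/T}$ is an integral universal homeomorphism so that topological invariance applies --- together with the careful bookkeeping in the first paragraph, where the adjunction defining $\Weil_{\ff_{T}}$ and the naturality of $\rho$ and of the relative Frobenius are used to turn the geometric square into the $\Hom$-square; everything else is formal.
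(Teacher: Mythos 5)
Your proof is correct and takes essentially the same route as the paper's: both translate the square via the adjunction defining $\Weil_{\ff_{T}}$ into the unique-lifting property for the diagram with $\ff_{Z/T}\colon Z\to Z^{(p)}$ on top and the \'etale $Y'\to Y$ on the bottom, and both settle this by topological invariance of the \'etale site for the universal homeomorphism $\ff_{Z/T}$. The only difference is that the paper cites \cite[Corollary 4.4.1]{Bar10} at the final step whereas you unwind it into full faithfulness of base change on \'etale schemes and verify that $\ff_{Z/T}$ is a universal homeomorphism by cancellation.
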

\begin{proof}
  Let $Z$ be a $T$-scheme.
    The morphism $g_{Y / T}$ on $Z$-valued points
    is the map $\Hom_{T}(Z^{(p)}, Y) \to \Hom_{T}(Z, Y)$
    given by precomposition with
    the relative Frobenius $Z \to Z^{(p)}$ for $Z / T$.
      Therefore, what we want to show is the following:
    given a commutative diagram of $T$-schemes
        \[
            \begin{CD}
                Z @> \ff_{Z / T} >> Z^{(p)} \\
                @VVV @VVV \\
                Y' @>>> Y,
            \end{CD}
        \]
    there exists a unique morphism $Z^{(p)} \to Y'$
    that splits the diagram into two commutative triangles.
    The morphism $\ff_{Z / T}$ is a universal homeomorphism.
    The morphism $Y' \to Y$ is \'etale.
    Hence the existence of such a morphism follows from a simple application
    of the topological invariance of \'etale sites;
    see \cite[Corollary 4.4.1]{Bar10} for example.
\end{proof}

Here is how $W_{N}(R)$-algebras $W_{N}(Q)$ behave nicely
for relatively perfect $R$-algebras $Q$,
which will be the key point in the proof of Lemma \ref{l.Aone} below:

\begin{proposition} \label{p.wh}
	Let $R$ be an $\F_{p}$-algebra and $Q$ a relatively perfect $R$-algebra.
	Let $M \ge N \ge 1$ be integers.
	\begin{enumerate}
				\item   \label{i.wr}
		The  natural morphism 	$W_{M}(Q) \otimes_{W_{M}(R)} W_{N}(R)   \to  W_{N}(Q)$ is an  isomorphism.
		\item   \label{i.wt}
			$\Tor_{1}^{W_{M}(R)}(W_{M}(Q), W_{N}(R)) = 0$.
	\end{enumerate}
\end{proposition}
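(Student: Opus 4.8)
The plan is to reduce everything to a statement about the relative Frobenius and to exploit the fact that, for any $\F_p$-algebra $A$, the Witt vector rings $W_N(A)$ fit into exact sequences built from Frobenius and Verschiebung. I would argue by induction on $N$, the case $N=1$ (where $W_1 = \Ga$, so $W_1(Q) = Q$ and $W_1(R) = R$) being the assertion that $W_M(Q) \otimes_{W_M(R)} R \isomto Q$ together with $\Tor_1^{W_M(R)}(W_M(Q), R) = 0$. Even this base case already contains the essential point, so let me describe how I expect the induction to be fed.

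For the inductive step, I would use the short exact sequence of $W_M(R)$-modules
\[
	0 \to W_{M-N}(R) \overset{V^N}{\lra} W_M(R) \lra W_N(R) \to 0,
\]
and tensor it with $W_M(Q)$ over $W_M(R)$. The resulting long exact $\Tor$-sequence reduces the vanishing of $\Tor_1^{W_M(R)}(W_M(Q), W_N(R))$ and the isomorphism $W_M(Q) \otimes_{W_M(R)} W_N(R) \cong W_N(Q)$ to the analogous statements with $W_N(R)$ replaced by $W_{M-N}(R)$ — but one must be careful: the map $V^N$ on Witt vectors is additive, not $W_M(R)$-linear, so the relevant module structure on $W_{M-N}(R)$ is the one twisted by the $N$-fold Frobenius $F^N$. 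The key computation is therefore to identify $W_M(Q) \otimes_{W_M(R)} W_{M-N}(R)^{(F^N)}$; here the hypothesis that $Q$ is relatively perfect over $R$ enters decisively, because relative perfection is exactly what makes Frobenius twists harmless — concretely, $Q \otimes_{R, \ff_R} R \isomto Q$, and this should propagate to the Witt level. I would make this precise by showing $W_M(Q)$ is flat over $W_M(R)$ after the Frobenius twist, or equivalently by reducing the whole problem to the perfect base case via a limit argument over the tower $R \to R^{(1/p)} \to \cdots$ implicit in relative perfection.

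Alternatively — and this may be cleaner — I would try to prove flatness of $W_M(Q)$ over $W_M(R)$ directly: since $W_M(R)$ is a $\Z/p^M$-algebra and $W_M(Q)/p W_M(Q)$-type quotients are controlled by $Q$ and its Frobenius twists, one can check flatness modulo the filtration by Verschiebung, where the graded pieces are (twisted) copies of $Q$ over $R$, and relative perfectness guarantees each such piece is flat (indeed, $Q$ is formally étale over $R$ by \cite[Lemma 1.3]{Kat86}, hence flat). Given flatness, part \eqref{i.wt} is immediate, and part \eqref{i.wr} follows by comparing the two sides after reducing mod Verschiebung: both sides are computed by the same graded object, namely $\bigoplus$ of Frobenius-twisted copies of $Q$, and one checks the natural map is an isomorphism on associated graded, hence an isomorphism.

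**Main obstacle.**
The delicate point is the bookkeeping of Frobenius twists. The maps $V^N \colon W_{M-N}(R) \to W_M(R)$ and the multiplication-by-$p$ maps on $W_M$ interact with the $W_M(R)$-module structures only up to a Frobenius twist, so one cannot naively "peel off" one Witt component at a time without tracking which twist of $R$ (or $Q$) one is working over. The relatively perfect hypothesis is precisely the tool that tames these twists, but matching it correctly against the $V$-adic filtration — and making sure the base case $W_M(Q) \otimes_{W_M(R)} R \isomto Q$ with vanishing $\Tor_1$ is handled honestly rather than circularly — is where the real work lies. I expect the argument to run most smoothly by first treating $R$ perfect (where $W_M$ behaves like a flat functor and everything is classical), and then descending to relatively perfect $R$ by writing $Q$ as a filtered object over the Frobenius tower of $R$.
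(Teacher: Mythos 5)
Your identification of the Verschiebung exact sequence and of the Frobenius-twist bookkeeping as the central issues is correct and matches the paper's route. However, there is a gap in the flatness argument: the parenthetical ``$Q$ is formally \'etale over $R$ by \cite[Lemma 1.3]{Kat86}, hence flat'' is not a valid step. \'Etale is formally \'etale \emph{plus} locally of finite presentation, and the flatness of \'etale morphisms genuinely uses the finiteness hypothesis; formally \'etale ring maps are not known to be flat in general, and nothing in the hypotheses of the proposition gives you flatness of $Q$ over $R$. Since your Route~2 rests on flatness of $W_M(Q)$ over $W_M(R)$ via the graded pieces of the Verschiebung filtration, which are Frobenius twists of $Q$ over $R$, the argument does not close. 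Your fallback --- reducing to $R$ perfect by ``writing $Q$ as a filtered object over the Frobenius tower of $R$'' --- is also not a precise reduction: a relatively perfect $R$-algebra is a single fixed object, not a (co)limit over such a tower, and it is unclear how the Tor-vanishing would descend from a perfect base to $R$.

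What the proposition actually needs is a hands-on use of relative perfection, and that is what the paper supplies. For part~(1) with $M = N+1$, the content is the identity $\vv^{N}(R)\,W_{N+1}(Q) = \vv^{N}(Q)$ inside $W_{N+1}(Q)$: given $(0,\dots,0,y)$, relative (semi-)perfection lets one write $y = \sum_i x_i y_i^{p^N}$ with $x_i \in R$, $y_i \in Q$, whence $(0,\dots,0,y) = \sum_i (0,\dots,0,x_i)(y_i,0,\dots,0)$, visibly in $\vv^{N}(R)\,W_{N+1}(Q)$. For part~(2), your exact sequence reduces the claim to injectivity of $W_M(Q) \otimes_{W_M(R)} \vv^{N}(W_{L}(R)) \to W_M(Q)$ with $L = M - N$; one identifies $\vv^{N}(W_{L}(R))$ as $W_{L}(R)$ with action twisted by $\ff^{N}$, uses part~(1) to rewrite the tensor product as $W_{L}(Q) \otimes_{W_{L}(R),\,\ff^{N}} W_{L}(R)$, invokes Equation~(1) of \cite[Lemma 2]{Kat82} --- a direct consequence of relative perfection --- to get a bijection $\ff^{N}\otimes\id$ onto $W_{L}(Q)$, and then observes that $\vv^{N}$ on $W_{L}(Q)$ is injective. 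This yields exactly the $\Tor_1$ vanishing asked for, without invoking (or needing) flatness of $W_M(Q)$ over $W_M(R)$, which is both unestablished and stronger than the statement requires.
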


\begin{proof}
\eqref{i.wr}
For $M=N$ it is clear. 
By induction, it is enough to consider the case $M = N + 1$. Surjectivity follows since $W_{N+1}(Q)\to W_N(Q)$ is surjective. For injectivity, let $\vv^{N}$ be the $N$-fold Verschiebung $\G_{a} \hookrightarrow W_{N+1}$  and recall that $\vv^N(R)$ is an ideal in $W_N(R)$. 
Note that for any ring homomorphism $g\colon A\to B$ mapping an ideal $I $ into an ideal $J\subset B$ then $g$ induces an isomorphism $B\otimes_A A/I\simeq B/J$ if and only if $IB=J$. Hence, taking $A=W_{N + 1}(R)$ and $ B=W_{N + 1}(Q)$, it is enough to show that $ \vv^{N}(R) W_{N + 1}(Q)= \vv^{N}(Q)$	in $W_{N + 1}(Q)$. One inclusion is obvious. 
Let $(0, \dots, 0, y) \in \vv^{N}(Q)$.
Since $Q$ is relatively (semi-)perfect over $R$,
we can write $y$ as $\sum  x_{i}y_{i}^{p^{N}}$ for some $x_{i} \in R$ and $y_{i} \in Q$.
Then $(0, \dots, 0, y) = \sum  (0, \dots, 0, x_{i})(y_{i}, 0, \dots, 0)$,
which is in $\vv^{N}(R)W_{N + 1}(Q) $ \cite[Chapter 0, (1.1.9)]{Ill79}).

\eqref{i.wt}
Let $L = M - N$  and let $\ff $ also denote the Frobenius morphism on Witt vector rings.
By the exact sequence
	\[
				0
			\to
				W_{L}(R)
			\overset{\vv^{N}}{\to}
				W_{M}(R)
			\to
				W_{N}(R)
			\to
				0,\]
it is enough to show that the natural morphism
	$
			W_{M}(Q) \otimes_{W_{M}(R)} \vv^{N}(W_{L}(R))
		\to
			W_{M}(Q)
	$
is injective.
The $W_{M}(R)$-action on $\vv^{N}(W_{L}(R))$ factors through a $W_{L}(R)$-action. 
Since $x \vv(x') = \vv  (\ff(x) x')$ for $x, x' \in W(R)$ (\cite[Chapter 0, (1.3.6)]{Ill79}), the $W_{L}(R)$-module $\vv^{N}(W_{L}(R))$ is isomorphic to $W_{L}(R)$ with $W_{L}(R)$-action given by $\ff^{N}$. Therefore,
	\[
			W_{M}(Q) \otimes_{W_{M}(R)} \vv^{N}(W_{L}(R))
		\cong
				W_{L}(Q)
			\otimes_{W_{L}(R), \ff^{N}}
				W_{L}(R)
	\]
by the previous assertion.
The morphism $\ff^{N} \otimes \id_{W_{L}(R)}$ gives a bijection
from the right-hand side to $W_{L}(Q)$
by Equation (1) in the proof of \cite[Lemma\ 2]{Kat82}.
Therefore, we are reduced to showing that
$\vv^{N} \colon W_{L}(Q) \to W_{M}(Q)$ is injective.
But this is obvious.
\end{proof}


\section{Kato canonical lifting}
\label{s.cl}

Let $A$ be a ring, $I$ a nilpotent ideal of $A$ containing $p$ and $R=A/I$. If $R=k$ is a field of characteristic $p$ the examples to keep in mind are $A$ an artinian local ring with maximal ideal $I$, or $A=W_n(k)$ and $I=\vv( W_n(k))$.
Let $g\colon R\to Q$ be a ring homomorphism and assume that $Q$ is relatively perfect over $R$, namely, the ring homomorphism  
\[ 
	Q \otimes_{R, \ff_{R}} R \to Q, \
	x\otimes y \mapsto x^{p} g(y),
\] 
is bijective.  Recall that by \cite[Lemma 1.3]{Kat86} $Q$ is a formally \'etale ring over $R$.

Kato shows in \cite[Lemma 1]{Kat82} that there exists a {\it canonical lifting of $Q$ over $A$ (with respect to $I$)}, namely,  a formally \'etale ring $B$ over $A$ such that $B/IB\simeq Q$.
The canonical lifting is unique up to unique isomorphism.
We denote this $B$ by $h^{A}(Q)$.
In the same lemma, it is shown that
if $Q$ is flat over $R$, then
$B$ is also characterized as a unique flat $A$-algebra
with $B / I B \cong Q$.

The construction works as follows. Assume $I^n=0$;  let $r\geq n-1$ and $N> r$.
The $r$-th ghost component $\phi_r\colon W_N(A)\to A, (a_0,a_1,\dots, a_{N-1})\mapsto a_0^{p^r}+pa_1^{p^{r-1}}+\dots+p^ra_r$ factors through $W_N(R)$ thus endowing $A$ with a structure of $W_N(R)$-algebra; note that we have a commutative diagram
\[\xymatrix{W_N(A)\ar[d]\ar[r]^{\phi_r}&A\ar[d]\\
W_N(R)\ar[r]^{ \phi_r}&R
}\qquad
\xymatrix{(a_0,a_1,\dots)\ar[d]\ar[r]&a_0^{p^r}+pa_1^{p^{r-1}}+\dots+p^ra_r \ar[d]\\
	(\bar a_0,\bar a_1,\dots)\ar[r] &(\bar a_0)^{p^r}
}
\]
where $\bar a_i$ is the image of $a_i $   via the projection $A\to A/I=R$.

In \cite[Lemma 2]{Kat82} it is shown that
the relative perfectness of $R \to Q$ implies that
$W_{N}(R) \to W_{N}(Q)$ is formally \'etale,
and then it suffices to set $B=W_N(Q)\otimes_{W_N(R)}A$ to conclude.

\begin{remark}\label{rem.cl}
\begin{enumerate}
\item   If $A$ is a local artinian ring with maximal ideal $I$ and perfect residue field $R$ of characteristic $p$ then any relatively perfect ring $Q$ over $R$ is perfect and Kato canonical lifting $A\otimes_{W_N(R)}W_N(Q)$ is the lifting given by the Greenberg algebra associated with $A$ (see \cite[Corollary A.2]{Lip76}, \cite[Remark 2.2]{BGA18Per}).
\item \label{i.etale lifting} If $Q$ is \'etale over $R$, then it is relatively perfect \cite[Lemma 1.3]{Kat86} and the canonical lifting is the usual \'etale lifting of $Q$ over $A$. 
\item \label{i.basechange}
Let $J$ be an ideal contained in $I$ and let $A'=A/J$ so that $R=A'/IA'$. Then there is a natural isomorphism  between the canonical lifting $h^{A'}(Q)$ of $Q$ to $A'$ and $h^A(Q)\otimes_AA'$, both being $W_N(Q)\otimes_{W_N(R)} A'$ for $N$ large enough.
\end{enumerate}
\end{remark}

More generally, for any scheme $Y$ which is relatively perfect over $R$ there exists a {\it canonical lifting of $Y$ over $A$}, namely, a formally \'etale $A$-scheme $h^{A}(Y)$ whose base change to $R$ is $Y$.
Indeed one first defines the canonical lifting of $Y$ over $W_N(R)$ as the scheme $W_N(Y)=(|Y|, W_N(\cO_{Y}))$ whose underlying topological space is the space underlying $Y$ and whose structure sheaf is defined via $U \mapsto W_N(\cO_{Y}(U))$ on affine open subsets $U \subseteq Y$ (see \cite[Chapter 0, Section 1.5]{Ill79}). Then one takes the base change of $W_N(Y)$ along $\spec(A)\to \spec(W_N(R))$.

This construction is Zariski-local on $A$.
Therefore, we can generalize it even more as follows.

Let $S$ be a scheme. Recall that an ideal $\mathcal{I}\subseteq \mathcal{O}_S$ is called {\it locally nilpotent} if locally for the Zariski topology it is nilpotent, namely, $\mathcal{I}^n=0$, for some $n\in \mathbb{N}$ \cite[Chapter I, (4.5.16)]{GD71}.
\begin{proposition} \label{p.cl}
Let $S$ be a scheme. Let $T\hookrightarrow S$ be a closed immersion with locally nilpotent ideal of definition $\mathcal{I} \subseteq \mathcal{O}_{S}$ containing $p$.
Then for any scheme $Y$ that is relatively perfect over $T$, there exists a \textit{canonical lifting of $Y$ over $S$},
namely, a formally \'etale $S$-scheme $h^{S}(Y)$
whose base change to $T$ is $Y$:
    \[
        h^{S}(Y) \times_{S} T = Y.
    \]
Further, the formation of canonical liftings commutes with fiber products and if $\{Y_\lambda\}$ is an open covering of $Y$ then $\{h^{S}(Y_{\lambda})\}$ is an open covering of $h^{S}(Y)$.  \end{proposition}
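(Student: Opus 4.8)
The plan is to reduce the global statement to the affine case already treated (the construction of $h^{A}(Q)$ via $W_{N}(Q) \otimes_{W_{N}(R)} A$) by a gluing argument, using the fact that the affine construction is functorial and compatible with base change along closed subschemes whose ideal is contained in $\mathcal{I}$. First I would choose, locally on $S$, an affine open $\Spec A \subseteq S$ on which $\mathcal{I}$ is nilpotent, say $\mathcal{I}^{n} = 0$, with $R = A/I$ where $I = \mathcal{I}(\Spec A)$; over the corresponding affine open $\Spec R \subseteq T$, the scheme $Y$ restricts to a relatively perfect $R$-scheme $Y_{A}$, and one forms $W_{N}(Y_{A})$ for $N$ large (depending only on $n$), then base-changes along $\Spec A \to \Spec W_{N}(R)$ to obtain a formally \'etale $A$-scheme $h^{A}(Y_{A})$ lifting $Y_{A}$. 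This uses Remark \ref{rem.cl} and the construction of $W_{N}(Y)$ as the ringed space $(|Y|, W_{N}(\mathcal{O}_{Y}))$, which is already recalled in the excerpt for relatively perfect $Y$ over $R$.

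Next I would check that these local liftings glue. The key point is that on an overlap $\Spec A \cap \Spec A'$, covered by distinguished affines $\Spec A_{f}$, one has by Remark \ref{rem.cl}\eqref{i.basechange} a canonical isomorphism $h^{A}(Y_{A}) \times_{A} A_{f} \cong h^{A_{f}}(Y_{A_{f}})$, since both are $W_{N}(\mathcal{O}_{Y}) \otimes_{W_{N}(\mathcal{O}_{T})} A_{f}$ over the relevant affine; the same description shows these identifications satisfy the cocycle condition on triple overlaps because they are all induced from the single sheaf $W_{N}(\mathcal{O}_{Y})$ and the structure sheaf of $S$. Hence the local $h^{A}(Y_{A})$ glue to an $S$-scheme $h^{S}(Y)$. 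That $h^{S}(Y)$ is formally \'etale over $S$ and that its base change to $T$ is $Y$ are local statements, already known on each $\Spec A$. The two supplementary claims then follow easily: compatibility with fiber products reduces, after passing to affine opens, to the corresponding property of the affine Kato lifting (which in turn comes from $W_{N}(Q_{1} \otimes_{R} Q_{2}) \cong W_{N}(Q_{1}) \otimes_{W_{N}(R)} W_{N}(Q_{2})$ when the factors are relatively perfect, i.e.\ Proposition \ref{p.wh}\eqref{i.wr} together with formal \'etaleness); and if $\{Y_{\lambda}\}$ is an open cover of $Y$, then since $h^{A}$ takes open immersions to open immersions (the underlying space of $W_{N}(Y)$ is $|Y|$, and base change preserves open immersions) the $h^{S}(Y_{\lambda})$ form an open cover of $h^{S}(Y)$.

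The main obstacle will be bookkeeping the uniqueness needed for gluing: one must know that the affine canonical lifting is canonical up to \emph{unique} isomorphism (already stated after Kato's Lemma 1), so that the transition isomorphisms on overlaps are forced and automatically satisfy the cocycle condition; this is exactly what makes the descent work without extra hypotheses. A secondary point requiring a little care is that $N$ can be chosen uniformly: since $\mathcal{I}$ is only locally nilpotent, the integer $n$ with $\mathcal{I}^{n}=0$ varies over the affine cover, but on each fixed affine $\Spec A$ any sufficiently large $N$ works, and by Remark \ref{rem.cl}\eqref{i.basechange} enlarging $N$ does not change the resulting $A$-algebra, so the glued object is well-defined independently of these choices. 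Everything else is a routine translation of the affine statements recalled above into the language of schemes.
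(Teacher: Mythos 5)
Your proof fills in exactly the gluing argument that the paper only sketches (the paper constructs $h^A(Y)$ for affine $A$ via $W_N(Y)$ and then simply remarks that ``this construction is Zariski-local on $A$''), so the approach is essentially the same. One small inaccuracy: Proposition~\ref{p.wh}~\eqref{i.wr} concerns comparing different Witt-vector truncation levels, not tensor products of two relatively perfect factors, so it is not quite the right citation for the fiber-product compatibility; the cleanest route there is simply to observe that $h^{S}(Y_1)\times_{h^{S}(Y)} h^{S}(Y_2)$ is formally \'etale over $S$ (fiber products and compositions of formally \'etale morphisms are formally \'etale) and reduces to $Y_1\times_Y Y_2$ over $T$, so uniqueness of the canonical lifting identifies it with $h^{S}(Y_1\times_Y Y_2)$.
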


If $S'$ is another closed subscheme of $S$ containing $T$ and $Y$ is a $T$-scheme,
then the morphism $T \hookrightarrow S'$ satisfies
the same conditions as $T \hookrightarrow S$ in Proposition \ref{p.cl}, since the ideal of $\mathcal{O}_{S'}$ defining $T$ is the image of the ideal of $\mathcal{O}_{S}$ defining $T$. Therefore,
by uniqueness of canonical liftings, there is a natural isomorphism (see Remark \ref{rem.cl} \eqref{i.basechange})
\begin{equation}\label{eq.cl-bc}
h^{S'}(Y) \simeq h^{S}(Y) \times_{S} S'.
\end{equation} 

We will also need an infinite level version (\cite[Definition 1]{Kat82}).
Let $A$ be a ring.
Let $I = I_{0} \supseteq I_{1} \supseteq \cdots$ be
a decreasing sequence of ideals of $A$.
Set $A_{n} = A / I_{n}$ and $R = A / I = A_{0}$.
Assume that $A \isomto \invlim_{n} A_{n}$, $p \in I$
and the ideal $I / I_{n} \subseteq A_{n}$ is nilpotent for all $n$.
Let $Q$ be a relatively perfect $R$-algebra.
For each integer $n \ge 0$,
we have the Kato canonical lifting $h^{A_{n}}(Q)$ of $Q$ over $A_{n}$ as above.
By Remark \ref{rem.cl} \eqref{i.basechange},
we have a natural surjective $A$-algebra homomorphism $h^{A_{n + 1}}(Q) \onto h^{A_{n}}(Q)$.
We define the \emph{Kato canonical lifting} of $Q$ over $A$ to be
    \begin{equation} \label{eq: Kato canonical lifting}
        h^{A}(Q) := \invlim_{n} h^{A_{n}}(Q).
    \end{equation}

Assume that $A$ is noetherian and $I_{n} = I^{n + 1}$ for all $n$.
For a flat relatively perfect $R$-algebra $Q$,
the $A_{n}$-algebra $h^{A_{n}}(Q)$ is flat and
the map $h^{A_{n + 1}}(Q) \onto h^{A_{n}}(Q)$ is surjective.
Therefore, by \cite[Tag 0912]{Sta20},
we know that $h^{A}(Q)$ is flat over $A$
and $h^{A}(Q) \tensor_{A} A_{n} \isomto h^{A_{n}}(Q)$ for all $n$.
(This is elementary if $A$ is a discrete valuation ring
and $I$ is its maximal ideal.)
Therefore, $h^{A}(Q)$ is characterized, in this case,
as a unique $I$-adically complete flat $A$-algebra with $h^{A}(Q) \tensor_{A} R \cong Q$.
Note that in the case $A$ is a complete discrete valuation ring with maximal ideal $I$ and $Q$ is a finite separable extension of the residue field $A/K$ then $h^{A}(Q)$ is the unique unramified extension of $A$ with residue field $Q$.


\section{Greenberg transform  of finite level: as a functor}
\label{s.gf}

Let $S$ be a scheme.
Let $T\hookrightarrow S$ be a closed immersion with locally nilpotent ideal of definition $\mathcal{I} \subseteq \mathcal{O}_{S}$ containing $p$
(which in particular implies that
$T$ is an $\F_{p}$-scheme).
For any $S$-scheme $X$, consider the functor
	\[
			F_{S / T, X}
		\colon
			(\rpsch / T)^{\mathrm{op}}
		\to
			\mathrm{Sets},
		\quad
			Y \mapsto X(h^{S}(Y)),
	\]
where
	$
			X(h^{S}(Y))
		:=
			\Hom_{S}(h^{S}(Y), X)
	$
and the superscript $\mathrm{op}$ denotes the opposite category.%
\footnote{The part ``$S / T$'' of the notation might look confusing since $T$ is an $S$-scheme.
But if $T \into S$ admits a left inverse $S \twoheadrightarrow T$ (the ``equal characteristic case''),
then $S / T$ literally makes sense and $F_{S / T, X}$ is the Weil restriction $\Weil_{S / T}(X)$.
We want to keep this intuition.}
If $F_{S / T, X}$ is representable, we call the representing object
the \textit{relatively perfect Greenberg transform of $X$ with respect to $T \hookrightarrow S$}
and denote it by $\gr^{\rp}(X)$ (or $\gr_{S}^{\rp}(X)$ or $\gr_{S / T}^{\rp}(X)$).
If this is the case, we have a canonical bijection
	\[
		\Hom_{T}(Y, \gr^\rp(X)) \simeq \Hom_{S}(h^{S}(Y), X)
	\]
functorial in relatively perfect $T$-schemes $Y$.
If $S'$ is another closed subscheme of $S$ containing $T$, then the closed immersion $h^{S'}(Y) \hookrightarrow h^{S}(Y)$  (see \eqref{eq.cl-bc}) gives a map
	\[
			\Hom_{S}(h^{S}(Y), X)
		\to
			\Hom_{S'}(h^{S'}(Y), X \times_{S} S'),
	\]
which then defines a canonical morphism
\[
\rho_{S / S'}^{\ast} \colon F_{S / T, X} \to F_{S' / T, X \times_{S} S'}
\]
of functors on $(\rpsch / T)^{\mathrm{op}}$.
If $S' = T$, then $F_{T / T, X \times_{S} T}$ is the restriction
of the representable functor $X \times_{S} T$ to $(\rpsch / T)^{\mathrm{op}}$,
which we denote by $(X \times_{S} T)^{\rp}$
(even if $T$ does not have finite locally free absolute Frobenius
and hence $(X \times_{S} T)^{\rp}$ is not necessarily representable).
Thus, we have a morphism of functors
	\begin{equation} \label{eq.rhoast}
			\rho_{S / T}^{\ast}
		\colon
			F_{S / T, X}
		\to
			(X \times_{S} T)^{\rp},
	\end{equation}
whose target is representable
if $\ff_{T}$ is finite locally free by Proposition \ref{p.rp}.

\begin{remark}\label{rem.F}
\begin{enumerate}
    \item \label{i.remFp}
	Note that the functor $F_{S / T, X}$ is the composition of functors $X\circ h^S$ where $X$ is identified with the associated functor from the category of (formally \'etale) $S$-schemes to $	\mathrm{Sets}$. Hence $X \mapsto F_{S / T, X}$  commutes with fiber products
    by \cite[Chapter 0, (1.2.2.1)]{GD71}.
    \item \label{i.remFr}
    If $Y$ is a relatively perfect $T$-scheme
    (no hypothesis on $T$
    other than $T$ being an $\F_{p}$-scheme),
    then $Y$  represents $F_{T/T,Y}$.
\end{enumerate}
\end{remark}

\begin{proposition} \label{p.sh}
	The functor $F_{S / T, X}$ satisfies the sheaf condition for the Zariski topology.
	That is, for any relatively perfect $T$-scheme $Y$ and any open covering $\{Y_{\lambda}\}$ of $Y$,
	the sequence
		\[
				F_{S / T, X}(Y)
			\to
				\prod_{\lambda} F_{S / T, X}(Y_{\lambda})
			\rightrightarrows
				\prod_{\lambda, \mu} F_{S / T, X}(Y_{\lambda} \cap Y_{\mu})
		\]
	is an equalizer sequence.
\end{proposition}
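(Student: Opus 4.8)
The plan is to reduce the sheaf condition for $F_{S/T,X}$ to the sheaf condition for the representable functor $X$ on the Zariski site of $S$-schemes, using the compatibility of the Kato canonical lifting with open coverings established in Proposition \ref{p.cl}. First I would recall that $F_{S/T,X}(Y) = \Hom_S(h^S(Y), X)$, and that by Proposition \ref{p.cl} the formation of canonical liftings sends an open covering $\{Y_\lambda\}$ of a relatively perfect $T$-scheme $Y$ to an open covering $\{h^S(Y_\lambda)\}$ of the $S$-scheme $h^S(Y)$. Moreover, since canonical liftings commute with fiber products (again Proposition \ref{p.cl}) and open immersions are monomorphisms, one has $h^S(Y_\lambda \cap Y_\mu) = h^S(Y_\lambda) \times_{h^S(Y)} h^S(Y_\mu) = h^S(Y_\lambda) \cap h^S(Y_\mu)$ as open subschemes of $h^S(Y)$; here I would note that $Y_\lambda \cap Y_\mu$ is again relatively perfect over $T$ (an open subscheme of a relatively perfect scheme is relatively perfect, since relative perfectness is Zariski-local on the source and checked via the relative Frobenius, which restricts to open subschemes) so that the canonical lifting is defined.

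Next I would invoke the fact that any scheme, viewed as a functor on schemes, is a sheaf for the Zariski topology: for the open covering $\{h^S(Y_\lambda)\}$ of $h^S(Y)$, the sequence
\[
    \Hom_S(h^S(Y), X)
  \to
    \prod_\lambda \Hom_S(h^S(Y_\lambda), X)
  \rightrightarrows
    \prod_{\lambda,\mu} \Hom_S(h^S(Y_\lambda) \cap h^S(Y_\mu), X)
\]
is an equalizer. Substituting the identifications $h^S(Y_\lambda) \cap h^S(Y_\mu) = h^S(Y_\lambda \cap Y_\mu)$ from the previous step, this is precisely the sequence
\[
    F_{S/T,X}(Y)
  \to
    \prod_\lambda F_{S/T,X}(Y_\lambda)
  \rightrightarrows
    \prod_{\lambda,\mu} F_{S/T,X}(Y_\lambda \cap Y_\mu),
\]
so it is an equalizer, which is exactly the assertion.

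I expect the only real point requiring care — and hence the main obstacle — is verifying that $\{h^S(Y_\lambda)\}$ genuinely forms an \emph{open covering} of $h^S(Y)$ (not merely a family of open subschemes) and that the double intersections are computed correctly. The covering property is supplied by Proposition \ref{p.cl}; the identification of intersections follows from compatibility with fiber products together with the observation that, since $Y$ and $h^S(Y)$ share the same underlying topological space (canonical liftings are thickenings, being formally étale with nilpotent — hence trivial on topological spaces — base change to $T$, and $Y \to T$ being formally étale with $T \hookrightarrow S$ a topological homeomorphism), the open subscheme $h^S(Y_\lambda)$ of $h^S(Y)$ has underlying space $Y_\lambda$, so the $h^S(Y_\lambda)$ do cover and their intersections have underlying spaces $Y_\lambda \cap Y_\mu$. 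Everything else is the formal statement that representable presheaves are Zariski sheaves, which needs no proof.
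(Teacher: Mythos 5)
Your proof is correct and follows essentially the same approach as the paper: invoke Proposition \ref{p.cl} to see that the canonical liftings $\{h^{S}(Y_{\lambda})\}$ form an open covering of $h^{S}(Y)$ with $h^{S}(Y_{\lambda} \cap Y_{\mu}) = h^{S}(Y_{\lambda}) \cap h^{S}(Y_{\mu})$, and then conclude by the fact that $X$ is a Zariski sheaf. The extra discussion of underlying topological spaces is a reasonable sanity check but is already subsumed in the covering statement of Proposition \ref{p.cl}.
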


\begin{proof}
By Proposition \ref{p.cl}
we know that the canonical lifts $\{h^{S}(Y_{\lambda})\}$ form an open covering of $h^{S}(Y)$
and $h^{S}(Y_{\lambda}) \cap h^{S}(Y_{\mu}) = h^{S}(Y_{\lambda} \cap Y_{\mu})$. Then the assertion of the proposition follows since $X$ is a sheaf for the Zariski topology.
\end{proof}

\begin{proposition} \label{r.cartesian}
	If $X'\to X$ is a formally \'etale morphism of $S$-schemes,
	then the diagram
		\[
			\xymatrix{
				F_{S / T, X'} \ar[d]^{\rho_{S/T}^\ast}\ar[rr]& & F_{S / T, X}\ar[d]^{\rho_{S/T}^\ast} \\
				(X' \times_{S} T)^{\rp} \ar[rr]& &(X \times_{S} T)^{\rp},
			}
		\]
	is cartesian, where $\rho_{S/T}^\ast$ was defined in \eqref{eq.rhoast}.
\end{proposition}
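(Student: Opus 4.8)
The plan is to verify the cartesian property pointwise on the site $(\rpsch/T)^{\mathrm{op}}$: it suffices to show that for every relatively perfect $T$-scheme $Y$ the induced square of sets
\[
\xymatrix{
F_{S/T,X'}(Y) \ar[d]\ar[r] & F_{S/T,X}(Y)\ar[d] \\
(X'\times_S T)^{\rp}(Y) \ar[r] & (X\times_S T)^{\rp}(Y)
}
\]
is cartesian. Unwinding the definitions, $F_{S/T,X}(Y)=\Hom_S(h^S(Y),X)$ and $(X\times_S T)^{\rp}(Y)=\Hom_T(Y,X\times_S T)=\Hom_S(Y,X)$, where in the last identification $Y$ is viewed as a $T$-scheme, hence an $S$-scheme via $T\hookrightarrow S$; the vertical maps are restriction along the closed immersion $Y=h^T(Y)\hookrightarrow h^S(Y)$ coming from \eqref{eq.cl-bc}. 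So concretely I must show: given an $S$-morphism $\varphi\colon h^S(Y)\to X$ and an $S$-morphism $\psi\colon Y\to X'$ whose composite $Y\to X'\to X$ agrees with $\varphi|_Y$, there is a unique $S$-morphism $\tilde\varphi\colon h^S(Y)\to X'$ lifting $\psi$ and compatible with $\varphi$.

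The key input is that $Y\hookrightarrow h^S(Y)$ is a closed immersion defined by a locally nilpotent ideal — indeed $h^S(Y)$ is flat over $S$ with locally nilpotent ideal of definition $\mathcal{I}\cdot h^S(Y)$ pulled back from $\mathcal{I}\subseteq\mathcal O_S$, and $h^S(Y)\times_S T=Y$ by construction (Proposition \ref{p.cl}). Since $\mathcal I$ is locally nilpotent, this is an infinitesimal (nilpotent) thickening Zariski-locally. Now I invoke the formal étaleness of $X'\to X$ directly: the lifting problem is the square
\[
\xymatrix{
Y \ar[r]^{\psi}\ar[d] & X' \ar[d] \\
h^S(Y) \ar[r]^{\varphi}\ar@{-->}[ur]^{\tilde\varphi} & X
}
\]
of $S$-schemes, with the left vertical a closed immersion defined by a (Zariski-locally) nilpotent ideal; formal étaleness of $X'\to X$ gives a unique diagonal filler $\tilde\varphi$. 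This handles both existence and uniqueness of the element of $F_{S/T,X'}(Y)$ simultaneously, which is exactly the cartesian property of the square of sets.

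The one genuine subtlety — and the main obstacle — is the non-affineness and the locally nilpotent (rather than globally nilpotent) nature of $\mathcal I$: formal étaleness as usually phrased applies to nilpotent ideals, so one should either argue Zariski-locally on $S$ (where $\mathcal I$ is nilpotent and $h^S(Y)\to Y$ is then a nilpotent closed immersion, the filler existing and being unique locally) and then glue using the uniqueness clause, or appeal to the standard fact that a closed immersion with locally nilpotent ideal still enjoys the unique right lifting property against formally étale morphisms. The gluing is harmless precisely because the diagonal filler is unique on overlaps. Once this point is made, functoriality of the construction in $Y$ is automatic, so the square of functors is cartesian. I should also remark that $F_{S/T,X'}\to F_{S/T,X}$ being the base change of $(X'\times_S T)^{\rp}\to(X\times_S T)^{\rp}$ is compatible with the maps $\rho^\ast_{S/T}$ by the very construction of $\rho^\ast_{S/T}$ in \eqref{eq.rhoast}.
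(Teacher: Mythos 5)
Your proof is correct and follows essentially the same route as the paper: both reduce to verifying the square pointwise, observe that $Y\hookrightarrow h^S(Y)$ is a locally nilpotent thickening (being the base change of $T\hookrightarrow S$), and then invoke the unique lifting property of the formally \'etale morphism $X'\to X$ against such thickenings — the paper citing \cite[Remark (17.1.2) (iv)]{Gro67} for precisely the point you flag as the ``genuine subtlety'' (the locally nilpotent rather than globally nilpotent case). One small inaccuracy: you assert $h^S(Y)$ is flat over $S$, which is not guaranteed for a general relatively perfect $Y$ over $T$, but this claim is never used — the closed immersion and locally nilpotent ideal come directly from base change, as you also say.
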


\begin{proof}
	We need to show that for any relatively perfect $T$-scheme $Y$,
	the square becomes a cartesian diagram of sets on taking $Y$-valued points.
	In other words, given a commutative square
		\[
			\begin{CD}
					Y
				@> \text{incl} >>
					h^{S}(Y)
				\\
				@VVV @VVV
				\\
					X'
				@>>>
					X,
			\end{CD}
		\]
	we want show that there exists a unique morphism $h^{S}(Y) \to X'$
	that splits the square into two commutative triangles.
	Since the closed immersion $T \hookrightarrow S$ is defined by
	a locally nilpotent ideal sheaf by assumption, so is its base change $Y \hookrightarrow h^{S}(Y)$ by the morphism $h^{S}(Y) \to S$.
Therefore the result follows from the formal \'etaleness of $X' \to X$ and \cite[Remark (17.1.2) (iv)]{Gro67},
\end{proof}

For a morphism $F' \to F$ between functors
$(\rpsch / T)^{\mathrm{op}} \to \mathrm{Sets}$,
we say that it is \emph{relatively representable by open immersions}
if for any relatively perfect $T$-scheme $Y$ and
any section $y\in F(Y)$
the induced morphism of functors $F' \times_{F} Y \to Y$ is
an open immersions of $T$-schemes,
where the morphism $Y \to F$ in the fiber product is
the one associated with $y$.
See \cite[Chapter 0, (1.1.3), (1.1.4)]{GD71}
for the background material on the functorial language.

\begin{proposition} \label{p.px}
For any open immersion of $S$-schemes $X' \hookrightarrow X$ the induced morphism $F_{S / T, X'} \to F_{S / T, X}$ is relatively represented by open immersions.
Moreover, for any open covering $\{X_{\lambda}\}$ of $X$, we have $F_{S / T, X} = \bigcup_{\lambda} F_{S / T, X_{\lambda}}$ as Zariski sheaves on $(\rpsch / T)$.
\end{proposition}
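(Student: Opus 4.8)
The plan is to reduce both assertions to the corresponding facts about the canonical lifting functor $h^S$, exactly as in Propositions \ref{p.sh} and \ref{r.cartesian}. The essential point is that $h^S$ turns an open immersion $Y' \hookrightarrow Y$ of relatively perfect $T$-schemes into an open immersion $h^S(Y') \hookrightarrow h^S(Y)$ of $S$-schemes, that it sends open coverings to open coverings (both guaranteed by Proposition \ref{p.cl}), and that it commutes with fibre products (also Proposition \ref{p.cl}, or Remark \ref{rem.F} \eqref{i.remFp}). Since $h^S(Y) \hookrightarrow S$ and $Y \hookrightarrow h^S(Y)$ are homeomorphisms on underlying spaces (the defining ideal of $T \hookrightarrow S$ is locally nilpotent, hence so is that of $Y \hookrightarrow h^S(Y)$), open subschemes of $h^S(Y)$ correspond bijectively to open subsets of $Y$, and this correspondence is realised by $h^S$ on open subschemes of $Y$.

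For the first assertion, I would fix a relatively perfect $T$-scheme $Y$ and a section $y \in F_{S/T,X}(Y)$, i.e.\ a morphism $\varphi \colon h^S(Y) \to X$ of $S$-schemes. I claim $F_{S/T,X'} \times_{F_{S/T,X}} Y$ is represented by the open subscheme $U \subseteq Y$ determined by the open subset $\varphi^{-1}(X') \subseteq h^S(Y)$ under the homeomorphism $|Y| \cong |h^S(Y)|$. Indeed, for any relatively perfect $T$-scheme $Z$ and any $T$-morphism $g \colon Z \to Y$, giving a lift of $g$ to $F_{S/T,X'} \times_{F_{S/T,X}} Y$ amounts to giving an $S$-morphism $h^S(Z) \to X'$ whose composite with $X' \hookrightarrow X$ equals $\varphi \circ h^S(g)$; since $X' \hookrightarrow X$ is an open immersion, such a lift exists (necessarily uniquely) if and only if $h^S(g)$ factors through $\varphi^{-1}(X')$, which by the homeomorphisms and functoriality of $h^S$ on opens happens if and only if $g$ factors through $U$. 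This proves the fibre product is the $T$-scheme $U$ and the projection to $Y$ is the open immersion $U \hookrightarrow Y$, as required.

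For the second assertion, given an open covering $\{X_\lambda\}$ of $X$, the subfunctors $F_{S/T,X_\lambda} \to F_{S/T,X}$ are open by the first part; I must check they cover $F_{S/T,X}$ as Zariski sheaves on $(\rpsch/T)$, which by Proposition \ref{p.sh} is a legitimate sheaf statement. Let $Y$ be a relatively perfect $T$-scheme and $y \in F_{S/T,X}(Y)$, corresponding to $\varphi \colon h^S(Y) \to X$. The open subschemes $U_\lambda \subseteq Y$ attached to $\varphi^{-1}(X_\lambda) \subseteq h^S(Y)$ cover $Y$ because the $\varphi^{-1}(X_\lambda)$ cover $h^S(Y)$ and $|Y| \cong |h^S(Y)|$; and by construction $h^S(U_\lambda) = \varphi^{-1}(X_\lambda)$, so $\varphi$ restricts to a morphism $h^S(U_\lambda) \to X_\lambda$, i.e.\ $y|_{U_\lambda}$ lies in $F_{S/T,X_\lambda}(U_\lambda)$. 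Hence $y$ is locally in the image of the $F_{S/T,X_\lambda}$, which is exactly the statement $F_{S/T,X} = \bigcup_\lambda F_{S/T,X_\lambda}$ as Zariski sheaves.

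I do not anticipate a serious obstacle; the only point requiring a little care is the passage between open subschemes of $h^S(Y)$ and of $Y$ via the nilpotent thickening, but this is the standard topological invariance already invoked in Proposition \ref{r.cartesian} via \cite[Remark (17.1.2)(iv)]{Gro67}, together with the compatibility $h^S(U_\lambda) = h^S(Y) \cap \text{(the opens)}$ recorded in Propositions \ref{p.cl} and \ref{p.sh}. Everything else is formal manipulation of representable functors and the definition of relative representability by open immersions.
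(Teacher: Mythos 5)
Correct, and essentially the same route as the paper's one-line proof: the paper cites Proposition \ref{r.cartesian} (open immersions are formally \'etale) to identify the fibre product with the pullback of $(X' \times_S T)^{\rp} \hookrightarrow (X \times_S T)^{\rp}$ along $Y \to (X \times_S T)^{\rp}$, whereas you unwind that same reduction by hand via the homeomorphism $|Y| \cong |h^S(Y)|$ and the compatibility of $h^S$ with open subschemes and coverings from Proposition \ref{p.cl}.
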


\begin{proof}
	This follows from Proposition \ref{r.cartesian}.
\end{proof}

For an open immersion $j \colon T' \hookrightarrow T$ and
a functor $F' \colon (\rpsch / T')^{\mathrm{op}} \to \mathrm{Sets}$,
we define $j_{!} F'$ to be the functor $(\rpsch / T)^{\mathrm{op}} \to \mathrm{Sets}$
that assigns to each $T$-scheme $Y$
the set $F'(Y)$ if the morphism $Y \to T$ factors through $T'$
and $\emptyset$ otherwise.
If $F'$ is represented by some $T'$-scheme $Y'$,
then $j_{!} F'$ is represented by $Y'$ viewed as a $T$-scheme.

\begin{proposition} \label{p.pt}
	For any open immersion $j \colon T' \hookrightarrow T$,
	the restriction $j^{\ast} F_{S / T, X}$ of $F_{S / T, X}$ to $(\rpsch / T')$ is 	naturally isomorphic to $F_{S' / T', X'}$
	(where $S' \subseteq S$ is the open subscheme corresponding to $T'$ and	$X' = X \times_{S} S'$).
	The resulting morphism $j_{!} F_{S' / T', X'} \to F_{S / T, X}$ is represented by open immersions.
	For any open covering $\{j_{\lambda} \colon T_{\lambda} \hookrightarrow T\}$ of $T$,
	we have
		\[
				F_{S / T, X}
			=
				\bigcup_{\lambda}
					(j_{\lambda})_{!}
					F_{S_{\lambda} / T_{\lambda}, X_{\lambda}}
		\]
	as Zariski sheaves on $(\rpsch / T)$
	(where $S_{\lambda}$ and $X_{\lambda}$ are similarly defined).
\end{proposition}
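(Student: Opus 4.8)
The plan is to reduce everything to the affine, finite-level situation already understood, and then globalize using the Zariski-sheaf machinery built up in Propositions \ref{p.sh}, \ref{r.cartesian}, \ref{p.px}. The first claim — that $j^{\ast} F_{S/T,X} \cong F_{S'/T',X'}$ — is essentially a compatibility of the canonical-lifting construction with open restriction. Concretely, for a relatively perfect $T'$-scheme $Y$ (which is automatically a relatively perfect $T$-scheme, since the structure morphism factors through the open immersion $j$), I would observe that $h^{S}(Y)$ and $h^{S'}(Y)$ agree: both are the formally étale $S$- (resp. $S'$-) scheme lifting $Y$, and since $S' \subseteq S$ is open with the same ideal of definition restricted from $\mathcal{I}$, the uniqueness of canonical liftings (Proposition \ref{p.cl}, together with \eqref{eq.cl-bc} applied to the open subscheme $S'$) identifies $h^{S'}(Y) = h^{S}(Y) \times_{S} S'$. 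But $Y$ lives over $T' \subseteq S'$, so the structure morphism $h^{S}(Y) \to S$ factors through $S'$; hence $h^{S}(Y) \times_{S} S' = h^{S}(Y)$, and $\Hom_{S}(h^{S}(Y),X) = \Hom_{S'}(h^{S'}(Y),X')$ because any $S$-morphism from a scheme mapping into $S'$ to $X$ is the same as an $S'$-morphism to $X' = X \times_{S} S'$. This gives the natural isomorphism $j^{\ast} F_{S/T,X} \cong F_{S'/T',X'}$ functorially in $Y$.

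Next, that the resulting morphism $j_{!} F_{S'/T',X'} \to F_{S/T,X}$ is represented by open immersions: given a relatively perfect $T$-scheme $Y$ and a section $y \in F_{S/T,X}(Y)$, I must show that the fiber product $j_{!} F_{S'/T',X'} \times_{F_{S/T,X}} Y \to Y$ is an open immersion. By definition of $j_{!}$, this fiber product is the subfunctor of $Y$ supported on the locus where $Y \to T$ factors through $T'$; since $j \colon T' \hookrightarrow T$ is an open immersion, that locus is the open subscheme $Y \times_{T} T'$ of $Y$, and it is immediate that the section $y$ restricts compatibly there (using the isomorphism of the previous paragraph). Hence the fiber product is represented by the open subscheme $Y \times_{T} T' \subseteq Y$, which is what we wanted.

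Finally, for the covering statement $F_{S/T,X} = \bigcup_{\lambda} (j_{\lambda})_{!} F_{S_{\lambda}/T_{\lambda},X_{\lambda}}$ as Zariski sheaves on $(\rpsch/T)$: a section of $F_{S/T,X}$ over a relatively perfect $T$-scheme $Y$ is a morphism $h^{S}(Y) \to X$; I would check surjectivity of $\coprod_{\lambda}(j_{\lambda})_{!}F_{S_{\lambda}/T_{\lambda},X_{\lambda}} \to F_{S/T,X}$ Zariski-locally on $Y$ by pulling back along the open covering $\{Y \times_{T} T_{\lambda}\}$ of $Y$ (which covers $Y$ since $\{T_{\lambda}\}$ covers $T$); over $Y \times_{T} T_{\lambda}$, the section lies in the image of $(j_{\lambda})_{!} F_{S_{\lambda}/T_{\lambda},X_{\lambda}}$ by the first part of the proposition. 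Gluing is handled by Proposition \ref{p.sh} (the Zariski sheaf property of $F_{S/T,X}$), so this exhibits $F_{S/T,X}$ as the union of the images of the $(j_{\lambda})_{!} F_{S_{\lambda}/T_{\lambda},X_{\lambda}}$.

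I expect the only mildly delicate point to be the bookkeeping in the first paragraph — carefully invoking uniqueness of canonical liftings and \eqref{eq.cl-bc} to see that $h^{S}(Y)$ does not change when we shrink $S$ to $S'$, given that $Y$ already maps into $T' \subseteq S'$. Once that identification is in place, the representability-by-open-immersions claim and the covering claim are formal consequences of the definition of $j_{!}$ together with the Zariski-sheaf property, exactly parallel to Proposition \ref{p.px}.
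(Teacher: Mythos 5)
Your proof is correct and follows essentially the same route as the paper: identify $h^{S'}(Y)$ with $h^{S}(Y)$ for relatively perfect $T'$-schemes $Y$ via uniqueness of canonical liftings, conclude $j^{\ast}F_{S/T,X} \cong F_{S'/T',X'}$, and then note that the representability-by-open-immersions and the covering claim are formal, using the cartesian square over $T' \hookrightarrow T$ and the Zariski-sheaf property from Proposition \ref{p.sh}. One small caveat: \eqref{eq.cl-bc} is stated in the paper for $S'$ a \emph{closed} subscheme of $S$ containing $T$, so citing it for an \emph{open} $S'$ is not quite legitimate — but you do not actually need it, since your appeal to uniqueness of canonical liftings (which is the paper's own argument: $h^{S'}(Y)$ is formally étale over $S$ through the open immersion $S' \into S$ and lifts $Y$, hence must be $h^{S}(Y)$) already carries the first paragraph.
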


\begin{proof}
	For any relatively perfect $T'$-scheme $Y'$, we have $h^{S'}(Y') \cong h^{S}(Y')$ since $h^{S'}(Y')$ is formally \'etale over $S$ and it lifts the $T$-scheme $Y'$. 
	It follows that
		\[
				j^{\ast} F_{S / T, X}(Y')
			=
				F_{S / T, X}(Y')
			=
				X(h^{S}(Y'))
			=
				X(h^{S'}(Y'))
			=
				X'(h^{S'}(Y'))
			=
				F_{S' / T', X'}(Y'),
		\]
	thus $j^{\ast} F_{S / T, X} \cong F_{S' / T', X'}$.
	Further, we have a cartesian square
		\[
			\begin{CD}
				j_{!} F_{S' / T', X'} @>>> F_{S / T, X} \\
				@VVV @VVV \\
				T' @>>> T
			\end{CD}
		\]
	as functors $(\rpsch / T)^{\mathrm{op}} \to (\mathrm{Sets})$.
	The rest follows from this.
\end{proof}


\section{Greenberg transform  of finite level: representability}\label{s.existence}
We now give a class of examples where the functor $F_{S / T, X}$ is representable, namely, the relatively perfect Greenberg transform $\gr_{S / T}^{\rp}(X)$ exists.  

Let $\mathcal{I} = \mathcal{I}_{0} \supseteq \mathcal{I}_{1} \supseteq \dots
\supseteq \mathcal{I}_{n} = 0$
be a decreasing sequence of quasi-coherent sheaves of ideals on $S$.
Let $S_{i} \subseteq S$ be the closed subscheme defined by $\mathcal{I}_{i}$,
so $S_{n} = S$ and $S_{0} = T$.
Assume the following:
\begin{enumerate}
\item \label{i.pr}
 The product ideal $\mathcal{I} \cdot \mathcal{I}_{i}$ is contained in $\mathcal{I}_{i+1}$. In particular $\mathcal{I}\mathcal{O}_{S_i}$ is nilpotent.
\item \label{i.fr}
The absolute Frobenius morphism $\ff_{T}$ of $T$ is
finite locally free.
\item \label{i.id}
The sheaf $\mathcal{I}_{i} / \mathcal{I}_{i + 1}$
is a finite locally free $\mathcal{O}_{T}$-module, where the $\mathcal{O}_{T}$-action is induced by the $\cO_{S_{i+1}}$-module structure on $\mathcal{I}_{i} / \mathcal{I}_{i + 1}$ via \eqref{i.pr}.
\end{enumerate}
Here are two examples.
Let $k$ be a field of characteristic $p$ such that $[k : k^{p}] < \infty$.
Let $A$ be an artinian local ring with maximal ideal $I$ and residue field $k$.
Then $S = \spec A$, $T = \spec k$, $\mathcal{I}_{i} = I^{i + 1}$ satisfy these conditions.
Also, let $k$ be as above.
Let $A = W_{n}(k)$ and $I_{i} = \vv^{i + 1} W_{n - i - 1}(k)$,
where $\vv$ is the Verschiebung.
Then $S = \spec A$, $T = \spec k$, $\mathcal{I}_{i} = I_{i}$ satisfy these conditions.

For an $S$-scheme $X$, set $X_{i} = X \times_{S} S_{i}$.
Denote $F_{i} = F_{i, X} = F_{S_{i} / T, X_i}$ and,
if it is representable, denote its representing object by
$\gr_{S_{i} / T}^{\rp}(X_{i}) = \gr_{i}^{\rp}(X)$.
We have a canonical morphism 
        $
			\rho_{i, j}^{\ast}
		=
			\rho_{S_{i} / S_{j}}^{\ast}
		\colon
			F_{i, X}
		\to
			F_{j, X}
	$
over $T$ for any $i \ge j$.

For proving the existence of $\gr_i^\rp(X)$ in this situation,
we imitate the proof of the representability of the Weil restriction functor in \cite[Section 7.6]{BLR90} and the proof of the existence of classical Greenberg transform in \cite{Gre61}.
We first consider the following results:

\begin{lemma} \label{l.Aone}
	Assume that $S$ is affine and that Conditions \eqref{i.fr} and \eqref{i.id} above hold with ``finite locally free'' replaced by ``finite free''.
	Then $\gr_{i}^{\rp}(\A_{S}^{1})$ exists.
	It is isomorphic to the relative perfection of an affine space over $T$.
\end{lemma}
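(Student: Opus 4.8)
The plan is to argue by induction on $i$, using the restriction morphisms $\rho_{i,i-1}^{\ast}\colon F_{i}\to F_{i-1}$ of Section \ref{s.gf} and exhibiting $F_{i}$ as the total space of a (trivial) torsor over $F_{i-1}$ under a vector group. The inductive hypothesis is that $F_{i-1}=F_{S_{i-1}/T,\A^{1}_{S_{i-1}}}$ is representable by $(\A^{e_{i-1}}_{T})^{\rp}$ for some integer $e_{i-1}\ge 0$; this is an affine relatively perfect $T$-scheme, affineness coming from $S$, hence $T$, being affine together with the construction of $(\ )^{\rp}$ as an inverse limit of Weil restrictions of affine schemes. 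The base case $i=0$ is immediate: $F_{0}=F_{T/T,\A^{1}_{T}}$ is by definition the restriction of the representable functor $\A^{1}_{T}$ to $(\rpsch/T)^{\mathrm{op}}$, and by Condition \eqref{i.fr} and Proposition \ref{p.rp} this is represented by $(\A^{1}_{T})^{\rp}$, so $e_{0}=1$. To run the induction at stage $i$ one applies the stage-$(i-1)$ statement to the truncated filtration $\mathcal{I}_{0}\supseteq\cdots\supseteq\mathcal{I}_{i-1}$ on the affine scheme $S_{i-1}$, which still satisfies Conditions \eqref{i.pr}--\eqref{i.id}.

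For the inductive step, write $A_{j}=\Gamma(S_{j},\cO_{S_{j}})$ and first analyse $\rho_{i,i-1}^{\ast}$ on affine test objects. For a relatively perfect $T$-algebra $Q$ one has $h^{S_{i}}(\Spec Q)=\Spec h^{A_{i}}(Q)$, so $F_{i}(\Spec Q)=\A^{1}_{S_{i}}(h^{S_{i}}(\Spec Q))$ is the underlying set of the ring $h^{A_{i}}(Q)$, and under this identification $\rho_{i,i-1}^{\ast}$ is the canonical surjection $h^{A_{i}}(Q)\onto h^{A_{i}}(Q)\otimes_{A_{i}}A_{i-1}=h^{A_{i-1}}(Q)$ of Remark \ref{rem.cl}\,\eqref{i.basechange}, whose kernel is the ideal $(\mathcal{I}_{i-1}/\mathcal{I}_{i})\,h^{A_{i}}(Q)$. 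Now $h^{A_{i}}(Q)$ is flat over $A_{i}$ by the flatness characterization of the Kato canonical lifting recalled in Section \ref{s.cl}; $\mathcal{I}_{i-1}/\mathcal{I}_{i}$ is annihilated by $\mathcal{I}\cO_{S_{i}}$ (Condition \eqref{i.pr}) and is therefore a finite free $A_{0}$-module of some rank $d:=d_{i-1}$ (Condition \eqref{i.id}); and $h^{A_{i}}(Q)\otimes_{A_{i}}A_{0}=h^{A_{0}}(Q)=Q$, again by Remark \ref{rem.cl}\,\eqref{i.basechange}. Combining these, $\ker\rho_{i,i-1}^{\ast}(\Spec Q)\cong(\mathcal{I}_{i-1}/\mathcal{I}_{i})\otimes_{A_{0}}Q\cong Q^{d}$ naturally in $Q$. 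The same computation carried out with the Zariski-local structure sheaves (Proposition \ref{p.cl}) shows that for an arbitrary relatively perfect $T$-scheme $Y$ the ideal sheaf of the closed immersion $h^{S_{i-1}}(Y)\hookrightarrow h^{S_{i}}(Y)$ is isomorphic to $\cO_{Y}^{\,d}$, so that $\ker\rho_{i,i-1}^{\ast}(Y)=\Gamma(Y,\cO_{Y})^{d}$; that is, $\ker\rho_{i,i-1}^{\ast}$ is represented by $(\Ga^{d}_{T})^{\rp}=(\A^{d}_{T})^{\rp}$, and $F_{i}$ is a pseudo-torsor under this vector group over $F_{i-1}$.

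It remains to trivialize this torsor. The scheme $Z_{i-1}:=(\A^{e_{i-1}}_{T})^{\rp}$ is the spectrum of a relatively perfect $T$-algebra $C_{i-1}$, so it is itself an object of $(\rpsch/T)$. By the surjectivity of $\rho_{i,i-1}^{\ast}(\Spec C_{i-1})\colon h^{A_{i}}(C_{i-1})\onto h^{A_{i-1}}(C_{i-1})$, the tautological element $\id_{Z_{i-1}}\in F_{i-1}(Z_{i-1})$ lifts to some $\widetilde\eta\in F_{i}(Z_{i-1})$. Then the natural transformation $Z_{i-1}\times_{T}(\A^{d}_{T})^{\rp}\to F_{i}$ sending $(y,v)$ to $y^{\ast}\widetilde\eta+v$ — pull $\widetilde\eta$ back along $y\colon Y\to Z_{i-1}$ and translate by $v\in\ker\rho_{i,i-1}^{\ast}(Y)$ using the additive structure of the ring $F_{i}(Y)$ — is an isomorphism of functors on $(\rpsch/T)^{\mathrm{op}}$: it is compatible with the projections down to $F_{i-1}$ because $\rho_{i,i-1}^{\ast}$ is additive and $\rho_{i,i-1}^{\ast}(\widetilde\eta)=\id_{Z_{i-1}}$, and over each $y\in F_{i-1}(Y)$ the fibre of $\rho_{i,i-1}^{\ast}$ is a coset of the subgroup $\ker\rho_{i,i-1}^{\ast}(Y)$, which $v\mapsto y^{\ast}\widetilde\eta+v$ carries bijectively onto $(\A^{d}_{T})^{\rp}(Y)$. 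Hence $F_{i}\cong Z_{i-1}\times_{T}(\A^{d}_{T})^{\rp}\cong(\A^{e_{i-1}}_{T})^{\rp}\times_{T}(\A^{d}_{T})^{\rp}\cong(\A^{e_{i}}_{T})^{\rp}$ with $e_{i}=e_{i-1}+d$, using that relative perfection, being a right adjoint (Proposition \ref{p.rp}), commutes with finite products. This completes the induction and shows that $\gr_{i}^{\rp}(\A^{1}_{S})$ exists and is the relative perfection of an affine space over $T$.

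The crux is the middle step, recognizing $\ker\rho_{i,i-1}^{\ast}$ as a vector group. This is exactly where Conditions \eqref{i.pr} and \eqref{i.id} and the flatness of the Kato canonical lifting are used, and it is the reason the hypothesis is strengthened to ``finite free'', so that the kernel is honestly $(\A^{d}_{T})^{\rp}$ rather than a twisted form of it and the conclusion names an affine space. Granting this, triviality of the resulting torsor is essentially formal; the one point to observe is that the relative perfection of affine space, unlike affine space itself, is the spectrum of a relatively perfect algebra and can therefore serve as a universal test object, which is what replaces the usual appeal to the vanishing of the first cohomology of a coherent sheaf on an affine scheme.
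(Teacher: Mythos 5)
Your argument is a genuinely different route from the paper's, and it is essentially correct for the case that actually matters in the paper (where $T = \Spec k$ for a field $k$), but it has a gap in the generality in which the lemma is stated.

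The paper's proof fixes $N > r$, writes $F_{i,j}(Q) = W_{N}(Q) \otimes_{W_{N}(R)} I_{j}/I_{i}$, and decomposes $F_{i}$ directly as the product $F_{i,i-1} \times \cdots \times F_{1,0} \times F_{0,-1}$ by exhibiting, for each step of the filtration, an explicit set-theoretic functorial section $\sum y_{l} \otimes e_{l} \mapsto \sum (y_{l}, 0, \dots, 0) \otimes \tilde e_{l}$ of the projection $F_{i,j-1} \twoheadrightarrow F_{j,j-1}$. Injectivity of the complementary inclusion is deduced from the $\Tor$-vanishing of Proposition \ref{p.wh} \eqref{i.wt}, which holds for \emph{any} relatively perfect $R$-algebra $Q$. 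You instead induct on $i$, identify $\ker \rho_{i,i-1}^{\ast}$ as the vector group $(\Affine_{T}^{d})^{\RP}$, and trivialize the resulting pseudo-torsor by lifting the tautological element over the representing object $Z_{i-1}$. The observation that $Z_{i-1}$, being the spectrum of a relatively perfect algebra (not merely a formal inverse limit), can itself serve as a universal test object is a nice substitute for the paper's explicit Teichm\"uller-style section; both give the same product decomposition, which is what is re-used in Proposition \ref{p.SmSch}.

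The gap is in the key kernel computation. You justify the injectivity of $(\mathcal{I}_{i-1}/\mathcal{I}_{i}) \otimes_{A_{i}} h^{A_{i}}(Q) \to h^{A_{i}}(Q)$ by asserting that $h^{A_{i}}(Q)$ is flat over $A_{i}$, citing the flatness characterization recalled in Section \ref{s.cl}. But that characterization is stated there under the additional hypothesis that $Q$ is flat over $R$, and indeed flatness of $h^{A_{i}}(Q)$ over $A_{i}$ forces $Q = h^{A_{i}}(Q)/I h^{A_{i}}(Q)$ to be flat over $R$. Conditions \eqref{i.fr} and \eqref{i.id} do not require $T = \Spec R$ to be the spectrum of a field, and a relatively perfect $R$-algebra is in general only formally \'etale over $R$, which does not obviously imply flatness without a finiteness hypothesis. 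The paper's use of Proposition \ref{p.wh} \eqref{i.wt} is precisely what circumvents this: it gives the needed $\Tor_{1}$-vanishing directly, with no flatness assumption on $Q$. You should either restrict to the case $R$ a field (where flatness of $Q$ is automatic and your argument goes through), or replace the flatness appeal by an invocation of Proposition \ref{p.wh}. A second, minor, point: you write the kernel as $(\mathcal{I}_{i-1}/\mathcal{I}_{i}) \otimes_{A_{0}} Q$, whereas the $W_{N}(R)$-module structure on $\mathcal{I}_{i-1}/\mathcal{I}_{i}$ factors through the $r$-th ghost component, so the correct identification carries a Frobenius twist $\otimes_{R, \ff^{r}}$. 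This is harmless because $Q$ is relatively perfect (so $Q \otimes_{R,\ff^{r}} R \cong Q$), but the paper tracks it explicitly and you should too.
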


\begin{proof}
	Write $S = \spec A$, $S_{j} = \spec A / I_{j}$, $j \geq 0$, and
	$T = \spec R = \spec A / I$.
	Set $I_{-1} = A$.
	By Proposition \ref{p.sh}, it is enough to test functors $F_j=F_{S_j/T,\A_{S_j}^{1}}$ by relatively perfect affine $T$-schemes.
	Let $Q$ be a relatively perfect $R$-algebra.
	Let $r \ge n$ and $N > r$.
	Then $F_{j}(Q) = W_{N}(Q) \otimes_{W_{N}(R)} A / I_{j}$ is Kato's canonical lifting of $Q$ as in Section \ref{s.cl}.
	 
	For $i \ge j \ge 0$, consider the exact sequence
		\begin{equation} \label{eq.dv}
				W_{N}(Q) \otimes_{W_{N}(R)} I_{j} / I_{i}
			\to
				W_{N}(Q) \otimes_{W_{N}(R)} I_{j - 1} / I_{i}
			\to
				W_{N}(Q) \otimes_{W_{N}(R)} I_{j - 1} / I_{j}
			\to
				0.
		\end{equation}
	We show that: the first morphism is injective;
	the third term is a finite free $Q$-module (via $W_{N}(Q) \twoheadrightarrow Q$)
	admitting a basis that does not depend on $Q$;
	and the second morphism admits a set-theoretic section
	(not necessarily respecting the group structures) functorial in $Q$.
	Recall that we are endowing $A$ with the action of $W_{N}(R)$
	via the $r$-th ghost component map $\phi_{r}$.
	Hence $I_{j - 1} / I_{j}$ here is endowed with the $W_{N}(R)$-action
	via the quotient $W_{N}(R) \twoheadrightarrow R$ and $\ff_R^{r} \colon R \to R$. 
	By Conditions \eqref{i.fr} and \eqref{i.id},
	this $I_{j - 1} / I_{j}$ is a finite free $R$-module.
	Hence $\Tor_{1}^{W_{N}(R)}(W_{N}(Q), I_{j - 1} / I_{j}) = 0$
	by Proposition \ref{p.wh} \eqref{i.wt}.
	Therefore the first morphism in \eqref{eq.dv} is injective.
	The third term of \eqref{eq.dv} is naturally isomorphic to
		\[
				(W_{N}(Q) \otimes_{W_{N}(R)} R)
			\otimes_{R, \ff^{r}}
				I_{j - 1} / I_{j},
		\]
	where the $W_{N}(R)$-action on $R$ is via the quotient map $W_{N}(R) \twoheadrightarrow R$.
	Hence, this is naturally isomorphic to $Q \otimes_{R, \ff^{r}} I_{j - 1} / I_{j}$
	by Proposition \ref{p.wh} \eqref{i.wr}.
	Let $e_{1}, \dots, e_{m}$ be a basis of the finite free $R$-module $I_{j - 1} / I_{j}$
	with the $R$-action given by $\ff^{r}$.
	Then $Q \otimes_{R, \ff^{r}} I_{j - 1} / I_{j}$ is a finite free $Q$-module
	with basis $\{1 \otimes e_{l}\}_{l}$.
	A section to the second morphism in \eqref{eq.dv} is given by the map
		\begin{align*}
					Q \otimes_{R, \ff^{r}} I_{j - 1} / I_{j}
				=
					\bigoplus_{l = 1}^{m} Q \otimes e_{l}
			&	\to
					W_{N}(Q) \otimes_{W_{N}(R)} I_{j - 1} / I_{i},
			\\
					\sum_{l = 1}^{m} y_{l} \otimes e_{l}
			&	\mapsto
					\sum_{l = 1}^{m} (y_{l}, 0, \dots, 0) \otimes \Tilde{e}_{l},
		\end{align*}
	where $\Tilde{e}_{l} \in I_{j - 1} / I_{i}$ is a lift of $e_{l} \in I_{j - 1} / I_{j}$.
	This is functorial in $Q$.
	
	Now, let $F_{i, j}(Q) = W_{N}(Q) \otimes_{W_{N}(R)} I_{j} / I_{i}$,
	so that $F_{i} = F_{i, -1}$.
	What we saw means that	
	$F_{i, j - 1} \cong F_{i, j} \times F_{j, j - 1}$
	as functors $(\rpaff / R)^{\mathrm{op}} \to (\mathrm{Sets})$
	and $F_{j, j - 1}$ is isomorphic to the $m$-fold product $F_0\times \cdots \times F_0$
	and thus is represented by the relative perfection of an affine space over $R$, $ (\A_R^m)^\rp$.
	Hence $F_{i} \cong F_{i, i - 1} \times \dots
	\times F_{1, 0} \times F_{0, -1}$
	is represented by the relative perfection of an affine space over $R$.
\end{proof}

\begin{lemma} \label{l.GrAff}
	Under the same assumptions as the previous lemma,
	$\gr_{i}^{\rp}(X)$ exists for any affine $S$-scheme $X$.
\end{lemma}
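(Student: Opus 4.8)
The plan is to deduce the general affine case from the case of affine space established in Lemma~\ref{l.Aone}, by writing an arbitrary affine $S$-scheme as a fibre product of affine spaces over $S$ and exploiting the fact, recorded in Remark~\ref{rem.F}\eqref{i.remFp}, that $X \mapsto F_{i, X}$ commutes with fibre products. Concretely, write $S = \Spec A$ and $X = \Spec B$, choose a (possibly infinite) family $\{b_{\lambda}\}_{\lambda \in \Lambda}$ of algebra generators of $B$ over $A$ and a family $\{f_{\sigma}\}_{\sigma \in \Sigma}$ of generators of the kernel of the resulting surjection $A[x_{\lambda} : \lambda \in \Lambda] \onto B$, so that $B \cong A[x_{\lambda}]/(f_{\sigma} : \sigma \in \Sigma)$. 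Writing $\A_{S}^{(\Lambda)} := \Spec A[x_{\lambda} : \lambda \in \Lambda]$ for the affine space on the set $\Lambda$ (and similarly $\A_{S}^{(\Sigma)}$), the tuple $(f_{\sigma})_{\sigma}$ defines an $S$-morphism $\A_{S}^{(\Lambda)} \to \A_{S}^{(\Sigma)}$ which, together with the zero section $S \to \A_{S}^{(\Sigma)}$, exhibits $X$ as the fibre product $\A_{S}^{(\Lambda)} \times_{\A_{S}^{(\Sigma)}} S$; moreover $\A_{S}^{(\Lambda)} = \prod_{\lambda \in \Lambda} \A_{S}^{1}$ and $\A_{S}^{(\Sigma)} = \prod_{\sigma \in \Sigma} \A_{S}^{1}$ in $(\Sch / S)$.

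Next I would push this picture through the functor $F_{i, -}$. Since $F_{i, X}(Y) = \Hom_{S_{i}}(h^{S_{i}}(Y), X \times_{S} S_{i})$ and a representable functor preserves limits, the assignment $X \mapsto F_{i, X}$ commutes with fibre products (Remark~\ref{rem.F}\eqref{i.remFp}) and, by the same argument, with arbitrary products. Applying this to the fibre-product description of $X$ and to $\A_{S}^{(\Lambda)} = \prod_{\lambda} \A_{S}^{1}$ yields
\[
    F_{i, X} \;\cong\;
    \Bigl( \prod_{\lambda \in \Lambda} F_{i, \A_{S}^{1}} \Bigr)
    \times_{\,\prod_{\sigma \in \Sigma} F_{i, \A_{S}^{1}}\,}
    F_{i, S},
\]
where $F_{i, S}$ is the terminal presheaf, represented by $T$, and each $F_{i, \A_{S}^{1}}$ is represented by the affine relatively perfect $T$-scheme $\gr_{i}^{\rp}(\A_{S}^{1})$ supplied by Lemma~\ref{l.Aone}. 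It then remains to note that the products and the fibre product on the right-hand side can be formed inside the category of affine relatively perfect $T$-schemes: a (possibly infinite) tensor product over $R$ of relatively perfect $R$-algebras, and a pushout $C \otimes_{D} R$ of relatively perfect $R$-algebras along $R$-algebra homomorphisms, are again relatively perfect over $R$, since relative Frobenius commutes with colimits of $R$-algebras and a colimit of isomorphisms is an isomorphism; and these constructions are $\Spec$ of filtered colimits of tensor products of the relevant coordinate rings, hence affine. (Alternatively one may simply invoke Proposition~\ref{p.rp}, which is available because $\ff_{T}$ is finite locally free, to form the limit in $(\rpsch / T)$.) Consequently $F_{i, X}$ is represented by an affine relatively perfect $T$-scheme, i.e.\ $\gr_{i}^{\rp}(X)$ exists and is affine.

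The only point requiring real care — and the step I would spell out in full — is this last one: when $\Lambda$ and $\Sigma$ are infinite one must verify that the displayed infinite products and fibre product genuinely produce a relatively perfect $T$-scheme rather than merely a $T$-scheme, and that it is affine so that the construction can be fed, in the companion lemmas, into a gluing argument for non-affine $X$. Everything else is a formal manipulation of $F_{i, -}$ together with Lemma~\ref{l.Aone}.
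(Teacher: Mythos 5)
Your proof is correct and takes essentially the same approach as the paper: the paper presents $X$ as an equalizer $X \to \A_S^{\Lambda} \rightrightarrows \A_S^{M}$ (one arrow the zero map, the other given by the $f_\mu$), which is exactly your fibre product $\A_S^{(\Lambda)} \times_{\A_S^{(\Sigma)}} S$ along the zero section, and then uses that $F_{i,-}$ turns this into a limit of representable functors built from Lemma~\ref{l.Aone}. The only cosmetic difference is that the paper cites \cite[Lemma 1.2(iii)]{Kat86} for relative perfectness of the resulting (co)limit, whereas you give the direct argument that relative Frobenius commutes with colimits of $R$-algebras (or, as you note, one can route through Proposition~\ref{p.rp}).
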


\begin{proof}
	Write $X = \spec B$ and $S = \spec A$.
	Write $B$ as a quotient of a (possibly infinitely many variable) polynomial ring
	$B = A[x_{\lambda} \,|\, \lambda \in \Lambda] / (f_{\mu} \,|\, \mu \in M)$.
	Then $B$ is the coequalizer of two $A$-algebra homomorphisms
		\[
				A[y_{\mu} \,|\, \mu \in M]
			\rightrightarrows
				A[x_{\lambda} \,|\, \lambda \in \Lambda]
			\to
				B,
		\]
	where the upper homomorphism maps $y_{\mu}$ to $0$ and
	the lower maps $y_{\mu}$ to $f_{\mu}$;
	hence $X$ can be viewed as an equalizer
	 	\[
	 			X
	 		\to
	 			\A_{S}^{\Lambda}
	 		\rightrightarrows
	 			\A_{S}^{M}
	 	\]
	in the category of $S$-schemes.
	Let $\alpha$ (respectively, $\beta$) be the upper (respectively, lower) morphism.
	By the previous lemma and Remark \ref{rem.F} \eqref{i.remFp},
	$\gr_{i}^{\rp}(\A_{S}^{\Lambda}) = \gr_{i}^{\rp}(\A_{S})^{\Lambda}$ exists and is affine.
	Then $F_{i ,X}$ is representable by the equalizer of the two morphisms of affine schemes,
	 	\[
	 			\gr_{i}^{\rp}(\A_{S}^{\Lambda})
	 		\rightrightarrows
	 			\gr_{i}^{\rp}(\A_{S}^{M}),
	 	\]
	where the upper (respectively, lower) morphism is $\gr_{i}^{\rp}(\alpha)$ (respectively, $\gr_{i}^{\rp}( \beta)$).
	Note that the above equalizer is relatively perfect over $T$ by \cite[Lemma 1.2 (iii)]{Kat86}.
\end{proof}

We can now prove the representability of $F_{n,X}$ in general.
\begin{proposition}\label{p.gr}
	Let $S, T$ satisfy Conditions \eqref{i.pr}, \eqref{i.fr}, \eqref{i.id}
	and let $X$ be any $S$-scheme.
	The functor $F_{j, X}$ is representable for any $j$.
	The morphisms $\rho_{i, j}^{\ast} \colon F_{i, X} \to F_{j, X}$ are affine for any $i \ge j$.
\end{proposition}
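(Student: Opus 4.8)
The plan is to bootstrap from the affine case of Lemma \ref{l.GrAff} by gluing twice: first over an affine open cover of $X$, then over an affine open cover of $T$. A preliminary observation is that $T \hookrightarrow S$ is a \emph{nilpotent} closed immersion — Condition \eqref{i.pr} forces $\mathcal{I}^{n+1} \subseteq \mathcal{I}_n = 0$ — so $T$, every $S_i$ and $S$ share the same underlying topological space, and an open cover of one of them induces open covers of all the others. It suffices to treat $F_{n,X}=F_{S/T,X}$, since for $j<n$ one replaces $S$ by $S_j$ and the filtration $\mathcal{I}_0\supseteq\dots\supseteq\mathcal{I}_j$ on $S_j$ again satisfies Conditions \eqref{i.pr}, \eqref{i.fr}, \eqref{i.id}.

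First I would reduce to the hypotheses of Lemma \ref{l.GrAff}, namely $S$ affine with Conditions \eqref{i.fr} and \eqref{i.id} holding with ``finite free''. Given an open cover $\{T_\lambda\}$ of $T$, the corresponding open subschemes $S_{i,\lambda}\subseteq S_i$ still satisfy Conditions \eqref{i.pr}, \eqref{i.fr}, \eqref{i.id}, and by Proposition \ref{p.pt} the restriction of $F_{S/T,X}$ to $(\rpsch/T_\lambda)$ is identified with $F_{S_\lambda/T_\lambda,X_\lambda}$, where $X_\lambda = X\times_S S_\lambda$. Since $F_{S/T,X}$ is a Zariski sheaf (Proposition \ref{p.sh}) and schemes glue along Zariski covers — the representing objects over the $T_\lambda$ agree over the overlaps, as they represent the same functor — representability of $F_{S/T,X}$ may be checked after passing to a Zariski cover of $T$. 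Because affineness is insensitive to nilpotent thickenings, shrinking $T$ to a small affine open makes $S$ affine; shrinking further so that the finitely many finite locally free $\mathcal{O}_T$-modules appearing in Conditions \eqref{i.fr} and \eqref{i.id} all become free puts us in the situation of Lemma \ref{l.GrAff}.

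Next I would glue over $X$. Choose an affine open cover $\{X_\lambda\}$ of $X$; by Lemma \ref{l.GrAff} each $F_{S/T,X_\lambda}$ is represented by an affine relatively perfect $T$-scheme $\gr^\rp(X_\lambda)$. Since $X\mapsto F_{S/T,X}$ commutes with fibre products (Remark \ref{rem.F} \eqref{i.remFp}), one has $F_{S/T,X_\lambda\cap X_\mu}\cong F_{S/T,X_\lambda}\times_{F_{S/T,X}} F_{S/T,X_\mu}$; moreover $F_{S/T,X_\lambda}\to F_{S/T,X}$ is relatively represented by open immersions and $F_{S/T,X}=\bigcup_\lambda F_{S/T,X_\lambda}$ as Zariski sheaves (Proposition \ref{p.px}). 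Hence $\gr^\rp(X_\lambda)\times_{F_{S/T,X}}\gr^\rp(X_\mu)$ is an open subscheme of $\gr^\rp(X_\lambda)$, and symmetrically of $\gr^\rp(X_\mu)$; these identifications satisfy the cocycle condition, so I would glue the $\gr^\rp(X_\lambda)$ to a $T$-scheme $\gr^\rp(X)$ — relatively perfect over $T$, as that is a Zariski-local property — and check it represents $F_{S/T,X}$: a section $\xi\in F_{S/T,X}(Y)$ pulls back the open cover to a cover $\{Y_\lambda\}$ of $Y$ with $\xi|_{Y_\lambda}$ giving a morphism $Y_\lambda\to\gr^\rp(X_\lambda)\subseteq\gr^\rp(X)$, and these glue by Proposition \ref{p.sh}, with uniqueness handled the same way. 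Together with the previous paragraph this gives representability of $F_{j,X}$ for all $j$.

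For affineness of $\rho_{i,j}^\ast$ I would again argue Zariski-locally on the target and reduce over $T$ to $S$ and $X=\spec B$ affine. The key point is that for an affine open $X_\lambda\subseteq X$ the square with vertices $F_{i,X_\lambda}$, $F_{i,X}$, $F_{j,X_\lambda}$, $F_{j,X}$ and vertical maps $\rho_{i,j}^\ast$ is cartesian: on $Y$-points it asks whether a morphism $h^{S_i}(Y)\to X$ factors through $X_\lambda$ given that its restriction to $h^{S_j}(Y)$ does, and since $h^{S_j}(Y)\hookrightarrow h^{S_i}(Y)$ is a homeomorphism (it is the base change of the nilpotent immersion $S_j\hookrightarrow S_i$, cf.\ \eqref{eq.cl-bc}) while factoring through an open subscheme is a topological condition, the two are equivalent. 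Therefore the restriction of $\rho_{i,j}^\ast$ over $\gr_j^\rp(X_\lambda)$ is the morphism $\gr_i^\rp(X_\lambda)\to\gr_j^\rp(X_\lambda)$, a morphism of affine schemes by the construction in Lemmas \ref{l.Aone} and \ref{l.GrAff}, hence affine; as these cover $\gr_j^\rp(X)$ and affineness is local on the target, $\rho_{i,j}^\ast$ is affine. The step I expect to be the main obstacle is the gluing over $X$: one must combine carefully the Zariski-sheaf property of $F_{S/T,X}$, the open-cover statement of Proposition \ref{p.px}, and the compatibility of canonical liftings with open immersions (Proposition \ref{p.cl}) to see that the affine pieces patch into a single scheme — once this is set up, the reductions over $T$ and to affine $X$ are routine.
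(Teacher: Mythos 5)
Your proof is correct and follows essentially the same route as the paper: reduce to the affine situation of Lemma \ref{l.GrAff} by patching over $T$ (Proposition \ref{p.pt}) and over $X$ (Proposition \ref{p.px}), then deduce affineness of $\rho_{i,j}^*$ locally over the target via a cartesian square. The only small difference is that where the paper cites Proposition \ref{r.cartesian} (combined implicitly with the pasting lemma to pass from the squares over $(X\times_S T)^{\rp}$ to the square over $F_{j,X}$), you reprove the needed cartesian square directly for open immersions via the topological-invariance observation about the nilpotent thickening $h^{S_j}(Y)\hookrightarrow h^{S_i}(Y)$ — a self-contained variant of the same idea.
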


\begin{proof}
Representability follows from the previous lemma and patching by Propositions\ \ref{p.px}, \ref{p.pt}.
Affineness follows by the cartesian square in Proposition \ref{r.cartesian}.
\end{proof}

We will now see that the structure of $\gr_{n}^{\rp}(X)$ is relatively simple if $X / S$ is smooth.
First, in view of Proposition \ref{r.cartesian},
it makes sense to talk about properties of $\gr_{n}^{\rp}(X)$ ``Zariski (or \'etale) local on $X$ and $S$''.

\begin{proposition} \label{p.SmSch}
	Assume that $X$ is smooth over $S$.
	Then for any $i$, the morphism $\gr_{i + 1}^{\rp}(X) \to \gr_{i}^{\rp}(X)$ is isomorphic,
	Zariski locally on $X$ and $S$, to the second projection
	$(\A_{T}^{m})^{\rp} \times_{T} \gr_{i}^{\rp}(X) \twoheadrightarrow \gr_{i}^{\rp}(X)$
	for some $m$.
	In particular, it is faithfully flat, and admits a section Zariski locally on $X$ and $S$.
\end{proposition}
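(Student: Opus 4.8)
The plan is to reduce, via the local structure theory of smooth morphisms and the formal-\'etale descent of Proposition \ref{r.cartesian}, to the case $X=\A_S^d$, where the required decomposition is essentially contained in the proof of Lemma \ref{l.Aone}. First, since $T\hookrightarrow S$ is a nilpotent thickening, $S$ and $T$ have the same underlying space, so working Zariski-locally on that space I may assume that $S=\Spec A$ is affine, that $\ff_T$ is finite \emph{free}, and that each $\mathcal{I}_i/\mathcal{I}_{i+1}$ is a finite \emph{free} $\cO_T$-module, i.e.\ that the hypotheses of Lemma \ref{l.Aone} are met. Working Zariski-locally on $X$ as well, I may further assume (local structure of smooth morphisms, \cite[(17.11.4)]{Gro67}) that there is an \'etale $S$-morphism $u\colon X\to\A_S^d$.

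Next I would propagate the assertion from $\A_S^d$ to $X$. The morphism $\rho_{i+1,i}^{\ast}$ is functorial in the $S$-scheme, so $u$ gives a commutative square relating $\rho_{i+1,i}^{\ast}$ for $X$ and for $\A_S^d$. Since $u$ is formally \'etale, Proposition \ref{r.cartesian}, applied at levels $i$ and $i+1$, identifies $F_{i+1,X}$, resp.\ $F_{i,X}$, with $F_{i+1,\A_S^d}\times_{(\A_T^d)^{\rp}}(X\times_S T)^{\rp}$, resp.\ $F_{i,\A_S^d}\times_{(\A_T^d)^{\rp}}(X\times_S T)^{\rp}$; moreover the two projections to $(X\times_S T)^{\rp}$ are both given by restriction along $Y\hookrightarrow h^S(Y)$, hence are intertwined by $\rho_{i+1,i}^{\ast}$ and the identity. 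Consequently $\gr_{i+1}^{\rp}(X)\to\gr_i^{\rp}(X)$ is the base change of $\gr_{i+1}^{\rp}(\A_S^d)\to\gr_i^{\rp}(\A_S^d)$ along $\gr_i^{\rp}(X)\to\gr_i^{\rp}(\A_S^d)$. Because $X\mapsto F_{i,X}$ commutes with finite products (Remark \ref{rem.F}\,\eqref{i.remFp}), $\gr_i^{\rp}(\A_S^d)=\gr_i^{\rp}(\A_S^1)^{\times d}$, so it suffices to treat $\A_S^1$.

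For $\A_S^1$ I would reuse the computation in the proof of Lemma \ref{l.Aone}, writing $F_j(Q)=W_N(Q)\otimes_{W_N(R)}A/I_j$ and $F_{i+1,i}(Q)=W_N(Q)\otimes_{W_N(R)}I_i/I_{i+1}$. Iterating the split exact sequence \eqref{eq.dv} presents $F_{i+1}$, functorially in $Q$, as a product of the graded contributions whose last factor is $F_{i+1,i}$; and $F_{i+1,i}$ is precisely the kernel of $\rho_{i+1,i}^{\ast}$, since $\rho_{i+1,i}^{\ast}$ is the reduction $W_N(Q)\otimes A/I_{i+1}\to W_N(Q)\otimes A/I_i$, which is short exact with kernel $W_N(Q)\otimes I_i/I_{i+1}$ once one knows $\Tor_1^{W_N(R)}(W_N(Q),A/I_i)=0$ (d\'evissage from the finite-free graded pieces, via Proposition \ref{p.wh}\,\eqref{i.wt}, exactly as in Lemma \ref{l.Aone}). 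Hence $\rho_{i+1,i}^{\ast}$ becomes the projection off the last factor, which leaves $F_i$, and by Lemma \ref{l.Aone} the factor $F_{i+1,i}$ is represented by the relative perfection $(\A_T^{m})^{\rp}$ of an affine space. Thus $\gr_{i+1}^{\rp}(\A_S^1)\cong(\A_T^m)^{\rp}\times_T\gr_i^{\rp}(\A_S^1)$ with transition morphism the second projection, and raising to the $d$-th power gives the same for $\A_S^d$ with $m$ replaced by $dm$.

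Assembling the pieces, $\gr_{i+1}^{\rp}(X)\to\gr_i^{\rp}(X)$ is, Zariski-locally on $X$ and $S$, the second projection $(\A_T^{dm})^{\rp}\times_T\gr_i^{\rp}(X)\to\gr_i^{\rp}(X)$; a section is then supplied by the identity section of the $T$-group scheme $(\A_T^{dm})^{\rp}$. For faithful flatness, $(\A_T^{dm})^{\rp}\to T$ is a filtered inverse limit of the iterated Weil restrictions $\Weil_{\ff_T}^{n}(\A_T^{dm})$, each smooth, hence flat, over $T$ by \cite[Section 7.6, Proposition 5]{BLR90}, so its structure sheaf is a filtered colimit of flat $\cO_T$-algebras and is therefore $\cO_T$-flat, while surjectivity is clear from the section; base change to $\gr_i^{\rp}(X)$ preserves faithful flatness, and faithful flatness descends along a Zariski covering of the target, so the morphism is faithfully flat in general. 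The step I expect to be most delicate is the propagation from $\A_S^d$ to $X$: one must check carefully that the two cartesian squares of Proposition \ref{r.cartesian} at consecutive levels are compatible with $\rho_{i+1,i}^{\ast}$, so that the morphism in question genuinely is a base change of the affine-space case; everything else is a routine assembly of results already in hand.
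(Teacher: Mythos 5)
Your proposal is correct and follows essentially the same path as the paper's own proof: reduce Zariski-locally to the free/affine setting, reduce to $X$ étale over $\A_S^d$, use the cartesian diagram of Proposition \ref{r.cartesian} to transfer the statement from $\A_S^d$ to $X$, and handle $\A_S^1$ (hence $\A_S^d$ by products) via the functorial set-theoretic splitting established in the proof of Lemma \ref{l.Aone}. The only difference is that you spell out more carefully than the paper does why the cartesian squares at levels $i$ and $i+1$ are compatible with $\rho_{i+1,i}^{\ast}$, and you supply an explicit argument (filtered colimit of flat Weil-restriction algebras) for the faithful flatness that the paper merely asserts; these elaborations are accurate and fill in exactly what the paper leaves implicit.
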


\begin{proof}
	We may assume that $X$ is \'etale over some affine space $\A_{S}^{l}$ over $S$.
	The statement is true for $\A_{S}^{1}$ and hence for $\A_{S}^{l}$ by the proof of Lemma \ref{l.Aone}.
	The diagram
		\[
			\begin{CD}
					\gr_{i + 1}^{\rp}(X)
				@>>>
					\gr_{i}^{\rp}(X)
				\\
				@VVV @VVV
				\\
					\gr_{i + 1}^{\rp}(\A_{S}^{l})
				@>>>
					\gr_{i}^{\rp}(\A_{S}^{l})
			\end{CD}
		\]
	is cartesian by Proposition \ref{r.cartesian}.
	Hence the statement is true for $X$.
\end{proof}

\begin{proposition} \label{p.SmGp}
	Assume that $X = G$ is a smooth group scheme over $S$.
	Then the kernel of $\gr_{i + 1}^{\rp}(X) \twoheadrightarrow \gr_{i}^{\rp}(X)$ is
	canonically isomorphic to
	$\bigl( \mathrm{Lie}(G \times_{S} T) \otimes_{\cO_{T}} \mathcal{I}_{i} / \mathcal{I}_{i + 1} \bigr)^{\rp}$.
\end{proposition}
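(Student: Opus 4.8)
The plan is to describe the kernel as a functor on relatively perfect $T$-schemes and then identify it by the infinitesimal lifting theory of the smooth morphism $G\to S$.

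Since $\gr_{i}^{\rp}(-)$ preserves fibre products (Remark \ref{rem.F} \eqref{i.remFp}), the $T$-schemes $\gr_{i+1}^{\rp}(G)$ and $\gr_{i}^{\rp}(G)$ are group schemes over $T$, the morphism $\rho_{i+1,i}^{\ast}\colon\gr_{i+1}^{\rp}(G)\to\gr_{i}^{\rp}(G)$ is a homomorphism, and its kernel $\mathcal K$ represents the functor sending a relatively perfect $T$-scheme $Y$ to
\[
\ker\bigl(G(h^{S_{i+1}}(Y))\longrightarrow G(h^{S_{i}}(Y))\bigr),
\]
the transition map being restriction along the closed immersion $h^{S_{i}}(Y)\hookrightarrow h^{S_{i+1}}(Y)$ coming from \eqref{eq.cl-bc}. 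The first step is to observe that this closed immersion is a square-zero thickening whose ideal $\mathcal J$ is canonically $(\mathcal I_{i}/\mathcal I_{i+1})\otimes_{\cO_{T}}\cO_{Y}$. Since $h^{S_{i}}(Y)=h^{S_{i+1}}(Y)\times_{S_{i+1}}S_{i}$, the ideal $\mathcal J$ is the image of $(\mathcal I_{i}/\mathcal I_{i+1})\cdot\cO_{h^{S_{i+1}}(Y)}$; by Condition \eqref{i.pr} one has $\mathcal I\cdot\mathcal J=0$, so $\mathcal J$ is square-zero and in particular an $\cO_{Y}$-module, and the natural surjection $(\mathcal I_{i}/\mathcal I_{i+1})\otimes_{\cO_{T}}\cO_{Y}\twoheadrightarrow\mathcal J$ is an isomorphism. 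This last point is checked affine-locally on $S$ and on $Y$, where it follows from the $\Tor$-vanishing of Proposition \ref{p.wh} \eqref{i.wt} together with the relative perfectness of $Y$, just as in the computation of the graded pieces in the proof of Lemma \ref{l.Aone}; concretely, the Frobenius twist in $W_{N}(Q)\otimes_{W_{N}(R)}\mathcal I_{i}/\mathcal I_{i+1}\cong Q\otimes_{R,\ff^{r}}\mathcal I_{i}/\mathcal I_{i+1}$ is removed by the relative-perfectness isomorphism $Q\otimes_{R,\ff^{r}}R\cong Q$.

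Next I would apply the infinitesimal lifting property of the smooth morphism $G\to S$ (see, e.g., \cite[\S~2.2]{BLR90}, \cite[\S~17]{Gro67}). The morphism $h^{S_{i}}(Y)\to G$ being lifted is the unit section $e\colon S\to G$ composed with the structure morphism, which manifestly lifts to $h^{S_{i+1}}(Y)$; hence the set of all lifts is a torsor under $\Hom_{\cO_{h^{S_{i}}(Y)}}\bigl((e^{\ast}\Omega^{1}_{G/S})|_{h^{S_{i}}(Y)},\ \mathcal J\bigr)$ with the lift coming from the unit section as canonical base point, so $\mathcal K(Y)$ is canonically this group. Using that $\mathcal J$ is an $\cO_{Y}$-module, that $e^{\ast}\Omega^{1}_{G/S}$ is finite locally free and compatible with base change (so its restriction is the co-Lie module $\omega_{G\times_{S}T/T}$), and that $\mathcal J\cong(\mathcal I_{i}/\mathcal I_{i+1})\otimes_{\cO_{T}}\cO_{Y}$, this $\Hom$-group rewrites as
\[
\Gamma\!\left(Y,\ \mathrm{Lie}(G\times_{S}T)\otimes_{\cO_{T}}(\mathcal I_{i}/\mathcal I_{i+1})\otimes_{\cO_{T}}\cO_{Y}\right),
\]
where $\mathrm{Lie}(G\times_{S}T)$ is the $\cO_{T}$-linear dual of $\omega_{G\times_{S}T/T}$. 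One must also check that this bijection is a group isomorphism, which reduces to the standard fact that the multiplication of $G$ is additive to first order along the unit section.

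Finally, put $\mathcal E=\mathrm{Lie}(G\times_{S}T)\otimes_{\cO_{T}}(\mathcal I_{i}/\mathcal I_{i+1})$, a finite locally free $\cO_{T}$-module by Condition \eqref{i.id} and smoothness of $G\times_{S}T$ over $T$. The functor $Y\mapsto\Gamma(Y,\mathcal E\otimes_{\cO_{T}}\cO_{Y})$ on \emph{all} $T$-schemes is represented by the vector group scheme $\mathbf V(\mathcal E)=\Spec_{\cO_{T}}\mathrm{Sym}(\mathcal E^{\vee})$, which is precisely the group scheme denoted $\mathrm{Lie}(G\times_{S}T)\otimes_{\cO_{T}}\mathcal I_{i}/\mathcal I_{i+1}$ in the statement; restricting to $(\rpsch/T)$ and invoking Proposition \ref{p.rp}, the representing object becomes its relative perfection $\mathbf V(\mathcal E)^{\rp}$, which is the asserted description of $\mathcal K$. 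The step I expect to be the main obstacle is the identification $\mathcal J\cong(\mathcal I_{i}/\mathcal I_{i+1})\otimes_{\cO_{T}}\cO_{Y}$: one has to make all the isomorphisms canonical so that they glue over non-affine $S$ and $Y$, and the disappearance of the Frobenius twist genuinely uses that $Y$ is relatively perfect over $T$ and would fail for a general $T$-scheme.
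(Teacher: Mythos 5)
Your proposal is correct and follows essentially the same route as the paper: identify the kernel of the ring map $h^{S_{i+1}}(Y)\onto h^{S_i}(Y)$ using Proposition \ref{p.wh} together with relative perfectness of $Y$ to untwist the Frobenius, then invoke the deformation theory of smooth morphisms (your ``torsor of lifts under $\Hom(\omega,\mathcal J)$'' formulation is exactly the content of the theorem of infinitesimal points in \cite[Chapter II, Section 4, No.~3.5]{DG70} that the paper cites). The only differences are stylistic: the paper reduces to affine $S,T,Y$ first and concludes by patching, whereas you phrase the square-zero thickening and the vector-group representability plus the $(\ )^{\rp}$ adjunction more globally; both are valid.
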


\begin{proof}
	First assume that $S = \spec A$ (and hence $T = \spec R$ also) are affine.
	Let $Q$ be a relatively perfect $R$-algebra.
	Then the morphism $\gr_{i + 1}^{\rp}(X) \twoheadrightarrow \gr_{i}^{\rp}(X)$ on $Q$-valued points is
	$G(W_{N}(Q) \otimes_{W_{N}(R)} A / I_{i + 1}) \twoheadrightarrow G(W_{N}(Q) \otimes_{W_{N}(R)} A / I_{i})$.
	The kernel of $W_{N}(Q) \otimes_{W_{N}(R)} A / I_{i + 1} \twoheadrightarrow W_{N}(Q) \otimes_{W_{N}(R)} A / I_{i}$
	is $Q \otimes_{R, \ff_{R}^{r}} I_{i} / I_{i + 1}$
	by Proposition \ref{p.wh}, which itself is isomorphic to $Q \otimes_{R} I_{i} / I_{i + 1}$
	by the relative perfectness of $Q$.
	Therefore the kernel in question on $Q$-valued points is
	$\Hom_{A-\mathrm{module}}(\omega_{G / S}, Q \otimes_{R} I_{i} / I_{i + 1})$
	by the theorem of infinitesimal points \cite[Chapter II, Section 4, No.\ 3.5]{DG70},
	where $\omega_{G / S}$ is the module of K\"ahler differentials
	pulled back to $S$ by the identity section $S \into G$.
	This $A$-module is isomorphic to
	$\mathrm{Lie}(G \times_{S} T) \otimes_{R} Q \otimes_{R} I_{i} / I_{i + 1}$
	by the smoothness of $G / S$.
	This proves the statement for the case $S$ is affine.
	The general case follows from this by patching.
\end{proof}

\begin{remark}
    Propositions \ref{p.SmSch} and \ref{p.SmGp} can be unified with stronger conclusions as follows.
    (This is a generalization of the Greenberg structure theorem \cite[Corollary A.13]{BGA18Gre}.)
    Assume that $X$ is smooth over $S$.
    Let $\Theta_{X_{T} / T}$ be the tangent sheaf of $X_{T} = X \times_{S} T$ over $T$,
    which is the dual of $\Omega_{X_{T} / T}^{1}$.
    Consider the vector bundle
    $\Theta_{X_{T} / T} \otimes_{\Order_{T}} \mathcal{I}_{i} / \mathcal{I}_{i + 1}$
    on $X_{T}$.
    Its relative perfection
    $(\Theta_{X_{T} / T} \otimes_{\Order_{T}} \mathcal{I}_{i} / \mathcal{I}_{i + 1})^{\RP}$
    over $T$ gives a group scheme over $X_{T}^{\RP}$.
    Consider its base change
    $(\Theta_{X_{T} / T} \otimes_{\Order_{T}} \mathcal{I}_{i} / \mathcal{I}_{i + 1})_{\Gr_{i}^{\RP}(X)}^{\RP}$
    to $\Gr_{i}^{\RP}(X)$ by $\Gr_{i}^{\RP}(X) \onto X_{T}^{\RP}$.
    Then it can be shown that the morphism
    $\Gr_{i + 1}^{\RP}(X) \onto \Gr_{i}^{\RP}(X)$ has a canonical structure of a  Zariski torsor under the group scheme
    $(\Theta_{X_{T} / T} \otimes_{\Order_{T}} \mathcal{I}_{i} / \mathcal{I}_{i + 1})_{\Gr_{i}^{\RP}(X)}^{\RP}$
    over $\Gr_{i}^{\RP}(X)$.%
    \footnote{In fact, this torsor is trivial over any affine open
    of $\Gr_{i}^{\RP}(X)$ since the group
    $\Theta_{X_{T} / T} \otimes_{\Order_{T}} \mathcal{I}_{i} / \mathcal{I}_{i + 1}$
    is a vector bundle and
    an fpqc torsor under a quasi-coherent sheaf over an affine scheme is
    trivial by fpqc descent.}
    This implies Proposition \ref{p.SmSch}.
    If $X = G$ is a smooth group scheme,
    then this group scheme action on the identity section defines an isomorphism stated in Proposition \ref{p.SmGp}.
    The proof is longer and more complicated than the proofs of these propositions.
    Also we do not use this fact.
    Therefore we do not prove it.
\end{remark}


\section{Greenberg transform of infinite level}
\label{sec: Greenberg transform of infinite level}
We now restrict to the affine base case, in order to apply an algebraization result in \cite{Bha16}.
Let $A$ be a commutative ring.  
Let $I = I_{0} \supseteq I_{1} \supseteq \cdots$ be
a decreasing sequence of ideals of $A$
such that $A \overset{\sim}{\to} \varprojlim_{n} A / I_{n}$ and $p \in I$.
Let $A_{n} = A / I_{n}$ and set $R = A / I = A_{0}$.
Assume that conditions analogous to Conditions \eqref{i.pr}, \eqref{i.fr}, \eqref{i.id} of Section \ref{s.existence} hold, namely:
\begin{itemize}
	\item
		The product ideal $I \cdot I_{i}$ is contained in $I_{i + 1}$. In particular, $I/I_n$ is nilpotent.
	\item
		The absolute Frobenius morphism $\ff_{R}$ of $R$ is
            finite locally free.
	\item
		The $R$-module $I_{i} / I_{i + 1}$ is finite locally free.
\end{itemize}
For any relatively perfect $R$-algebra $Q$,
we have the Kato canonical lifting
$h^{A}(Q) = \varprojlim_{n} h^{A_{n}}(Q)$
of $Q$ over $A$,
as defined in \eqref{eq: Kato canonical lifting}.
For any $A$-scheme $X$, consider the functor
	\[
			F_{X} \colon (\rpaff / R)^{\mathrm{op}} \to (\mathrm{Sets}),
		\quad
			Q \mapsto X(h^{A}(Q)).
	\]

\begin{proposition}\label{p.adjinf}
	The natural morphisms $F_{X} \to F_{X \times_{A} A_{n}}$ induces an isomorphism
	$F_{X} \overset{\sim}{\to} \varprojlim_{n} F_{X \times_{A} A_{n}}$
	of functors $(\rpaff / R)^{\mathrm{op}} \to (\mathrm{Sets})$.
	The inverse limit $\varprojlim_{n} \gr_{n}^{\rp}(X)$ is a relatively perfect $R$-scheme representing $F_{X}$.
	That is, for any relatively perfect $R$-algebra $Q$, the natural morphism
		\[
				X(h^{A}(Q))
			\to
				\Hom_{R} \bigl(
					\spec Q, \varprojlim_{n} \gr_{n}^{\rp}(X)
				\bigr)
		\]
	is a bijection functorial in $Q$.
\end{proposition}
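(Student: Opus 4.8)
The plan is to realize the object representing $F_{X}$ as the inverse limit $\varprojlim_{n} \gr_{n}^{\rp}(X)$ of the finite-level Greenberg transforms, and to reduce both assertions to a single algebraization statement about the ring $h^{A}(Q)$.

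First I would treat the finite levels. For each $n$, the truncated decreasing sequence of ideals $I_{0}/I_{n} \supseteq I_{1}/I_{n} \supseteq \cdots \supseteq I_{n}/I_{n} = 0$ of $A_{n} = A/I_{n}$ inherits Conditions \eqref{i.pr}, \eqref{i.fr} and \eqref{i.id} from the hypotheses on $A$ (the successive quotients $\mathcal{I}_{i}/\mathcal{I}_{i+1}$ and the base $\spec R$ are unchanged), so Proposition \ref{p.gr} applies and shows that $F_{X \times_{A} A_{n}}$ is representable by a relatively perfect $R$-scheme $\gr_{n}^{\rp}(X)$ and that the transition morphisms $\gr_{n+1}^{\rp}(X) \to \gr_{n}^{\rp}(X)$ are affine. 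I would then set $\mathcal{X} := \varprojlim_{n} \gr_{n}^{\rp}(X)$; this is a scheme because the transition maps are affine (cf.\ Section \ref{s.rp}), and it is relatively perfect over $R$: each $\gr_{n}^{\rp}(X)$ being relatively perfect over $R$, its absolute Frobenius is the base change of $\ff_{R}$, and this property is inherited by the inverse limit, since the transition maps are affine.

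Next, the universal property of the inverse limit gives, for every relatively perfect $R$-algebra $Q$,
\[
	\Hom_{R}(\spec Q, \mathcal{X})
	= \varprojlim_{n} \Hom_{R}(\spec Q, \gr_{n}^{\rp}(X))
	= \varprojlim_{n} F_{X \times_{A} A_{n}}(Q)
\]
functorially in $Q$; thus $\mathcal{X}$ represents $\varprojlim_{n} F_{X \times_{A} A_{n}}$. So it remains to prove that the natural map $F_{X} \to \varprojlim_{n} F_{X \times_{A} A_{n}}$ is an isomorphism, i.e.\ that the canonical map $X(h^{A}(Q)) \to \varprojlim_{n} X(h^{A_{n}}(Q))$ is bijective for every relatively perfect $R$-algebra $Q$. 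For this I would note that $h^{A}(Q) = \varprojlim_{n} h^{A_{n}}(Q)$ by \eqref{eq: Kato canonical lifting}, that each transition map $h^{A_{n+1}}(Q) \to h^{A_{n}}(Q)$ is the base change of $A_{n+1} \onto A_{n}$ (Remark \ref{rem.cl} \eqref{i.basechange}) and hence is surjective with kernel $(I_{n}/I_{n+1}) h^{A_{n+1}}(Q)$, and that this kernel is square-zero, because $I_{n} \cdot I_{n} \subseteq I \cdot I_{n} \subseteq I_{n+1}$ (using $I_{n} \subseteq I_{0} = I$ and Condition \eqref{i.pr}), so $(I_{n}/I_{n+1})^{2} = 0$ in $A_{n+1}$. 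Hence $\{h^{A_{n}}(Q)\}_{n}$ is an inverse system of rings with surjective, nilpotent-kernel transition maps and with limit $h^{A}(Q)$, and the desired bijection is precisely what the algebraization theorem of \cite{Bha16} provides for the scheme $X$; it is visibly functorial in $Q$. Combining this with the preceding computation shows that $F_{X}$ is represented by the relatively perfect $R$-scheme $\varprojlim_{n} \gr_{n}^{\rp}(X)$, which is all the proposition claims.

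I expect the main obstacle to be the last step for general $X$. When $X$ is affine the identity $X(h^{A}(Q)) = \varprojlim_{n} X(h^{A_{n}}(Q))$ is immediate, since $\Hom$ out of a fixed ring commutes with all limits of rings. For general $X$ one cannot reduce to an affine open cover, because $\spec h^{A}(Q) \to \spec Q$ is in general not a homeomorphism --- its kernel consists of topologically nilpotent, but not in general nilpotent, elements --- so the opens of $\spec h^{A}(Q)$ are not cylinders over opens of $\spec Q$, and a morphism $\spec h^{A}(Q) \to X$ need not factor, even Zariski-locally on $\spec Q$, through an affine open of $X$. This is exactly why we restrict to the affine base case here, so that the algebraization result of \cite{Bha16} becomes available.
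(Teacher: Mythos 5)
Your proposal is correct and takes essentially the same approach as the paper's proof: apply the Bhatt--Gabber algebraization theorem to deduce $X(h^{A}(Q)) \isomto \varprojlim_n X(h^{A_n}(Q))$ for each relatively perfect $Q$, then observe that $\varprojlim_n \gr_n^{\rp}(X)$ is a scheme because the transition morphisms are affine (Proposition \ref{p.gr}) and conclude. Your added details — verifying that the truncated ideal chain satisfies the conditions of Section \ref{s.existence}, that the inverse limit is relatively perfect, and that the transition maps $h^{A_{n+1}}(Q) \onto h^{A_n}(Q)$ have square-zero kernels — are correct elaborations of the paper's ``the rest follows,'' and your closing remark about why the affine reduction fails for non-affine $X$ matches the remark the paper itself makes after the proof.
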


\begin{proof}
	We have
		\[
				X(h^{A}(Q))
			\overset{\sim}{\to}
				\varprojlim_{n} X(h^{A_{n}}(Q))
		\]
	by Bhatt-Gabber's algebraization theorem
	\cite[Theorem 4.1, Remark 4.3, Remark 4.6]{Bha16}.
	Hence $F_{X} \overset{\sim}{\to} \varprojlim_{n} F_{X \times_{A} A_{n}}$.
	The inverse limit $\varprojlim_{n} \gr_{n}^{\rp}(X)$ is a scheme
	since the transition morphisms are affine by Proposition \ref{p.gr}.
	The rest follows.
\end{proof}
This proves Theorem \ref{thm: Greenberg transform} as a special case.
We will write $\gr^\rp(X)$ for $F_X$ in the following; in particular \begin{equation}\label{eq.adj}
	\gr^\rp(X)(R)=X(A). \end{equation}

\begin{remark}
    For affine $X$, the Bhatt-Gabber theorem used in the proof of Proposition \ref{p.adjinf} is trivial.
    The real need of this theorem is for non-affine $X$ (for example, N\'eron models).
    The same already happens in the case where $R$ is a perfect field.
    In this case, Proposition \ref{p.adjinf} corresponds to \cite[Proposition 14.2]{BGA18Gre}.
    The reduction to the affine case in the proof of \cite[Proposition 14.2]{BGA18Gre} is not correct since \cite[Equation (4)]{BGA18Gre} does not extend to the infinite level case.
    Instead, the Bhatt-Gabber theorem is again needed in the proof of \cite[Proposition 14.2]{BGA18Gre}.
\end{remark}

We describe the structure of $\Gr^{\RP}(X)$ when $X$ is smooth over $A$.
Denote $S = \Spec A$ and $T = \Spec R$.
For a formally \'etale morphism $X' \to X$ over $A$, we have a natural cartesian diagram
    \[
        \begin{CD}
                \Gr^{\RP}(X')
            @>>>
                \Gr^{\RP}(X)
            \\
            @VVV @VVV
            \\
                (X' \times_{S} T)^{\RP}
            @>>>
                (X \times_{S} T)^{\RP}
        \end{CD}
    \]
by taking an inverse limit in the diagram in Proposition \ref{r.cartesian}.
Therefore, it makes sense to talk about properties of $\Gr^{\RP}(X)$
Zariski local on $X$
(but not in $S$ or $T$ since $h$ does not preserve localization
in this infinite level situation).

\begin{proposition} \label{prop: structure of infinite level Gr for smooth X}
    Assume that $A$ is local and $X$ is smooth over $A$.
    Let $n \ge 0$ be an integer.
    Then the morphism $\Gr^{\RP}(X) \to  \Gr_{n}^{\RP}(X)$ is isomorphic,
    Zariski locally on $X$, to the second projection
        \[
                (\Affine_{T}^{\N})^{\RP} \times_{T} \Gr_{n}^{\RP}(X)
            \to
                \Gr_{n}^{\RP}(X).
        \]
    In particular, it is faithfully flat, and admits a section Zariski locally on $X$.
\end{proposition}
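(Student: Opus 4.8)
The plan is to bootstrap from the finite level structure theorem, Proposition~\ref{p.SmSch}, and pass to the inverse limit over the level, using Proposition~\ref{p.adjinf} together with the fact that both relative perfection and fibre products commute with limits. As a preliminary reduction, recall that by Proposition~\ref{p.px} and passage to the limit (Proposition~\ref{p.adjinf}), any open covering $\{U_{\alpha}\}$ of $X$ induces open coverings $\{\Gr_{n}^{\RP}(U_{\alpha})\}$ of $\Gr_{n}^{\RP}(X)$ and $\{\Gr^{\RP}(U_{\alpha})\}$ of $\Gr^{\RP}(X)$, compatible with the projection, with $\Gr^{\RP}(X) \times_{\Gr_{n}^{\RP}(X)} \Gr_{n}^{\RP}(U_{\alpha}) = \Gr^{\RP}(U_{\alpha})$. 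Hence it is enough to prove the statement after replacing $X$ by a member of a suitable open covering, and since $X / A$ is smooth we may assume that $X$ is \'etale over some affine space $\Affine_{S}^{l}$ over $S = \Spec A$.

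Because $A$ is local, the finite locally free $R$-modules $I_{i} / I_{i+1}$ are finite free, so the strengthened hypotheses of Lemma~\ref{l.Aone}, and hence of Proposition~\ref{p.SmSch}, hold over all of $S$ with no further localization needed. Applying Proposition~\ref{p.SmSch} to our $X$ (in the finite length setting of Section~\ref{s.existence} associated with the truncation $A_{m}$ for any $m > i$), for every $i \ge 0$ we obtain an isomorphism
	\[
			\phi_{i+1} \colon \Gr_{i+1}^{\RP}(X)
		\isomto
			(\Affine_{T}^{m_{i}})^{\RP} \times_{T} \Gr_{i}^{\RP}(X),
	\]
for suitable integers $m_{i} \ge 0$ (depending on $i$ and $l$), under which the map on the right is the second projection and the map on the left is the transition morphism $\rho_{i+1,i}^{\ast}$; in other words $\mathrm{pr}_{2} \circ \phi_{i+1} = \rho_{i+1,i}^{\ast}$. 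Now fix $n$ and set $\Psi_{n} = \id$ and $\Psi_{i+1} = (\id \times \Psi_{i}) \circ \phi_{i+1}$ for $i \ge n$; one gets isomorphisms
	\[
			\Psi_{i} \colon \Gr_{i}^{\RP}(X)
		\isomto
			(\Affine_{T}^{m_{i-1}})^{\RP} \times_{T} \dots \times_{T} (\Affine_{T}^{m_{n}})^{\RP} \times_{T} \Gr_{n}^{\RP}(X)
	\]
over $\Gr_{n}^{\RP}(X)$, compatible with the transition morphisms in the sense that $\Psi_{i} \circ \rho_{i+1,i}^{\ast}$ equals $\Psi_{i+1}$ followed by the projection forgetting the $(\Affine_{T}^{m_{i}})^{\RP}$-factor; this compatibility is immediate from $\mathrm{pr}_{2} \circ \phi_{i+1} = \rho_{i+1,i}^{\ast}$.

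Finally, take $\varprojlim_{i \ge n}$. By Proposition~\ref{p.adjinf} we have $\Gr^{\RP}(X) = \varprojlim_{i \ge n} \Gr_{i}^{\RP}(X)$, with affine transition morphisms by Proposition~\ref{p.gr}. Since limits commute with fibre products and, relative perfection being a right adjoint (Remark~\ref{rem: properties of RP}), with $(-)^{\RP}$, applying $\varprojlim_{i \ge n}$ to the $\Psi_{i}$ yields an isomorphism
	\[
			\Gr^{\RP}(X)
		\isomto
			\bigl( \varprojlim_{i \ge n} \Affine_{T}^{m_{n} + \dots + m_{i-1}} \bigr)^{\RP} \times_{T} \Gr_{n}^{\RP}(X)
		=
			(\Affine_{T}^{\N})^{\RP} \times_{T} \Gr_{n}^{\RP}(X)
	\]
over $\Gr_{n}^{\RP}(X)$, where the inverse limit of affine spaces along the coordinate projections is $\Affine_{T}^{\N} = \Spec R[x_{j} \mid j \in \N]$ (to be read as the appropriate finite-dimensional affine space if only finitely many $m_{i}$ are nonzero). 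This is the asserted isomorphism, valid Zariski locally on $X$, and the zero section provides a section Zariski locally on $X$. Faithful flatness of $\Gr^{\RP}(X) \to \Gr_{n}^{\RP}(X)$ even holds globally: it is the cofiltered limit of the morphisms $\rho_{i+1,i}^{\ast}$, which are affine (Proposition~\ref{p.gr}) and faithfully flat (Proposition~\ref{p.SmSch}), and on coordinate rings this is a filtered colimit of faithfully flat algebras, hence faithfully flat, since flatness is preserved under filtered colimits and each transition homomorphism is injective.

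The step I expect to require the most care is the second paragraph: verifying that, $A$ being local, Proposition~\ref{p.SmSch} applies over all of $S$ with no residual $S$-localization hidden in the reduction to affine space, and that the finite level trivializations $\phi_{i+1}$ can be chosen compatibly enough to survive the passage to the inverse limit. The rest is a formal manipulation of limits and adjoints.
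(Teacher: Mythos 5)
Your proof is correct and follows essentially the same strategy as the paper: reduce to $X$ \'etale over an affine space over $S$, apply Proposition~\ref{p.SmSch} at each finite level $i\ge n$ to trivialize $\Gr_{i+1}^{\RP}(X)\to\Gr_i^{\RP}(X)$, compose these trivializations over $\Gr_n^{\RP}(X)$, and pass to the inverse limit using Proposition~\ref{p.adjinf} and the fact that relative perfection, being a right adjoint, commutes with limits. The extra care you take in the second paragraph (observing that $A$ local makes the finite free hypothesis hold globally on $S$, so no localization on $S$ is needed, and verifying compatibility of the trivializations $\Psi_i$) is exactly the content the paper's terser proof implicitly relies on.
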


\begin{proof}
    We may assume that $X$ is \'etale over some affine space $\Affine_{S}^{l}$ over $S$.
    Then for any $i \ge n$,
    the morphism $\Gr_{i + 1}^{\RP}(X) \onto \Gr_{i}^{\RP}(X)$ is isomorphic to the second projection
    $\Affine_{T}^{m_{i}} \times_{T} \Gr_{i}^{\RP}(X) \onto \Gr_{i}^{\RP}(X)$
    for some integer $m_{i}$
    by the proof of Proposition \ref{p.SmSch}.
    Hence the morphism $\Gr_{i + 1}^{\RP}(X) \onto \Gr_{n}^{\RP}(X)$ is isomorphic to the second projection
    $\Affine_{T}^{m_{i}'} \times_{T} \Gr_{n}^{\RP}(X) \onto \Gr_{n}^{\RP}(X)$,
    where $m_{i}' = m_{n} + \dots + m_{i}$.
    Taking the inverse limit in $i$, we get the result.
\end{proof}


\section{Greenberg transform before relative perfection}
\label{sec: Greenberg transform before relative perfection}
In this section, we will define a (twisted) Greenberg transform
before relative perfection in some cases.
Its relative perfection recovers the relatively perfect Greenberg transform.

The caveat is that this construction depends on the choice of an integer $r$ such that $I^{r + 1} = 0$ (where $R = A / I$).
Therefore, there is a difficulty in defining a Greenberg transform of infinite level, which is of course very important since the main objects we want apply the Greenberg transform to are over complete discrete valuation rings.
For complete discrete valuation rings
(and more general complete noetherian local rings),
we can still take a certain limit of the Greenberg transforms of finite levels.
But this limit is ``wrong'': the resulting object is automatically relatively perfect
and hence does not recover the usual Greenberg transform in the case of perfect residue fields.
In this sense, the relatively perfect Greenberg transform still seems to be the only possible construction
in the infinite level, imperfect residue field situation.

Also, we cannot hope for the same level of generality as the relatively perfect Greenberg transform:
the ring $A$ is taken to be an artinian local ring.
It is not clear how to generalize this to the nilpotent-thickening situation $T \into S$
that we did for the relatively perfect Greenberg transform.
If we naively generalize the Greenberg transform below for the situation of $T \into S$
with $T$ and $S$ affine
satisfying the conditions \eqref{i.pr}, \eqref{i.fr}, \eqref{i.id}
in Section \ref{s.existence},
the resulting Greenberg transform is not representable in general.

Let $k$ be a field of characteristic $p > 0$ with $[k : k^{p}] < \infty$.
Let $A$ be an artinian local ring with maximal ideal $I$ and residue field $k$.
Fix an integer $r \ge 0$ such that $I^{r + 1} = 0$.
As before, for $N > r$,
we give $A$ a canonical $W_{N}(k)$-algebra structure by the lifted $r$-th ghost component map.
In this section, we mainly work with the fppf topology.
First, we define a twisted version of Greenberg algebra.

\begin{definition}
	Let $N > r$ be an integer.
	Define an fppf sheaf of $A$-algebras $h_{r}^{A}$ over $k$ to be the fppf sheafification of the presheaf
		\[
				Q
			\mapsto
				W_{N}(Q) \otimes_{W_{N}(k)} A,
		\]
	where $Q$ runs through $k$-algebras.
	In other words, we define $h_{r}^{A} = W_{N} \otimes_{W_{N}(k)} A$,
	where $W_{N}$ is the affine scheme of Witt vectors over $k$ viewed as an fppf sheaf and $A$ is viewed as a constant sheaf.
	We call $h_{r}^{A}$ the \emph{$r$-twisted Greenberg algebra} over $k$
	associated with $A$.
\end{definition}

\begin{proposition} \label{prop: Greenberg algebra indep of N}
	The sheaf $h_{r}^{A}$ is independent of the choice of $N$.
	More precisely, the natural morphism
	$W_{M} \otimes_{W_{M}(k)} A \to W_{N} \otimes_{W_{N}(k)} A$
	is an isomorphism for any $M \ge N$.
\end{proposition}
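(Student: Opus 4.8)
The plan is to reduce, as in the proof of Proposition \ref{p.wh}\eqref{i.wr}, to the case $M = N+1$ and show that the natural surjection
\[
    W_{N+1} \otimes_{W_{N+1}(k)} A \lra W_{N} \otimes_{W_{N}(k)} A
\]
of fppf sheaves of $A$-algebras over $k$ is an isomorphism. Sheafification is exact, so it suffices to check this after evaluating on $k$-algebras $Q$ and then sheafifying; concretely I want to understand the kernel of the surjection
\[
    W_{N+1}(Q) \otimes_{W_{N+1}(k)} A \lra W_{N}(Q) \otimes_{W_{N}(k)} A
\]
and prove that it vanishes fppf-locally on $Q$.

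First I would identify where the two $W(k)$-algebra structures on $A$ differ. Both the length-$N+1$ and the length-$N$ ghost-component maps $\phi_{r} \colon W_{\bullet}(k) \to k$ (and their lifts $W_{\bullet}(A) \to A$) are being used; the point is that $A$, as a $W_{N+1}(k)$-module, is annihilated by the Verschiebung ideal $\vv^{N}(W_{1}(k)) = \vv^{N}(k)$, because the $r$-th ghost component (with $r < N$) kills $\vv^{N}$. Hence the $W_{N+1}(k)$-action on $A$ factors through $W_{N}(k)$. Using the exact sequence $0 \to W_{1}(k) \overset{\vv^{N}}{\to} W_{N+1}(k) \to W_{N}(k) \to 0$ and tensoring with $W_{N+1}(Q)$, the kernel of the displayed map is the image of $W_{N+1}(Q) \otimes_{W_{N+1}(k)} \vv^{N}(W_{1}(k))$ in $W_{N+1}(Q) \otimes_{W_{N+1}(k)} A$. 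As in the proof of Proposition \ref{p.wh}\eqref{i.wt}, the $W_{N+1}(k)$-module $\vv^{N}(W_{1}(k))$ is isomorphic to $k$ with $W_{N+1}(k)$ acting through the quotient $W_{N+1}(k) \onto W_{1}(k) = k$ followed by $\ff^{N} = (p\text{-th power})^{N}$; and then $W_{N+1}(Q) \otimes_{W_{N+1}(k)} \vv^{N}(W_{1}(k)) \cong (W_{N+1}(Q) \otimes_{W_{N+1}(k)} k) \otimes_{k, \ff^{N}} k$. The key observation is that $W_{N+1}(Q) \otimes_{W_{N+1}(k)} k \cong Q$ (reduce mod Verschiebung), so the kernel is a quotient of $Q \otimes_{k, \ff^{N}} k = Q^{(1/p^{N})}$, and its further image in $W_{N+1}(Q) \otimes_{W_{N+1}(k)} A$ lands in $\vv^{N}(k) \cdot \bigl(W_{N+1}(Q) \otimes_{W_{N+1}(k)} A\bigr)$, which is zero because $\vv^{N}(k)$ annihilates $A$.

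Carrying this out carefully, the map $W_{N+1}(Q) \otimes_{W_{N+1}(k)} A \to W_{N}(Q) \otimes_{W_{N}(k)} A$ is then seen to be injective as well, hence an isomorphism of presheaves, a fortiori of the fppf sheafifications; independence of $N$ follows by composing these isomorphisms. The main subtlety — and the step I expect to require the most care — is the bookkeeping of the two distinct $W(k)$-module structures on $A$ (via the length-$N$ versus length-$N+1$ lifted ghost maps) and checking that the transition map $W_{N+1} \otimes_{W_{N+1}(k)} A \to W_{N} \otimes_{W_{N}(k)} A$ is genuinely compatible with them; once the compatibility is pinned down, the vanishing of the kernel is a direct consequence of Proposition \ref{p.wh}, exactly as in Kato's argument in \cite[Lemma 2]{Kat82}. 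No fppf-local considerations are really needed beyond exactness of sheafification, since the presheaf-level statement already holds for every $Q$; I would remark that the fppf language is retained only for uniformity with the rest of the section.
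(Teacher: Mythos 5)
Your proof has a genuine gap, and the error lies precisely in the claim that ``the presheaf-level statement already holds for every $Q$'' so that ``no fppf-local considerations are really needed.'' That claim is false, and the fppf sheafification is in fact essential.

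The step that goes wrong is the identification of the kernel. Tensoring the exact sequence
$0 \to W_{N+1}(Q) \to W_{N+1}(Q) \to W_N(Q) \to 0$ (kernel $\vv^N(W_1(Q))$, not $\vv^N(W_1(k))$) with $A$ over $W_{N+1}(k)$, the kernel of $W_{N+1}(Q) \otimes_{W_{N+1}(k)} A \to W_N(Q) \otimes_{W_N(k)} A$ is the image of $\vv^N(W_1(Q)) \otimes_{W_{N+1}(k)} A$, not of $W_{N+1}(Q) \otimes_{W_{N+1}(k)} \vv^N(W_1(k))$ as you assert. Equivalently, your ``key observation'' $W_{N+1}(Q) \otimes_{W_{N+1}(k)} k \cong Q$ is only true for relatively semi-perfect $Q$: its proof requires $\vv(W_N(k)) W_{N+1}(Q) = \vv(W_N(Q))$, which is exactly the content of Proposition \ref{p.wh}\eqref{i.wr} and is \emph{false} for arbitrary $k$-algebras $Q$. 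A concrete counterexample: take $k = \F_p$, $A = \F_p$ (so $r = 0$), $N = 1$, $M = 2$, $Q = \F_p[x]$. Then $W_2(Q) \otimes_{W_2(\F_p)} \F_p = W_2(Q)/pW_2(Q)$, while $W_1(Q) \otimes_{W_1(\F_p)} \F_p = Q$, and the comparison map has kernel $\vv(W_1(Q))/pW_2(Q) \cong Q/Q^p = \F_p[x]/\F_p[x^p] \neq 0$. So the presheaf map is not injective.

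This is precisely why the paper's proof works at the level of fppf sheaves rather than presheaves: it uses that $\ff\colon W_M \to W_M$ is an epimorphism \emph{in the fppf topology} (but not of presheaves) to conclude $p^N W_M = \vv^N W_M$ and hence $W_M/p^N W_M \cong W_N$ as fppf sheaves, whence $W_M \otimes_{W_M(k)} A \cong W_N \otimes_{W_M(k)} A \cong W_N \otimes_{W_N(k)} A$. Your appeal to Proposition \ref{p.wh} cannot be transported to arbitrary $Q$ because that proposition is stated and proved only for relatively perfect $Q$. To salvage your approach, you would have to observe that the kernel $Q \mapsto \vv^N(W_1(Q))/\vv^N(W_1(k))W_{N+1}(Q)$ (and its image after $\otimes_{W_N(k)} A$) is a presheaf that dies under fppf sheafification, which is again the assertion that Frobenius is an fppf cover --- at which point you have essentially reproduced the paper's argument, and the ``remark'' at the end of your proposal should be inverted.
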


\begin{proof}
We have $p^{N} = 0$ in $W_{N}$.
	Also, $p^{N} W_{M} = \vv^{N} \ff^{N} W_{M} = \vv^{N} W_{M}$	since $\ff \colon W_{M} \to W_{M}$ is an epimorphism (in the fppf topology).
	Thus $W_{M} / p^{N} W_{M} \cong W_{N}$ (as fppf sheaves).
	Hence,
	    \[
	               W_{M} \otimes_{W_{M}(k)} A
	           \cong
	               W_{N} \otimes_{W_{M}(k)} A
	           \cong
	               W_{N} \otimes_{W_{N}(k)} A.
	    \]
\end{proof}

When $k$ is perfect, we also have the usual Greenberg algebra
$h^{A} = W_{N} \otimes_{W_{N}(k), \mathrm{can}} A$,
where the $W_{N}(k)$-algebra structure on $A$ is given by
the usual (that is, Teichm\"uller) one (\cite[Section 3]{BGA18Gre}).
The natural morphism
$\ff^{r} \otimes \id \colon h_{r}^{A} \to h^{A}$
is not an isomorphism unless $r = 0$ and $A = k$.

Now we define an $r$-twisted Greenberg transform as a functor:

\begin{definition}
	Let $X$ be an $A$-scheme.
	Define a functor $F_{X}^{r}$ on the category of $k$-algebras to the category of sets
	by the formula
		\[
			F_{X}^{r}(Q) = X(h_{r}^{A}(Q)) \quad (= \Hom_{A}(\Spec h_{r}^{A}(Q), X)).
		\]
\end{definition}

We will show that this functor $F_{X}^{r}$ is representable by a $k$-scheme
(and call the representing object the \emph{$r$-twisted Greenberg transform} of $X$).
In the rest of this section, we set $N = r + 1$,
and fix a $p$-basis
$\mathcal{B} = \{t(1), \dots, t(d)\}$ of $k$
and its lift $\{\Tilde{t}(1), \dots, \Tilde{t}(d)\}$ to $A$.
We recall the Cohen ring scheme defined by Schoeller:

\begin{definition}[{\cite[Section 2.1]{Sch72}}]
 Let $n \ge 0$ be an integer.
	Let $Q$ be a $k$-algebra.
	We define $C_{n + 1}(Q)$ (denoted by $\mathcal{J}_{n + 1}^{\mathcal{B}}(Q)$ in \cite{Sch72}) to be the $W_{n + 1}(Q)$-subalgebra of $W_{n + 1}(Q \otimes_{k, \Fr_{k}^{n}} k)$ generated by the elements $(1 \otimes t(i), 0, \dots, 0)$ over all integers $1 \le i \le d$.
	This is naturally a functor in $Q$.
\end{definition}

For clarity and readability of future proofs, we have chosen to avoid indices and multi-index notation used in \cite{Sch72}.

\begin{proposition}[{\cite[Proposition 3.1]{Sch72}}]
    \label{prop: Cohen ring is artinian with max ideal p}
	The ring $C_{n + 1}(k)$ is an artinian local ring with maximal ideal $(p)$	and residue field $k$.
\end{proposition}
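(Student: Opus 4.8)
The idea is to realize $C_{n+1}(k)$ explicitly inside $W_{n+1}(k)$ and then analyze it by hand. First observe that $k \otimes_{k, \Fr_{k}^{n}} k$ is canonically isomorphic to $k$: a ring homomorphism $k \otimes_{k, \Fr_{k}^{n}} k \to S$ is a pair $g_{1}, g_{2} \colon k \to S$ of ring homomorphisms with $g_{1} = g_{2} \circ \Fr_{k}^{n}$, i.e.\ $g_{1}(x) = g_{2}(x)^{p^{n}}$, so $g_{1}$ is forced by $g_{2}$ (and $g_{1}$ is automatically a homomorphism in characteristic $p$); hence $k \otimes_{k, \Fr_{k}^{n}} k$ represents $S \mapsto \Hom(k, S)$. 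Under this identification the structure map $k \to k \otimes_{k, \Fr_{k}^{n}} k$ becomes $\Fr_{k}^{n}$, so the induced map $W_{n+1}(k) \to W_{n+1}(k \otimes_{k, \Fr_{k}^{n}} k) = W_{n+1}(k)$ raises every Witt component to its $p^{n}$-th power, hence has image $W_{n+1}(k^{p^{n}})$; and the generator $(1 \otimes t(i), 0, \dots, 0)$ becomes the Teichm\"uller representative $[t(i)] := (t(i), 0, \dots, 0)$. Therefore
\[
  C_{n+1}(k) = W_{n+1}(k^{p^{n}})\bigl[\,[t(1)], \dots, [t(d)]\,\bigr] \subseteq W_{n+1}(k).
\]

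Since $[t(i)]^{p^{n}} = [t(i)^{p^{n}}] \in W_{n+1}(k^{p^{n}})$, this ring is finite as a module over the artinian local ring $W_{n+1}(k^{p^{n}})$ (spanned by the monomials $\prod_{i} [t(i)]^{a_{i}}$ with $0 \le a_{i} < p^{n}$), hence is artinian. For locality I would use the first Witt component $w_{0} \colon W_{n+1}(k) \to k$: it restricts to a ring homomorphism $C_{n+1}(k) \to k$ whose image is $k^{p^{n}}[t(1), \dots, t(d)] = k$ (as the $t(i)$ generate $k$ over $k^{p^{n}}$), so it is surjective, and whose kernel $\mathfrak{n} := C_{n+1}(k) \cap \vv W_{n}(k)$ satisfies $\mathfrak{n}^{n+1} \subseteq (\vv W_{n}(k))^{n+1} = 0$, hence is nilpotent. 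A ring with a nilpotent ideal whose quotient is a field is local with that ideal as its maximal ideal; so $C_{n+1}(k)$ is artinian local with $\mathfrak{m} = \mathfrak{n}$ and residue field $k$.

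It remains to prove the identity $\mathfrak{m} = (p)$. One inclusion is immediate: $p = \vv(1) \in \vv W_{n}(k) \cap C_{n+1}(k) = \mathfrak{m}$. For the reverse I would induct on $n$ (the case $n = 0$ is trivial, as $C_{1}(k) = k$ and $p = 0$). Using $pc = \vv(\ff(c))$ and the injectivity of $\vv$, one gets $p\, C_{n+1}(k) = \vv\bigl(\ff(C_{n+1}(k))\bigr)$ and $\mathfrak{m} = \vv\bigl(\{\eta \in W_{n}(k) : \vv(\eta) \in C_{n+1}(k)\}\bigr)$, so the claim becomes: every $\eta \in W_{n}(k)$ with $\vv(\eta) \in C_{n+1}(k)$ lies in $\ff(C_{n+1}(k))$. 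Writing $\vv(\eta) = \sum_{a} \mu_{a}[t^{a}]$ with $\mu_{a} \in W_{n+1}(k^{p^{n}})$ and $0 \le a_{i} < p^{n}$, vanishing of the first Witt component forces $\mu_{a,0} = 0$ for all $a$ (linear independence of the monomials $t^{a}$ over $k^{p^{n}}$), and peeling off one Verschiebung yields $\eta = \sum_{a}[t^{ap}]\bar{\mu}_{a}$ with $\bar{\mu}_{a} \in W_{n}(k^{p^{n}})$; one checks this already places $\eta$ in the analogously defined subring $C_{n}(k) = W_{n}(k^{p^{n-1}})[\,[t(i)]\,] \subseteq W_{n}(k)$. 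The heart of the matter is then to promote this to $\eta \in \ff(C_{n+1}(k)) = W_{n}(k^{p^{n+1}})[\,[t(i)^{p}]\,]$, which is where the inductive description of the maximal ideal at level $n$, together with a careful bookkeeping of how Verschiebung and restriction link the rings $C_{m}(k)$, comes in.

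\textbf{Main obstacle.} The hard part is exactly this last step, the inclusion $\mathfrak{m} \subseteq (p)$: showing that the Verschiebung part of $C_{n+1}(k)$ is no larger than $p\, C_{n+1}(k)$. The difficulty is genuine, since the coefficient ring $W_{n+1}(k^{p^{n}})$ on its own does \emph{not} have this property ($p\, W_{n+1}(k^{p^{n}}) \subsetneq \vv W_{n}(k^{p^{n}})$ for imperfect $k$); the identity for $C_{n+1}(k)$ relies essentially on the interplay between $W_{n+1}(k^{p^{n}})$ and the Teichm\"uller generators $[t(i)]$, and one must keep track of which power $k^{p^{m}}$ the relevant monomials remain independent over (independence over the ``wrong'' $k^{p^{m}}$ fails already for $n \ge 2$). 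This is precisely the content of \cite[Proposition 3.1]{Sch72}, which we cite.
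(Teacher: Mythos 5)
The paper gives no proof of this proposition: it is stated as a cited result, \cite[Proposition 3.1]{Sch72}. Your proposal correctly unpacks the definition (via the canonical isomorphism $k \otimes_{k, \Fr_k^n} k \cong k$, under which $C_{n+1}(k)$ becomes the subring $W_{n+1}(k^{p^n})\bigl[\,[t(1)],\dots,[t(d)]\,\bigr]$ of $W_{n+1}(k)$), and the argument that this ring is artinian local with residue field $k$ and $(p) \subseteq \mathfrak{m}$ is sound: finiteness over the artinian local ring $W_{n+1}(k^{p^n})$ gives artinianity, the first Witt component $w_0$ restricts to a surjection onto $k$ with nilpotent kernel $\mathfrak{n} = C_{n+1}(k) \cap \vv W_n(k)$, and $p = \vv(1) \in \mathfrak{n}$. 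Your observation that $p\,W_{n+1}(k^{p^n}) \subsetneq \vv W_n(k^{p^n})$ for imperfect $k$ is the right diagnosis of why the reverse inclusion $\mathfrak{m} \subseteq (p)$ is the genuine content of the statement, and not automatic from the shape of the coefficient ring.

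However, for that key inclusion you do not supply an argument: you explicitly defer to \cite[Proposition 3.1]{Sch72}, which is the same reference the paper itself cites for the entire proposition. So the proposal is not an independent proof of the hard step; it is a correct and informative reduction whose irreducible core is still a citation, coinciding with the paper's own treatment.
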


This is why $C_{n + 1}$ is better behaved than $W_{n + 1}$,
even though $C_{n + 1}$ depends on the choice of a $p$-basis.
It makes the analysis of $h_{r}^{A}$ much similar
to the analysis of the Greenberg algebra in the perfect residue field case.
We need several facts about $C_{n + 1}$.

\begin{proposition}[{\cite[Section 2.1]{Sch72}}] \label{prop: presentation of Cohen ring scheme}
	Any element of $C_{n + 1}(Q)$ can be uniquely written as
		\begin{align*}
			&
					\sum_{0 \le i(1), \dots, i(d) \le p^{n} - 1}
						\bigl( x_{r}(i(1), \dots, i(d)), 0, \dots, 0 \bigr)
						\bigl( 1 \otimes t(1)^{i(1)} \cdots t(d)^{i(d)}, 0, \dots, 0 \bigr)
			\\
			&	\qquad	+ \dots +
			\\
			&		\sum_{0 \le i(1), \dots, i(d) \le p - 1}
						\bigl( 0, \dots, 0, x_{1}(i(1), \dots, i(d)), 0 \bigr)
						\bigl( 1 \otimes t(1)^{i(1)} \cdots t(d)^{i(d)}, 0, \dots, 0 \bigr)
			\\
			&	\qquad	+
					\bigl( 0, \dots, 0, x_{0} \bigr),
		\end{align*}
	where $x_{j}(i(1), \dots, i(d)) \in Q$
	 (and $x_{0} = x_{0}(0, \dots, 0) \in Q$).
	In particular, the functor of sets $C_{n + 1}$ (forgetting the ring structure) is represented by an affine space over $k$.
\end{proposition}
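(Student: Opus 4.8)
The plan is to prove the stated unique representation by induction on the length $n+1$, organised by the Verschiebung filtration on $W_{n+1}$. Write $M = Q \otimes_{k,\Fr_{k}^{n}} k$; for $x$ in a ring let $[x] = (x,0,\dots,0)$ denote its Teichm\"uller lift in the ambient Witt ring, and set $m^{(i)} = [\,1 \otimes t(1)^{i(1)}\cdots t(d)^{i(d)}\,]$. Two preliminary observations: since $\{t(1)^{i(1)}\cdots t(d)^{i(d)} : 0 \le i(l) \le p^{n}-1\}$ is a basis of $k$ over $\Fr_{k}^{n}(k) = k^{p^{n}}$, the ring $M$ is free as a $Q$-module on the monomials $1 \otimes t(1)^{i(1)}\cdots t(d)^{i(d)}$, $0 \le i(l) \le p^{n}-1$; and $(1 \otimes t(l))^{p^{n}} = 1 \otimes t(l)^{p^{n}} = t(l) \otimes 1$ lies in $Q$, so by multiplicativity of Teichm\"uller lifts $[1 \otimes t(l)]^{p^{n}} \in W_{n+1}(Q) \subseteq W_{n+1}(M)$. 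Hence every monomial in the generators $[1 \otimes t(l)]$ is a $W_{n+1}(Q)$-multiple of some $m^{(i)}$ with $0 \le i(l) \le p^{n}-1$, and $C_{n+1}(Q)$ is generated over $W_{n+1}(Q)$, as a module, by these finitely many $m^{(i)}$.

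Next I would equip $W_{n+1}(M)$ with the Verschiebung filtration $W_{n+1}(M) = \vv^{0}W_{n+1}(M) \supseteq \vv W_{n}(M) \supseteq \dots \supseteq \vv^{n}W_{1}(M) \supseteq 0$, whose $j$-th graded piece $\vv^{j}W_{n+1-j}(M)/\vv^{j+1}W_{n-j}(M)$ is the additive group $M$ via $\vv^{j}[m] \mapsto m$, and take the induced (finite, separated) filtration $\mathrm{Fil}^{\bullet}$ on $C_{n+1}(Q)$. Two facts then drive the induction. First, $\mathrm{gr}^{0}C_{n+1}(Q) = M$: the inclusion in $M$ is clear, while $[a]\,m^{(i)} = [\,a \otimes t^{i}\,] \in C_{n+1}(Q)$ has $\mathrm{gr}^{0}$-image $a(1 \otimes t^{i})$, so $\sum_{i} a_{i}(1 \otimes t^{i}) \mapsto \sum_{i} [a_{i}]\,m^{(i)}$ is a functorial section — this is the top summand of the displayed expression. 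Second, $\mathrm{Fil}^{1}C_{n+1}(Q) = \vv\bigl(\iota(C_{n}(Q))\bigr)$, where $\iota \colon C_{n}(Q) \isomto \bigl(\text{the }W_{n}(Q)\text{-subalgebra of }W_{n}(M)\text{ generated by the }[1 \otimes t(l)^{p}]\bigr)$ is induced by the $Q$-algebra identification of $Q \otimes_{k,\Fr_{k}^{n-1}} k$ with the $Q$-subalgebra of $M$ generated by the $1 \otimes t(l)^{p}$ (both are $Q[Z_{l}]/(Z_{l}^{p^{n-1}} - t(l))$, with $t(l)$ its image in $Q$, via $1 \otimes t(l) \leftrightarrow 1 \otimes t(l)^{p} \leftrightarrow Z_{l}$). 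Here $\supseteq$ uses $\vv(a)b = \vv(a\ff(b))$ and $\ff([1 \otimes t^{i}]) = [1 \otimes t^{pi}]$ to rewrite $\vv$ of a $W_{n}(Q)$-multiple of $\prod_{l}[1 \otimes t(l)^{p}]^{i(l)}$ as a $W_{n+1}(Q)$-multiple of $m^{(i)}$; and $\subseteq$ holds because an element of $C_{n+1}(Q) \cap \vv W_{n}(M)$, written $\sum_{i} c_{i}\,m^{(i)}$ with $c_{i} \in W_{n+1}(Q)$, must have all first Witt components $c_{i,0} = 0$ (by $Q$-freeness of $M$), whence $c_{i} = \vv(c_{i}')$ and the element is $\vv(\sum_{i} c_{i}'[1 \otimes t^{pi}]) \in \vv(\iota(C_{n}(Q)))$.

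Running the induction: the base case $n = 0$ is the tautology $C_{1}(Q) = Q$ with representation $\xi = [x_{0}]$. For the step, given $\xi \in C_{n+1}(Q)$, write its class in $\mathrm{gr}^{0} = M$ as $\sum_{i} a_{i}(1 \otimes t^{i})$; then $\xi - \sum_{i} [a_{i}]\,m^{(i)}$ lies in $\mathrm{Fil}^{1} = \vv(\iota(C_{n}(Q)))$, hence equals $\vv(\iota(\zeta))$ for a unique $\zeta \in C_{n}(Q)$, and substituting the length-$n$ representation of $\zeta$ — each $\vv^{s}(\cdot)$ times a $\iota$-generator becoming $\vv^{s+1}(\cdot)$ times the corresponding $m^{(i)}$, with range $0 \le i(l) \le p^{n-1-s}-1$ becoming $0 \le i(l) \le p^{n-(s+1)}-1$ — reproduces precisely the lower summands of the displayed expression for $\xi$. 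Uniqueness is forced stage by stage: $\xi$ determines its class in $\mathrm{gr}^{0}$, hence the $a_{i}$, hence $\zeta$, hence by induction the remaining coefficients. Finally, since the coefficients in the displayed expression range freely and independently over $Q$ and there are $\sum_{j=0}^{n} p^{d(n-j)} = (p^{d(n+1)}-1)/(p^{d}-1)$ of them, $C_{n+1}$, with its ring structure forgotten, is represented by $\Affine_{k}^{(p^{d(n+1)}-1)/(p^{d}-1)}$.

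The step I expect to require the most care — and would flag as the main obstacle — is the equality $\mathrm{Fil}^{1}C_{n+1}(Q) = \vv(\iota(C_{n}(Q)))$, and in particular the ``untwisting'' isomorphism between $Q \otimes_{k,\Fr_{k}^{n-1}} k$ and the subalgebra of $Q \otimes_{k,\Fr_{k}^{n}} k$ generated by the $1 \otimes t(l)^{p}$, which is what lets the induction descend one level while keeping the same $p$-basis $\{t(l)\}$. The remaining ingredients are routine manipulations with the standard Witt identities $[x][y] = [xy]$, $\ff[x] = [x^{p}]$ and $\vv(a)b = \vv(a\ff(b))$; this is in substance the computation of \cite{Sch72}.
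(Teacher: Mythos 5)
The paper gives no proof of this proposition; it is quoted directly from Schoeller's \cite[Section~2.1]{Sch72}, so there is nothing internal to compare against. Your argument is correct and is, as you say, in substance Schoeller's own computation: the module-generation of $C_{n+1}(Q)$ by the $m^{(i)}$ (coming from $[1\otimes t(l)]^{p^n}\in W_{n+1}(Q)$), the functorial Teichm\"uller section $M\to C_{n+1}(Q)$ identifying $\mathrm{gr}^0 C_{n+1}(Q)$ with $M$, and the key equality $\mathrm{Fil}^1 C_{n+1}(Q)=\vv\bigl(\iota(C_n(Q))\bigr)$ via the untwisting isomorphism $Q\otimes_{k,\Fr_k^{n-1}}k\cong Q\bigl[1\otimes t(l)^p\bigr]\subseteq M$ together with $\vv(a)b=\vv(a\,\ff(b))$ and $\ff[x]=[x^p]$ all check out, and the inductive bookkeeping (including uniqueness, forced stage by stage via the $0$-th Witt coordinate and injectivity of $\vv\circ\iota$) is handled correctly. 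One incidental slip to flag in the statement itself, not in your proof: the top summand's coefficient is written $x_r$ where $x_n$ is meant, an artifact of the paper's later specialization to $n=r$; your indexing is the right one.
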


\begin{proposition}[{\cite[Sections 2.8.1, 2.10.1]{Sch72}}] \label{prop: Ver for Cohen ring scheme}
	Let $0 \le m \le n$ be integers.
	For $0 \le j \le m$, the collection of maps
		\begin{align*}
			&
					(0, \dots, 0, x, 0, \dots, 0)
					\bigl( 1 \otimes t(1)^{i(1)} \cdots t(d)^{i(d)}, 0, \dots, 0 \bigr)
			\\
			&	\mapsto
					\bigl(
						\Ver^{n - m}
						(0, \dots, 0, x, 0, \dots, 0)
					\bigr)
					\bigl( 1 \otimes t(1)^{i(1)} \cdots t(d)^{i(d)}, 0, \dots, 0 \bigr)
		\end{align*}
	with $x$ placed in the $j$-the position
	and $0 \le i(1), \dots, i(d) \le p^{m - j} - 1$
    defines a monomorphism of additive group schemes
	$C_{m + 1} \into C_{n + 1}$.
	The quotient $C_{n + 1} / C_{m + 1}$ is isomorphic to a finite product of copies of $C_{n - m}$.
\end{proposition}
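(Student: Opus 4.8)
The statement is due to Schoeller \cite[Sections 2.8.1, 2.10.1]{Sch72}; here I outline the proof I would give. Throughout write $[y]=(y,0,\dots,0)$ for the Teichm\"uller lift and recall the Witt-vector identities $\Ver(a)\cdot b=\Ver(a\Fr(b))$ and $\Fr([y])=[y^{p}]$ together with the additivity and naturality of $\Ver$ and of $[\ ]$ \cite[Chapter 0]{Ill79}. The only real input is the uniqueness of the normal form of Proposition~\ref{prop: presentation of Cohen ring scheme}.

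\emph{The monomorphism.} First I would define $\iota=\iota_{m,n}\colon C_{m+1}\to C_{n+1}$ by applying the displayed collection of maps termwise to the normal form of an element of $C_{m+1}(Q)$ and summing. Since at level $j$ the exponent bound is $p^{m-j}-1=p^{\,n-(j+n-m)}-1$, the output is again a normal form inside $C_{n+1}(Q)$, so $\iota$ is a well-defined natural transformation; read on the affine-space coordinates of Proposition~\ref{prop: presentation of Cohen ring scheme}, it identifies $C_{m+1}$ with the coordinate subscheme of $C_{n+1}$ on which the level-$j'$ coordinates vanish for $j'<n-m$, hence is a closed immersion, in particular a monomorphism. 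It remains to prove additivity, and for that I would establish the closed formula
\[
	\iota=\Ver^{\,n-m}\compose W_{m+1}(\phi)\ \text{ on }C_{m+1}(Q),
	\qquad
	\phi=\id_{Q}\tensor\Fr_{k}^{\,n-m}\colon Q\tensor_{k,\Fr_{k}^{m}}k\to Q\tensor_{k,\Fr_{k}^{n}}k,
\]
where $\phi$ is a $k$-algebra homomorphism because $(\lambda^{p^{m}}x)^{p^{n-m}}=\lambda^{p^{n}}x^{p^{n-m}}$. Both sides are additive ($\Ver$ is additive and $W_{m+1}(\phi)$ is a ring homomorphism), so it suffices to check the formula on the elements $\Ver^{j}([x])\cdot[1\tensor t(1)^{i(1)}\cdots t(d)^{i(d)}]$ that span $C_{m+1}(Q)$; on such an element the identities above rewrite each side to $\Ver^{\,j+n-m}\bigl([x\tensor t(1)^{p^{j+n-m}i(1)}\cdots t(d)^{p^{j+n-m}i(d)}]\bigr)$. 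This proves that $\iota$ is a monomorphism of additive group schemes. (The same formula yields $\iota_{m',n}\compose\iota_{m,m'}=\iota_{m,n}$ for $m\le m'\le n$, since $\phi_{m',n}\compose\phi_{m,m'}=\phi_{m,n}$ and Verschiebung is natural.)

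\emph{The quotient.} Next I would pin down the image of $\iota$ and descend. Let $\tau\colon W_{n+1}(Q\tensor_{k,\Fr_{k}^{n}}k)\to W_{n-m}(Q\tensor_{k,\Fr_{k}^{n}}k)$ be truncation, a surjective ring homomorphism with kernel $\Ver^{\,n-m}\bigl(W_{m+1}(Q\tensor_{k,\Fr_{k}^{n}}k)\bigr)$; by the formula above this kernel contains $\iota(C_{m+1}(Q))$. Conversely, if $c\in C_{n+1}(Q)$ has $\tau(c)=0$, peeling off its normal form slot by slot — the $s$-th Witt component of $c$ receives no carry from levels $>s$, and the level-$s$ contribution to it is $\sum_{\mathbf{i}}x_{s}(\mathbf{i})\,(1\tensor t(1)^{p^{s}i(1)}\cdots t(d)^{p^{s}i(d)})$, whose monomials are linearly independent over $Q$ — forces the level-$j$ coordinates of $c$ to vanish for $j<n-m$, i.e.\ $c\in\iota(C_{m+1}(Q))$. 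Hence $\iota(C_{m+1})=\ker\tau\cap C_{n+1}$, and, the normal form exhibiting $C_{n+1}$ as $\tau(C_{n+1})\times(\text{affine space over }k)$ with second factor $\iota(C_{m+1})$, the map $\tau$ realizes an isomorphism of $k$-group schemes $C_{n+1}/C_{m+1}\isomto\tau(C_{n+1})$, where $\tau(C_{n+1})$ is the $W_{n-m}(Q)$-subalgebra of $W_{n-m}(Q\tensor_{k,\Fr_{k}^{n}}k)$ generated by the $[1\tensor t(i)]$. Finally, writing $Q\tensor_{k,\Fr_{k}^{n}}k=(Q\tensor_{k,\Fr_{k}^{n-m-1}}k)\tensor_{k,\Fr_{k}^{m+1}}k$, which is free over $Q\tensor_{k,\Fr_{k}^{n-m-1}}k$ of rank $p^{(m+1)d}$ on the monomials $t(1)^{e(1)}\cdots t(d)^{e(d)}$ with $0\le e(i)<p^{m+1}$, I would identify $\tau(C_{n+1})$ with $C_{n-m}^{\times p^{(m+1)d}}$.

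\emph{Main obstacle.} The hard part is precisely this last identification: matching the Cohen-ring group law on the base-changed coefficient ring with the product structure on $p^{(m+1)d}$ copies of $C_{n-m}$. This is exactly the multi-index bookkeeping carried out in \cite[Section 2.1]{Sch72}, and in practice I would simply cite \cite[Sections 2.8.1, 2.10.1]{Sch72} for it after fixing the notational dictionary rather than reproduce it. Two consistency checks for the plan: the dimension count $\dim_{k}\tau(C_{n+1})=\sum_{j=0}^{n-m-1}p^{(n-j)d}=p^{(m+1)d}\sum_{l=0}^{n-m-1}p^{ld}=p^{(m+1)d}\dim_{k}C_{n-m}$, and the base case $n-m=1$, where $\tau$ is the slot-$0$ ring homomorphism $W_{n+1}\to W_{1}$, so that $C_{n+1}/C_{n}\isomto\tau(C_{n+1})\cong\Ga^{p^{nd}}=C_{1}^{\times p^{nd}}$ with the induced law being coordinatewise addition, as wanted.
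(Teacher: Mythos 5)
The paper does not give a proof of this proposition; it is attributed to \cite[Sections 2.8.1, 2.10.1]{Sch72} and used as a black box, so your argument is an elaboration rather than a comparison against an in-paper proof.

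What you do prove is correct. The closed formula $\iota = \Ver^{n-m} \compose W_{m+1}(\phi)$ with $\phi = \id_Q \tensor \Fr_k^{n-m}$ cleanly explains additivity (as $W_{m+1}(\phi)$ is a ring homomorphism and $\Ver^{n-m}$ is additive), the verification on the normal-form generators via the standard Witt identities is right, and the uniqueness part of Proposition~\ref{prop: presentation of Cohen ring scheme} upgrades it to a closed immersion of affine spaces. The peeling argument for $\iota(C_{m+1}) = \ker\tau \cap C_{n+1}$ is also correct; the crucial observation is that the $j$-th summand of the normal form lies in $\Ver^j W_{n+1}$ and hence contributes nothing to Witt components of index $< j$, which makes the slot-by-slot extraction valid.

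You also correctly identify the genuine difficulty: the final isomorphism of $\tau(C_{n+1})$ with a product of $p^{(m+1)d}$ copies of $C_{n-m}$ \emph{as group schemes}. The scheme-level coordinate decomposition and the dimension count are necessary but not sufficient, since the group law on $\tau(C_{n+1})$ is the restriction of Witt-vector addition to a generated subalgebra and must be matched to the product group law; that matching is exactly the multi-index bookkeeping in Schoeller's Sections 2.8.1 and 2.10.1. Deferring this step to the reference is consistent with what the paper itself does for the entire proposition, so the plan is reasonable, but a self-contained write-up would have to fill it in.
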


\begin{proposition} \label{prop: mod p of Cohen ring scheme}
	Let $0 \le m \le n$ be integers.
	The image (in the fppf topology) of the endomorphism of $C_{n + 1}$ given by multiplication by $p^{n - m}$ agrees with the image of the morphism $C_{m + 1} \into C_{n + 1}$.
	In particular, the cokernel of multiplication by $p^{n - m}$ on $C_{n + 1}$	is isomorphic to a finite product of copies of $C_{n - m}$.
\end{proposition}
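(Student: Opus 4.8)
The plan is to prove the first assertion — that $\Im\bigl(p^{n-m}\colon C_{n+1}\to C_{n+1}\bigr)$ and the image of $C_{m+1}\into C_{n+1}$ coincide as fppf sheaves — and then read off the ``in particular'' at once from Proposition~\ref{prop: Ver for Cohen ring scheme}, which computes $C_{n+1}/C_{m+1}$ as a finite product of copies of $C_{n-m}$. One may assume $m<n$, the case $m=n$ being trivial. Write $\iota\colon C_{m+1}\into C_{n+1}$ for the monomorphism of Proposition~\ref{prop: Ver for Cohen ring scheme} and $\pi\colon C_{n+1}\onto C_{n+1}/C_{m+1}$ for the quotient map, so that $0\to C_{m+1}\xrightarrow{\iota}C_{n+1}\xrightarrow{\pi}C_{n+1}/C_{m+1}\to 0$ is exact with $C_{n+1}/C_{m+1}\cong\prod C_{n-m}$.

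The first step is the inclusion $p^{n-m}C_{n+1}\subseteq\iota(C_{m+1})$. Since $\pi$ is additive, $\pi\circ p^{n-m}=p^{n-m}\circ\pi$, so it suffices to observe that multiplication by $p^{n-m}$ vanishes on $C_{n-m}$, hence on $C_{n+1}/C_{m+1}$: indeed $C_{n-m}(Q)$ is a subring of the truncated Witt ring $W_{n-m}$ of an $\F_p$-algebra, which is killed by $p^{n-m}$ since $p^{n-m}\cdot 1=\Ver^{n-m}(1)=0$ there (cf.\ \cite[Chapter~0]{Ill79}). By exactness, $\iota(C_{m+1})(Q)=\ker\bigl(\pi(Q)\bigr)$ for every $k$-algebra $Q$, so $p^{n-m}$ factors through $\iota$, say $p^{n-m}=\iota\circ\psi$ for a homomorphism $\psi\colon C_{n+1}\to C_{m+1}$ of additive group schemes over $k$.

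It then remains to show that $\psi$ is an fppf epimorphism; this is the heart of the matter. Since $C_{n+1}$ and $C_{m+1}$ are smooth (affine spaces, by Proposition~\ref{prop: presentation of Cohen ring scheme}) and $\psi$ is a homomorphism of finite type, it is enough to check that $\psi$ becomes faithfully flat after base change to $\bar k$, a surjective homomorphism of smooth affine group schemes over a field being faithfully flat. Over $\bar k$ the construction of the Cohen ring schemes degenerates: for a $\bar k$-algebra $Q$ each $p$-basis element $t(i)$ has a $p^{n}$-th root in $\bar k$, so the generator $(1\otimes t(i),0,\dots,0)$ already lies in $W_{n+1}(Q)\subseteq W_{n+1}(Q\otimes_{k,\Fr_k^{n}}k)$; hence $C_{n+1}\times_k\bar k\cong W_{n+1,\bar k}$ compatibly with $\iota$ (which becomes $\Ver^{n-m}$, after moving the Teichmüller twists across via the projection formula) and with multiplication by $p$. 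Over the perfect field $\bar k$ the Frobenius $\Fr$ is an fppf epimorphism of $W_{n+1}$, so from $p^{n-m}=\Ver^{n-m}\Fr^{n-m}$ — exactly as in the proof of Proposition~\ref{prop: Greenberg algebra indep of N} — one identifies $\psi_{\bar k}$ with $\Fr^{n-m}$ followed by the truncation $W_{n+1,\bar k}\onto W_{m+1,\bar k}$, which is plainly an fppf epimorphism. Thus $\psi$ is an fppf epimorphism, $\Im(\psi)=C_{m+1}$ as fppf sheaves, and $\Im(p^{n-m})=\iota\bigl(\Im(\psi)\bigr)=\iota(C_{m+1})$; the cokernel statement follows since $C_{n+1}/\Im(p^{n-m})=C_{n+1}/C_{m+1}\cong\prod C_{n-m}$.

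The main obstacle I expect is the identification $C_{n+1}\times_k\bar k\cong W_{n+1,\bar k}$ together with its compatibility with $\iota$ and with multiplication by $p$ — i.e.\ tracking Schoeller's Cohen ring scheme through the (highly inseparable) base change to a perfect field. If one prefers to stay over $k$, the alternative is to prove directly that $\psi$ is faithfully flat by computing the length of the scheme-theoretic kernel of $p^{n-m}$ on $C_{n+1}$ at the identity section; this is doable using Propositions~\ref{prop: presentation of Cohen ring scheme} and \ref{prop: Ver for Cohen ring scheme} but is more computational.
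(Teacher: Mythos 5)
Your first step is sound: showing $p^{n-m}C_{n+1}\subseteq\iota(C_{m+1})$ by observing that the quotient $C_{n+1}/C_{m+1}$, a product of copies of $C_{n-m}\subseteq W_{n-m}(\cdot)$, is killed additively by $p^{n-m}$, and then factoring $p^{n-m}=\iota\circ\psi$. The gap is in the base-change step for surjectivity of $\psi$: the identification $C_{n+1}\times_k\bar{k}\cong W_{n+1,\bar{k}}$ is false. By Proposition~\ref{prop: presentation of Cohen ring scheme}, $C_{n+1}$ is an affine $k$-space of dimension $\sum_{j=0}^{n}p^{jd}$ where $p^{d}=[k:k^{p}]$, and this dimension persists under base change to $\bar{k}$, whereas $W_{n+1,\bar{k}}$ has dimension $n+1$; the two disagree whenever $k$ is imperfect. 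The point where the heuristic breaks is the claim that $(1\otimes t(i),0,\dots,0)$ already lies in $W_{n+1}(Q)\subseteq W_{n+1}(Q\otimes_{k,\Fr_{k}^{n}}k)$ for a $\bar{k}$-algebra $Q$: the ring $Q\otimes_{k,\Fr_{k}^{n}}k$ is a free $Q$-module of rank $p^{nd}$ with basis $\{1\otimes t(1)^{i(1)}\cdots t(d)^{i(d)}\}_{0\le i(j)<p^{n}}$ for \emph{any} $k$-algebra $Q$, because the Frobenius twist sits on the right-hand factor $k\to k$ and is not absorbed by the structure map $k\to Q$. In particular $1\otimes t(i)$ is a basis element distinct from $1\otimes 1$ and is never in the image of $Q\to Q\otimes_{k,\Fr_{k}^{n}}k$ (concretely, $\bar{k}\otimes_{k,\Fr_{k}^{n}}k$ is a non-reduced local $\bar{k}$-algebra of length $p^{nd}$, not $\bar{k}$). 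What \emph{is} true is that running Schoeller's construction over $\bar{k}$ with the empty $p$-basis produces $W_{n+1,\bar{k}}$; but that is a different scheme from the base change of the $k$-scheme $C_{n+1}$, and the two should not be conflated.

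The paper's proof avoids base change entirely. It reads the image of $p^{n-m}$ directly off the coordinates of Proposition~\ref{prop: presentation of Cohen ring scheme}: writing a generic element of $C_{n+1}$ in that unique normal form and multiplying by $p^{n-m}=\Ver^{n-m}\Fr^{n-m}$, one uses that $\Fr^{n-m}$ is an fppf epimorphism on $W_{n+1}$ to absorb the $\Fr^{n-m}$ fppf-locally into a reparametrization of the coordinates, and the remaining $\Ver^{n-m}$ lands the result exactly in the image of $C_{m+1}\into C_{n+1}$ as described in Proposition~\ref{prop: Ver for Cohen ring scheme}. If you want to repair your argument you should carry out this coordinate computation directly (or, equivalently, the kernel/length computation you mention at the end); the base change to $\bar{k}$ as stated does not do the job.
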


\begin{proof}
	With Propositions \ref{prop: presentation of Cohen ring scheme}
	 and \ref{prop: Ver for Cohen ring scheme},
	this follows from the facts that
	$p^{n - m} = \Ver^{n - m} \Fr^{n - m}$
	and the endomorphism $\Fr^{n - m}$ on $W_{n + 1}$ is an epimorphism.
\end{proof}

 Since $C_{1} = \Ga$, this proposition in particular implies that
$C_{n + 1}$ is a successive extension of copies of $\Ga$.
This means, in the terminology of \cite[Section B.1]{CGP15},
that $C_{n + 1}$ is a \emph{split} connected smooth unipotent group scheme over $k$.

More precisely, the quotient $C_{n + 1} / p^{n - m} C_{n + 1}$ with its induced ring scheme structure
is isomorphic to the Weil restriction of $C_{n - m}$
along $\Fr_{k}^{m + 1} \colon \Spec k \to \Spec k$.
We do not use this fact.

\begin{proposition} \label{prop: base change from Witt to Cohen}
	The natural morphism $W_{n + 1} \tensor_{W_{n + 1}(k)} C_{n + 1}(k) \to C_{n + 1}$
	is an isomorphism.
\end{proposition}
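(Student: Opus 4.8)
The plan is to show that the morphism is an isomorphism of fppf sheaves of $k$-algebras. Both sides are in fact sheaves --- $C_{n+1}$ is representable by Proposition~\ref{prop: presentation of Cohen ring scheme} --- so it is enough to check that the morphism, call it $f$, is an epimorphism and that a naturally-defined section $s$ of it is again an epimorphism; then $fs=\id$ and the epimorphism property of $s$ force $sf=\id$ (valid even though $s$ is not additive: $(sf)s=s(fs)=s=\id_{}\circ s$, so $sf=\id$), so $f$ is an isomorphism.

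First I would check that $f$ is an epimorphism, which already holds for the underlying presheaves. By construction $C_{n+1}(Q)$ is generated as a $W_{n+1}(Q)$-algebra by the elements $\tau(i)=(1\otimes t(i),0,\dots,0)$, $1\le i\le d$, and each $\tau(i)$ is the image of the corresponding generator of $C_{n+1}(k)$ under the base-change map $W_{n+1}(k\otimes_{k,\Fr_{k}^{n}}k)\to W_{n+1}(Q\otimes_{k,\Fr_{k}^{n}}k)$; since $W_{n+1}(Q)$ also lies in the image, the image of $W_{n+1}(Q)\otimes_{W_{n+1}(k)}C_{n+1}(k)\to C_{n+1}(Q)$ is a $W_{n+1}(Q)$-subalgebra containing all the generators, hence everything. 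Equivalently, in the normal form of Proposition~\ref{prop: presentation of Cohen ring scheme} every coefficient $(0,\dots,0,x_{j}(\underline i),0,\dots,0)$ lies in $W_{n+1}$, so the monomials $(1\otimes t(1)^{i(1)}\cdots t(d)^{i(d)},0,\dots,0)$ with $0\le i(\ell)<p^{n}$ generate $C_{n+1}$ as a $W_{n+1}$-module, both over $Q$ and over $k$.

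Next, Proposition~\ref{prop: presentation of Cohen ring scheme} provides, functorially in the $k$-algebra $Q$, the unique normal form of each element of $C_{n+1}(Q)$; sending that normal form to $\sum_{j,\underline i}(0,\dots,0,x_{j}(\underline i),0,\dots,0)\otimes(1\otimes t(1)^{i(1)}\cdots t(d)^{i(d)},0,\dots,0)$, where now the first factor is viewed in $W_{n+1}(Q)$ and the second in $C_{n+1}(k)$, defines a morphism of fppf sheaves $s\colon C_{n+1}\to W_{n+1}\otimes_{W_{n+1}(k)}C_{n+1}(k)$ with $fs=\id$. To see that $s$ is an epimorphism I would argue fppf-locally: an arbitrary local section of the source is a sum of tensors $w\otimes c$, and after expanding $c$ in the normal form over $W_{n+1}(k)$ and using $x\,\Ver^{a}(y)=\Ver^{a}(\Fr^{a}(x)\,y)$ this reduces to rewriting products $\Ver^{a}\!\bigl(w\cdot(1\otimes t^{\underline i},0,\dots,0)\bigr)$ --- with $w$ a local section of $W_{n+1}$ and $\underline i$ possibly outside the allowed range --- in normal form. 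This is carried out using the Witt-vector identities just quoted and $\Fr(a,0,\dots,0)=(a^{p},0,\dots,0)$ (\cite[Chapter~0, (1.3.6)]{Ill79}) together with $(1\otimes t(i))^{p^{n}}=t(i)\otimes 1$ in $Q\otimes_{k,\Fr_{k}^{n}}k$; it is exactly Schoeller's computation \cite[Section~2.1]{Sch72}, carried out with coefficients in $W_{n+1}(Q)$ in place of $W_{n+1}(k)$.

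The hard part is this last reduction: bringing into line the monomials whose exponents exceed the ranges permitted in the normal form, which is where the choice of a $p$-basis of $k$ and the algebra structure of $k\otimes_{k,\Fr_{k}^{n}}k$ really enter, and --- because $Q\mapsto W_{n+1}(Q)\otimes_{W_{n+1}(k)}C_{n+1}(k)$ is only a presheaf --- the necessity of working fppf-locally rather than on the nose. Everything goes through because each relation invoked is already defined over $k$ and therefore stable under base change along $k\to Q$.
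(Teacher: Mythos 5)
Your route is genuinely different from the paper's. The paper argues by induction on $n$: it quotes Schoeller's short exact sequence $0\to C_{n+1}\to C_{n+2}\to \Ga\otimes_{k,\Fr}k\to 0$ from \cite[Proposition~2.9]{Sch72}, tensors it with $W_{n+2}$ over $W_{n+2}(k)$, identifies the outer terms using $W_{n+2}/p^{n+1}W_{n+2}\cong W_{n+1}$ together with the inductive hypothesis, and concludes by a diagram chase. That argument contains no explicit Witt-vector manipulation. Your skeleton --- build a set-theoretic section $s$ of the multiplication map $f$ from the normal form of Proposition~\ref{prop: presentation of Cohen ring scheme}, then deduce $sf=\id$ from $fs=\id$ and $s$ being an epimorphism --- is logically sound and more elementary.

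However, the step you defer, that $s$ is an epimorphism, is equivalent to $f$ being a monomorphism, which is the entire content of the proposition, and it is not ``exactly Schoeller's computation'' with $W_{n+1}(Q)$-coefficients. Inside $W_{n+1}(Q)\otimes_{W_{n+1}(k)}C_{n+1}(k)$ only $W_{n+1}(k)$-scalars may cross the tensor sign, and the naive exponent reduction produces factors such as $(1\otimes t^{\underline{a}\,p^{n-m}},0,\dots,0)\in C_{n+1}(k)$ which do \emph{not} lie in $W_{n+1}(k)$ and therefore cannot simply be moved left. The reduction can be repaired --- fppf-locally write $\Ver^{m}([y])=p^{m}[z]$ with $z^{p^{m}}=y$, move the scalar $p^{m}\in W_{n+1}(k)$ across, compute $p^{m}\,(1\otimes t^{\underline i},0,\dots,0)=\Ver^{m}([t^{\underline a}])\cdot(1\otimes t^{\underline r},0,\dots,0)$ inside $C_{n+1}(k)$ so as to isolate the genuine $W_{n+1}(k)$-scalar $\Ver^{m}([t^{\underline a}])$, move that back to the left, and iterate after further fppf localization to handle sums of Teichm\"uller lifts --- but this is a real argument and not the one in \cite[Section~2.1]{Sch72}. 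Note also that the fppf localization is forced by the need for $p^{m}$-th roots, not merely (as you suggest) by the target being a sheafification. As written your proposal has a genuine gap at precisely the point where the paper's induction takes over, so I would not accept ``already defined over $k$, hence stable under base change'' as a justification: whether a relation of $C_{n+1}(Q)$ descends to the tensor product is exactly what is to be proved.
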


\begin{proof}
	We prove this by induction on $n$.
	The statement is obvious for $n = 0$.
	Assume the statement is true for some $n \ge 0$.
 We have an exact sequence
	\begin{equation} \label{eq: Kummer for Cohen ring scheme}
	    0 \to C_{n + 1} \to C_{n + 2} \to \Ga \otimes_{k, \Fr} k \to 0
	\end{equation}
	by \cite[Proposition 2.9]{Sch72} and it remains exact on $k$-sections.
	Consider the induced exact sequence
	\begin{equation} \label{eq: Kummer for Cohen tensored with Witt}
			W_{n + 2} \otimes_{W_{n + 2}(k)} C_{n + 1}(k)
		\to
			W_{n + 2} \otimes_{W_{n + 2}(k)} C_{n + 2}(k)
		\to
			W_{n + 2} \otimes_{W_{n + 2}(k), \Fr} k
		\to
			0.
	\end{equation}
	The natural morphisms define a morphism of exact sequences
	from \eqref{eq: Kummer for Cohen tensored with Witt}
	to \eqref{eq: Kummer for Cohen ring scheme}.
	We have
		\[
				W_{n + 2} \otimes_{W_{n + 2}(k)} C_{n + 1}(k)
			\isomto
				W_{n + 1} \otimes_{W_{n + 1}(k)} C_{n + 1}(k)
			\isomto
				C_{n + 1}
		\]
	by $W_{n + 2} / p^{n + 1} W_{n + 2} \cong W_{n + 1}$ and the induction hypothesis.
	Also,
		\[
				W_{n + 2} \otimes_{W_{n + 2}(k), \Fr} k
			\isomto
				\Ga \otimes_{k, \Fr} k.
		\]
	These imply the statement for $n + 1$ by the snake lemma.
\end{proof}

The $W_{r + 1}(k)$-algebra structure of $A$ lifts to a
$C_{r + 1}(k)$-algebra structure:

\begin{proposition}[{\cite[Theorem 8.4]{Sch72}}] \label{prop: Cohen lifted ghost component map}
	The ring homomorphism $W_{r + 1}(k) \to A$ given by the lifted $r$-th ghost component map uniquely extends to a ring homomorphism $C_{r + 1}(k) \to A$ such that each $(1 \otimes t(1)^{i(1)} \cdots t(d)^{i(d)}, 0 \dots, 0)$ is mapped to $\Tilde{t}(1)^{i(1)} \cdots \Tilde{t}(d)^{i(d)}$.
\end{proposition}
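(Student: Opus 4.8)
The plan is to reduce everything to an existence statement, since uniqueness is immediate. By definition $C_{r+1}(k)$ is the $W_{r+1}(k)$-subalgebra of $W_{r+1}(k\otimes_{k,\Fr_k^{r}}k)$ generated by the Teichm\"uller elements $\tau_i:=(1\otimes t(i),0,\dots,0)$, so any $W_{r+1}(k)$-algebra homomorphism out of $C_{r+1}(k)$ is determined by the images of $\tau_1,\dots,\tau_d$, and specializing the prescribed formula to the multi-index with a single $1$ in position $i$ forces $\tau_i\mapsto\Tilde t(i)$. Conversely, once a ring homomorphism satisfies $\tau_i\mapsto\Tilde t(i)$, the full condition follows from multiplicativity of the Teichm\"uller lift, since $(1\otimes t(1)^{i(1)}\cdots t(d)^{i(d)},0,\dots,0)=\prod_i\tau_i^{i(i)}$. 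So it remains to produce a ring homomorphism $\psi\colon C_{r+1}(k)\to A$ that restricts to the given map $\theta\colon W_{r+1}(k)\to A$ (the lifted $r$-th ghost component map) and sends each $\tau_i$ to $\Tilde t(i)$.

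To construct $\psi$ I would use the normal form of Proposition~\ref{prop: presentation of Cohen ring scheme}: every element of $C_{r+1}(k)$ is uniquely a sum $\sum_{j=0}^{r}\sum_{\vec i}w_{j,\vec i}\cdot(1\otimes t^{\vec i},0,\dots,0)$ with $w_{j,\vec i}=\Ver^{r-j}\bigl((x_j(\vec i),0,\dots,0)\bigr)\in W_{r+1}(k)$ and $0\le i(l)\le p^{j}-1$, and I would declare $\psi$ of this sum to be $\sum_{j,\vec i}\theta(w_{j,\vec i})\,\Tilde t^{\vec i}$, where $\Tilde t^{\vec i}=\Tilde t(1)^{i(1)}\cdots\Tilde t(d)^{i(d)}$. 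This is a well-defined map of sets by the uniqueness half of Proposition~\ref{prop: presentation of Cohen ring scheme}; it restricts to $\theta$ on $W_{r+1}(k)$ (whose elements have normal form supported at $\vec i=\vec 0$) and sends $\tau_i\mapsto\Tilde t(i)$. The remaining content is that $\psi$ respects addition and multiplication. Additivity requires comparing the Witt-vector additions and the ``carries'' occurring when adding normal forms (an exponent $i(l)$ reaching $p^{j}$ is pushed down a level, via $\Ver(a)x=\Ver(a\,\Fr(x))$ and $\Fr\bigl((1\otimes t(l),0,\dots,0)\bigr)=(1\otimes t(l)^{p},0,\dots,0)$, together with the description of $\Ver$ on $C_{n+1}$ in Proposition~\ref{prop: Ver for Cohen ring scheme}) with the matching identities in $A$, all of which boil down to $\theta$ being a ring homomorphism. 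Multiplicativity rests on $\tau^{\vec i}\tau^{\vec i'}=\tau^{\vec i+\vec i'}$ for Teichm\"uller monomials and on the fact --- immediate from the presentation --- that $\tau_l^{p^{r}}$ lies in the image of $W_{r+1}(k)$, so that the two ways of evaluating $\psi$ on it agree ($\theta(\tau_l^{p^{r}})=\Tilde t(l)^{p^{r}}$ by the very definition of $\theta$ as the lifted $r$-th ghost component, and this equals $\psi(\tau_l)^{p^{r}}$); exponents at the intermediate levels $j<r$ are reduced analogously after applying $\Fr^{r-j}$, and Proposition~\ref{prop: base change from Witt to Cohen} can be used to keep the $W_{r+1}(k)$-module bookkeeping under control.

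The main obstacle is precisely this ring-homomorphism verification, i.e.\ the compatibility of the ad hoc normal-form assignment with the Witt-vector ring structure on $C_{r+1}(k)$; this is exactly the content of \cite[Th\'eor\`eme 8.4]{Sch72}, so in a writeup the honest thing is to invoke that result directly, the statement there being literally the assertion of the proposition. If one instead wants a self-contained argument, the cleanest route is probably an induction on $r$ using the exact sequences $0\to C_{r}\to C_{r+1}\to\Ga\otimes_{k,\Fr}k\to0$ and Propositions~\ref{prop: mod p of Cohen ring scheme} and \ref{prop: base change from Witt to Cohen} to propagate the homomorphism level by level; alternatively one can read the statement off Cohen's structure theorem, since $C_{r+1}(k)$ is essentially a Cohen ring of $k$ reduced modulo $p^{r+1}$ and $A$ is a $p^{r+1}$-torsion local ring with residue field $k$, whence $C_{r+1}(k)$ maps to $A$ as a coefficient ring and the map can be normalized to be $\theta$-linear and to carry $\tau_i$ to $\Tilde t(i)$.
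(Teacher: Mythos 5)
Your proposal is correct and in fact ends up at the same place as the paper: the paper gives no proof here, it simply cites Schoeller's Th\'eor\`eme 8.4, which is what you also recognize as the heart of the matter. Your extra discussion — uniqueness from the fact that $W_{r+1}(k)$ and the $\tau_i$ generate $C_{r+1}(k)$, the candidate formula on the normal form from Proposition \ref{prop: presentation of Cohen ring scheme}, the observation that $\tau_l^{p^{r}}=(t(l)\otimes 1,0,\dots,0)$ lands in the image of $W_{r+1}(k)$, and the acknowledgment that verifying the carries/multiplicativity is precisely Schoeller's content — is all consistent with what a self-contained proof would have to do, and you are honest that the fastest correct writeup is to cite Schoeller directly. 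That matches the paper's approach, so nothing further is needed.
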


Now we can prove the representability of the $r$-twisted Greenberg algebra $h_{r}^{A}$:

\begin{proposition} \label{prop: canonical lift before RP is representable}
	The sheaf $h_{r}^{A}$ on $\Spec k_{\fppf}$ is represented by an affine space over $k$.
\end{proposition}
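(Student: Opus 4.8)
The plan is to reduce the representability of $h_r^A = W_N \otimes_{W_N(k)} A$ (with $N = r+1$) to the representability of the Cohen ring scheme $C_{r+1}$, which is an affine space over $k$ by Proposition \ref{prop: presentation of Cohen ring scheme}. The bridge is Proposition \ref{prop: Cohen lifted ghost component map}, which gives $A$ a $C_{r+1}(k)$-algebra structure compatible with its $W_{r+1}(k)$-algebra structure. So the first step is to rewrite
\[
    h_{r}^{A}
    =
    W_{N} \otimes_{W_{N}(k)} A
    =
    W_{r+1} \otimes_{W_{r+1}(k)} A
    \cong
    \bigl( W_{r+1} \otimes_{W_{r+1}(k)} C_{r+1}(k) \bigr) \otimes_{C_{r+1}(k)} A
    \cong
    C_{r+1} \otimes_{C_{r+1}(k)} A,
\]
where the middle isomorphism is just associativity of tensor product (using that $A$ is a $C_{r+1}(k)$-algebra through the given map $C_{r+1}(k) \to A$) and the last isomorphism is Proposition \ref{prop: base change from Witt to Cohen}.

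The second step is to show that $C_{r+1} \otimes_{C_{r+1}(k)} A$ is represented by an affine space over $k$. Here I would use that $C_{r+1}(k)$ is an artinian local ring with maximal ideal $(p)$ and residue field $k$ (Proposition \ref{prop: Cohen ring is artinian with max ideal p}), so $A$ is a finite(-length) $C_{r+1}(k)$-module, hence a quotient of a finite free $C_{r+1}(k)$-module; more usefully, pick a filtration of $A$ by $C_{r+1}(k)$-submodules with successive quotients isomorphic to $k = C_{r+1}(k)/(p)$. Tensoring the fppf sheaf (of flat $C_{r+1}(k)$-modules) $C_{r+1}$ with this filtration, and using that $C_{r+1}$ is a split unipotent group scheme with $C_{r+1} \otimes_{C_{r+1}(k)} k \cong C_{r+1}/p\,C_{r+1}$ a finite product of copies of $\Ga$ by Proposition \ref{prop: mod p of Cohen ring scheme} (with $m = 0$, $n = r$), one gets that $h_r^A$ is a successive extension of finite products of $\Ga$'s. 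Since $\Ga$-torsors (indeed torsors under any split unipotent, or any quasi-coherent-sheaf, group) over an affine base are trivial, each extension splits, so $h_r^A$ is representable by a finite product of copies of $\Ga$, i.e.\ an affine space over $k$. Alternatively, and perhaps more cleanly, I would argue directly: choose a $C_{r+1}(k)$-module generating set of $A$ giving a presentation, express $h_r^A$ as a cokernel of a map of finite products of copies of $C_{r+1}$, and invoke Proposition \ref{prop: presentation of Cohen ring scheme} together with right-exactness of $C_{r+1} \otimes_{C_{r+1}(k)} (-)$ in the fppf topology (legitimate since $C_{r+1}$, being a filtered limit-free split unipotent scheme, is flat as an fppf sheaf of $C_{r+1}(k)$-modules — the content of the exact sequences used in the proof of Proposition \ref{prop: base change from Witt to Cohen}).

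I expect the main obstacle to be the flatness/exactness bookkeeping: one must know that tensoring the fppf sheaf $C_{r+1}$ against $C_{r+1}(k)$-modules is exact on the relevant short exact sequences, so that a $C_{r+1}(k)$-module filtration of $A$ with graded pieces $k$ really does produce a filtration of $h_r^A$ with graded pieces $C_{r+1}/p\,C_{r+1}$; this is where Propositions \ref{prop: base change from Witt to Cohen} and \ref{prop: mod p of Cohen ring scheme} do the real work and where an incautious argument could go wrong (e.g.\ fppf sheafification versus naive tensor product). Once that is in hand, the vanishing of $\Ga$-torsors over an affine scheme (fpqc descent for quasi-coherent sheaves) finishes the splitting, and Proposition \ref{prop: presentation of Cohen ring scheme} identifies the pieces as affine spaces, so the assembled $h_r^A$ is an affine space over $k$. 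A final remark I would add: this makes $F_X^r$ for affine $X$ visibly a subfunctor of a product of copies of $h_r^A$, setting up the representability of the $r$-twisted Greenberg transform in the next step.
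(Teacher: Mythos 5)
Your first step, rewriting $h_r^A = W_{r+1} \otimes_{W_{r+1}(k)} A \cong C_{r+1} \otimes_{C_{r+1}(k)} A$ via Propositions \ref{prop: Cohen lifted ghost component map} and \ref{prop: base change from Witt to Cohen}, is exactly the paper's opening move. Where you diverge is the second step. You filter $A$ by $C_{r+1}(k)$-submodules with graded pieces $k$, tensor with $C_{r+1}$, and then appeal to (i) a flatness/exactness claim to get a filtration of $h_r^A$, and (ii) vanishing of torsors under $\mathbf{G}_a$-type groups over an affine base to split the resulting extensions. You explicitly flag (i) as the main obstacle, and rightly so: the assertion that tensoring the fppf sheaf $C_{r+1}$ against $C_{r+1}(k)$-modules is exact (i.e.\ that $C_{r+1}$ is flat over $C_{r+1}(k)$ in the relevant sense) is not established anywhere, and is not ``the content'' of Proposition \ref{prop: base change from Witt to Cohen}, which only produces specific short exact sequences $0 \to C_{n+1} \to C_{n+2} \to \mathbf{G}_a \otimes_{k, \mathrm{F}} k \to 0$. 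Without (i) you only get right exactness, so the filtration of $A$ produces cokernel presentations of $h_r^A$, not extensions, and the torsor argument in (ii) has nothing to latch onto.

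The paper's proof avoids this entirely by replacing your filtration with a \emph{direct sum decomposition}: since $C_{r+1}(k)$ is an artinian local ring with maximal ideal $(p)$ and is a quotient of a discrete valuation ring, any finite-length $C_{r+1}(k)$-module such as $A$ is a finite direct sum of cyclic modules $C_{r+1}(k)/p^{s+1}C_{r+1}(k)$. Tensor products commute with direct sums unconditionally, so $h_r^A$ is a direct sum of sheaves $C_{r+1}/p^{s+1}C_{r+1}$, each of which is a finite product of copies of $C_{s+1}$ by Proposition \ref{prop: mod p of Cohen ring scheme}, hence an affine space by Proposition \ref{prop: presentation of Cohen ring scheme}. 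No flatness, no torsor splitting, no handling of cokernels of morphisms of affine group schemes. You came close when you observed that $A$ is a finite-length $C_{r+1}(k)$-module; the missing ingredient is the classification of such modules over a truncated DVR, which upgrades your filtration to a direct sum and dissolves the obstacle you identified.
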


\begin{proof}
	By Proposition \ref{prop: base change from Witt to Cohen},
	we have $h_{r}^{A} \cong C_{r + 1} \otimes_{C_{r + 1}(k)} A$
	via the map $C_{r + 1}(k) \to A$ in
	Proposition \ref{prop: Cohen lifted ghost component map}.
	The ring $C_{r + 1}(k)$ is an artinian local ring with maximal ideal $(p)$ and residue field $k$
	by Proposition \ref{prop: Cohen ring is artinian with max ideal p}.
	The ring homomorphism $C_{r + 1}(k) \to A$ induces the identity map $k \to k$ on the residue fields.
	Hence, the ring $A$ as a $C_{r + 1}(k)$-module has finite length
	(\cite[Section 7.1, Lemma 1.36 (a)]{Liu02})
	and thus can be written as a finite direct sum of copies of
	$C_{r + 1}(k) / p^{s + 1} (C_{r + 1}(k))$
	for various $0 \le s \le r$.%
	\footnote{This is a general fact about finite length modules
	over truncated discrete valuation rings.
	In our case, it is enough to use the fact that
	$C_{r + 1}(k)$ is a quotient ring of the discrete valuation ring
	$C(k) = \invlim C_{n + 1}(k)$ with prime ideal $(p)$
	by \cite[Proposition 3.2.3]{Sch72}.}
	Therefore it is enough to show that
	$C_{r + 1} \otimes_{C_{r + 1}(k)} C_{r + 1}(k) / p^{s + 1} (C_{r + 1}(k))
	\cong C_{r + 1} / p^{s + 1} C_{r + 1}$
	is an affine space over $k$ for all $0 \le s \le r$.
	This sheaf is isomorphic to a finite product of copies of
	$C_{s + 1}$ by Proposition \ref{prop: mod p of Cohen ring scheme}.
	This proves the proposition.
\end{proof}

Using this, we will show the representability of $F_{X}^{r}$.
We need some preparations about patching.

\begin{proposition} \label{prop: canonical lift before RP commutes with localization}
	Let $Q$ be a $k$-algebra.
	Let $f \in Q$ be any element.
	Denote by $Q_{f} = Q[1 / f]$ the localization of $Q$ by $f$.
	Then the natural morphism $h_{r}^{A}(Q) \to h_{r}^{A}(Q_{f})$ induces an isomorphism
	$h_{r}^{A}(Q)_{(f, 0, \dots, 0) \otimes 1} \isomto h_{r}^{A}(Q_{f})$.
\end{proposition}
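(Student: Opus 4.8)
The plan is to reduce to the classical fact that localizing the Witt vectors $W_{N}(Q)$ (with $N = r + 1$) at the Teichm\"uller representative $[f] := (f, 0, \dots, 0) \in W_{N}(Q)$ recovers $W_{N}(Q_{f})$, and then to base-change along the structure map $W_{N}(k) \to A$. First I would note that, writing $h_{r}^{A} = W_{N} \otimes_{W_{N}(k)} A$, the element $(f, 0, \dots, 0) \otimes 1$ is the image of $[f]$ under the ring homomorphism $W_{N}(Q) \to h_{r}^{A}(Q)$, $x \mapsto x \otimes 1$. Since $f$ is invertible in $Q_{f}$ and the kernel of $W_{N}(Q_{f}) \onto Q_{f}$ is nilpotent, $[f]$ is a unit in $W_{N}(Q_{f})$; hence its image in $h_{r}^{A}(Q_{f})$ is a unit, and we obtain a canonical ring homomorphism $h_{r}^{A}(Q)_{(f, 0, \dots, 0) \otimes 1} \to h_{r}^{A}(Q_{f})$ whose bijectivity is what must be shown.

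Next I would establish $W_{N}(Q)[1/[f]] \cong W_{N}(Q_{f})$ by induction on $N$, the case $N = 1$ being the identity $Q[1/f] = Q_{f}$. For the inductive step, apply the exact functor $(-)[1/[f]]$ to the short exact sequence of $W_{N}(Q)$-modules
	\[
		0 \to W_{N - 1}(Q) \overset{\vv}{\to} W_{N}(Q) \to W_{1}(Q) = Q \to 0,
	\]
in which $W_{N - 1}(Q)$, identified through $\vv$ with the ideal $\vv(W_{N-1}(Q))$, carries the $W_{N}(Q)$-module structure with $[f]$ acting as $\vv(\ff([f])\cdot\,{-}) $, i.e.\ via $\ff([f]) = [f]^{p}$; so inverting $[f]$ on it amounts to forming $W_{N - 1}(Q)[1/[f]]$, while on $Q = W_{1}(Q)$ it amounts to forming $Q_{f}$. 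Comparing the localized sequence with the analogous short exact sequence for $Q_{f}$ through the natural maps, the inductive hypothesis identifies the sub-terms, the identity identifies the quotient terms, and the five lemma identifies the middle terms, giving $W_{N}(Q)[1/[f]] \cong W_{N}(Q_{f})$. Since $(-)[1/[f]]$ is flat it commutes with $(-) \otimes_{W_{N}(k)} A$, so formally $h_{r}^{A}(Q)_{(f,0,\dots,0) \otimes 1} \cong W_{N}(Q)[1/[f]] \otimes_{W_{N}(k)} A \cong W_{N}(Q_{f}) \otimes_{W_{N}(k)} A \cong h_{r}^{A}(Q_{f})$.

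The step requiring care, and the main obstacle, is that $h_{r}^{A}$ is by definition the fppf sheafification of the presheaf $Q \mapsto W_{N}(Q) \otimes_{W_{N}(k)} A$, and this presheaf genuinely fails to be a sheaf for imperfect $k$, so the chain of isomorphisms above must be realized at the level of the honest rings $h_{r}^{A}(Q)$ rather than of presheaf tensor products. I would handle this by running the whole argument through the Cohen ring schemes: by Proposition \ref{prop: base change from Witt to Cohen} and the proof of Proposition \ref{prop: canonical lift before RP is representable}, $h_{r}^{A} \cong C_{N} \otimes_{C_{N}(k)} A$, and via the decomposition of $A$ into copies of $C_{N}(k)/p^{s + 1}$ it is built from the \emph{representable} sheaves $C_{N}/p^{s + 1} C_{N} \cong \prod C_{s + 1}$. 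One proves the Cohen-ring analogue $C_{s + 1}(Q)[1/[f]] \cong C_{s + 1}(Q_{f})$ by the same induction, this time using the fundamental exact sequence \eqref{eq: Kummer for Cohen ring scheme}, namely $0 \to C_{n + 1} \to C_{n + 2} \to \Ga \otimes_{k, \Fr} k \to 0$, which stays exact on $Q$-sections because $\HH^{1}_{\fppf}(\Spec Q, \Ga)$ vanishes for affine $Q$ (as $C_{n+1}$ is a successive extension of copies of $\Ga$); assembling these isomorphisms compatibly with the $A$-algebra structure then yields the proposition.
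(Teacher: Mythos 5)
You correctly identify the reduction that also drives the paper's proof: express $h_r^A$ through the Cohen ring schemes $C_{s+1}$ via the decomposition of $A$ into copies of $C_{r+1}(k)/p^{s+1}(C_{r+1}(k))$, and then the assertion reduces to showing $C_{s+1}(Q)_{(f,0,\dots,0)} \isomto C_{s+1}(Q_f)$ for $0 \le s \le r$. You also correctly flag the presheaf-versus-sheaf subtlety as the real obstacle. Up to this point the two proofs coincide.

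Where you diverge is in proving that last isomorphism. The paper deduces it directly from the cited Witt-vector fact $W_{s+1}(Q)_{(f,0,\dots,0)} \isomto W_{s+1}(Q_f)$ (\cite[Chapter 0, (1.5.3)]{Ill79}) together with the definition of $C_{s+1}(Q)$ as the $W_{s+1}(Q)$-subalgebra of $W_{s+1}(Q \otimes_{k, \Fr_{k}^{s}} k)$ generated by the Teichm\"uller elements $(1\otimes t(i),0,\dots,0)$: localizing at $(f,0,\dots,0) \in W_{s+1}(Q)$ carries this generated subalgebra to the corresponding generated subalgebra of $W_{s+1}(Q_f \otimes_{k, \Fr_{k}^{s}} k)$, which is exactly $C_{s+1}(Q_f)$. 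So your re-derivation of the Witt-vector fact from the Verschiebung sequence is superfluous, and the substantive novelty in your proposal is the suggestion to prove the Cohen-ring isomorphism instead by an induction on the sequence \eqref{eq: Kummer for Cohen ring scheme}. That is a legitimate idea, but the phrase ``by the same induction'' skips precisely the content that made the Witt induction run: to apply the five lemma after inverting $(f,0,\dots,0) \in C_{n+2}(Q)$, you must verify how this element acts on the two outer terms of \eqref{eq: Kummer for Cohen ring scheme} evaluated on $Q$ --- on the sub $C_{n+1}(Q)$, that it acts through an element with the same radical as the intrinsic $(f,0,\dots,0) \in C_{n+1}(Q)$ (the analogue of $\ff([f]) = [f]^p$ in the Witt case), and on the quotient $\Ga \tensor_{k,\Fr} k$, that inverting its image gives the $Q_f$-version. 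Neither is addressed, and neither is automatic from the formal shape of the sequence; making them precise would require unwinding the explicit embedding from Proposition \ref{prop: Ver for Cohen ring scheme} and the concrete identification of the cokernel. The paper's route via the generated-subalgebra description avoids these verifications entirely, which is why it is a one-line deduction from the Illusie citation. So: right framework and right reduction, but the step on which everything rests is replaced by a sketch that, as written, leaves the essential structural inputs unestablished.
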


\begin{proof}
	From the proof of Proposition \ref{prop: canonical lift before RP is representable}, it is enough to show that $C_{s + 1}(Q)_{(f, 0, \dots, 0)} \isomto C_{s + 1}(Q_{f})$ for all $0 \le s \le r$.
	This follows from $W_{s + 1}(Q)_{(f, 0, \dots, 0)} \isomto W_{s + 1}(Q_{f})$ \cite[Chapter 0, (1.5.3)]{Ill79}.
\end{proof}

\begin{proposition} \label{prop: Greenberg before RP is a Zariski sheaf}
	For any $A$-scheme $X$,	the functor $F_{X}^{r} = X \compose h_{r}^{A}$ is a sheaf for the Zariski topology.
\end{proposition}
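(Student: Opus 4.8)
The plan is to reduce the Zariski sheaf property of $F_{X}^{r}$ to the statement that the functor $h_{r}^{A}$ carries principal-open coverings of affine schemes to coverings, and then to conclude from the fact that the scheme $X$ is itself a Zariski sheaf. By a standard reduction it suffices, for each $k$-algebra $Q$ and each finite family $f_{1}, \dots, f_{n} \in Q$ with $(f_{1}, \dots, f_{n}) = (1)$, to verify that
\[
    F_{X}^{r}(Q) \to \prod_{i} F_{X}^{r}(Q_{f_{i}}) \rightrightarrows \prod_{i, j} F_{X}^{r}(Q_{f_{i} f_{j}})
\]
is an equalizer, where $Q_{f} = Q[1/f]$ and $Q_{f_{i} f_{j}} = (Q_{f_{i}})_{f_{j}}$ (the compatibility with finite disjoint unions being immediate, since $h_{r}^{A}$ commutes with finite products of rings). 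Because $F_{X}^{r}(Q) = X(h_{r}^{A}(Q)) = \Hom_{A}(\Spec h_{r}^{A}(Q), X)$ and $X$, being a scheme, is a Zariski sheaf, this equalizer condition follows as soon as one shows that $\{\Spec h_{r}^{A}(Q_{f_{i}}) \to \Spec h_{r}^{A}(Q)\}_{i}$ is a Zariski open covering whose pairwise fibre products are the $\Spec h_{r}^{A}(Q_{f_{i} f_{j}})$.

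This covering statement is the key step, and I would extract it from Proposition~\ref{prop: canonical lift before RP commutes with localization}. That proposition identifies $\Spec h_{r}^{A}(Q_{f_{i}}) \to \Spec h_{r}^{A}(Q)$ with the open immersion onto the principal open $D(u_{i})$, where $u_{i}$ is the image of the Witt vector $(f_{i}, 0, \dots, 0)$ under the canonical ring homomorphism $W_{N}(Q) \to h_{r}^{A}(Q)$. To see that the $D(u_{i})$ cover $\Spec h_{r}^{A}(Q)$, write $1 = \sum_{i} a_{i} f_{i}$ in $Q$: then the Witt vector $\sum_{i} (a_{i}, 0, \dots, 0)(f_{i}, 0, \dots, 0) = \sum_{i} (a_{i} f_{i}, 0, \dots, 0)$ has $0$th component $\sum_{i} a_{i} f_{i} = 1$ and is hence a unit of $W_{N}(Q)$, so its image in $h_{r}^{A}(Q)$ is a unit lying in the ideal $(u_{1}, \dots, u_{n})$; thus $(u_{1}, \dots, u_{n}) = (1)$. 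For the fibre products, $D(u_{i}) \cap D(u_{j}) = D(u_{i} u_{j})$, and since $f \mapsto (f, 0, \dots, 0)$ is multiplicative one has $u_{i} u_{j} = (f_{i} f_{j}, 0, \dots, 0)$ (meaning its image in $h_{r}^{A}(Q)$), so Proposition~\ref{prop: canonical lift before RP commutes with localization} identifies $D(u_{i} u_{j})$ with $\Spec h_{r}^{A}(Q_{f_{i} f_{j}})$, as needed.

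With the covering statement in hand, the proposition is immediate: applying the Zariski sheaf $\Hom_{A}(-, X)$ to the open covering $\{\Spec h_{r}^{A}(Q_{f_{i}}) \hookrightarrow \Spec h_{r}^{A}(Q)\}$ produces exactly the displayed equalizer diagram. I do not expect a genuine obstacle here: beyond the already-proven Proposition~\ref{prop: canonical lift before RP commutes with localization}, the only ingredient is the elementary fact that a Witt vector over $Q$ whose $0$th component is a unit of $Q$ is itself a unit of $W_{N}(Q)$. The one point that requires a little care is the identification of the double-intersection terms, which goes through precisely because the coordinate map $f \mapsto (f, 0, \dots, 0)$ is multiplicative.
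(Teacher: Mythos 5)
Your proposal is correct and follows essentially the same route as the paper's proof: reduce to the equalizer condition for a principal-open cover of $\Spec Q$, use Proposition~\ref{prop: canonical lift before RP commutes with localization} to identify $h_{r}^{A}(Q_{f_{i}})$ with the localization of $B = h_{r}^{A}(Q)$ at the image of $(f_{i},0,\dots,0)$, and then invoke that $X$ is a Zariski sheaf. The one point the paper asserts without proof — that the elements $g_{i} = (f_{i},0,\dots,0)\otimes 1$ generate the unit ideal of $B$ — you verify explicitly and correctly, via the fact that $\sum_{i}(a_{i},0,\dots,0)(f_{i},0,\dots,0)$ has $0$-th Witt component $\sum_{i} a_{i}f_{i} = 1$ and is therefore a unit (since $W_{N}(Q)\to Q$ has nilpotent kernel); this is a welcome elaboration but not a different method.
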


\begin{proof}
	The statement means that for any $k$-algebra $Q$ and
	any family of elements $f_{1}, \dots, f_{n}$ generating the unit ideal of $Q$,
	the sequence
		\[
				X(h_{r}^{A}(Q))
			\to
				\prod_{i}
					X(h_{r}^{A}(Q_{f_{i}}))
			\rightrightarrows
				\prod_{i, j}
					X(h_{r}^{A}(Q_{f_{i} f_{j}}))
		\]
	is an equalizer.
	Set $B = h_{r}^{A}(Q)$ and $g_{i} = (f_{i}, 0, \dots, 0) \otimes 1 \in B$.
	Then the elements $g_{i}$ generate the unit ideal of $B$.
	Therefore, by Proposition \ref{prop: canonical lift before RP commutes with localization},
	the above sequence can be identified with the obvious equalizer sequence
		\[
				X(B)
			\to
				\prod_{i}
					X(B_{g_{i}})
			\rightrightarrows
				\prod_{i, j}
					X(B_{g_{i} g_{j}}).
		\]
\end{proof}

For any $k$-algebra $Q$, we have a natural surjection
$h_{r}^{A}(Q) \onto Q \otimes_{k, \Fr_{k}^{r}} k$.
Hence for any $A$-scheme $X$, we have a natural morphism
$F_{X}^{r} \to \Weil_{\Fr_{k}}^{r}(X)$ of functors over $k$.

\begin{proposition} \label{prop: etale morphisms and Greenberg before RP}
	For any formally \'etale morphism $X' \to X$ of $A$-schemes,
	the diagram
		\[
			\begin{CD}
					F_{X'}^{r}
				@>>>
					F_{X}^{r}
				\\
				@VVV
				@VVV
				\\
					\Weil_{\Fr_{k}}^{r}(X')
				@>>>
					\Weil_{\Fr_{k}}^{r}(X)
			\end{CD}
		\]
	is cartesian.
\end{proposition}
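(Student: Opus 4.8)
The plan is to follow the proof of Proposition \ref{r.cartesian} essentially verbatim; the only new ingredient needed is to recognise the natural morphism $F_{X}^{r} \to \Weil_{\Fr_{k}}^{r}(X)$ as being, on sections, reduction of a ring modulo a nilpotent ideal. Since the square in question is a diagram of functors on $k$-algebras, it is cartesian as soon as it is cartesian on $Q$-points for every $k$-algebra $Q$ (and this also settles the statement for the associated fppf sheaves, since sheafification commutes with finite limits). So fix $Q$, write $B = h_{r}^{A}(Q)$, and recall that $\Weil_{\Fr_{k}}^{r}(X)(Q) = X(Q \otimes_{k, \Fr_{k}^{r}} k)$; I must show that
\[
\begin{CD}
X'(B) @>>> X(B) \\
@VVV @VVV \\
X'(Q \otimes_{k, \Fr_{k}^{r}} k) @>>> X(Q \otimes_{k, \Fr_{k}^{r}} k)
\end{CD}
\]
is a cartesian diagram of sets.

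First I would identify the vertical maps. The sheaf $h_{r}^{A}$ is a sheaf of $A$-algebras, and the natural surjection $B \onto Q \otimes_{k, \Fr_{k}^{r}} k$ inducing $F_{X}^{r} \to \Weil_{\Fr_{k}}^{r}(X)$ is the one coming from the reduction $A \onto A / I = k$; hence it identifies its target with $B / I B$ and its kernel with $I B$. (Explicitly: $B \cong C_{r + 1}(Q) \otimes_{C_{r + 1}(k)} A$ by Propositions \ref{prop: base change from Witt to Cohen} and \ref{prop: Cohen lifted ghost component map}, and then $B / I B \cong C_{r + 1}(Q) / p C_{r + 1}(Q) \cong Q \otimes_{k, \Fr_{k}^{r}} k$ by Proposition \ref{prop: mod p of Cohen ring scheme}, the Frobenius twist appearing because $A$ carries the $W_{N}(k)$-structure given by the $r$-th ghost component map.) Since $A$ is artinian local, $I^{m} = 0$ for some $m \ge 1$, so $(I B)^{m} \subseteq I^{m} B = 0$: the closed immersion $\Spec \bar{B} \hookrightarrow \Spec B$, with $\bar{B} := B / I B$, is a nilpotent thickening.

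It then remains to show that $X'(B) \to X(B) \times_{X(\bar{B})} X'(\bar{B})$ is a bijection, i.e. that given the nilpotent closed immersion $\Spec \bar{B} \hookrightarrow \Spec B$, a $B$-point of $X$ and a $\bar{B}$-point of $X'$ lying over it, there is a unique morphism $\Spec B \to X'$ over $X$ extending the $\bar{B}$-point. This is exactly the infinitesimal lifting criterion for the formally \'etale morphism $X' \to X$; see \cite[Remark (17.1.2) (iv)]{Gro67}, precisely as in the proof of Proposition \ref{r.cartesian}. The only step that is not entirely formal is the identification of the ``special fibre'' $B / I B$ with $Q \otimes_{k, \Fr_{k}^{r}} k$ compatibly with $F_{X}^{r} \to \Weil_{\Fr_{k}}^{r}(X)$; once that is in hand, the statement is a formal consequence of formal \'etaleness.
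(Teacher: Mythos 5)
Your proof follows the same strategy as the paper's: recognize that $h_{r}^{A}(Q) \onto Q \otimes_{k,\Fr_{k}^{r}} k$ is a surjection with nilpotent kernel, so that $\Spec(Q \otimes_{k,\Fr_{k}^{r}} k) \hookrightarrow \Spec h_{r}^{A}(Q)$ is a nilpotent thickening, and then invoke the formal-\'etaleness lifting criterion exactly as in the proof of Proposition~\ref{r.cartesian}. The paper's proof is the one-liner ``the kernel of $h_{r}^{A} \onto \Ga \otimes_{k,\Fr^{r}} k$ is nilpotent; the same proof works.''

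There is, however, a genuine (if small) gap in your identification of the kernel. You claim that the kernel of $B = h_{r}^{A}(Q) \onto Q \otimes_{k,\Fr_{k}^{r}} k$ equals $IB$, and hence $\bar B := B/IB \cong Q \otimes_{k,\Fr_{k}^{r}} k$; the parenthetical ``$B \cong C_{r+1}(Q) \otimes_{C_{r+1}(k)} A$'' is used to justify this. But that na\"ive tensor-product description of $h_{r}^{A}(Q)$ is established in the paper only for \emph{relatively perfect} $Q$ (Propositions~\ref{prop: base change from Witt to Cohen for RP algebras}, \ref{prop: p times Cohen ring scheme for RP algebra}, \ref{prop: RPn of twisted Greenberg is RP Greenberg}); the present proposition must hold for \emph{every} $k$-algebra $Q$, and the fppf sheafification built into the definition of $h_{r}^{A}$ genuinely changes the answer for non-relatively-perfect $Q$. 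Concretely, take $A = C_{2}(k)$, $r=1$, $I = (p)$, $Q = k[x]$: then $B = C_{2}(k[x])$, the kernel of $B \onto Q \otimes_{k,\Fr_{k}} k$ is $C_{1}(k[x])$ (the Verschiebung image, i.e.\ all $(0,x_{0})$ with $x_{0} \in k[x]$), whereas $IB = pC_{2}(k[x])$ consists only of $(0,q)$ with $q \in k[x^{p}]$, a strictly smaller ideal. So $\bar B = B/IB$ is strictly larger than $Q \otimes_{k,\Fr_{k}} k$, and the cartesian square you reduce to involves the wrong ring in the bottom row. The fix is easy and is what the paper does: do not try to identify the kernel with $IB$; instead, let $\mathcal{K}(Q)$ denote the actual kernel of $B \onto Q \otimes_{k,\Fr_{k}^{r}} k$ (left-exactness of evaluation gives that this is $\mathcal{K}(Q)$ where $\mathcal{K}$ is the kernel ideal sheaf of $h_{r}^{A} \onto \Ga \otimes_{k,\Fr^{r}} k$), observe that $\mathcal{K}$ is a nilpotent ideal sheaf (it contains $I h_{r}^{A}$ and is killed by a power of $I$), so $\mathcal{K}(Q)^{m} \subseteq \mathcal{K}^{m}(Q) = 0$, and run the formal-\'etaleness argument with $B/\mathcal{K}(Q) = Q \otimes_{k,\Fr_{k}^{r}} k$. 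Your main argument is otherwise sound; only the explicit identification $\ker = IB$ needs to be dropped.
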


\begin{proof}
	The kernel of $h_{r + 1}^{A} \onto \Ga \otimes_{k, \Fr^{r}} k$ is nilpotent.
	The same proof as Proposition \ref{r.cartesian} works.
\end{proof}

\begin{proposition} \label{prop: Greenberg before RP preserves Zariski coverings}
	For any open immersion $X' \into X$ of $A$-schemes, the morphism $F_{X'}^{r} \to F_{X}^{r}$ is relatively represented by open immersions of $k$-schemes.
	For any open covering $\{X_{\lambda}\}$ of $X$,
	the Zariski sheaves $\{F_{X_{\lambda}}^{r}\}$ form
	a Zariski covering of $F_{X}^{r}$.
\end{proposition}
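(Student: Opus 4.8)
The plan is to follow the pattern of Proposition~\ref{p.px}, with Proposition~\ref{prop: etale morphisms and Greenberg before RP} playing the role there assigned to Proposition~\ref{r.cartesian}. Since the open immersion $X' \into X$ of $A$-schemes is in particular formally \'etale, Proposition~\ref{prop: etale morphisms and Greenberg before RP} gives a cartesian square identifying $F_{X'}^{r}$ with $F_{X}^{r} \times_{\Weil_{\Fr_{k}}^{r}(X)} \Weil_{\Fr_{k}}^{r}(X')$, where the Weil restrictions exist as $k$-schemes by Lemma~\ref{l.FrWeil} (the absolute Frobenius of $\Spec k$ is finite locally free because $[k : k^{p}] < \infty$). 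Granting that $\Weil_{\Fr_{k}}^{r}(X') \to \Weil_{\Fr_{k}}^{r}(X)$ is an open immersion, its base change along $F_{X}^{r} \to \Weil_{\Fr_{k}}^{r}(X)$, and then along any $k$-scheme mapping to $F_{X}^{r}$, is again an open immersion; this gives the first assertion that $F_{X'}^{r} \to F_{X}^{r}$ is relatively represented by open immersions.

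The one point that is not purely formal is thus that Weil restriction along $\Fr_{k}$ preserves open immersions; I would establish this for a single application of $\Weil_{\Fr_{k}}$ and iterate $r$ times. Let $Y$ be a $k$-scheme and $y \colon Y \to \Weil_{\Fr_{k}}(X)$ a section, corresponding by the adjunction recalled in Section~\ref{s.rp} to a morphism $\Tilde{y} \colon Y^{(p)} \to X$. Unwinding the universal property of $\Weil_{\Fr_{k}}$, the fibre product $Y \times_{\Weil_{\Fr_{k}}(X)} \Weil_{\Fr_{k}}(X')$ is the subfunctor of $Y$ picking out those $g \colon Y' \to Y$ whose base change $g^{(p)} \colon Y'^{(p)} \to Y^{(p)}$ along $\Fr_{k}$ has image inside the open subscheme $\Tilde{y}^{-1}(X') \subseteq Y^{(p)}$. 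Because $Y^{(p)} \to Y$ is the base change of the absolute Frobenius of $\Spec k$, hence a universal homeomorphism, open subschemes of $Y^{(p)}$ correspond bijectively to open subschemes of $Y$ in such a way that $g^{(p)}$ factors through the open corresponding to $U \subseteq Y$ if and only if $g$ factors through $U$. Writing $\Tilde{y}^{-1}(X') = U^{(p)}$ for the corresponding $U \subseteq Y$, the subfunctor above is represented by $U \into Y$; taking $Y = \Weil_{\Fr_{k}}(X)$ and $y = \id$ shows $\Weil_{\Fr_{k}}(X') \to \Weil_{\Fr_{k}}(X)$ is an open immersion. (Alternatively, one may simply invoke that Weil restriction along a finite locally free universal homeomorphism preserves open immersions.)

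For the covering statement, the first part already makes each $F_{X_{\lambda}}^{r} \to F_{X}^{r}$ relatively represented by open immersions, so only joint surjectivity as Zariski sheaves remains. Given a $k$-algebra $Q$ and a section $y \in F_{X}^{r}(Q) = X(h_{r}^{A}(Q))$, I would use that the natural morphism $\Spec h_{r}^{A}(Q) \to \Spec Q$ is a homeomorphism: it factors as the surjective closed immersion $\Spec h_{r}^{A}(Q) \to \Spec(Q \otimes_{k, \Fr_{k}^{r}} k)$ (the kernel of $h_{r}^{A}(Q) \onto Q \otimes_{k, \Fr_{k}^{r}} k$ being nilpotent, cf.\ the proof of Proposition~\ref{prop: etale morphisms and Greenberg before RP}) followed by the base change of the universal homeomorphism $\Fr_{k}^{r}$ on $\Spec k$. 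Hence the open covering $\{y^{-1}(X_{\lambda})\}$ of $\Spec h_{r}^{A}(Q)$ transports to an open covering of $\Spec Q$; refining it by standard affine opens $\Spec Q_{f_{i}}$ and using Proposition~\ref{prop: canonical lift before RP commutes with localization} to identify $h_{r}^{A}(Q_{f_{i}})$ with the localization $h_{r}^{A}(Q)_{(f_{i}, 0, \dots, 0) \otimes 1}$, one finds that each $y|_{Q_{f_{i}}}$ factors through $X_{\lambda(i)}$ for a suitable $\lambda(i)$, i.e.\ lies in $F_{X_{\lambda(i)}}^{r}(Q_{f_{i}})$ --- exactly the argument used in the proofs of Propositions~\ref{p.px} and~\ref{p.pt}. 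I expect the Weil-restriction step of the second paragraph to be the only real obstacle; everything else is routine bookkeeping parallel to the relatively perfect case.
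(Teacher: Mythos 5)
Your proposal is correct and is in essence the paper's argument: the paper's proof is a one-line citation of Proposition~\ref{prop: etale morphisms and Greenberg before RP} (the cartesian square over $\Weil_{\Fr_k}^r$) together with \cite[Lemma 1.5]{Kat86}, and you have simply unpacked what that citation is doing. Your second paragraph (checking that $\Weil_{\Fr_k}$ preserves open immersions via the universal-homeomorphism property of $Y^{(p)} \to Y$) is a correct fleshing-out of what the paper takes as known about Weil restriction along a finite locally free universal homeomorphism. The one small departure is in the covering statement: the paper would presumably obtain it by pulling back an open covering $\{\Weil_{\Fr_k}^r(X_\lambda)\}$ of $\Weil_{\Fr_k}^r(X)$ through the cartesian square, whereas you instead argue directly at the level of points using the homeomorphism $\Spec h_r^A(Q) \to \Spec Q$ and Proposition~\ref{prop: canonical lift before RP commutes with localization}; this is the argument the paper uses for the relatively perfect analogue (Propositions~\ref{p.sh} and~\ref{p.px}), so it is certainly in the spirit of the paper, and both routes are equally sound.
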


\begin{proof}
	This follows from Proposition \ref{prop: etale morphisms and Greenberg before RP} and \cite[Lemma 1.5]{Kat86}.
\end{proof}

\begin{theorem}
	For any $A$-scheme $X$, the functor $F_{X}^{r}$ is representable by a $k$-scheme.
\end{theorem}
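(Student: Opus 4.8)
The plan is to follow the pattern used for Weil restrictions in \cite[Section 7.6]{BLR90} and for the finite level Greenberg transform in Lemmas \ref{l.Aone} and \ref{l.GrAff}: first represent $F_{X}^{r}$ when $X$ is affine, then glue. The key input that makes this possible is the representability of the twisted Greenberg algebra itself, Proposition \ref{prop: canonical lift before RP is representable}.

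First I would observe that, just as in Remark \ref{rem.F} \eqref{i.remFp}, the assignment $X \mapsto F_{X}^{r}$ preserves finite limits of $A$-schemes: it is the composite of the sheaf of rings $h_{r}^{A}$ with the limit-preserving functor $X \mapsto \Hom_{A}(-, X)$, and $F_{\Spec A}^{r}$ is the one-point functor. In particular $F_{\Affine_{A}^{\Lambda}}^{r} \cong \prod_{\lambda \in \Lambda} F_{\Affine_{A}^{1}}^{r}$ for any index set $\Lambda$, and since $F_{\Affine_{A}^{1}}^{r}(Q) = h_{r}^{A}(Q)$ as a set, $F_{\Affine_{A}^{1}}^{r}$ is represented by the affine $k$-scheme $h_{r}^{A}$ of Proposition \ref{prop: canonical lift before RP is representable}. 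Hence $F_{\Affine_{A}^{\Lambda}}^{r}$ is represented by a (possibly infinite-dimensional) affine space over $k$. Next, for $X = \Spec B$ arbitrary affine, I would write $B = A[x_{\lambda} \mid \lambda \in \Lambda]/(f_{\mu} \mid \mu \in M)$ so that, as in the proof of Lemma \ref{l.GrAff}, $X$ is the equalizer of two $A$-morphisms $\Affine_{A}^{\Lambda} \rightrightarrows \Affine_{A}^{M}$. Applying $F_{(-)}^{r}$ presents $F_{X}^{r}$ as the equalizer of two morphisms of affine $k$-schemes $F_{\Affine_{A}^{\Lambda}}^{r} \rightrightarrows F_{\Affine_{A}^{M}}^{r}$; since the target is affine and hence separated, this equalizer is a closed subscheme of $F_{\Affine_{A}^{\Lambda}}^{r}$, so $F_{X}^{r}$ is represented by an affine $k$-scheme.

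Finally, for general $X$, I would fix an affine open cover $\{X_{\lambda}\}$, represent each $F_{X_{\lambda}}^{r}$ by an affine $k$-scheme $Y_{\lambda}$ as above, and use Proposition \ref{prop: Greenberg before RP preserves Zariski coverings} to see that each $F_{X_{\lambda} \cap X_{\mu}}^{r} \to F_{X_{\lambda}}^{r}$ is an open immersion, hence that $F_{X_{\lambda} \cap X_{\mu}}^{r}$ is represented by an open subscheme $Y_{\lambda\mu} \subseteq Y_{\lambda}$. The canonical identifications $Y_{\lambda\mu} \cong Y_{\mu\lambda}$ are compatible on triple overlaps, so the $Y_{\lambda}$ glue to a $k$-scheme $Y$; and because $F_{X}^{r}$ is a Zariski sheaf (Proposition \ref{prop: Greenberg before RP is a Zariski sheaf}) covered by the $F_{X_{\lambda}}^{r}$ (Proposition \ref{prop: Greenberg before RP preserves Zariski coverings}), a $Q$-point of $F_{X}^{r}$ is the same datum as a compatible family of $Q$-points of the $Y_{\lambda}$, i.e. a $Q$-point of $Y$. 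Thus $Y$ represents $F_{X}^{r}$.

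I expect no serious obstacle: the only substantive ingredient is Proposition \ref{prop: canonical lift before RP is representable} (which ultimately rests on Schoeller's analysis of the Cohen ring scheme), and everything else is the standard equalizer-and-glue machinery, requiring only the usual care in the gluing step when $X$ is neither separated nor quasi-compact.
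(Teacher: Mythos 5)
Your proposal is correct and follows essentially the same route as the paper's proof: the affine case via the limit-preservation of $X \mapsto F_{X}^{r}$ together with the representability of $h_{r}^{A}$ (Proposition \ref{prop: canonical lift before RP is representable}), then gluing via Propositions \ref{prop: Greenberg before RP is a Zariski sheaf} and \ref{prop: Greenberg before RP preserves Zariski coverings}. The paper simply compresses the equalizer-and-glue portion into a reference to ``arguments similar to Proposition \ref{p.gr}''; you have spelled those out correctly, including the observation that an equalizer of morphisms into an affine (hence separated) scheme is a closed subscheme.
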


\begin{proof}
	If $X = \Affine_{A}^{1}$, then $F_{X}^{r} = h_{r}^{A}$,
	which is representable by
	Proposition \ref{prop: canonical lift before RP is representable}.
	The general case follows from
	Propositions \ref{prop: Greenberg before RP is a Zariski sheaf} and
	\ref{prop: Greenberg before RP preserves Zariski coverings}
	by the arguments similar to the proof of the representability of
	the relatively perfect Greenberg functor in Proposition \ref{p.gr}.
\end{proof}

\begin{definition}
	Let $X$ be an $A$-scheme.
	We call the object representing the functor $F_{X}^{r}$
	the \emph{$r$-twisted Greenberg transform} of $X$
	and denote it by $\Gr_{A}^{r}(X)$.
\end{definition}

When $k$ is perfect, the morphism
$\ff^{r} \otimes \id \colon h_{r}^{A} \to h^{A}$
mentioned after Proposition \ref{prop: Greenberg algebra indep of N}
induces a morphism of functors
$\Gr_{A}^{r} \to \Gr_{A}$
to the usual Greenberg functor.
This is not an isomorphism unless $r = 0$ and $A = k$.

Nevertheless, we will show that
the relative perfection of $\Gr_{A}^{r}(X)$ is
the relatively perfect Greenberg transform.
For this, we need to deal with the fppf sheafification in the definition of $h_{r}^{A}$,
which did not appear in the definition of the Kato canonical lifting $h^{A}$.
The following is needed instead of Proposition \ref{prop: mod p of Cohen ring scheme}:

\begin{proposition} \label{prop: p times Cohen ring scheme for RP algebra}
	Let $Q$ be a relatively perfect $k$-algebra.
	Let $0 \le m \le n$ be integers.
	Then $p^{n - m}(C_{n + 1}(Q)) = C_{m + 1}(Q)$ in $C_{n + 1}(Q)$,
	where we identified $C_{m + 1}(Q)$ with its image in $C_{n + 1}(Q)$
	using Proposition \ref{prop: Ver for Cohen ring scheme}.
\end{proposition}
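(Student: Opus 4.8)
The plan is to reduce the statement to Witt vectors by means of Proposition \ref{prop: base change from Witt to Cohen} and then to exploit the good base-change behaviour of $W_{\bullet}$ on relatively perfect algebras recorded in Proposition \ref{p.wh}. I will not use the (easy) inclusion $p^{n-m}(C_{n+1}(Q))\subseteq C_{m+1}(Q)$ separately; the argument below gives the equality directly.

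First I would dispose of the case $Q=k$, which is legitimate since $k$ is relatively perfect over itself. Here $C_{n+1}(k)=C(k)/p^{n+1}C(k)$, where $C(k)=\invlim_{j}C_{j+1}(k)$ is a discrete valuation ring with maximal ideal $(p)$ (Proposition \ref{prop: Cohen ring is artinian with max ideal p} together with \cite[Proposition 3.2.3]{Sch72}); thus $C_{n+1}(k)$ is a uniserial $C(k)$-module whose unique submodule of colength $n-m$ is $p^{n-m}(C_{n+1}(k))$. On the other hand, by Proposition \ref{prop: mod p of Cohen ring scheme} the image of multiplication by $p^{n-m}$ on $C_{n+1}$ coincides with the image of the embedding $C_{m+1}\hookrightarrow C_{n+1}$ of Proposition \ref{prop: Ver for Cohen ring scheme}; since $[p^{n-m}]$ is $C_{n+1}$-linear, this image is a $C(k)$-submodule of $C_{n+1}(k)$ isomorphic to $C_{m+1}(k)$, hence also of colength $n-m$. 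Therefore $p^{n-m}(C_{n+1}(k))=C_{m+1}(k)$ inside $C_{n+1}(k)$.

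For a general relatively perfect $k$-algebra $Q$ I would argue as follows. By Proposition \ref{prop: base change from Witt to Cohen}, $C_{n+1}(Q)=W_{n+1}(Q)\otimes_{W_{n+1}(k)}C_{n+1}(k)$ and $C_{m+1}(Q)=W_{m+1}(Q)\otimes_{W_{m+1}(k)}C_{m+1}(k)$. Because $p^{n-m}$ is central, $p^{n-m}(C_{n+1}(Q))$ is exactly the image of the natural map $W_{n+1}(Q)\otimes_{W_{n+1}(k)}p^{n-m}(C_{n+1}(k))\to C_{n+1}(Q)$, which by the previous paragraph is the image of $W_{n+1}(Q)\otimes_{W_{n+1}(k)}C_{m+1}(k)\to C_{n+1}(Q)$ induced by $C_{m+1}(k)\hookrightarrow C_{n+1}(k)$. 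Now the $W_{n+1}(k)$-module structure on $C_{m+1}(k)$ factors through the restriction $W_{n+1}(k)\onto W_{m+1}(k)$, so Proposition \ref{p.wh} \eqref{i.wr} (applied to $W_{n+1}(Q)\otimes_{W_{n+1}(k)}W_{m+1}(k)\isomto W_{m+1}(Q)$) yields $W_{n+1}(Q)\otimes_{W_{n+1}(k)}C_{m+1}(k)\cong W_{m+1}(Q)\otimes_{W_{m+1}(k)}C_{m+1}(k)=C_{m+1}(Q)$, and one checks that this identification is compatible with the maps into $C_{n+1}(Q)$, i.e. that the displayed map is the embedding $C_{m+1}(Q)\hookrightarrow C_{n+1}(Q)$ of Proposition \ref{prop: Ver for Cohen ring scheme}. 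Hence its image is $C_{m+1}(Q)$, as wanted. The hard part will be exactly this last compatibility bookkeeping — tracking the chain of canonical isomorphisms and the embeddings $C_{j+1}\hookrightarrow C_{j'+1}$ through the base change along $W_{\bullet}(k)\to W_{\bullet}(Q)$ — since once the case $Q=k$ and Proposition \ref{p.wh} are available, everything else is formal.
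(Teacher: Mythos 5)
Your overall strategy—settle $Q = k$ by the uniserial structure of $C_{n+1}(k) = C(k)/p^{n+1}C(k)$ and then base change along $W_{\bullet}(k) \to W_{\bullet}(Q)$—is attractive, but the base-change step has a genuine gap, and it sits exactly where you dismiss the remaining "compatibility bookkeeping" as formal.

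The claim that the $W_{n+1}(k)$-module structure on the embedded $C_{m+1}(k)$ factors through the restriction $W_{n+1}(k) \onto W_{m+1}(k)$, so that Proposition \ref{p.wh} \eqref{i.wr} turns $W_{n+1}(Q) \otimes_{W_{n+1}(k)} C_{m+1}(k)$ into $C_{m+1}(Q)$, is not correct. The embedding of Proposition \ref{prop: Ver for Cohen ring scheme} is built from $\Ver^{n-m}$, and the Witt-vector identity $a \cdot \Ver^{s}(b) = \Ver^{s}(\Fr^{s}(a) b)$ introduces a Frobenius twist: the $W_{n+1}(k)$-action on the embedded copy of $C_{m+1}(k)$ factors through $\Fr^{n-m}$ composed with the projection to $W_{m+1}(k)$, not through the projection alone. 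Already for $m = 0$, $n = 1$ one has $(a, b) \cdot (0, x) = (0, a^{p} x)$ in $W_{2}$, so the scalar acting is $a^{p}$, not $a$. Hence $W_{n+1}(Q) \otimes_{W_{n+1}(k)} C_{m+1}(k)$ is a $\Fr^{n-m}$-twisted version of $W_{m+1}(Q) \otimes_{W_{m+1}(k)} C_{m+1}(k)$. One can undo the twist using the relative perfectness of $Q$---the bijectivity of $\Fr^{N} \otimes \id$ invoked in the proof of Proposition \ref{p.wh} \eqref{i.wt}---but the resulting isomorphism onto $C_{m+1}(Q)$ is not the obvious one, so checking compatibility with the embedding into $C_{n+1}(Q)$ is no longer a formality: it amounts to essentially the same computation on Witt coordinates $(0,\dots,0,x,0,\dots,0)$ that the paper carries out directly. (A smaller side point: for the pointwise identities $C_{j+1}(Q) = W_{j+1}(Q) \otimes_{W_{j+1}(k)} C_{j+1}(k)$ you need Proposition \ref{prop: base change from Witt to Cohen for RP algebras}, which is the evaluated statement for relatively perfect $Q$, not the fppf-sheaf statement of Proposition \ref{prop: base change from Witt to Cohen}.)
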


\begin{proof}
	It is enough to treat the case $m = n - 1$.
	One inclusion $p(C_{n + 1}(Q)) \subseteq C_{n}(Q)$ is obvious.
	For the opposite inclusion,
	it is enough to show that any element of the form
	$(0, \dots, 0, x, 0, \dots, 0)$ with $x \in Q$ placed in the $j$-th position, $1 \le j \le n$, is in $p(C_{n + 1}(Q))$.
	 Here we number entries of Witt vectors in $W_{n + 1}$
	from the $0$-th position to the $n$-th position. 
	Since $Q$ is relatively perfect
	and hence
	$\ff_{Q} \otimes \mathrm{can} \colon Q \otimes_{k, \ff_{k}} k \isomto Q$,
	we can write $x$ as a sum of elements of the form
	$y^{p} t(1)^{i(1)} \cdots t(d)^{i(d)}$ with
	$y \in Q$ and $0 \le i(1), \dots, i(d) \le p - 1$.
	It is enough to treat the case where $x$ is a monomial:
	$x = y^{p} t(1)^{i(1)} \cdots t(d)^{i(d)}$.
	Then $(0, \dots, 0, x, 0, \dots, 0)$ is $p$ times $(0, \dots, 0, y, 0, \dots, 0)$ (with $y$ in the $(j - 1)$-st position) times $\bigl( 1 \otimes t(1)^{i(1) p^{n - j}} \cdots t(d)^{i(d) p^{n - j}},
	0, \dots, 0 \bigr)$.
	 Indeed,
	    \begin{align*}
    	    &
    	            p \cdot (0, \dots, 0, y, 0, \dots, 0) \cdot
    	            \bigl( 1 \otimes t(1)^{i(1) p^{n - j}} \cdots t(d)^{i(d) p^{n - j}},
    	            0, \dots, 0\bigr)
            \\
            &    =
    	            (0, \dots, 0, y^{p}, 0, \dots, 0) \cdot
    	            \bigl( 1 \otimes t(1)^{i(1) p^{n - j}} \cdots t(d)^{i(d) p^{n - j}},
    	            0, \dots, 0\bigr)
            \\
            &    =
                    \bigl( 0, \dots, 0,
                    y^{p} \otimes t(1)^{i(1) p^{n}} \cdots t(d)^{i(d) p^{n}},
                    0, \dots, 0 \bigr)
            \\
            &    =
                    \bigl( 0, \dots, 0,
                    y^{p} t(1)^{i(1)} \cdots t(d)^{i(d)},
                    0, \dots, 0 \bigr)
            \\
            &   =
                    (0, \dots, 0, x, 0, \dots, 0),
	    \end{align*}
    where the $y$ in the first line is in the $(j - 1)$-st position,
    the $y^{p}$ in the second line is in the $j$-th position,
    the non-zero entries in the rest of the lines are in the $j$-th position
    and the second equality follows from \cite[Chapter 0, (1.3.12)]{Ill79}.
\end{proof}

We also need the following instead of
Proposition \ref{prop: base change from Witt to Cohen}:

\begin{proposition} \label{prop: base change from Witt to Cohen for RP algebras}
    Let $Q$ be a relatively perfect $k$-algebra.
    Then the natural homomorphism
    $W_{n + 1}(Q) \otimes_{W_{n + 1}(k)} C_{n + 1}(k) \to C_{n + 1}(Q)$
    is an isomorphism.
    In particular, this isomorphism gives an isomorphism between the Kato canonical lifting $h^{A}(Q) = W_{r + 1}(Q) \otimes_{W_{r + 1}(k)} A$
    and $C_{r + 1}(Q) \otimes_{C_{r + 1}(k)} A$.
\end{proposition}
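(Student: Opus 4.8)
The plan is to establish the first isomorphism by induction on $n$, imitating the proof of Proposition~\ref{prop: base change from Witt to Cohen}; the one new ingredient is that relative perfectness of $Q$ lets one carry out with the honest rings $W_{m}(Q)$ the base changes that were done there with fppf sheaves, via Proposition~\ref{p.wh}. For $n=0$ we have $C_{1}=\Ga$, so both sides are $Q$ and the map is the identity.

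For the inductive step I would assume the assertion for $n$ and evaluate the exact sequence \eqref{eq: Kummer for Cohen ring scheme} on $k$-sections to get a short exact sequence of $W_{n+2}(k)$-modules
\[
	0 \to C_{n+1}(k) \to C_{n+2}(k) \to M \to 0,
	\qquad M := (\Ga \tensor_{k,\Fr_{k}} k)(k)
\]
(its exactness on $k$-sections is recorded in the proof of Proposition~\ref{prop: base change from Witt to Cohen}). Applying $W_{n+2}(Q)\tensor_{W_{n+2}(k)}(-)$ gives a right-exact sequence which maps naturally to the sequence of $Q$-sections of \eqref{eq: Kummer for Cohen ring scheme}, namely the left-exact sequence $0 \to C_{n+1}(Q) \to C_{n+2}(Q) \to M(Q)$ (left-exactness of the section functor, with $C_{n+1}(Q)\into C_{n+2}(Q)$ the Verschiebung embedding of Proposition~\ref{prop: Ver for Cohen ring scheme}). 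I would then check that the comparison maps on the two outer terms are isomorphisms. On the left, viewing $C_{n+1}(k)$ as a $W_{n+1}(k)$-algebra,
\[
	W_{n+2}(Q) \tensor_{W_{n+2}(k)} C_{n+1}(k)
	\cong \bigl( W_{n+2}(Q) \tensor_{W_{n+2}(k)} W_{n+1}(k) \bigr) \tensor_{W_{n+1}(k)} C_{n+1}(k)
	\cong W_{n+1}(Q) \tensor_{W_{n+1}(k)} C_{n+1}(k)
\]
by Proposition~\ref{p.wh}\eqref{i.wr}, and the induction hypothesis identifies this with $C_{n+1}(Q)$. On the right, $M$ is, by the proof of Proposition~\ref{prop: base change from Witt to Cohen}, obtained from $W_{1}(k)=k$ by restriction along a ring homomorphism $W_{n+2}(k)\to k$ that factors through the truncation $W_{n+2}(k)\onto W_{1}(k)$, so Proposition~\ref{p.wh}\eqref{i.wr} (now with $N=1$) again makes $W_{n+2}(Q)\tensor_{W_{n+2}(k)}M \to M(Q)$ an isomorphism. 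A diagram chase (a four/five lemma argument, which only needs right-exactness of the top row and left-exactness of the bottom row) then shows that the middle comparison $W_{n+2}(Q)\tensor_{W_{n+2}(k)}C_{n+2}(k)\to C_{n+2}(Q)$ is an isomorphism, which is the assertion for $n+1$.

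For the ``in particular'' statement: by construction $A$ is a $W_{r+1}(k)$-algebra, and by Proposition~\ref{prop: Cohen lifted ghost component map} it is compatibly a $C_{r+1}(k)$-algebra. Taking the first isomorphism with $n=r$ and using associativity of tensor products,
\[
	C_{r+1}(Q) \tensor_{C_{r+1}(k)} A
	\cong \bigl( W_{r+1}(Q) \tensor_{W_{r+1}(k)} C_{r+1}(k) \bigr) \tensor_{C_{r+1}(k)} A
	\cong W_{r+1}(Q) \tensor_{W_{r+1}(k)} A,
\]
and the last term is $h^{A}(Q)$ by the description of the Kato canonical lifting in Section~\ref{s.cl} (applicable since $I^{r+1}=0$ and $N=r+1>r$).

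The main obstacle is the bookkeeping inside the inductive step: in Proposition~\ref{prop: base change from Witt to Cohen} everything is phrased with fppf sheaves, so exactness survives base change automatically, whereas here one works with actual rings and must invoke Proposition~\ref{p.wh} to see that the outer comparison maps have no defect — which forces one to keep careful track of \emph{which} Frobenius twist of \emph{which} truncated Witt ring occurs in each term, especially in the quotient $M$, so that the identifications with $W_{n+1}(Q)\tensor_{W_{n+1}(k)}C_{n+1}(k)$ and with $M(Q)$ really hold. (One could alternatively use Proposition~\ref{prop: p times Cohen ring scheme for RP algebra} to upgrade the bottom sequence to a short exact sequence for relatively perfect $Q$, but left-exactness is all the diagram chase requires.) Once the outer comparison maps are pinned down as isomorphisms, the rest of the argument is formal.
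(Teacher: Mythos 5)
Your proof is correct and is essentially the paper's argument spelled out in full: the paper's proof simply says that one runs the same induction as in Proposition~\ref{prop: base change from Witt to Cohen}, and that the two isomorphisms needed to compare outer terms (the reduction $W_{n+2}(Q)\tensor_{W_{n+2}(k)}C_{n+1}(k)\isomto W_{n+1}(Q)\tensor_{W_{n+1}(k)}C_{n+1}(k)$ and the mod-$p^{n+1}$ identification with $Q\tensor_{k,\ff}k$) both follow from Proposition~\ref{p.wh}~\eqref{i.wr}, after which the snake lemma (your four/five-lemma chase, equivalently) finishes the inductive step. The ``in particular'' statement is also handled the same way, by associativity of tensor products over the compatible $W_{r+1}(k)$- and $C_{r+1}(k)$-algebra structures on $A$ from Proposition~\ref{prop: Cohen lifted ghost component map}.
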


\begin{proof}
    The only difference from
    Proposition \ref{prop: base change from Witt to Cohen}
    is to show
        \begin{gather*}
                W_{n + 2}(Q) \otimes_{W_{n + 2}(k)} C_{n + 1}(k)
            \isomto
                W_{n + 1}(Q) \otimes_{W_{n + 1}(k)} C_{n + 1}(k),
            \\
                W_{n + 2}(Q) \otimes_{W_{n + 2}(k), \ff} k
                \isomto
            Q \otimes_{k, \ff} k.
        \end{gather*}
    By Proposition \ref{p.wh} \eqref{i.wr}, we have
    $W_{n + 2}(Q) \otimes_{W_{n + 2}(k)} W_{n + 1}(k)
    \cong W_{n + 1}(Q)$.
    This implies the first isomorphism.
    The second isomorphism follows similarly.
\end{proof}

\begin{proposition} \label{prop: RPn of twisted Greenberg is RP Greenberg}
	Let $X$ be an $A$-scheme.
	Then the relative perfection of $\Gr_{A}^{r}(X)$ is canonically isomorphic to $\Gr_{A}^{\RP}(X)$.
\end{proposition}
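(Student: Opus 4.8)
The plan is to check the claimed isomorphism on functors of points, using that relative perfection over $k$ is right adjoint to the inclusion $(\rpsch/k)\hookrightarrow(\Sch/k)$ (Proposition \ref{p.rp}; applicable since $[k:k^{p}]<\infty$). Thus $\bigl(\Gr_{A}^{r}(X)\bigr)^{\RP}$ represents the functor sending a relatively perfect $k$-scheme $Y$ to $\Hom_{k}(Y,\Gr_{A}^{r}(X))=F_{X}^{r}(Y)$, which for affine $Y=\Spec Q$ is $X(h_{r}^{A}(Q))$; while $\Gr_{A}^{\RP}(X)$ represents $Q\mapsto X(h^{A}(Q))$ on $(\rpaff/k)$. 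Since both $F_{X}^{r}$ and $Q\mapsto X(h^{A}(Q))$ are Zariski sheaves and Zariski-local on $X$ (Propositions \ref{prop: Greenberg before RP is a Zariski sheaf}, \ref{prop: Greenberg before RP preserves Zariski coverings}, \ref{p.sh}, \ref{p.px}), it suffices to produce an isomorphism $h_{r}^{A}(Q)\cong h^{A}(Q)$ of $A$-algebras, natural in relatively perfect $k$-algebras $Q$, and then conclude by Yoneda.

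The comparison of $h_{r}^{A}$ and $h^{A}$ is the heart of the matter. Recall from the proof of Proposition \ref{prop: canonical lift before RP is representable} that $h_{r}^{A}\cong C_{r+1}\otimes_{C_{r+1}(k)}A$ as fppf sheaves, where $A$ is a $C_{r+1}(k)$-algebra via Proposition \ref{prop: Cohen lifted ghost component map}. By Proposition \ref{prop: Cohen ring is artinian with max ideal p} the ring $C_{r+1}(k)$ is a truncated discrete valuation ring with maximal ideal $(p)$ and residue field $k$, and $A$ has finite length over it; so $A\cong\bigoplus_{i}C_{r+1}(k)/p^{s_{i}+1}$ as a $C_{r+1}(k)$-module with $0\le s_{i}\le r$, hence $h_{r}^{A}\cong\bigoplus_{i}\bigl(C_{r+1}/p^{s_{i}+1}C_{r+1}\bigr)$ as fppf sheaves. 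Fix $s$ with $0\le s\le r$. By Proposition \ref{prop: mod p of Cohen ring scheme} the fppf image of multiplication by $p^{s+1}$ on $C_{r+1}$ is the subgroup $C_{r-s}$, so there is an exact sequence of fppf sheaves $0\to C_{r-s}\to C_{r+1}\to C_{r+1}/p^{s+1}C_{r+1}\to 0$. Since $C_{r-s}$ is a split unipotent group scheme over $k$ — a successive extension of copies of $\Ga$ (Proposition \ref{prop: mod p of Cohen ring scheme}) — the map $C_{r+1}(Q)\to\bigl(C_{r+1}/p^{s+1}C_{r+1}\bigr)(Q)$ is surjective for every $k$-algebra $Q$ (dévissage from $\HH^{i}_{\fppf}(\Spec Q,\Ga)=\HH^{i}(\Spec Q,\cO)=0$ for $i\ge 1$), so this sheaf quotient is computed as $C_{r+1}(Q)/C_{r-s}(Q)$. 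Now suppose $Q$ is relatively perfect. Then Proposition \ref{prop: p times Cohen ring scheme for RP algebra} gives $C_{r-s}(Q)=p^{s+1}C_{r+1}(Q)$, so $\bigl(C_{r+1}/p^{s+1}C_{r+1}\bigr)(Q)=C_{r+1}(Q)\otimes_{C_{r+1}(k)}C_{r+1}(k)/p^{s+1}$. Summing over $i$, we obtain $h_{r}^{A}(Q)\cong C_{r+1}(Q)\otimes_{C_{r+1}(k)}A$, which is identified with $W_{r+1}(Q)\otimes_{W_{r+1}(k)}A=h^{A}(Q)$ by Proposition \ref{prop: base change from Witt to Cohen for RP algebras}; every step is natural in $Q$.

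Combining the two steps, for relatively perfect $Q$ we get natural bijections $F_{X}^{r}(Q)=X(h_{r}^{A}(Q))\cong X(h^{A}(Q))$, so the restriction of $F_{X}^{r}$ to $(\rpaff/k)$ is the functor represented by $\Gr_{A}^{\RP}(X)$; gluing over a Zariski cover of $X$ by affines handles general $X$. Therefore $\bigl(\Gr_{A}^{r}(X)\bigr)^{\RP}$ and $\Gr_{A}^{\RP}(X)$ represent the same functor on relatively perfect $k$-schemes, whence the canonical isomorphism. The one genuinely subtle point — and the reason Propositions \ref{prop: p times Cohen ring scheme for RP algebra} and \ref{prop: base change from Witt to Cohen for RP algebras} were established separately from their analogues for general $k$-algebras — is the fppf sheafification built into the definition of $h_{r}^{A}$: Proposition \ref{prop: p times Cohen ring scheme for RP algebra} is exactly what makes this sheafification invisible after evaluating at a relatively perfect algebra, so that the naive tensor product $C_{r+1}(Q)\otimes_{C_{r+1}(k)}A$ already computes $h_{r}^{A}(Q)$.
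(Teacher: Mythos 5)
Your proof is correct and follows essentially the same route as the paper: reduce to a natural isomorphism $h_{r}^{A}(Q)\cong h^{A}(Q)$ for relatively perfect $Q$, pass from $W_{r+1}$ to $C_{r+1}$, decompose $A$ into summands $C_{r+1}(k)/p^{s+1}(C_{r+1}(k))$, and combine Propositions \ref{prop: mod p of Cohen ring scheme} and \ref{prop: p times Cohen ring scheme for RP algebra} with the vanishing of higher fppf cohomology of split unipotent groups over an affine base to identify the sheaf quotient with the naive quotient on sections. The only presentational difference is that the paper first exhibits the canonical map (presheaf to sheafification) and then proves it is bijective, while you verify the isomorphism directly; the underlying computation is identical.
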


\begin{proof}
    For any $k$-algebra $Q$,
    we have a natural $A$-algebra homomorphism
        \[
                W_{r + 1}(Q) \otimes_{W_{r + 1}(k)} A
            \to
                (W_{r + 1} \otimes_{W_{r + 1}(k)} A)(Q),
        \]
    where $\otimes$ in the target is the tensor product of fppf sheaves.%
    \footnote{To see this,
    set $F(Q) = W_{r + 1}(Q) \otimes_{W_{r + 1}(k)} A$.
    Then $F$ is a presheaf.
    Hence we have a canonical morphism $F \to \Tilde{F}$
    to the fppf sheafification of $F$.
    Thus we have a natural map $F(Q) \to \Tilde{F}(Q)$.
    This is the one stated above.}
    The target is $h_{r}^{A}(Q)$ by definition.
    If $Q$ is relatively perfect,
    then the source is the Kato canonical lifting $h^{A}(Q)$.
    Therefore we have an $A$-algebra homomorphism
    $h^{A}(Q) \to h_{r}^{A}(Q)$ for relatively perfect $Q$.
	It is enough to show that this homomorphism
	$h^{A}(Q) \to h_{r}^{A}(Q)$ is an isomorphism
	for any relatively perfect $k$-algebras $Q$.
	 This homomorphism can be written as
		\[
				C_{r +1}(Q) \otimes_{C_{r + 1}(k)} A
			\to
				(C_{r + 1} \otimes_{C_{r + 1}(k)} A)(Q)
		\]
	by Propositions \ref{prop: base change from Witt to Cohen}
	and \ref{prop: base change from Witt to Cohen for RP algebras}.
	More generally, for a finite length $C_{r + 1}(k)$-module $M$,
	we show that the morphism 
		\[
				C_{r +1}(Q) \otimes_{C_{r + 1}(k)} M
			\to
				(C_{r + 1} \otimes_{C_{r + 1}(k)} M)(Q)
		\]
	is an isomorphism, where $\otimes$ on the right-hand side is the tensor product of fppf sheaves.
	It is enough to treat the case 
	$M = C_{r + 1}(k) / p^{s + 1} (C_{r + 1}(k))$
	with $0 \le s \le r$.
	In this case, the above morphism is
	$C_{r + 1}(Q) / p^{s + 1} (C_{r + 1}(Q)) \to (C_{r + 1} / p^{s + 1} C_{r + 1})(Q)$.
	We have $p^{s + 1} (C_{r + 1}(Q)) = C_{r - s}(Q)$
	by Proposition \ref{prop: p times Cohen ring scheme for RP algebra}
	and $p^{s + 1} C_{r + 1} = C_{r - s}$
	by Proposition \ref{prop: mod p of Cohen ring scheme}.
	In the exact sequence
		\[
			0 \to C_{r - s} \to C_{r + 1} \to C_{r + 1} / C_{r - s} \to 0
		\]
	of fppf sheaves, the first term $C_{r - s}$ is a successive extension
	of copies of $\Ga$
    by Proposition \ref{prop: Ver for Cohen ring scheme}.
        Since $H^{1}(Q, \Ga) = 0$
        by \cite[Tag 03P2]{Sta20}, it follows that
	the sequence
		\[
			0 \to C_{r - s}(Q) \to C_{r + 1}(Q) \to (C_{r + 1} / C_{r - s})(Q) \to 0
		\]
	is exact.
	This gives the desired isomorphism.
\end{proof}

 In particular, we have
$\Gr_{A}^{r}(X)(k) \cong \Gr_{A}^{\RP}(X)(k) \cong X(A)$.
Also $h_{r}^{A}(k) \cong h^{A}(k) \cong A$
from the proof.

Now we consider the infinite level case.
Let $A$ be a complete noetherian local ring with maximal ideal $I$
and residue field $k$ such that $[k : k^{p}] < \infty$.
For each integer $r \ge 0$, we set $A_{r} = A / I^{r + 1}$, so $A_{0} = k$.
We give $A_{r}$ a $W_{r + 1}(k)$-algebra structure
given by the lifted $r$-th ghost component map.
Then the $r$-twisted Greenberg algebra
$h_{r}^{A_{r}} = W_{r + 1} \tensor_{W_{r + 1}(k)} A_{r}$ and
the $r$-twisted Greenberg functor $\Gr_{A_{r}}^{r}$ make sense.
We will take a certain limit of them in $r$.
But it turns out the resulting objects are automatically relatively perfect:

\begin{proposition} \label{prop: canonical lift in infinite level is automatically RP}
	Let $A$ be a complete noetherian local ring with maximal ideal $I$
	and residue field $k$ such that $[k : k^{p}] < \infty$.
	Consider the morphisms
		\[
				\cdots
			\to
				h_{2}^{A_{2}}
			\to
				h_{1}^{A_{1}}
			\to
				h_{0}^{k} = \Ga
		\]
	of ring schemes over $k$, where each morphism $h_{r}^{A_{r}} \to h_{r - 1}^{A_{r - 1}}$ is given by $\Fr \otimes \mathrm{can}$.
	Define $h_{\infty}^{A}$ to be the inverse limit of
	this system of affine schemes.
	Then $h_{\infty}^{A}$ is relatively perfect over $k$.
	For any relatively perfect $k$-algebra $Q$,
	the $A$-algebra $h_{\infty}^{A}(Q)$ is
	the Kato canonical lifting $h^{A}(Q)$ of $Q$ over $A$.
\end{proposition}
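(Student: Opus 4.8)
The plan is to treat the statement about relatively perfect $Q$ first, and then to deduce relative perfectness of $h_{\infty}^{A}$ from the shape of the transition maps.

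Let $Q$ be a relatively perfect $k$-algebra. Each $A_{r} = A / I^{r + 1}$ is an artinian local ring with residue field $k$ and $I^{r + 1} A_{r} = 0$, so $h_{r}^{A_{r}}$ is the $r$-twisted Greenberg algebra of $A_{r}$; by Proposition \ref{prop: RPn of twisted Greenberg is RP Greenberg} applied to $\A^{1}_{A_{r}}$, its relative perfection $(h_{r}^{A_{r}})^{\rp}$ represents, on relatively perfect $k$-algebras, the functor $Q \mapsto h^{A_{r}}(Q)$. Since $\Spec Q$ is relatively perfect, the counit $(h_{r}^{A_{r}})^{\rp} \to h_{r}^{A_{r}}$ induces a bijection on $Q$-points, so $h_{r}^{A_{r}}(Q) \cong h^{A_{r}}(Q)$ naturally as $A_{r}$-algebras. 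I would then check that, under these isomorphisms, the transition morphism $\Fr \otimes \mathrm{can}$ corresponds to the canonical surjection $h^{A_{r}}(Q) \onto h^{A_{r - 1}}(Q)$ of Remark \ref{rem.cl} \eqref{i.basechange}; this reduces to the compatibility of the $r$-th and $(r - 1)$-st lifted ghost component maps recorded at the beginning of Section \ref{s.cl} together with $W_{r + 1}(Q) \otimes_{W_{r + 1}(k)} W_{r}(k) \isomto W_{r}(Q)$ from Proposition \ref{p.wh} \eqref{i.wr}. As $h_{\infty}^{A} = \varprojlim_{r} h_{r}^{A_{r}}$, taking $Q$-points, passing to the limit in $r$, and using \eqref{eq: Kato canonical lifting} gives $h_{\infty}^{A}(Q) = \varprojlim_{r} h_{r}^{A_{r}}(Q) \cong \varprojlim_{r} h^{A_{r}}(Q) = h^{A}(Q)$ as $A$-algebras.

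For relative perfectness, the key claim is that each transition morphism $t_{r} \colon h_{r}^{A_{r}} \to h_{r - 1}^{A_{r - 1}}$ factors through the relative Frobenius $\ff_{h_{r}^{A_{r}} / k}$. This rests on the corresponding fact for $\Fr \colon W_{r + 1} \to W_{r}$: over $k$ this is the truncation of the Witt vector Frobenius of $W_{r + 1}$, which under the canonical identification $W_{r + 1}^{(p)} \cong W_{r + 1}$ (coming from $W_{r + 1}$ being defined over $\F_{p}$) is exactly $\ff_{W_{r + 1} / k}$ — both being the $p$-power map on the coordinates of $W_{r + 1}$ that fixes $k$. Hence $\Fr$ factors through $\ff_{W_{r + 1} / k}$, and one transports this through the tensor product $h_{r}^{A_{r}} = W_{r + 1} \otimes_{W_{r + 1}(k)} A_{r}$, tracking the twist induced on the coefficient ring $W_{r + 1}(k)$ and on the constant coefficient module $A_{r}$; write $t_{r} = \bar{t}_{r} \circ \ff_{h_{r}^{A_{r}} / k}$ with $\bar{t}_{r} \colon (h_{r}^{A_{r}})^{(p)} \to h_{r - 1}^{A_{r - 1}}$.

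Granting this, each $h_{r}^{A_{r}}$ is an affine space over $k$ (Proposition \ref{prop: canonical lift before RP is representable}), hence smooth, so $\ff_{h_{r}^{A_{r}} / k}$ is finite locally free, in particular an fppf epimorphism. Since the relative Frobenius over $k$ is functorial and commutes with inverse limits, $\ff_{h_{\infty}^{A} / k} = \varprojlim_{r} \ff_{h_{r}^{A_{r}} / k}$; using the epimorphism property and naturality of the relative Frobenius, the $\bar{t}_{r + 1}$ assemble into a morphism $s \colon (h_{\infty}^{A})^{(p)} = \varprojlim_{r} (h_{r}^{A_{r}})^{(p)} \to \varprojlim_{r} h_{r}^{A_{r}} = h_{\infty}^{A}$, and a diagram chase on the finite stages (via $\bar{t}_{r + 1} \circ \ff_{h_{r + 1}^{A_{r + 1}} / k} = t_{r + 1}$, the transition map) shows $s$ and $\ff_{h_{\infty}^{A} / k}$ are mutually inverse; thus $h_{\infty}^{A}$ is relatively perfect over $k$. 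The main obstacle I expect is the factorization $t_{r} = \bar{t}_{r} \circ \ff_{h_{r}^{A_{r}} / k}$, which requires an explicit description of $(h_{r}^{A_{r}})^{(p)}$ and careful bookkeeping of Frobenius twists when passing from $\Fr$ on Witt vectors to $\Fr \otimes \mathrm{can}$. An alternative avoiding this is to identify $h_{\infty}^{A}$ with $\varprojlim_{r} (h_{r}^{A_{r}})^{\rp} = \Gr^{\RP}(\A_{A}^{1})$ — relatively perfect by Proposition \ref{p.adjinf} — using $C_{n + 1} / p^{n - m} C_{n + 1} \cong \mathfrak{R}_{\ff_{k}^{m + 1}}(C_{n - m})$ and $\varprojlim_{j} \mathfrak{R}_{\ff_{k}}^{j}(-) = (-)^{\rp}$.
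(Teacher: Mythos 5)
Your proposal is correct and follows essentially the same approach as the paper. The key factorization $t_r=\bar t_r\circ\ff_{h_r^{A_r}/k}$ is exactly the content of the paper's Lemma~\ref{lem: Frob twist of Greenberg algebra}, which identifies $(h_r^{A_r})^{(p)}$ with $W_{r+1}\otimes_{\Fr,W_{r+1}(k)}A_r$ and $\ff_{h_r^{A_r}/k}$ with $\Fr\otimes\id$ (via the $\F_p$-structure on $W_{r+1}$ and the Frobenius-invariance of the constant sheaf $A_r$), so that $\bar t_r=\mathrm{can}\otimes\mathrm{can}$; your shifted-index assembly of the $\bar t_{r+1}$ and the diagram chase using that $\ff_{h_r^{A_r}/k}$ is an epimorphism then produce the inverse of $\ff_{h_\infty^A/k}$, just as in the paper. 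For the identification $h_\infty^A(Q)\cong h^A(Q)$, the paper proceeds in the opposite order: it first uses relative perfectness of $h_\infty^A$ to write it as $\varprojlim_r(h_r^{A_r})^{\RP}\cong\varprojlim_r h^{A_r}$, and then identifies the transition maps with the canonical surjections by appealing to the \emph{uniqueness} of Kato canonical liftings after checking compatibility with the reduction maps to $Q$, rather than unwinding the ghost-component compatibility $\phi_{r-1}\circ\Fr=\phi_r$ directly as you propose; both routes are fine, the uniqueness argument being slightly cleaner. One caution about your closing alternative: the equality $h_\infty^A=\varprojlim_r(h_r^{A_r})^{\rp}$ is not available a priori---it is \emph{deduced} from relative perfectness of $h_\infty^A$ and the fact that $(\,)^{\rp}$, as a right adjoint, commutes with inverse limits---so that route requires first establishing a Weil-restriction description of the transition morphisms themselves, which the paper only asserts without proof for the quotients $C_{n+1}/p^{n-m}C_{n+1}$.
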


\begin{proof}
	We want to show that the relative Frobenius for $h_{\infty}^{A}$ is an isomorphism.
	Recall that $h_{r}^{A_{r}} = W_{r + 1} \otimes_{W_{r + 1}(k)} A_{r}$.
	For any $r \ge 0$,  by Lemma \ref{lem: Frob twist of Greenberg algebra} below,
	the Frobenius twist of $h_{r}^{A_{r}}$ is given by
	$W_{r + 1} \otimes_{\Fr, W_{r + 1}(k)} A_{r}$.
	The relative Frobenius for $h_{r}^{A_{r}}$ is given by
	$\Fr \otimes \id$.
	We have a morphism $\mathrm{can} \otimes \mathrm{can}$
	from $W_{r + 2} \otimes_{\Fr, W_{r + 2}(k)} A_{r + 1} = h_{r + 1}^{A_{r + 1}}$
	to $W_{r + 1} \otimes_{W_{r + 1}(k)} A_{r} = h_{r}^{A_{r}}$.
	This gives a morphism from the Frobenius twist of $h_{\infty}^{A}$ to $h_{\infty}^{A}$
	inverse to the relative Frobenius. 
	Therefore, $h_{\infty}^{A}$ is relatively perfect over $k$.
	
	 Let $Q$ be a relatively perfect $k$-algebra.
	We show that $h_{\infty}^{A}(Q) \cong h^{A}(Q)$.
	Since $\RP$ is a right adjoint, it commutes with inverse limits.
	Hence $h_{\infty}^{A} = (h_{\infty}^{A})^{\RP}$ is the inverse limit of
	$(h_{r}^{A_{r}})^{\RP}$ in $r$ with transition morphisms $\ff \tensor \id$.
	We have $(h_{r}^{A_{r}})^{\RP} \cong h^{A_{r}}$ by (the proof of)
	Proposition \ref{prop: RPn of twisted Greenberg is RP Greenberg}.
	In the presentation $h^{A_{r}} = W_{r + 1} \tensor_{W_{r + 1}(k)} A$,	the reduction map $h^{A_{r}}(Q) \onto Q$ is given by
	$\ff^{r} \tensor \id$.
	Therefore, the transition morphism
	$\ff \tensor \mathrm{can} \colon h^{A_{r + 1}}(Q) \to h^{A_{r}}(Q)$
	is compatible with the reduction maps to $Q$.
	By the uniqueness of the Kato canonical liftings
	$h^{A_{r + 1}}(Q), h^{A_{r}}(Q)$,
	we know that this transition morphism has to agree with the canonical map in Remark \ref{rem.cl} \eqref{i.basechange}.
	Taking the inverse limit in $n$, we know that $h_{\infty}^{A}(Q) \cong h^{A}(Q)$.
\end{proof}

\begin{lemma} \label{lem: Frob twist of Greenberg algebra}
	For any $r \ge 0$, the Frobenius twist of $h_{r}^{A_{r}}$ is given by $W_{r + 1} \otimes_{\Fr, W_{r + 1}(k)} A_{r}$.
	The relative Frobenius for $h_{r}^{A_{r}}$ is given by
	$\Fr \otimes \id$.
\end{lemma}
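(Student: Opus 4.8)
The plan is to read off the Frobenius twist directly from the sheaf-theoretic presentation $h_r^{A_r}=W_{r+1}\tensor_{W_{r+1}(k)}A_r$, using that the Witt scheme $W_{r+1}$ is defined over $\F_p$. Recall that the Frobenius twist $Y\mapsto Y^{(p)}=Y\times_{\Spec k,\ff_k}\Spec k$ of $k$-schemes is, on fppf sheaves over $\Spec k$, nothing but the base-change functor $\ff_k^{\ast}$ along the absolute Frobenius $\ff_k\colon\Spec k\to\Spec k$; for the big fppf site this functor is just restriction of evaluation, hence exact, and it commutes with all colimits and with fppf sheafification (the hypothesis $[k:k^p]<\infty$ makes $\ff_k$ finite flat, but this is not even needed for the formal properties just quoted). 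Now $h_r^{A_r}$ is by definition the fppf sheafification of the pushout of the diagram of sheaves of rings $W_{r+1}\leftarrow\underline{W_{r+1}(k)}\to\underline{A_r}$, where $W_{r+1}$ is the Witt scheme over $k$, the sheaves $\underline{W_{r+1}(k)}$ and $\underline{A_r}$ are constant, the left map is the tautological unit map and the right map is the lifted $r$-th ghost component. Hence $\ff_k^{\ast}h_r^{A_r}$ is the pushout of the twisted diagram.

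The next step is to identify the twists of the three terms and of the two structure maps. The constant sheaves $\underline{W_{r+1}(k)}$ and $\underline{A_r}$ are unchanged by $\ff_k^{\ast}$, and their relative Frobenii are the identity (constant $k$-schemes are \'etale over $k$). On the other hand $W_{r+1}=W_{r+1,\F_p}\times_{\F_p}\Spec k$ is obtained by base change from $\F_p$, so $W_{r+1}^{(p)}$ is canonically the Witt scheme $W_{r+1}$ again, and under this identification the relative Frobenius $\ff_{W_{r+1}/k}$ is the base change to $k$ of the absolute Frobenius of $W_{r+1,\F_p}$, namely the Witt-vector Frobenius $\Fr\colon W_{r+1}\to W_{r+1}$. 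By naturality of the relative Frobenius (for $\phi\colon X\to Y$ over $k$ one has $\phi^{(p)}\circ\ff_{X/k}=\ff_{Y/k}\circ\phi$), the unit map $\phi\colon\underline{W_{r+1}(k)}\to W_{r+1}$ twists to $\ff_{W_{r+1}/k}\circ\phi=\Fr\circ\phi$, which equals $\phi\circ\Fr$ with $\Fr$ now the Witt-vector Frobenius ring endomorphism of $W_{r+1}(k)$ (because $\Fr$ on the Witt scheme restricts on $k$-points to the Witt Frobenius on $W_{r+1}(k)$); while the ghost map $\underline{W_{r+1}(k)}\to\underline{A_r}$, being a morphism of constant sheaves, is unaffected. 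Therefore $\ff_k^{\ast}h_r^{A_r}$ is precisely $W_{r+1}\tensor_{\Fr,W_{r+1}(k)}A_r$, the $\Fr$ sitting on the copy of $W_{r+1}(k)$ that acts on $W_{r+1}$; this is the first assertion.

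For the second assertion I would use that the relative Frobenius is a natural transformation from the identity functor to $\ff_k^{\ast}$, hence compatible with the pushout presentation of $h_r^{A_r}$. Consequently $\ff_{h_r^{A_r}/k}$ is the morphism induced on pushouts by the triple of relative Frobenii $(\ff_{W_{r+1}/k},\ff_{\underline{W_{r+1}(k)}/k},\ff_{\underline{A_r}/k})=(\Fr,\id,\id)$; this is exactly the morphism denoted $\Fr\tensor\id$, and it is well defined with target $W_{r+1}\tensor_{\Fr,W_{r+1}(k)}A_r$ precisely because the $W_{r+1}(k)$-action on the target copy of $W_{r+1}$ has already been precomposed with $\Fr$. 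Essentially everything in this argument is formal once the sheaf-theoretic setup is in place; the one step I expect to require genuine care --- indeed the only substantive point --- is the canonical identification $W_{r+1}^{(p)}\cong W_{r+1}$ together with $\ff_{W_{r+1}/k}=\Fr$, which is the standard behaviour of the relative Frobenius for schemes pulled back from $\F_p$ and which, if desired, can be verified in coordinates on $\A^1_k$ and then propagated to $W_{r+1}$.
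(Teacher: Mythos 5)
Your proof is correct and follows essentially the same route as the paper: identify the Frobenius twist with the exact pullback functor $\ff_k^{\ast}$ for the morphism of sites, note that it commutes with tensor products (pushouts) and fixes constant sheaves, use that $W_{r+1}$ is defined over $\F_p$ to identify $W_{r+1}^{(p)}\cong W_{r+1}$ with relative Frobenius equal to the Witt ring-scheme Frobenius $\Fr$, and invoke naturality of the relative Frobenius to transport the twist onto the $W_{r+1}(k)$-algebra structure. The paper phrases this with a commutative diagram in $k$-schemes and you phrase it as a morphism of pushout diagrams, but the content is identical.
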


\begin{proof}
    The absolute Frobenius $\ff_{k}$ induces an endomorphism of the site $\Spec k_{\fppf}$
    whose underlying functor on the underlying categories is
    $Y \mapsto Y^{(p)} = Y \times_{k, \ff_{k}} k$.
    Therefore, the pullback functor $\ff_{k}^{\ast}$ agrees with
    $Y \mapsto Y^{(p)}$ on representable sheaves
    and commutes with tensor products of sheaves (being a left adjoint).
    It is the identity functor on constant sheaves
    (which are disjoint unions of copies of $\Spec k$).
    Therefore the Frobenius twist of $h_{r}^{A_{r}}$ is
    $W_{r + 1}^{(p)} \tensor_{\mathrm{can}^{(p)}, W_{r + 1}(k)} A_{r}$.
    The relative Frobenius for a constant sheaf is the identity.
    Hence the relative Frobenius for $h_{r}^{A_{r}}$ is given by
    $\ff_{W_{r + 1} / k} \tensor \id$.
    The naturality of relative Frobenius gives a commutative diagram
        \[
            \begin{CD}
                    W_{r + 1}(k)
                @> \mathrm{can} >>
                    W_{r + 1}
                \\ @| @VV \ff_{W_{r + 1} / k} V \\
                    W_{r + 1}(k)
                @> \mathrm{can}^{(p)} >>
                    W_{r + 1}^{(p)}
            \end{CD}
        \]
     of $k$-schemes.
    The right vertical arrow followed by the natural  $k$-scheme isomorphism
    $W_{r + 1}^{(p)} \cong W_{r + 1}$ is
    the Frobenius endomorphism $\ff$ of $W_{r + 1}$.
    This means that $\mathrm{can}^{(p)} \colon W_{r + 1}(k) \to W_{r + 1}^{(p)}$
    can be identified with $\ff \colon W_{r + 1}(k) \to W_{r + 1}$.
    This gives the desired description of the relative Frobenius for $h_{r}^{A_{r}}$.
\end{proof}

Therefore, even if $k$ is perfect and $A = W(k)$,
the scheme $h_{\infty}^{A}$ is the perfection of $W$ and not $W$.
There seem to be no other natural transition morphisms among the family $\{h_{r}^{A_{r}}\}$
but $\ff \tensor \mathrm{can}$.

\begin{proposition}
	Let $A, I, k$ be as in
	Proposition \ref{prop: canonical lift in infinite level is automatically RP}.
	Let $X$ be an $A$-scheme.
	Then the functor $F_{X}^{\infty} \colon Q \mapsto X(h_{\infty}^{A}(Q))$ on $k$-algebras $Q$
	is representable by the relatively perfect Greenberg transform $\Gr^{\RP}(X)$.
\end{proposition}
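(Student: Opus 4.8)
The plan is to show first that $F_X^\infty$ is representable by a relatively perfect $k$-scheme $Z_X$, and then to identify $Z_X$ with $\Gr^{\RP}(X)$ by comparing their $Q$-points for $Q$ relatively perfect.

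For representability, observe that $h_\infty^A = \varprojlim_r h_r^{A_r}$ is an inverse limit of affine $k$-schemes --- each $h_r^{A_r}$ is an affine space over $k$ by Proposition \ref{prop: canonical lift before RP is representable} --- along affine transition morphisms, hence is itself an affine $k$-scheme, and it is relatively perfect over $k$ by Proposition \ref{prop: canonical lift in infinite level is automatically RP}. Thus $F_{\mathbb{A}^1_A}^\infty = h_\infty^A$ is representable by a relatively perfect affine $k$-scheme. For a general affine $A$-scheme $X$, writing $X$ as an equalizer of affine spaces over $A$ as in Lemma \ref{l.GrAff} exhibits $F_X^\infty$ as an equalizer of products of copies of $h_\infty^A$; this is again a relatively perfect affine $k$-scheme, since a limit of relatively perfect $k$-schemes is relatively perfect (the relative Frobenius of a limit is the limit of the relative Frobenius morphisms). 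For a general $A$-scheme $X$ one patches along a Zariski covering of $X$, as in the proof of Proposition \ref{p.gr}: the inputs needed --- that $F_X^\infty$ is a Zariski sheaf on $(\Sch/k)$ and that open coverings of $X$ induce open coverings of $F_X^\infty$ --- are obtained at each finite level from the arguments of Propositions \ref{prop: canonical lift before RP commutes with localization}, \ref{prop: Greenberg before RP is a Zariski sheaf} and \ref{prop: Greenberg before RP preserves Zariski coverings}, and then transported to the inverse limit over $r$ using the Bhatt-Gabber algebraization theorem exactly as in the proof of Proposition \ref{p.adjinf}. This yields a relatively perfect $k$-scheme $Z_X$ representing $F_X^\infty$.

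For the identification, recall that a relatively perfect $k$-scheme is determined, up to unique isomorphism, by its restriction to the category of relatively perfect $k$-algebras: it admits an open covering by affine opens, which are again relatively perfect, and a morphism of relatively perfect $k$-schemes amounts to a compatible family of morphisms on such a covering (cf.\ Remark \ref{rem.F}\,\eqref{i.remFr}). It therefore suffices to give a natural bijection $Z_X(Q) \cong \Gr^{\RP}(X)(Q)$ for relatively perfect $k$-algebras $Q$. For such $Q$, Proposition \ref{prop: canonical lift in infinite level is automatically RP} identifies $h_\infty^A(Q)$ with the Kato canonical lifting $h^A(Q)$ of $Q$ over $A$, so
\[
	Z_X(Q) = F_X^\infty(Q) = X\bigl(h_\infty^A(Q)\bigr) = X\bigl(h^A(Q)\bigr) = \Gr^{\RP}(X)(Q),
\]
where the last identification is Proposition \ref{p.adjinf} (i.e.\ Theorem \ref{thm: Greenberg transform}). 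These bijections are natural in $Q$, so $Z_X \cong \Gr^{\RP}(X)$ as relatively perfect $k$-schemes, hence as $k$-schemes; in particular $F_X^\infty$ is representable by $\Gr^{\RP}(X)$.

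The step I expect to be the main obstacle is the representability of $F_X^\infty$ for non-affine $X$. In the infinite-level situation the thickening $\Spec Q \hookrightarrow \Spec h_\infty^A(Q)$ is no longer nilpotent --- only $I$-adically complete --- and is not even a homeomorphism, so the direct patching argument used in Proposition \ref{r.cartesian} is unavailable; one must instead reduce the sheaf and covering statements to the finite levels $F^r$, where Proposition \ref{r.cartesian} does apply, and then pass to the limit in $r$, and it is precisely this passage that requires Bhatt-Gabber's algebraization theorem. Once representability is established, the identification with $\Gr^{\RP}(X)$ is formal, as carried out above.
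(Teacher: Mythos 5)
Your proposal is correct and rests on the same two pillars as the paper's proof: Bhatt--Gabber's algebraization theorem to control $F_X^\infty$ for non-affine $X$, and Proposition \ref{prop: canonical lift in infinite level is automatically RP} to identify $h_\infty^A(Q)$ with the Kato canonical lifting $h^A(Q)$ for relatively perfect $Q$, after which the comparison of $Q$-points with $\Gr^{\RP}(X)$ is formal. The organization differs slightly. The paper's representability step is more economical: it uses Bhatt--Gabber to identify the functor $F_X^\infty$ with $\varprojlim_r \Gr_A^r(X)$ outright, then observes that the transition morphisms are affine (by the analogue of Proposition \ref{p.gr} with Proposition \ref{prop: etale morphisms and Greenberg before RP} replacing Proposition \ref{r.cartesian}), so the inverse limit is automatically a scheme --- no need to reprove Zariski sheaf and covering conditions at the infinite level. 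You instead re-derive representability by redoing the affine-then-patch argument for $F_X^\infty$ itself, transporting the sheaf and covering statements from finite level; this works but is more roundabout, since after the reduction to finite level one is effectively re-assembling $\varprojlim_r \Gr_A^r(X)$ anyway. The other genuine difference is in proving relative perfectness: you get it "for free" from the fact that $h_\infty^A$ is relatively perfect and limits and Zariski-local patching preserve relative perfectness, whereas the paper gives an independent direct argument via the relative Frobenius on $Q$-points (that $Q \mapsto {}^{(p)}Q$ induces an isomorphism $h_\infty^A(Q) \to h_\infty^A({}^{(p)}Q)$, hence $X(h_\infty^A(Q)) \to X(h_\infty^A({}^{(p)}Q))$); both arguments are valid, and yours is arguably more structural. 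Your final identification step (restricting to relatively perfect $k$-algebras and invoking full faithfulness of the restricted Yoneda embedding) matches the paper's closing sentence.
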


\begin{proof}
	The functor $F_{X}^{\infty}$ is the inverse limit of
	the functors $Q\mapsto X(h_{r}^{A_{r}}(Q)) $
	for $r \ge 0$ by
	the Bhatt-Gabber algebraization theorem
	already used in the proof of Proposition \ref{p.adjinf}.
	Hence $F_{X}^{\infty}$ is the inverse limit of the schemes $\Gr_{A}^{r}(X)$.
	The transition morphisms $\Gr_{A}^{r + 1}(X) \to \Gr_{A}^{r}(X)$ are affine by the same proof as Proposition \ref{p.gr}
	(using Proposition \ref{prop: etale morphisms and Greenberg before RP}
	in place of \ref{r.cartesian}).
	Hence $F_{X}^{\infty}$ is a scheme.
	
    We show that $F_{X}^{\infty}$ is relatively perfect over $k$.
	Let $Q$ be a $k$-algebra.
	The composite of the absolute Frobenius $\ff_{k} \colon k \to k$
	and the structure map $k \to Q$ gives a new $k$-algebra structure
	on the ring $Q$.
	We denote this new $k$-algebra by ${^{(p)} Q}$.
	 The Frobenius map $Q \to {^{(p)} Q}$ (on the underlying rings) is a $k$-algebra homomorphism.
	For any $k$-scheme $Z$, the relative Frobenius morphism $Z \to Z^{(p)}$ on $Q$-valued points is given by the map $Z(Q) \to Z({^{(p)} Q})$ 	induced by the Frobenius map $Q \to {^{(p)} Q}$.
	In particular, the relative Frobenius morphism
	$F_{X}^{\infty} \to (F_{X}^{\infty})^{(p)}$
	for $F_{X}^{\infty}$ on $Q$-valued points
	is given by the map $F_{X}^{\infty}(Q) \to F_{X}^{\infty}({^{(p)} Q})$
	induced by the Frobenius map $Q \to {^{(p)} Q}$.
	This map is written as
	$X(h_{\infty}^{A}(Q)) \to X(h_{\infty}^{A}({^{(p)} Q}))$.
	Since $h_{\infty}^{A}$ is relatively perfect,
	its relative Frobenius morphism evaluated on $Q$-valued points,
	$h_{\infty}^{A}(Q) \to h_{\infty}^{A}({^{(p)} Q})$,
	is an isomorphism of $A$-algebras.
	This implies that 
	$X(h_{\infty}^{A}(Q)) \isomto X(h_{\infty}^{A}({^{(p)} Q}))$.
	Thus, $F_{X}^{\infty}$ is relatively perfect over $k$.
	
	The functor $F_{X}^{\infty}$ takes the same values as $\Gr^{\RP}(X)$
	by Proposition \ref{prop: canonical lift in infinite level is automatically RP}.
	Hence they are isomorphic.
\end{proof}


\section{Relatively perfectly smooth group schemes}
\label{sec: Relatively perfectly smooth group schemes}
Let $k$ be a field of characteristic $p > 0$ such that $[k : k^{p}] < \infty$.
In this section, we provide some preliminary results for the next section on relatively perfect cycle maps.
We need a solid foundation on relative perfections of commutative smooth group schemes over $k$.
We make a convention:
\begin{itemize}
	\item[]
		\emph{For the rest of the paper, all group schemes are assumed \emph{commutative}}.
\end{itemize}

\begin{definition} \label{def: finiteness for relatively perfect schemes}
	Let $X$ be a $k$-scheme.
	\begin{enumerate}
		\item
			$X$ is said to be \emph{locally relatively perfectly of finite presentation}
			if it is Zariski locally isomorphic to the relative perfection of a $k$-scheme (locally) of finite type (or finite presentation).
		\item
			$X$ is said to be \emph{relatively perfectly of finite presentation}
			if it is locally relatively perfectly of finite presentation, quasi-compact and quasi-separated.
		\item
			$X$ is said to be \emph{relatively perfectly smooth}
			if it is Zariski locally isomorphic to the relative perfection of a smooth $k$-scheme.
	\end{enumerate}
\end{definition}

\begin{definition} \mbox{}
	\begin{enumerate}
		\item
			Define $\Spec k_{\RP} = (\RPSch / k)_{\et}$ to be
			the category of relatively perfect $k$-schemes endowed with the \'etale topology
			(\cite[Section 2]{Kat86}).
		\item
			Denote the category of relatively perfectly smooth $k$-schemes with $k$-scheme morphisms
			by $(\RPSSch / k)$.
			Define $\Spec k_{\RPS} = (\RPSSch / k)_{\et}$ to be
			the category $(\RPSSch / k)$ endowed with the \'etale topology (\cite[Section 2]{KS19}).
		\item
			Define $\Spec k_{\Et} = (\Sch / k)_{\et}$ to be
			the category of $k$-schemes endowed with the \'etale topology (which is the big \'etale site of $k$).
		\item
			The category of sheaves of abelian groups on $\Spec k_{\RP}$ is denoted by $\Ab(k_{\RP})$.
			Its $\Hom$ and $n$-th $\Ext$ functors are denoted by
			$\Hom_{k_{\RP}}$ and $\Ext_{k_{\RP}}^{n}$, respectively.
			We use similar notation for $\Spec k_{\RPS}$ and $\Spec k_{\Et}$.
	\end{enumerate}
\end{definition}

\begin{definition}
	Let $X$ be a $k$-scheme locally of finite type.
	Then it admits a maximal geometrically reduced closed subscheme by \cite[Lemma C.4.1]{CGP15}.
	We denote this subscheme by $X^{\natural} \subseteq X$.
\end{definition}

As mentioned in \cite[Lemma C.4.1]{CGP15},
if $X$ is a group scheme, then $X^{\natural}$ is a smooth group scheme.
Further, if $X$ is locally of finite type over $k$, $X^{\natural}$ is the maximal smooth closed $k$-subgroup scheme of $X$ and hence $(X/X^{\natural})^{\natural}=0$.

\begin{proposition} \label{prop: geometrically reduced part has the same RPn}
	Let $X$ be a $k$-scheme locally of finite type.
	Then the natural morphism $(X^{\natural})^{\RP} \to X^{\RP}$ is an isomorphism.
\end{proposition}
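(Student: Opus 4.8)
The plan is to use the defining adjunction of relative perfection together with the fact that relatively perfect $k$-schemes are geometrically reduced (Remark \ref{rem: relative perfectness and Frobenius}\eqref{item: relatively perfect implies geom reduced}). Since $[k:k^{p}]<\infty$, the absolute Frobenius $\ff_{k}$ is finite locally free, so Proposition \ref{p.rp} applies and, for any $k$-scheme $Y$ locally of finite type, $Y^{\RP}$ is a relatively perfect $k$-scheme with $\Hom_{k}(Z,Y^{\RP})=\Hom_{k}(Z,Y)$ naturally in every relatively perfect $k$-scheme $Z$. Under these identifications the morphism $(X^{\natural})^{\RP}\to X^{\RP}$ is the map $\Hom_{k}(Z,X^{\natural})\to\Hom_{k}(Z,X)$ induced by the closed immersion $X^{\natural}\into X$. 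As both $(X^{\natural})^{\RP}$ and $X^{\RP}$ are objects of $(\rpsch/k)$, it suffices by Yoneda to show this map is bijective for every relatively perfect $Z$; injectivity is immediate since $X^{\natural}\into X$ is a monomorphism, so the content is the claim that every morphism $g\colon Z\to X$ with $Z$ relatively perfect over $k$ factors through $X^{\natural}$.

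To prove the claim I would proceed as follows. The property that $g$ factors through the closed subscheme $X^{\natural}$ is Zariski-local on $Z$, and an open subscheme of a relatively perfect $k$-scheme is again relatively perfect (relative perfectness means the relative Frobenius is an isomorphism, which is a local condition); so we may assume $Z=\Spec Q$ with $Q$ relatively perfect, hence quasi-compact, whence $g$ is a quasi-compact morphism (quasi-compact source, and locally noetherian, therefore quasi-separated, target). Let $Y\subseteq X$ be the scheme-theoretic image of $g$, through which $g$ factors. Since scheme-theoretic image commutes with the flat base change $-\otimes_{k}\closure{k}$, the scheme $Y\otimes_{k}\closure{k}$ is the scheme-theoretic image of $Z\otimes_{k}\closure{k}\to X\otimes_{k}\closure{k}$; as $Q$ is geometrically reduced, $Z\otimes_{k}\closure{k}$ is reduced, and hence so is its scheme-theoretic image $Y\otimes_{k}\closure{k}$. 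Thus $Y$ is a geometrically reduced closed subscheme of $X$, so $Y\subseteq X^{\natural}$ by the maximality in \cite[Lemma C.4.1]{CGP15}, and therefore $g$ factors through $X^{\natural}$. Since the claim is Zariski-local on $Z$, this settles it in general, and with it the proposition.

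I do not anticipate a genuinely difficult step; the points that deserve care are the quasi-compactness of $g$ (needed so that the scheme-theoretic image is well-behaved under the base change $-\otimes_{k}\closure{k}$) and the standard fact that the scheme-theoretic image of a reduced scheme under a quasi-compact morphism is reduced, which is what forces $Y\otimes_{k}\closure{k}$ to be reduced. As an aside, using that closed immersions are preserved by relative perfection (Remark \ref{rem: properties of RP}\eqref{RP preserves qc}), the morphism $(X^{\natural})^{\RP}\to X^{\RP}$ is a closed immersion, so the bijectivity on $Z$-points established above is also directly seen to force it to be an isomorphism.
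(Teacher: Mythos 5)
Your proof is correct and uses the same two key facts as the paper's: that relatively perfect $k$-schemes are geometrically reduced (Remark \ref{rem: relative perfectness and Frobenius}\eqref{item: relatively perfect implies geom reduced}) and the maximality of $X^{\natural}$ from \cite[Lemma C.4.1]{CGP15}. The paper's argument is the special case $Z=X^{\RP}$ of your Yoneda-style argument: it observes directly that the canonical morphism $X^{\RP}\to X$ has scheme-theoretic image in $X^{\natural}$, and the resulting factorization $X^{\RP}\to X^{\natural}$ produces the inverse morphism $X^{\RP}\to(X^{\natural})^{\RP}$ by adjunction; your version runs the same reasoning over arbitrary relatively perfect $Z$ and invokes Yoneda, and you are more explicit about the Zariski-localization to affine $Z$ needed to justify the quasi-compactness used in the base-change-to-$\closure{k}$ step.
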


\begin{proof}
	A relatively perfect $k$-scheme is geometrically reduced by Remark \ref{rem: relative perfectness and Frobenius}
	\eqref{item: relatively perfect implies geom reduced}.
	Hence the schematic image of the morphism $X^{\RP} \to X$
 is contained in $X^{\natural}$.
	This gives a morphism $X^{\RP} \to (X^{\natural})^{\RP}$
	inverse to the morphism $(X^{\natural})^{\RP} \to X^{\RP}$.
\end{proof}

\begin{proposition} \label{prop: relatively perfectly of finite type means limit preserving} \mbox{}
	Let $\{X_{\lambda}\}$ be a filtered inverse system of
	quasi-compact quasi-separated relatively perfect $k$-schemes with affine transition morphisms.
	Set $X = \invlim X_{\lambda}$.
	Let $Y$ be a $k$-scheme locally relatively perfectly of finite presentation.
	Then
		\[
				\dirlim_{\lambda} \Hom_{k}(X_{\lambda}, Y)
			\isomto
				\Hom_{k}(X, Y).
		\]
\end{proposition}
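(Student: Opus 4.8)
The plan is to reduce, through the adjunction of Proposition~\ref{p.rp}, to the classical fact that morphisms out of an inverse limit of schemes into a finitely presented scheme are already defined at a finite level. I would begin with two observations. First, $X=\invlim X_{\lambda}$ is again relatively perfect over $k$: the relative Frobenius commutes with inverse limits, so $\ff_{X/k}=\invlim \ff_{X_{\lambda}/k}$ is an inverse limit of isomorphisms, hence an isomorphism. Second, $X$ is quasi-compact and quasi-separated, since each projection $X\to X_{\lambda_{0}}$ is an affine morphism. The key formal input is then that, by Proposition~\ref{p.rp}, for a relatively perfect $k$-scheme $Z$ and an arbitrary $k$-scheme $W$ one has a natural bijection $\Hom_{k}(Z,W^{\rp})=\Hom_{k}(Z,W)$.

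Next I would localize on $Y$. Since a quasi-compact subset of a scheme is contained in a quasi-compact open, and a union of two quasi-compact opens is quasi-compact, arguing separately for injectivity and for surjectivity of the map in the statement reduces one to the case that $Y$ is quasi-compact. As $Y$ is locally relatively perfectly of finite type and relative perfection commutes with open immersions (Remark~\ref{rem: properties of RP}\,\eqref{RP preserves qc}), $Y$ then admits a finite open covering $Y=\bigcup_{i=1}^{n}V_{i}$ with $V_{i}\cong(\Spec B_{i})^{\rp}$ for finitely generated $k$-algebras $B_{i}$.

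In the case $n=1$, say $Y=(\Spec B)^{\rp}$ with $B$ finitely generated over $k$, the assertion is immediate: for relatively perfect $Z$ one has $\Hom_{k}(Z,Y)=\Hom_{k}(Z,\Spec B)$, i.e.\ the set of $k$-algebra homomorphisms $B\to\Gamma(Z,\cO_{Z})$; because the transition morphisms are affine and the $X_{\lambda}$ are quasi-compact and quasi-separated, $\Gamma(X,\cO_{X})=\dirlim_{\lambda}\Gamma(X_{\lambda},\cO_{X_{\lambda}})$, and the images of the finitely many generators of $B$ are already defined over some $X_{\lambda}$, uniquely after enlarging $\lambda$; this gives the required bijection for $Y=V_{1}$.

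It then remains to pass from the case $n=1$ to a general $Y=\bigcup_{i=1}^{n}V_{i}$, which I would do by the usual patching argument of the limit formalism for schemes (\cite[IV, \S 8]{Gro67}; see also \cite{Sta20}). Concretely, for surjectivity a morphism $f\colon X\to Y$ is trivialized over a finite covering of $X$ by quasi-compact opens each mapping into one of the $V_{i}$ (cover $X$ by finitely many affines and refine by principal opens); such a covering and the corresponding local morphisms descend to some $X_{\lambda}$ --- using that quasi-compact opens and finite coverings of $X=\invlim X_{\lambda}$ descend, together with the $n=1$ case --- and after one further enlargement of $\lambda$ the descended morphisms agree on the (quasi-compact) overlaps, so they glue to the desired $f_{\lambda}\colon X_{\lambda}\to Y$; injectivity is the analogous, easier statement, obtained by comparing two given morphisms out of $X_{\lambda}$ on a common finite covering by quasi-compact opens. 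I expect this final patching step to be the only real obstacle: the bookkeeping needed to make the opens, coverings, overlaps and local morphisms descend compatibly over a single index $\lambda$ must be carried out carefully, whereas the remainder of the argument is purely formal, resting only on the adjunction for $(-)^{\rp}$ and the standard limit lemmas.
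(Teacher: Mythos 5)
Your argument is correct and follows essentially the same route as the paper: reduce Zariski-locally on $Y$ to the case $Y=(\Spec B)^{\rp}$ via the adjunction $\Hom_k(Z,W^{\rp})=\Hom_k(Z,W)$ for relatively perfect $Z$, then invoke the classical limit characterization of finite-type schemes. The only difference is presentational---the paper cites EGA~IV (8.14.2) directly for $Y_0$ locally of finite type after the localization step, whereas you do the affine case by hand and then fold the non-affine case into the same patching argument; both rely on the identical ``standard but non-trivial'' bookkeeping that you flag as the real work.
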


\begin{proof}
	The statement is Zariski local on $Y$ by a standard (but non-trivial) patching argument.
	Hence, we may assume that $Y$ is the relative perfection of a $k$-scheme $Y_{0}$ (locally) of finite type.
	We have $\Hom_{k}(X, Y) \isomto \Hom_{k}(X, Y_{0})$ and
	$\Hom_{k}(X_{\lambda}, Y) \isomto \Hom_{k}(X_{\lambda}, Y_{0})$.
	Hence, the statement reduces to the usual characterization of a $k$-scheme locally of finite type
(\cite[Proposition (8.14.2)]{Gro66}).
\end{proof}

\begin{proposition} \label{prop: kernel of RPn of smooth group is also}
	Let $G$ and $H$ be relative perfections of smooth group schemes over $k$.
	Let $\varphi \colon G \to H$ be a group scheme morphism over $k$.
	Then $\Ker(\varphi)$ is the relative perfection of a smooth group scheme over $k$.
\end{proposition}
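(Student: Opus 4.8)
The plan is to descend $\varphi$ to a morphism of smooth group schemes at a finite level of the relative perfection, form its kernel there, and then pass back to relative perfections.

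Write $G = G_{0}^{\RP}$ and $H = H_{0}^{\RP}$ with $G_{0}, H_{0}$ smooth group schemes over $k$. Since $[k : k^{p}] < \infty$, the Frobenius $\ff_{k}$ is finite locally free, so by Proposition \ref{p.rp} the functor $(\ )^{\RP}$ is right adjoint to the inclusion $(\rpsch / k) \hookrightarrow (\mathrm{Sch}/k)$; as $G$ is relatively perfect, $\varphi$ corresponds to a unique $k$-morphism $\bar{\varphi} \colon G \to H_{0}$, and testing on $G \times_{k} G$ (again relatively perfect) shows $\bar{\varphi}$ is a group homomorphism because $\varphi$ is. Now $G = G_{0}^{\RP} = \invlim_{n} \Weil_{\ff_{k}}^{n}(G_{0})$ with affine transition morphisms, each $\Weil_{\ff_{k}}^{n}(G_{0})$ is a smooth group scheme over $k$ (Weil restriction along the finite locally free $\ff_{k}$ preserves smoothness and the group structure), and $\bigl( \Weil_{\ff_{k}}^{n}(G_{0}) \bigr)^{\RP} = G_{0}^{\RP} = G$ by cofinality. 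Applying Proposition \ref{prop: relatively perfectly of finite type means limit preserving} with target $H_{0}$, the morphism $\bar{\varphi}$ factors through $\Weil_{\ff_{k}}^{n}(G_{0}) \to H_{0}$ for some large $n$; since $G \times_{k} G = (G_{0} \times_{k} G_{0})^{\RP}$ (Weil restriction commutes with products), testing the group-law identity and enlarging $n$ we may take the factoring morphism $\varphi_{0} \colon G_{1} := \Weil_{\ff_{k}}^{n}(G_{0}) \to H_{0}$ to be a group homomorphism. By naturality of the canonical morphism $X^{\RP} \to X$ together with the adjunction, $\varphi = \varphi_{0}^{\RP}$ as morphisms $G = G_{1}^{\RP} \to H = H_{0}^{\RP}$.

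Next, $\Ker(\varphi) = \Ker(\varphi_{0}^{\RP})$ is the fiber product $G_{1}^{\RP} \times_{H_{0}^{\RP}} \Spec k$ taken over the unit section $\Spec k = (\Spec k)^{\RP} \to H_{0}^{\RP}$, which is $(e_{H_{0}})^{\RP}$ for the unit section $e_{H_{0}} \colon \Spec k \to H_{0}$. Being a right adjoint, $(\ )^{\RP}$ preserves limits, hence this fiber product (cf.\ Remark \ref{rem: properties of RP} \eqref{RP and limits}), so
	\[
			\Ker(\varphi)
		=
			\bigl( G_{1} \times_{H_{0}} \Spec k \bigr)^{\RP}
		=
			\Ker(\varphi_{0})^{\RP} .
	\]
Finally, $\Ker(\varphi_{0})$ is a closed subgroup scheme of $G_{1}$, hence a group scheme locally of finite type over $k$; its maximal geometrically reduced closed subscheme $\Ker(\varphi_{0})^{\natural}$ is a smooth $k$-group scheme by \cite[Lemma C.4.1]{CGP15}, and by Proposition \ref{prop: geometrically reduced part has the same RPn} the inclusion induces an isomorphism $\bigl( \Ker(\varphi_{0})^{\natural} \bigr)^{\RP} \isomto \Ker(\varphi_{0})^{\RP} = \Ker(\varphi)$. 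Thus $\Ker(\varphi)$ is the relative perfection of the smooth group scheme $\Ker(\varphi_{0})^{\natural}$.

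The delicate point — and what I expect to be the main obstacle — is the finite-level descent: Proposition \ref{prop: relatively perfectly of finite type means limit preserving} requires the $\Weil_{\ff_{k}}^{n}(G_{0})$ (and $\Weil_{\ff_{k}}^{n}(G_{0} \times_{k} G_{0})$) to be quasi-compact and quasi-separated. Quasi-separatedness is automatic since group schemes over a field are separated, and quasi-compactness holds whenever $G_{0}$ is of finite type. For a general locally-of-finite-type $G_{0}$ — the relevant case for N\'eron lft models — one reduces to this by treating the identity component $G_{0}^{\circ}$, an open subgroup scheme of finite type, and the quotient $\pi_{0}(G_{0})$, which is \'etale over $k$ and hence already relatively perfect, separately. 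I expect this reduction, rather than the homological bookkeeping, to be where the care is needed.
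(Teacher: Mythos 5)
Your argument is correct and follows essentially the same route as the paper. The paper's proof simply opens with ``Write $\varphi$ as the relative perfection of a morphism $\varphi_0 \colon G_0 \to H_0$ between smooth group schemes'' and then runs exactly your last two steps: $(\ )^{\RP}$ is left exact since it is a right adjoint, hence $\Ker(\varphi_0)^{\RP} \cong \Ker(\varphi)$, and one passes to $\Ker(\varphi_0)^{\natural}$ via Proposition~\ref{prop: geometrically reduced part has the same RPn}. What you have written out is precisely the finite-level descent that makes the paper's opening assertion honest: factor $\bar\varphi \colon G = \invlim_n \Weil_{\ff_k}^n(G_0) \to H_0$ through a finite stage of the tower, check on products that the factored map can be taken to be a homomorphism, and observe that $\bigl(\Weil_{\ff_k}^n(G_0)\bigr)^{\RP} = G$ by cofinality.

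One small repair to the citation in that step: Proposition~\ref{prop: relatively perfectly of finite type means limit preserving} requires the schemes $X_\lambda$ to be \emph{relatively perfect} and the target $Y$ to be locally relatively perfectly of finite type, and neither hypothesis holds for $X_\lambda = \Weil_{\ff_k}^n(G_0)$ (not relatively perfect) and $Y = H_0$ (smooth, hence not generally of the required form). What you actually want is the underlying EGA limit statement, \cite[Proposition (8.14.2)]{Gro66}, which is what the paper's proof of that proposition reduces to and which applies directly to a filtered inverse system with affine transitions of quasi-compact quasi-separated schemes mapping to a locally finitely presented target. Your flag of quasi-compactness as the delicate point is apt and is glossed over by the paper's terse assertion as well; in the paper's downstream applications the groups are in fact relatively perfectly of finite type, hence quasi-compact, so the reduction you sketch is only needed if one insists on the statement at its stated generality.
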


\begin{proof}
	Write $\varphi$ as the relative perfection of a morphism
	$\varphi_{0} \colon G_{0} \to H_{0}$ between smooth group schemes.
	The relative perfection functor is left exact since it is a right adjoint.
	Hence $\Ker(\varphi_{0})^{\RP} \cong \Ker(\varphi)$.
	We have $\Ker(\varphi_{0})^{\RP} \cong (\Ker(\varphi_{0})^{\natural})^{\RP}$
	by Proposition \ref{prop: geometrically reduced part has the same RPn}.
	The group scheme $\Ker(\varphi_{0})^{\natural}$ is smooth,
	so this proves the proposition.
\end{proof}

Consider the following three types of $k$-group schemes:
\begin{enumerate}
    \item\label{RPft} group schemes relatively perfectly of finite presentation over $k$,
    \item\label{qcRPs} quasi-compact relatively perfectly smooth group schemes over $k$, and
    \item\label{RPqcs} relative perfections of quasi-compact smooth group schemes over $k$.
\end{enumerate}
Note that a group scheme over a field is separated (\cite[Tag 047L]{Sta20}) and hence quasi-separated.
Clearly \eqref{qcRPs} implies \eqref{RPft} and \eqref{RPqcs} implies \eqref{qcRPs} by definition and Remark \ref{rem: properties of RP} \eqref{RP preserves qc}. The next proposition states that \eqref{RPft} implies \eqref{RPqcs}  thus showing that the above three types of objects are the same thing.

\begin{proposition} \label{prop: RP finite type group is a RPn of smooth group}
	Let $G$ be a group scheme relatively perfectly of finite presentation over $k$.
	Then $G$ is the relative perfection of a quasi-compact smooth group scheme over $k$.
	Further, any morphism $\varphi\colon G\to H$ of group schemes relatively perfectly of finite presentation over $k$ is the relative perfection of a morphism $\varphi_0\colon G_0\to H_0$ of quasi-compact smooth group schemes over $k$.
\end{proposition}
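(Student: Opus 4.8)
The plan is to ``de-perfect'' both $G$ and $\varphi$ down to a finite-type level and then transport the group structure, systematically using the limit formalism behind Proposition \ref{prop: relatively perfectly of finite type means limit preserving}. The main tool is the identity $\Hom_{k}(Y^{\RP}, Z) = \dirlim_{n} \Hom_{k}(\Weil_{\ff_{k}}^{n}(Y), Z)$ for $Y$, $Z$ of finite type over $k$, the colimit running over precomposition with the transition maps $g$ of the tower $Y^{\RP} = \invlim_{n} \Weil_{\ff_{k}}^{n}(Y)$; this follows from \cite[Proposition (8.14.2)]{Gro66}, since those transition maps are affine and $Z$ is of finite presentation over the field $k$. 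Note at the outset that $G$ is quasi-compact and separated: it is covered by finitely many affine opens $V_{i}^{\RP}$ with $V_{i}$ affine of finite type, and its identity section $\Spec k \to G$ is a closed immersion, being split by $G \to \Spec k$.

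First I would produce a \emph{smooth} finite-type $k$-scheme $G_{1}$ with $G_{1}^{\RP} \cong G$. The intersections $V_{i}^{\RP} \cap V_{j}^{\RP}$ are quasi-compact opens of $V_{i}^{\RP} = \invlim_{n}\Weil_{\ff_{k}}^{n}(V_{i})$, hence preimages of quasi-compact opens at a finite level, hence themselves relative perfections of finite-type $k$-schemes (relative perfection and Weil restriction preserving open immersions, Remark \ref{rem: properties of RP}\eqref{RP preserves qc}). After replacing each $V_{i}$ by a suitable $\Weil_{\ff_{k}}^{n}(V_{i})$ — which leaves $V_{i}^{\RP}$ unchanged — the gluing data for $G$ becomes the image under $(\ )^{\RP}$ of gluing data among finitely many finite-type $k$-schemes, the open immersions and their cocycle relations descending to a finite level by the identity above and Proposition \ref{prop: relatively perfectly of finite type means limit preserving}; gluing yields a finite-type $G_{1}$ with $G_{1}^{\RP} \cong G$. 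Since $G$ is geometrically reduced (Remark \ref{rem: relative perfectness and Frobenius}\eqref{item: relatively perfect implies geom reduced}) I may replace $G_{1}$ by $G_{1}^{\natural}$ without changing the relative perfection (Proposition \ref{prop: geometrically reduced part has the same RPn}); then the smooth locus $U\subseteq G_{1}$ is dense open, so $U^{\RP}\subseteq G$ is a dense open which is relatively perfectly smooth, and by the usual homogeneity argument for a group scheme over a field this forces all of $G$ to be relatively perfectly smooth. Running the construction once more with every $V_{i}$ smooth — and using that the Weil restriction of a smooth scheme along the finite locally free morphism $\ff_{k}$ is smooth — delivers a smooth $G_{1}$ with $G_{1}^{\RP}\cong G$; in particular every $\Weil_{\ff_{k}}^{n}(G_{1})$ is smooth and the transition maps $g$ are faithfully flat, hence epimorphisms of schemes.

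Next I would transport the group structure. Applying the identity above to powers of $G_{1}$, the multiplication $\mu$, inversion $\iota$ and identity $e$ of $G$ are each induced from morphisms at some finite level $n$; and since there are only finitely many group-scheme axioms, each an equality of morphisms between smooth finite-type $k$-schemes, and since the $g$'s are epimorphisms, after increasing $n$ all of them hold honestly at level $n$. The delicate point is that one obtains a priori only a morphism $\Weil_{\ff_{k}}^{n}(G_{1})^{\times 2}\to G_{1}$ rather than a self-map; this is reconciled by passing to the model $G_{0} := \Weil_{\ff_{k}}^{n}(G_{1})$ — whose relative perfection is still $G$ and on which the canonical projection to $G_{1}$ becomes an isomorphism after $(\ )^{\RP}$ — and by invoking a further Weil restriction together with the faithful flatness of the $g$'s to absorb the remaining level discrepancy, so that $\mu$, $\iota$, $e$ define an honest commutative group-scheme structure on $G_{0}$ with $G_{0}^{\RP}\cong G$ as group schemes. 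Being a geometrically reduced group scheme of finite type over a field, $G_{0}$ is smooth. This bookkeeping — reconciling the calculus of fractions that relates finite-type $k$-schemes to their relative perfections with the requirement that multiplication and inversion be \emph{self}-maps — is the step I expect to be the main obstacle; everything else is routine limit bookkeeping.

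Finally, for the morphism statement, write $G = G_{0}^{\RP}$ and $H = H_{0}^{\RP}$ with $G_{0}$, $H_{0}$ smooth group schemes produced as above. The identity represents $\varphi$ by a morphism $\varphi_{0}\colon \Weil_{\ff_{k}}^{n}(G_{0})\to H_{0}$ for some $n$; here $\Weil_{\ff_{k}}^{n}(G_{0})$ is again a smooth group scheme with relative perfection $G$, and compatibility of $\varphi_{0}$ with the multiplications is an equality that holds after $(\ )^{\RP}$, hence — the $g$'s being epimorphisms — already at finite level, so $\varphi_{0}$ is a homomorphism with $\varphi_{0}^{\RP}\cong \varphi$. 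This direction is strictly easier than the object statement precisely because no self-map constraint intervenes. Quasi-compactness of $G_{0}$ and $H_{0}$ is immediate, being Weil restrictions along a finite morphism of finite-type $k$-schemes.
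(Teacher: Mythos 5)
Your route differs markedly from the paper's, and the step you yourself flag as the main obstacle is a genuine gap. You try to de-perfect the scheme structure first (building a smooth finite-type $G_1$ with $G_1^{\RP}\cong G$) and then transport the group law. The transport step doesn't close: the limit identity
$\Hom_k(G\times G, G_1)=\dirlim_n \Hom_k(\mathfrak{R}_{\ff_k}^n(G_1)^2, G_1)$ only produces a multiplication $\mu_n\colon \mathfrak{R}_{\ff_k}^n(G_1)^2\to G_1$ whose target sits strictly below the source in the Weil-restriction tower. You propose to ``absorb the remaining level discrepancy'' by passing to $G_0=\mathfrak{R}_{\ff_k}^n(G_1)$ and invoking a further Weil restriction, but applying $\mathfrak{R}_{\ff_k}^n$ to $\mu_n$ yields $\mathfrak{R}_{\ff_k}^n(G_0)^2\to G_0$, which simply recreates the same offset one level higher; using the adjunction $\Hom(Z,\mathfrak{R}_{\ff_k}^m(W))=\Hom(Z^{(p^m)},W)$ together with the maps $\rho$ lowers the gap but never eliminates it. No iteration of these operations produces an honest self-map $\mathfrak{R}_{\ff_k}^N(G_1)^2\to \mathfrak{R}_{\ff_k}^N(G_1)$. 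Faithful flatness of the $g$'s lets you check equalities of morphisms once both sides exist, but it does not manufacture a lift of $\mu_n$ along $\mathfrak{R}_{\ff_k}^N(G_1)\to G_1$. So the key sentence of your proof is an assertion, not an argument.

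The paper avoids this entirely by not de-perfecting the scheme and transporting the group law. Instead it invokes Perrin's theorem: any quasi-compact group scheme over $k$ is a filtered inverse limit of finite-type group schemes $G_\lambda$ with affine transitions. Then $G = G^{\RP} \cong \invlim (G_\lambda^{\natural})^{\RP}$, and Proposition \ref{prop: relatively perfectly of finite type means limit preserving} applied to $\id_G$ gives, for some $\lambda$, a group-scheme retraction $(G_\lambda^\natural)^{\RP}\to G$ splitting the canonical map. Thus $G$ is a direct summand (equivalently, the kernel of an idempotent) of the relative perfection of the quasi-compact smooth group scheme $G_\lambda^\natural$, and Proposition \ref{prop: kernel of RPn of smooth group is also} finishes. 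The group structure is present from the start on each $G_\lambda$, so no transport is needed. If you want to salvage your route, the missing input is precisely this sort of structural theorem about quasi-compact group schemes; without it, I do not see how to produce a group law on a finite Weil-restriction level from the group law on the relative perfection.

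Your treatment of the morphism statement is fine and is essentially what the paper does (reduce to the standard limit characterization of finite-type schemes), but it presupposes the object statement, so it inherits the gap.
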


\begin{proof}
	Since $G$ is a quasi-compact group scheme over $k$,
	it can be written as a filtered inverse limit of
	group schemes $G_{\lambda}$ of finite type over $k$
	with affine transition morphisms
	by \cite[Part 1, Chapter V, Section 3, Theorem 3.1]{Per75}.
	Applying the relative perfection functor,
	we have $G \cong \invlim G_{\lambda}^{\RP} \cong \invlim (G_{\lambda}^{\natural})^{\RP}$.
	Applying Proposition \ref{prop: relatively perfectly of finite type means limit preserving} to the identity map $G \to G$,
	we know that there exist an index $\lambda$ and a $k$-scheme morphism $(G_{\lambda}^{\natural})^{\RP} \to G$ such that the composite
	$G \to (G_{\lambda}^{\natural})^{\RP} \to G$ (where the first morphism is the canonical morphism of the inverse limit)
	is the identity.
	By increasing $\lambda$ if necessary,
	we can take such a morphism $(G_{\lambda}^{\natural})^{\RP} \to G$ to be a group scheme morphism over $k$.
	This means that $G$ is the kernel of an idempotent endomorphism of
	the group scheme $(G_{\lambda}^{\natural})^{\RP}$.
	Hence $G$ is the relative perfection of a quasi-compact smooth group scheme over $k$ by Proposition \ref{prop: kernel of RPn of smooth group is also}.
	The second statement follows from the first by the usual characterization of a $k$-scheme locally of finite type (\cite[Proposition (8.14.2)]{Gro66}).
\end{proof}

\begin{proposition} \label{prop: RPn of smooth surj is surj over the RP site}
    Let $f \colon Y \to X$ be a smooth surjective morphism of $k$-schemes.
    Then the induced morphism $f^{\RP} \colon Y^{\RP} \to X^{\RP}$ is
    an epimorphism of sheaves of sets on $\Spec k_{\RP}$.
\end{proposition}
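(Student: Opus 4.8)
The plan is to reduce the statement, via the adjunction that defines relative perfection, to the classical fact that a smooth surjective morphism of schemes admits a section étale-locally on its target.

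First I would unwind what it means for $f^{\RP}$ to be an epimorphism of sheaves on $\Spec k_{\RP} = (\RPSch / k)_{\et}$: it suffices to show that for every relatively perfect $k$-scheme $Z$ and every morphism $Z \to X^{\RP}$ there are an étale covering $\{Z_{i} \to Z\}$ in $(\RPSch / k)$ and morphisms $Z_{i} \to Y^{\RP}$ over $X^{\RP}$. By Proposition \ref{p.rp} the functor $(\ )^{\RP}$ is right adjoint to the inclusion of relatively perfect $k$-schemes, so for relatively perfect $Z$ one has natural bijections $\Hom_{k}(Z, X^{\RP}) = \Hom_{k}(Z, X)$ and $\Hom_{k}(Z, Y^{\RP}) = \Hom_{k}(Z, Y)$ (composition with the counits), and by naturality of the counit these identify the map induced by $f^{\RP}$ on $Z$-valued points with postcomposition by $f$. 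Thus the assertion becomes: given $g \colon Z \to X$ with $Z$ relatively perfect, there is an étale covering $\{Z_{i} \to Z\}$ by relatively perfect $k$-schemes such that every restriction $g|_{Z_{i}}$ factors through $f$.

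To prove this I would base change along $g$: the projection $Y \times_{X} Z \to Z$ is smooth and surjective, being a base change of $f$, and a smooth surjective morphism admits a section étale-locally on its target (\cite[(17.16.3)]{Gro67}). This yields an étale covering $\{Z_{i} \to Z\}$ together with $Z_{i}$-morphisms $Z_{i} \to (Y \times_{X} Z) \times_{Z} Z_{i}$, and composing these with the projection to $Y$ gives morphisms $Z_{i} \to Y$ lifting $g|_{Z_{i}}$. To close the argument I would observe that each $Z_{i}$, being étale over the relatively perfect $k$-scheme $Z$, is relatively perfect over $k$ (Remark \ref{rem: properties of RP} \eqref{RP preserves qc}, \cite[Lemma 1.3]{Kat86}), so $\{Z_{i} \to Z\}$ is indeed a covering in $(\RPSch / k)_{\et}$. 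The only real care is in the reduction step — making sure the adjunction genuinely turns $f^{\RP}$ on $Z$-points into postcomposition by $f$, and that the coverings produced at the end lie in the site $(\RPSch / k)_{\et}$, i.e. that étaleness over a relatively perfect $k$-scheme forces relative perfection; both inputs are already in place, and the substance of the proof is just the existence of étale-local sections of smooth surjections.
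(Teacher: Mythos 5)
Your proof is correct, and it rests on the same two pillars as the paper's: EGA IV (17.16.3) for \'etale-local sections of smooth surjective morphisms, and Kato's results tying \'etale morphisms to relative perfection. The difference is one of organization rather than substance. The paper applies (17.16.3) \emph{once}, to $f \colon Y \to X$ itself, obtaining a single \'etale surjective $g \colon X' \to X$ with a section $s \colon X' \to Y$ over $X$; it then invokes \cite[Corollary 1.9]{Kat86} to conclude that $g^{\RP} \colon X'^{\RP} \to X^{\RP}$ is again \'etale surjective (indeed it is the base change of $g$ along $X^{\RP} \to X$), hence an epimorphism of sheaves, and since $g^{\RP} = f^{\RP} \circ s^{\RP}$ factors through $f^{\RP}$ the latter is an epimorphism too. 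You instead unwind the sheaf-epimorphism condition and apply (17.16.3) to the base change $Y \times_{X} Z \to Z$ for each test object $Z$, using the adjunction to translate maps into $X^{\RP}$ into maps into $X$, and the fact that \'etale over relatively perfect is relatively perfect to keep the covers $\{Z_{i}\}$ inside the site. Both are valid; the paper's version is shorter because it produces one cover of $X^{\RP}$ up front and never has to mention test objects, while yours is a first-principles check of the epimorphism condition that may be easier to verify line by line if one has not internalized that \'etale surjective morphisms are epimorphisms in the \'etale topology on $\Spec k_{\RP}$.
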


\begin{proof}
    By \cite[Corollary (17.16.3) (ii)]{Gro67},
    there exist an \'etale surjective morphism $g \colon X' \to X$
    and an $X$-scheme morphism $s \colon X' \to Y$.
    They induce morphisms $g^{\RP} \colon X'^{\RP} \to X^{\RP}$ and
    $s^{\RP} \colon X'^{\RP} \to Y^{\RP}$.
    The \'etaleness of $g$ implies that
    $g^{\RP}$ can be obtained as the base change of $g$
    by the projection $X^{\RP} \to X$
    by \cite[Corollary 1.9]{Kat86}.
    Hence $g^{\RP}$ is \'etale surjective.
    In particular, it is an epimorphism of sheaves of sets on $\Spec k_{\RP}$.
    Hence so is $f^{\RP}$.
\end{proof}

\begin{proposition} \label{prop: cohomology of relative perfection}
	Let $X$ be a relatively perfect $k$-scheme.
	Let $G_{0}$ be a smooth group scheme over $k$
	with relative perfection $G = G_{0}^{\RP}$.
	Then for any $i \ge 0$, we have
		\[
				H^{i}(X_{\et}, G)
			\isomto
				H^{i}(X_{\et}, G_{0}).
		\]
	If $X$ is the relative perfection of a quasi-compact quasi-separated $k$-scheme $X_{0}$,
	then these isomorphic groups are further isomorphic to
		\[
				\dirlim_{n}
				H^{i}(\mathfrak{R}_{\ff_{k}}^{n}(X_{0})_{\et}, G_{0}).
		\]
\end{proposition}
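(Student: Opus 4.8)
The plan is to prove the two stated isomorphisms separately: the first by identifying the relevant sheaves on the small \'etale site of $X$, the second by the standard limit theorem for \'etale cohomology.

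For the first isomorphism I would work on the small \'etale site $X_{\et}$, on which $G = G_{0}^{\RP}$ and $G_{0}$ define sheaves of abelian groups by $U \mapsto \Hom_{k}(U, G_{0}^{\RP})$ and $U \mapsto \Hom_{k}(U, G_{0})$ respectively. The key point is that every $k$-scheme $U$ \'etale over $X$ is again relatively perfect over $k$: by \cite[Lemma 1.3]{Kat86} such a $U$ is relatively perfect over $X$, so $\ff_{U}$ is the base change of $\ff_{X}$; since $X$ is relatively perfect over $k$, $\ff_{X}$ is the base change of $\ff_{k}$; hence $\ff_{U}$ is the base change of $\ff_{k}$, i.e.\ $U$ is relatively perfect over $k$. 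Therefore, for every object $U$ of $X_{\et}$, the adjunction of Proposition \ref{p.rp} gives
\[
	G(U) = \Hom_{k}(U, G_{0}^{\RP}) = \Hom_{k}(U, G_{0}) = G_{0}(U)
\]
naturally in $U$, so $G$ and $G_{0}$ have the same restriction to $X_{\et}$; consequently $H^{i}(X_{\et}, G) \isomto H^{i}(X_{\et}, G_{0})$ for all $i$.

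For the second isomorphism, recall from Section \ref{s.rp} that
\[
	X = X_{0}^{\RP} = \varprojlim_{n} \mathfrak{R}_{\ff_{k}}^{n}(X_{0}),
\]
where the transition morphisms $g$ are affine. Since $\ff_{k}$ is finite locally free and $X_{0}$ is quasi-compact and quasi-separated, each $\mathfrak{R}_{\ff_{k}}^{n}(X_{0})$ is again quasi-compact and quasi-separated by Remark \ref{rem: properties of RP} \eqref{RP preserves qc}; moreover the pullback of the representable sheaf $G_{0}$ along any morphism of this system is again $G_{0}$. The limit theorem for \'etale cohomology of a cofiltered system of quasi-compact quasi-separated schemes with affine transition morphisms (see, e.g., \cite{Sta20}) then yields
\[
	H^{i}(X_{\et}, G_{0}) \isomto \dirlim_{n} H^{i}(\mathfrak{R}_{\ff_{k}}^{n}(X_{0})_{\et}, G_{0}),
\]
which, combined with the first part, proves the proposition.

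Neither step poses a serious difficulty; the one place that calls for a genuine (though brief) argument is the observation in the first part that the small \'etale site of a relatively perfect $k$-scheme consists of relatively perfect $k$-schemes, which is exactly what allows the defining adjunction of $G = G_{0}^{\RP}$ to be applied objectwise. The remaining ingredients are the universal property of $(\ )^{\RP}$ and the cited limit theorem, so I expect no real obstacle beyond bookkeeping.
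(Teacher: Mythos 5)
Your proof is correct and, in substance, follows the same route as the paper. For the second isomorphism both arguments cite the same limit theorem for \'etale cohomology over a cofiltered system with affine transition morphisms (the paper invokes SGA4, Expos\'e VII, Corollaire 5.9; you point to the Stacks Project). For the first isomorphism the paper phrases things in terms of a morphism of sites $f\colon \Spec k_{\Et}\to\Spec k_{\RP}$, the exactness of $f_{\ast}$ (citing Milne), and the identity $f_{\ast}G_{0}=G_{0}^{\RP}$, while you unravel the same content directly on the small \'etale site of $X$: the common crux is that any $U$ \'etale over a relatively perfect $k$-scheme $X$ is again relatively perfect over $k$, so the adjunction $\Hom_{k}(U,G_{0}^{\RP})\cong\Hom_{k}(U,G_{0})$ identifies the restrictions of $G$ and $G_{0}$ to $X_{\et}$. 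Your transitivity argument (\'etale implies relatively perfect over $X$ by \cite[Lemma~1.3]{Kat86}, and relative perfectness composes because it is equivalent to the absolute Frobenius being a base change of the base's Frobenius) is exactly the justification needed. Your packaging is a little more hands-on and avoids the morphism-of-sites formalism, which buys concreteness; the paper's packaging is more ready-made for reuse in the later $\Ext$ comparison (Proposition \ref{prop: extensions of relative perfection}), where the exactness of $f^{\ast}$ and $f_{\ast}$ is invoked again. Either way, there is no gap.
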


\begin{proof}
     Since fiber products of relatively perfect $k$-schemes are relatively perfect
    by \cite[Lemma 1.2]{Kat86},
    the category $(\RPSch / k)$ has finite inverse limits
    and the inclusion functor $(\RPSch / k) \into (\Sch / k)$ commutes with them.
    Hence by \cite[Chapter II, Proposition 2.6 (a) and Section 3, the first paragraph]{Mil80},
    the inclusion functor $(\RPSch / k) \into (\Sch / k)$ defines
    a morphism of sites
	$f \colon \Spec k_{\Et} \to \Spec k_{\RP}$
	 (which in particular means that the pullback functor $f^{\ast}$ is exact).
	The pushforward functor $f_{\ast} \colon \Ab(k_{\Et}) \to \Ab(k_{\RP})$ is exact
	 by \cite[Chapter III, Proposition 3.1 (a)]{Mil80}.
	We have $f_{\ast} G_{0} = G_{0}^{\RP} = G$ by definition.
	These imply the isomorphism between $H^{i}(X_{\et}, G)$ and $H^{i}(X_{\et}, G_{0})$.
    From the construction of the relative perfection,
	we have $X = \invlim_{n} \Weil_{\ff_{k}}^{n}(X_{0})$.
	Hence the second statement about the commutativity with limits
	follows from \cite[Expos\'e VII, Corollaire 5.9]{AGV72}.
\end{proof}

\begin{proposition} \label{prop: extensions of relative perfection}
	Let $G_{0}$ and $H_{0}$ be quasi-compact smooth group schemes over $k$
	with relative perfections $G$ and $H$, respectively.
	Then for any $i \ge 0$, we have
		\[
				\Ext_{k_{\RP}}^{i}(G, H)
			\cong
				\Ext_{k_{\Et}}^{i}(G, H_{0})
			\cong
				\dirlim_{n}
				\Ext_{k_{\Et}}^{i}(\mathfrak{R}_{\ff_{k}}^{n}(G_{0}), H_{0}).
		\]
	In particular, any extension of $G$ by $H$ in $\Ab(k_{\RP})$ is represented by a group scheme relatively perfectly of finite presentation over $k$.
\end{proposition}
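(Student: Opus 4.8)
\emph{First isomorphism.} I would deduce $\Ext_{k_{\RP}}^{i}(G, H) \cong \Ext_{k_{\Et}}^{i}(G, H_{0})$ from the morphism of sites $f \colon \Spec k_{\Et} \to \Spec k_{\RP}$ used in the proof of Proposition~\ref{prop: cohomology of relative perfection}. Recall that $f^{\ast}$ is exact, that $f_{\ast}$ is exact, and that $f_{\ast} H_{0} = H_{0}^{\RP} = H$; moreover $f^{\ast}$, being the pullback along the inclusion $(\RPSch / k) \into (\Sch / k)$, sends the sheaf represented by a relatively perfect $k$-scheme to the sheaf represented by the same scheme viewed in $(\Sch / k)$, so $f^{\ast} G = G$. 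Since $f^{\ast}$ is exact, $f_{\ast}$ preserves injective abelian sheaves; combined with the exactness of $f_{\ast}$, this means that any injective resolution $H_{0} \to I^{\bullet}$ yields an injective resolution $H = f_{\ast} H_{0} \to f_{\ast} I^{\bullet}$, whence, applying the adjunction termwise,
\[
	\Ext_{k_{\RP}}^{i}(G, H)
	= H^{i}\bigl( \Hom_{k_{\RP}}(G, f_{\ast} I^{\bullet}) \bigr)
	= H^{i}\bigl( \Hom_{k_{\Et}}(f^{\ast} G, I^{\bullet}) \bigr)
	= \Ext_{k_{\Et}}^{i}(G, H_{0}).
\]

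\emph{Second isomorphism.} Here I would mimic the second assertion of Proposition~\ref{prop: cohomology of relative perfection}. By the construction of relative perfection recalled after Proposition~\ref{p.rp}, $G = \invlim_{n} \mathfrak{R}_{\ff_{k}}^{n}(G_{0})$ with affine transition morphisms, each $\mathfrak{R}_{\ff_{k}}^{n}(G_{0})$ being a quasi-compact smooth $k$-group scheme; since Weil restriction is a right adjoint, $\mathfrak{R}_{\ff_{k}}^{n}(G_{0})^{p} = \mathfrak{R}_{\ff_{k}}^{n}(G_{0}^{p})$ and $\invlim_{n} \mathfrak{R}_{\ff_{k}}^{n}(G_{0}^{p}) = (G_{0}^{p})^{\RP} = G^{p}$ for every $p$. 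Consequently $H^{q}(G^{p}_{\et}, H_{0}) \isomto \dirlim_{n} H^{q}(\mathfrak{R}_{\ff_{k}}^{n}(G_{0}^{p})_{\et}, H_{0})$ by \cite[Expos\'e~VII, Corollaire~5.9]{AGV72}, exactly as in Proposition~\ref{prop: cohomology of relative perfection}. One then computes $\Ext_{k_{\Et}}^{\bullet}(A, H_{0})$, for a quasi-compact smooth $k$-group scheme $A$, through a functorial resolution of $A$ as an abelian sheaf by free abelian sheaves built from its powers $A^{p}$ (the bar/classifying-space resolution), which produces a spectral sequence with initial terms $H^{q}(A^{p}_{\et}, H_{0})$; applying this to $A = \mathfrak{R}_{\ff_{k}}^{n}(G_{0})$ and to $A = G$, passing to the filtered colimit in $n$ (filtered colimits of abelian groups being exact), and using the displayed continuity of cohomology degreewise, one identifies $\dirlim_{n} \Ext_{k_{\Et}}^{i}(\mathfrak{R}_{\ff_{k}}^{n}(G_{0}), H_{0})$ with $\Ext_{k_{\Et}}^{i}(G, H_{0})$.

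\emph{Main obstacle.} The delicate point is this last step: the bar-type resolution of a group scheme has terms that are a priori infinite sums of free abelian sheaves on schemes, and filtered colimits do not commute with infinite products, so one must arrange the resolution so that each degree is a \emph{finite} sum of free abelian sheaves on powers of $A$ — so that $\Ext^{q}$ of each term is a finite sum of groups $H^{q}((A^{p})_{\et}, H_{0})$ to which \cite[Expos\'e~VII, Corollaire~5.9]{AGV72} applies — and verify that its differentials are compatible with $G = \invlim_{n} \mathfrak{R}_{\ff_{k}}^{n}(G_{0})$. If this is awkward, one can instead prove the statement first for $i \le 1$ directly ($\Hom$ and $\Ext^{1}$ of the group schemes at hand being controlled by continuity of $\Hom$ of $k$-schemes, \cite[Proposition~(8.14.2)]{Gro66}, and by continuity of the cohomology of the $\mathfrak{R}_{\ff_{k}}^{n}(G_{0})$ and their powers) and then dévisse by a short exact sequence $0 \to H_{0} \to I \to C \to 0$ with $I$ injective, paying attention to the fact that $C$ need not be representable.

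\emph{The last assertion.} Given an extension $0 \to H \to E \to G \to 0$ in $\Ab(k_{\RP})$, its class in $\Ext_{k_{\RP}}^{1}(G, H) \cong \dirlim_{n} \Ext_{k_{\Et}}^{1}(\mathfrak{R}_{\ff_{k}}^{n}(G_{0}), H_{0})$ comes from a class in $\Ext_{k_{\Et}}^{1}(\mathfrak{R}_{\ff_{k}}^{n}(G_{0}), H_{0})$ for some $n$, that is, from an extension $0 \to H_{0} \to E' \to \mathfrak{R}_{\ff_{k}}^{n}(G_{0}) \to 0$ of abelian sheaves on $\Spec k_{\Et}$. Its middle term $E'$ is an \'etale $H_{0}$-torsor over the quasi-compact smooth $k$-scheme $\mathfrak{R}_{\ff_{k}}^{n}(G_{0})$ (as $H_{0}$ is smooth), hence is representable by a quasi-compact smooth $k$-group scheme, an extension of one such by another being again one. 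Applying the exact functor $f_{\ast}$ — which carries the sheaf represented by a $k$-scheme $Y$ to the one represented by $Y^{\RP}$, by the universal property of relative perfection, and which sends $\mathfrak{R}_{\ff_{k}}^{n}(G_{0})$ to $(\mathfrak{R}_{\ff_{k}}^{n}(G_{0}))^{\RP} = \invlim_{m} \mathfrak{R}_{\ff_{k}}^{m + n}(G_{0}) = G$ — produces an extension $0 \to H \to (E')^{\RP} \to G \to 0$ in $\Ab(k_{\RP})$ whose middle term is relatively perfectly of finite type, being the relative perfection of a quasi-compact smooth $k$-group scheme. Finally, the adjunction recipe computing the first isomorphism on $\Ext^{1}$ (push-out of the $f^{\ast}$-pullback along the counit $f^{\ast} f_{\ast} H_{0} \to H_{0}$), together with the fact that the counit $(\mathfrak{R}_{\ff_{k}}^{n}(G_{0}))^{\RP} \to \mathfrak{R}_{\ff_{k}}^{n}(G_{0})$ is the canonical morphism, shows that $f_{\ast}$ is compatible with the two isomorphisms above; hence the class of $(E')^{\RP}$ equals that of $E$, so $E \cong (E')^{\RP}$ and the proof is complete.
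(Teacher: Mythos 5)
Your treatment of the first isomorphism matches the paper exactly: both exploit the morphism of sites $f \colon \Spec k_{\Et} \to \Spec k_{\RP}$, the exactness of $f^{\ast}$ and $f_{\ast}$, and the identities $f^{\ast} G = G$, $f_{\ast} H_{0} = H$. Your proof of the final "In particular" is also sound (and, in fact, more explicit than anything written in the paper about that sentence), and your observation that the relevant extension class lifts to $\Ext^{1}_{k_{\Et}}(\mathfrak{R}_{\ff_{k}}^{n}(G_{0}), H_{0})$, whose middle term is a smooth torsor hence representable, followed by applying $f_{\ast}$, is precisely the right argument.

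However, there is a genuine gap in your proof of the second isomorphism, and you have put your finger on exactly where it is without resolving it. You correctly note that a naive bar-type resolution of the sheaf $G$ has infinite direct sums of free abelian sheaves in each degree, that $\Ext^{q}$ of such a term is then an infinite \emph{product} of cohomology groups, and that filtered colimits do not commute with infinite products. So the continuity of cohomology along $\invlim_{n}\mathfrak{R}_{\ff_{k}}^{n}(G_{0}) = G$ cannot be applied degreewise unless each degree of the resolution is a \emph{finite} sum of $\Z[A^{m}]$'s. The paper supplies precisely this missing ingredient by invoking the Deligne--Scholze functorial resolution $M(A)$ of \cite[Theorem~4.5]{Sch19}, whose defining feature is that every $M_{j}(A)$ is a finite direct sum of groups $\Z[A^{m}]$, functorially in $A$. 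Without citing this (or constructing an equivalent finitary functorial resolution), the two spectral sequences you want to compare do not have $E_{1}$-pages to which \cite[Expos\'e~VII, Corollaire~5.9]{AGV72} applies termwise, and the argument stalls. Your fallback --- handle $i \le 1$ directly and d\'evisse by embedding $H_{0}$ in an injective --- is also incomplete as you acknowledge: the cokernel $C$ is not representable, so the inductive step is not available in the form needed. The remedy is exactly the reference to the Deligne--Scholze resolution.
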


\begin{proof}
	Let $f \colon \Spec k_{\Et} \to \Spec k_{\RP}$ be
	as in the proof of Proposition \ref{prop: cohomology of relative perfection}.
	Then $f^{\ast} G = G$ and $f_{\ast} H_{0} = H$.
	As $f^{\ast}$ and $f_{\ast}$ are exact functors,	this implies the isomorphism between $\Ext_{k_{\RP}}^{i}(G, H)$
	and $\Ext_{k_{\Et}}^{i}(G, H_{0})$.
	
	For the second isomorphism,
	we fix a choice of a Deligne-Scholze functorial resolution 
  	$M(A) = (\cdots \to M_{1}(A) \to M_{0}(A))$
	of an abelian group $A$ \cite[Theorem 4.5]{Sch19}.
	Each term $M_{j}(A)$ is a \emph{finite} direct sum of groups of the form $\Z[A^{m}]$
	(the free abelian group generated by the set $A^{m}$) for various $m$.
	Define a complex $M(G)$ in $\Ab(k_{\Et})$ to be the \'etale sheafification of the complex of presheaves
	$X \mapsto M(G(X))$ (where $X$ runs over $k$-schemes).
	This complex of sheaves is a resolution of $G$ since sheafification is exact.
	Set $G_{n} = \mathfrak{R}_{\mathbf{\mathrm{F}}_{k}}^{n}(G_{0})$,
	so that $G = \invlim_{n} G_{n}$.
	We have two hyper-$\Ext$ spectral sequences
		\begin{gather*}
					E_{1}^{i j}
				=
					\dirlim_{n}
					\Ext_{k_{\Et}}^{j}(M_{i}(G_{n}), H_{0})
				\Longrightarrow
					\dirlim_{n}
					\Ext_{k_{\Et}}^{i + j}(G_{n}, H_{0}),
			\\
					E_{1}^{i j}
				=
					\Ext_{k_{\Et}}^{j}(M_{i}(G), H_{0})
				\Longrightarrow
					\Ext_{k_{\Et}}^{i + j}(G, H_{0})
		\end{gather*}
	and a morphism of spectral sequences from the first one to the second one
	compatible with the $E_{\infty}$-terms.
	Each term $\Ext_{k_{\Et}}^{j}(M_{i}(G), H_{0})$ is a finite direct sum of
	groups of the form
		\[
				\Ext_{k_{\Et}}^{j}(\Z[G^{m}], H_{0})
			\cong
				H^{j}((G^{m})_{\et}, H_{0})
		\]
	for various $m$.
	We have
		\[
				\dirlim_{n}
				H^{j}((G_{n}^{m})_{\et}, H_{0})
			\isomto
				H^{j}((G^{m})_{\et}, H_{0})
		\]
	by Proposition \ref{prop: cohomology of relative perfection}.
	Hence the morphism of the above spectral sequences is an isomorphism.
	Therefore their $E_{\infty}$-terms are isomorphic.
\end{proof}

Before the next proposition with a long, technical proof,
the following example is instructive:

\begin{example}\label{ex.ga}
    Consider the exact sequence
    $0 \to \alpha_{p} \to \Ga \stackrel{\ff}{\to} \Ga \to 0$
    of affine group schemes over $k$,
    where this $\ff$ is the relative Frobenius for $\Ga$ over $k$.
    We will work out what happens
    by applying the relative perfection functor $\RP$.
    We have $\alpha_{p}^{\RP} = 0$
    since a relatively perfect $k$-algebra is (geometrically) reduced (Remark \ref{rem: relative perfectness and Frobenius}
    \eqref{item: relatively perfect implies geom reduced}).
    Therefore we have an exact sequence
    $0 \to \Ga^{\RP} \stackrel{ \ff^{\RP}}{\to} \Ga^{\RP} \to \Coker(\ff^{\RP}) \to 0$
    of affine group schemes over $k$.
     Here the morphism $\ff^{\RP}$ evaluated on
    a relatively perfect $k$-algebra $R$ is the $p$-th power map on $R$.
    To describe $\Coker(\ff^{\RP})$,
    take a $p$-basis $t(1), \dots, t(d)$ of $k$.
    Let $R$ be a relatively perfect $k$-algebra.
    Let
        \[
                x
            =
                \sum_{0 \le i(1), \dots, i(d) \le p - 1}
                    x(i(1), \dots, i(d))^{p}
                    t(1)^{i(1)} \cdots t(d)^{i(d)}
            \in
                R
        \]
    be an arbitrary element.
    Then the map $x \mapsto x(0, \dots, 0)$ is functorial in $R$
    and hence gives a group scheme morphism
    $\Ga^{\RP} \to \Ga^{\RP}$.
    It is left inverse to $\ff^{\RP} \colon \Ga^{\RP} \into \Ga^{\RP}$.
    Therefore $\ff^{\RP} \colon \Ga^{\RP} \into \Ga^{\RP}$ is
    a closed immersion onto a direct summand.
    This implies that $\Coker(\ff^{\RP})$ is isomorphic to
    $(\Ga^{p^{d} - 1})^{\RP}$
    and the morphism $\Ga^{\RP} \onto \Coker(\ff^{\RP})$ is given by
        \[
                x
            \mapsto
                \bigl(
                    x(i(1), \dots, i(d))
                \bigr)_{0 \le i(1), \dots, i(d) \le p - 1;  \text{ not all zero}}.
        \]
    In particular, $\ff^{\RP}$ is not an isomorphism
    (unless $k$ is perfect),
    while the relative Frobenius for $\Ga^{\RP}$ over $k$
    is an isomorphism by definition.

    Here the fact already used is
    $\Ga^{\RP} \cong (\Ga^{p^{d}})^{\RP} \cong (\Weil_{\ff_{k}}(\Ga))^{\RP}$.
    A canonical description of $\Coker(\ff^{\RP})$
    independent of the choice of a $p$-basis
    can be given as the kernel of the Cartier operator
    $C \colon \Omega_{k, d = 0}^{1} \to \Omega_{k}^{1}$;
    see \cite[(3.1.2), (3.1.3)]{Kat86}.
\end{example}

\begin{proposition} \label{prop: RPn of smooth groups forms an abelian category}
	The group schemes relatively perfectly of finite presentation over $k$ form an abelian subcategory of $\Ab(k_{\RP})$
	closed under extensions.
\end{proposition}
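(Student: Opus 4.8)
The plan is to verify that the full subcategory $\mathcal{C}\subseteq\Ab(k_{\RP})$ of objects isomorphic to a group scheme relatively perfectly of finite type over $k$ is closed, inside $\Ab(k_{\RP})$, under finite direct sums and under the formation of kernels and cokernels of its morphisms; by a standard criterion this makes $\mathcal{C}$ abelian with exact inclusion into $\Ab(k_{\RP})$, and closure under extensions is then exactly the last sentence of Proposition \ref{prop: extensions of relative perfection}. Closure under $0$ and finite products is immediate, since relative perfection and quasi-compactness are stable under finite products. Given a morphism $\varphi\colon G\to H$ in $\mathcal{C}$, Proposition \ref{prop: RP finite type group is a RPn of smooth group} lets me write $\varphi=\varphi_{0}^{\RP}$ for a morphism $\varphi_{0}\colon G_{0}\to H_{0}$ of quasi-compact smooth (hence finite type) $k$-group schemes with $G=G_{0}^{\RP}$ and $H=H_{0}^{\RP}$. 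Since $(\ )^{\RP}$ is a right adjoint it is left exact (Remark \ref{rem: properties of RP}), so $\Ker\varphi=(\Ker\varphi_{0})^{\RP}\cong((\Ker\varphi_{0})^{\natural})^{\RP}$ by Proposition \ref{prop: geometrically reduced part has the same RPn}; as $(\Ker\varphi_{0})^{\natural}$ is a quasi-compact smooth $k$-group scheme, this shows $\Ker\varphi\in\mathcal{C}$ (this is Proposition \ref{prop: kernel of RPn of smooth group is also}).

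The real content is the cokernel. Put $N_{0}=\Ker\varphi_{0}$ and factor $\varphi_{0}=\iota_{0}\circ p_{0}\circ q_{0}$ with $q_{0}\colon G_{0}\twoheadrightarrow G_{0}/N_{0}^{\natural}$, $p_{0}\colon G_{0}/N_{0}^{\natural}\twoheadrightarrow G_{0}/N_{0}=\Im\varphi_{0}$ and $\iota_{0}\colon\Im\varphi_{0}\hookrightarrow H_{0}$. The quotient $\Im\varphi_{0}$ of the smooth group $G_{0}$ by a flat closed subgroup is smooth; $q_{0}$ is smooth surjective because its kernel $N_{0}^{\natural}$ is smooth; $\iota_{0}$ is a closed immersion with smooth cokernel $Q_{0}:=H_{0}/\Im\varphi_{0}$; and the kernel $N_{0}/N_{0}^{\natural}$ of $p_{0}$, having trivial maximal smooth closed subgroup, is infinitesimal, so $p_{0}$ is a finite faithfully flat isogeny with infinitesimal kernel. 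Applying $(\ )^{\RP}$: the morphism $q_{0}^{\RP}$ is an epimorphism of sheaves on $\Spec k_{\RP}$ by Proposition \ref{prop: RPn of smooth surj is surj over the RP site}, so $\Coker\varphi=\Coker(\iota_{0}^{\RP}\circ p_{0}^{\RP})$; the morphism $\iota_{0}^{\RP}$ is a closed immersion, hence a monomorphism, and Proposition \ref{prop: RPn of smooth surj is surj over the RP site} applied to $H_{0}\twoheadrightarrow Q_{0}$ together with left-exactness of $(\ )^{\RP}$ identifies $\Coker\iota_{0}^{\RP}$ with $Q_{0}^{\RP}\in\mathcal{C}$. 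The key claim, discussed below, is that $p_{0}^{\RP}$ is a \emph{split} monomorphism in $\Ab(k_{\RP})$. Granting it, $\Coker p_{0}^{\RP}$ is the kernel of an idempotent endomorphism of $(\Im\varphi_{0})^{\RP}$, the relative perfection of a quasi-compact smooth group scheme, hence lies in $\mathcal{C}$ by Propositions \ref{prop: kernel of RPn of smooth group is also} and \ref{prop: RP finite type group is a RPn of smooth group}; and from the chain of images $\Im(\iota_{0}^{\RP}\circ p_{0}^{\RP})\subseteq\Im\iota_{0}^{\RP}\subseteq H$ one gets a short exact sequence $0\to\Coker p_{0}^{\RP}\to\Coker\varphi\to\Coker\iota_{0}^{\RP}\to 0$ with outer terms in $\mathcal{C}$, whence $\Coker\varphi\in\mathcal{C}$ by the extension-closedness already in hand.

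It therefore remains to prove that the relative perfection of a finite faithfully flat isogeny with infinitesimal kernel between quasi-compact smooth $k$-group schemes is a split monomorphism, and this is where the substance of the proof lies. An infinitesimal group scheme is killed by a high power of Frobenius, so the kernel of $p_{0}$ sits inside $\Ker(\ff^{m}_{G_{1}/k})$ for $G_{1}=G_{0}/N_{0}^{\natural}$ and $m\gg0$; hence $\ff^{m}_{G_{1}/k}$ factors through $p_{0}$, and by left cancellation it suffices to prove that the relative Frobenius $\ff_{G_{1}/k}^{\RP}\colon G_{1}^{\RP}\to(G_{1}^{(p)})^{\RP}$ is a split monomorphism for every quasi-compact smooth $k$-group scheme $G_{1}$ (then iterate over the successive Frobenius twists $G_{1}^{(p^{i})}$). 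This is exactly the situation analysed explicitly for $\Ga$ in Example \ref{ex.ga}, where a choice of $p$-basis of $k$ produces a retraction of $\ff^{\RP}$ and exhibits $\Coker\ff^{\RP}$ as $(\Ga^{p^{d}-1})^{\RP}$. For general smooth $G_{1}$ I would reduce Zariski-locally to affine space, hence to $\Ga$, using the compatibility of the relative Frobenius with étale morphisms and the fact that $(\ )^{\RP}$ commutes with fibre products (Remark \ref{rem: properties of RP}) and carries étale morphisms to base changes (Proposition \ref{prop: Weil restriction for F commutes with etale}, \cite[Lemma 1.3, Corollary 1.9]{Kat86}). The delicate point — as opposed to the purely formal homological bookkeeping above — is to assemble these local retractions into a single global \emph{additive} splitting; this is precisely the mechanism controlling the inseparable contribution by which $\Coker\varphi$ differs from the relative perfection of the fppf cokernel of $\varphi_{0}$.
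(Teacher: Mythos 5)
Your overall architecture is sound: the kernel is handled by Proposition~\ref{prop: kernel of RPn of smooth group is also}, the factorization $\varphi_0 = \iota_0 \compose p_0 \compose q_0$ is sensible, $q_0$ and $\iota_0$ are disposed of via Proposition~\ref{prop: RPn of smooth surj is surj over the RP site}, and Proposition~\ref{prop: extensions of relative perfection} gives extension-closure. But the claim you flag as ``the delicate point'' --- that $\ff_{G_1/k}^{\RP}$ is a split monomorphism in $\Ab(k_{\RP})$ for every quasi-compact smooth group scheme $G_1$ --- is not merely hard to globalize: it is \emph{false}. Take $G_1 = \G_m$ and $k = \F_p(t)$. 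A retraction in $\Ab(k_{\RP})$ would restrict on $k$-points to a group-theoretic retraction of the $p$-th power map $k^\times \to k^\times$, forcing $(k^\times)^p$ to be a direct summand of $k^\times$. But $k^\times \cong \F_p^\times \oplus \bigoplus_f \Z$ (free over monic irreducibles $f$), so $k^\times / (k^\times)^p \cong \bigoplus_f \Z/p\Z$ is nontrivial $p$-torsion, whereas $k^\times$ has no nontrivial $p$-torsion since $\mu_p(k) = \{1\}$ in characteristic $p$; hence no section exists and the sequence cannot split. Moreover this is precisely an instance of your $p_0$: with $\varphi_0 = \ff_{\G_m/k}$ one has $N_0 = \mu_p$, $N_0^{\natural} = 0$, so $p_0 = \varphi_0 = \ff_{\G_m/k}$. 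Example~\ref{ex.ga} succeeds for $\Ga$ exactly because $\Ga$ is $p$-torsion; for $\G_m$ the torsion-free source and $p$-torsion quotient are an insurmountable obstruction.

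The paper's proof sidesteps the splitting altogether. In the analogous special case ($\varphi_0$ faithfully flat with $K_0^{\natural} = 0$, so $K_0$ is geometrically infinitesimal and hence finite), it sets $K_n = \Weil_{\ff_k}^{n}(K_0)$ and uses $\invlim_n K_n = K_0^{\RP} = 0$ together with the finite-dimensionality of $\cO(K_0)$ to conclude that the transition map $K_n \to K_0$ is zero for $n$ large. This yields a factoring $M_n \to G_0$ of the inclusion $G_n \to G_0$ through the image, and after applying $\RP$ one identifies $G$ with $M_n^{\RP}$ inside $H$ by an idempotent-monomorphism argument; the cokernel is then $L_n^{\RP}$ with $L_n$ smooth, \emph{without} ever asserting that $M_n^{\RP} \hookrightarrow H$ splits. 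The general case reduces to this by comparing with the fppf image $M_0$. To repair your proof you must replace the split-monomorphism step with an argument of this ``eventual vanishing of $K_n \to K_0$'' type, which establishes directly that the cokernel is relatively perfectly of finite type, rather than trying to exhibit it as a direct summand.
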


\begin{proof}
	Let $\varphi \colon G \to H$ be a morphism of
	group schemes relatively perfectly of finite presentation over $k$, with kernel $K$, cokernel $L$ and image $M$ in $\Ab(k_{\RP})$.
	Then $K$ is relatively perfectly of finite presentation over $k$
	by Propositions \ref{prop: kernel of RPn of smooth group is also} and \ref{prop: RP finite type group is a RPn of smooth group}.
	We want to show that $L$ is a group scheme relatively perfectly of finite presentation over $k$.
	By  Proposition   \ref{prop: RP finite type group is a RPn of smooth group},
	we can write $\varphi$ as the relative perfection of a morphism
	$\varphi_{0} \colon G_{0} \to H_{0}$ of quasi-compact smooth group schemes over $k$.
	Set $G_{n} = \mathfrak{R}_{\ff_{k}}^{n}(G_{0})$ and
	$H_{n} = \mathfrak{R}_{\ff_{k}}^{n}(H_{0})$ for every $n$;
	they are smooth quasi-compact by \cite[Section 7.6, Proposition 5]{BLR90}.
	Denote by $K_{n}$, $L_{n}$ and $M_{n}$
	the kernel, cokernel and image, respectively,
	of the morphism of finite type
	$\varphi_{n} \colon G_{n} \to H_{n}$ in $\Ab(k_{\fppf})$ induced by $\varphi_{0}$.
	Then $L_{n}$ and $M_{n}$ are quasi-compact smooth group schemes over $k$  by \cite[Chapter II, Section 5, Corollary 2.2]{DG70}
	and $K_{n}$ is a group scheme of finite type over $k$.
	Note that the functors $\mathfrak{R}_{\ff_{k}}^{n}$ and $\RP$
	applied to group schemes of finite type over $k$ are only left exact,
	so $K_{0}^{\RP}=(K_{0}^{\natural})^{\RP}=K$,
	but $\mathfrak{R}_{\ff_{k}}^{n}(M_{0}) \ne M_{n}$ and $M_{0}^{\RP} \ne M$ in general.
	
	We first treat the special case
	where $\varphi_{0} \colon G_{0} \to H_{0}$ is faithfully flat
	and $K_{0}^{\natural} = 0$
	(which contains Example \ref{ex.ga} as a further special case).
	We have $K_{0}^{\RP} = 0$ by
	Proposition \ref{prop: geometrically reduced part has the same RPn}.
	Hence the morphism $K_{n} \to K_{0}$ is zero for large enough $n$.
	Choose such $n$.
	We have a diagram with exact rows
		\[
			\begin{CD}
					0
				@>>>
					K_{n}
				@>>>
					G_{n}
				@>>>
					M_{n}
				@>>>
					0
				\\
				@. @VVV @VVV @VVV @.
				\\
					0
				@>>>
					K_{0}
				@>>>
					G_{0}
				@>>>
					H_{0}
				@>>>
					0
			\end{CD}
		\]
	in $\Ab(k_{\fppf})$.
	Since the left vertical morphism $K_{n} \to K_{0}$ is zero,
	there exists a unique morphism $M_{n} \to G_{0}$
	splitting the right square into two commutative triangles.
	Applying $\RP$ to the right square, we have a diagram
		\[
			\begin{CD}
					G
				@>>>
					M_{n}^{\RP}
				\\
				@| @VVV
				\\
					G
				@>>>
					H
			\end{CD}
		\]
	with a morphism $M_{n}^{\RP} \to G$ splitting the square
	into two commutative triangles.
	All the morphisms in this square are monomorphisms
	since $K_{0}^{\RP} = K_{n}^{\RP} = 0$ and the left exactness of $\RP$.
	Therefore $G = M_{n}^{\RP}$ in $H$.
	Consider the exact sequence
		\[
				0
			\to
				M_{n}
			\to
				H_{n}
			\to
				L_{n}
			\to
				0
		\]
	in $\Ab(k_{\fppf})$.
	Since $M_{n}$ is smooth, the morphism $H_{n} \to L_{n}$ is smooth.
	Therefore the morphism $H \to L_{n}^{\RP}$ is an epimorphism in $\Ab(k_{\RP})$
	by Proposition \ref{prop: RPn of smooth surj is surj over the RP site}.
	Thus we have an exact sequence
		\[
				0
			\to
				M_{n}^{\RP}
			\to
				H
			\to
				L_{n}^{\RP}
			\to
				0
		\]
	in $\Ab(k_{\RP})$.
	With $G = M_{n}^{\RP}$, this shows that
	$L = \Coker(\varphi)$ is the relative perfection of
	the quasi-compact smooth group scheme $L_{n}$ (for this choice of $n$).
	
	Now we treat the general case. 
	As before, by Proposition \ref{prop: RPn of smooth surj is surj over the RP site},
	the exact sequence
		\[
				0
			\to
				M_{0}
			\to
				H_{0}
			\to
				L_{0}
			\to
				0
		\]
	in $\Ab(k_{\fppf})$ induces an exact sequence
		\[
				0
			\to
				M_{0}^{\RP}
			\to
				H
			\to
				L_{0}^{\RP}
			\to
				0
		\]
	in $\Ab(k_{\RP})$.
	Comparing this with the obvious exact sequence
	$0 \to M \to H \to L \to 0$,
	we have an exact sequence
		\[
				0
			\to
				M
			\to
				M_{0}^{\RP}
			\to
				L
			\to
				L_{0}^{\RP}
			\to
				0
		\]
	in $\Ab(k_{\RP})$.
	Consider the exact sequence of quasi-compact smooth group schemes
		\[
				0
			\to
				K_{0}^{\natural}
			\to
				G_{0}
			\to
				G_{0} / K_{0}^{\natural}
			\to
				0.
		\]
	As before, by Proposition \ref{prop: RPn of smooth surj is surj over the RP site},
	applying $\RP$ yields an exact sequence in $\Ab(k_{\RP})$.
	Hence the relative perfection of $G_{0} / K_{0}^{\natural}$ is 
	$G / K \cong M$.
	Consider the exact sequence
		\[
				0
			\to
				K_{0} / K_{0}^{\natural}
			\to
				G_{0} / K_{0}^{\natural}
			\to
				M_{0}
			\to
				0
		\]
	in $\Ab(k_{\fppf})$.
	We have $(K_{0} / K_{0}^{\natural})^{\natural} = 0$.
	Therefore the morphism $G_{0} / K_{0}^{\natural} \onto M_{0}$ is
	an example of the special case treated above.
	It follows that the cokernel of its relative perfection $M \to M_{0}^{\RP}$ in $\Ab(k_{\RP})$
	is the relative perfection of a quasi-compact smooth group scheme.
	Hence the exact sequence
		\[
				0
			\to
				M_{0}^{\RP} / M
			\to
				L
			\to
				L_{0}^{\RP}
			\to
				0
		\]
	in $\Ab(k_{\RP})$ is a presentation of $L$ as an extension
	between relative perfections of quasi-compact smooth group schemes.
	Therefore Proposition \ref{prop: extensions of relative perfection} shows that
	$L$ is a group scheme relatively perfectly of finite presentation.
\end{proof}

\begin{proposition} \label{prop: components of relatively perfectly smooth schemes}
	Let $X$ be a relatively perfectly smooth $k$-scheme.
	Then $X$ is the disjoint union of irreducible components.
	If $X = X_{0}^{\RP}$ is the relative perfection of a smooth $k$-scheme $X_{0}$,
	then the morphism $X \to X_{0}$
	is faithfully flat and induces a bijection between the irreducible components.
\end{proposition}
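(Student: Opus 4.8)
The plan is to reduce to the case $X = X_{0}^{\RP}$ with $X_{0}$ smooth, show there that $X_{0}^{\RP} \to X_{0}$ is faithfully flat with geometrically irreducible fibres, and then read off both assertions. For the first assertion it suffices to show that $X$ is locally irreducible, i.e.\ that every point has an irreducible open neighbourhood, a scheme with this property being the disjoint union of its irreducible components; this is Zariski local, so we may take $X = X_{0}^{\RP}$. A smooth $k$-scheme $X_{0}$ is regular, hence the disjoint union of its integral components $C_{i}$, each open and closed in $X_{0}$; since $(\ )^{\RP}$ sends open (resp.\ closed) immersions to open (resp.\ closed) immersions and, for an open $C_{i} \hookrightarrow X_{0}$, the scheme $C_{i}^{\RP}$ is the preimage of $C_{i}$ in $X_{0}^{\RP}$ (Remark~\ref{rem: properties of RP}; \cite[Corollary 1.9]{Kat86}), we get $X_{0}^{\RP} = \coprod_{i} C_{i}^{\RP}$ as schemes. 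Thus the whole proposition follows from: (I) $X_{0}^{\RP} \to X_{0}$ is faithfully flat for $X_{0}$ smooth; and (II) $C^{\RP}$ is non-empty and irreducible for $C$ integral and smooth.

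For (I) I would work Zariski locally and assume $X_{0}$ \'etale over some $\A^{l}_{k}$. As $\Weil_{\ff_{k}}$ commutes with fibre products (being a right adjoint) and with \'etale base change (Proposition~\ref{prop: Weil restriction for F commutes with etale}), and hence so does $(\ )^{\RP} = \varprojlim_{n} \Weil_{\ff_{k}}^{n}$ on \'etale $k$-schemes (\cite[Corollary 1.9]{Kat86}), one has $X_{0}^{\RP} \cong X_{0} \times_{\A^{l}_{k}} (\A^{l}_{k})^{\RP}$ and $(\A^{l}_{k})^{\RP} \cong \bigl((\A^{1}_{k})^{\RP}\bigr)^{\times_{k} l}$, reducing to the case $X_{0} = \A^{1}_{k}$. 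Writing $[k : k^{p}] = p^{d}$ and fixing a $p$-basis $t(1), \dots, t(d)$, the ring $k$ viewed as a $k$-algebra via $\ff_{k}$ is free of rank $p^{d}$ on the monomials $t^{I} = t(1)^{i(1)} \cdots t(d)^{i(d)}$; since $\Weil_{\ff_{k}}(\A^{1}_{k})(Q) = Q \otimes_{k, \ff_{k}} k$ functorially, this identifies $\Weil_{\ff_{k}}(\A^{1}_{k})$ with $\A^{p^{d}}_{k}$ and, unwinding the definition of $g = g_{\A^{1}_{k}/k}$ as precomposition with the relative Frobenius, presents $g$ as $(y_{I}) \mapsto \sum_{I} y_{I}^{p}\, t^{I}$ on coordinate rings. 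Over $\overline{k}$, writing $t^{I} = s_{I}^{p}$, this becomes $(y_{I}) \mapsto \bigl(\sum_{I} s_{I} y_{I}\bigr)^{p}$, the composite of a surjective linear map $\A^{p^{d}}_{\overline{k}} \to \A^{1}_{\overline{k}}$ (an affine bundle with fibre $\A^{p^{d}-1}$) with the Frobenius of $\A^{1}_{\overline{k}}$ (a universal homeomorphism); hence $g$ is faithfully flat with geometrically irreducible fibres, first over $\overline{k}$ and then over $k$ by descent along the faithfully flat $\Spec \overline{k} \to \Spec k$. Each transition morphism $\Weil_{\ff_{k}}^{n}(\A^{l}_{k}) \to \Weil_{\ff_{k}}^{n-1}(\A^{l}_{k})$ is, up to coordinates, a direct power of $g$, hence enjoys the same properties; passing to the inverse limit along these affine transition morphisms gives that $X_{0}^{\RP} \to X_{0}$ is faithfully flat with geometrically irreducible fibres.

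For (II), with $C$ integral and smooth, flatness from (I) makes the generic fibre of $C^{\RP} \to C$ dense in $C^{\RP}$, so it is enough that this fibre, over the generic point $\eta$ of $C$, be irreducible — and it is geometrically irreducible over $\kappa(\eta)$ by (I). (Concretely: the underlying space of $\varprojlim_{n} \Weil_{\ff_{k}}^{n}(C)$ is $\varprojlim_{n} |\Weil_{\ff_{k}}^{n}(C)|$, the transition morphisms being affine, so after base change to $\overline{\kappa(\eta)}$ the fibre is an inverse limit, with surjective transition maps, of geometrically irreducible thickened affine spaces; and an inverse limit of irreducible quasi-compact spectral spaces with surjective transition maps is irreducible, the compatible system of generic points being a generic point of the limit.) Combining (I) and (II), $X_{0}^{\RP} = \coprod_{i} C_{i}^{\RP}$ with each $C_{i}^{\RP}$ open, closed, non-empty (by surjectivity in (I)) and irreducible, so these are precisely the irreducible components of $X_{0}^{\RP}$, and $X_{0}^{\RP} \to X_{0}$ carries $C_{i}^{\RP}$ onto $C_{i}$, yielding the bijection; while a general relatively perfectly smooth $X$, being Zariski locally of the form $C^{\RP}$ with $C$ integral, is locally irreducible, hence the disjoint union of its irreducible components.

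The step I expect to be the main obstacle is (I) for $\A^{1}_{k}$: identifying $\Weil_{\ff_{k}}(\A^{1}_{k})$ with $\A^{p^{d}}_{k}$ together with the explicit coordinate form of $g$, and then extracting both faithful flatness and geometric irreducibility of its fibres — the decisive device being the factorization of $g$ over $\overline{k}$ through an affine bundle followed by a Frobenius. A secondary technical point is the topological lemma on inverse limits of irreducible spectral spaces used in (II).
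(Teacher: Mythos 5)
Your proof is correct and takes essentially the same route as the paper's: reduce to $X_{0}$ étale over $\A^{n}_{k}$ via Proposition \ref{prop: Weil restriction for F commutes with etale}, establish that $\Weil_{\ff_{k}}(\A^{n}_{k}) \to \A^{n}_{k}$ is faithfully flat with geometrically irreducible fibres by factoring it through a Frobenius piece and a surjective linear piece, then pass to the limit. The only real difference is one of care: the paper factors $g_{\A^n_k/k}$ as relative Frobenius followed by the $k$-linear surjection $\rho$ (Frobenius first), while you work over $\overline{k}$ to write $g$ as a surjective linear map followed by Frobenius, which amounts to the same thing; and where the paper merely says ``iterating for $\Weil_{\ff_{k}}^{n}(X_{0})$ for all $n$, we get the result,'' you spell out the transition to the inverse limit, including the topological fact that an inverse limit of irreducible spectral spaces along quasi-compact, generizing, surjective maps is irreducible — a point the paper leaves tacit and which is genuinely needed to conclude that each $C_{i}^{\RP}$ is irreducible and not merely open, closed, and non-empty.
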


Here ``disjoint union'' means scheme-theoretic (or topological) disjoint union and not just set-theoretic disjoint union.
In particular, the proposition claims that
the irreducible components of $X$ are \emph{open}.

\begin{proof}
	We may assume that $X$ is the relative perfection of an affine $k$-scheme $X_{0}$
	that is \'etale over some affine space $\mathbb{A}_{k}^{n}$.
	Then $\mathfrak{R}_{\ff_{k}}(X_{0})$ is given by
	$X_{0} \times_{\mathbb{A}_{k}^{n}} \mathfrak{R}_{\ff_{k}}(\mathbb{A}_{k}^{n})$  
 by Proposition \ref{prop: Weil restriction for F commutes with etale}.
	The morphism $\mathfrak{R}_{\ff_{k}}(\mathbb{A}_{k}^{n}) \to \mathbb{A}_{k}^{n}$
	is given by the composite
		\[
				\mathfrak{R}_{\ff_{k}}(\mathbb{A}_{k}^{n})
			\stackrel{
			    \ff_{\mathfrak{R}_{\ff_{k}}(\mathbb{A}_{k}^{n}) / k}
			}{
			    \longrightarrow
			}
				\mathfrak{R}_{\ff_{k}}(\mathbb{A}_{k}^{n})^{(p)}
			\stackrel{\rho_{\mathbb{A}_{k}^{n}}}{\longrightarrow}
				\mathbb{A}_{k}^{n};
		\]
	see the paragraph after Lemma \ref{l.FrWeil}.
	The first morphism is a finite flat universal homeomorphism.
	An explicit calculation shows that the second morphism is
	a surjective $k$-linear map of finite-dimensional $k$-vector groups.   
	Therefore  their composite
	$\mathfrak{R}_{\ff_{k}}(\mathbb{A}_{k}^{n}) \to \mathbb{A}_{k}^{n}$ is
	faithfully flat with geometrically irreducible fibers.
	So is $\mathfrak{R}_{\ff_{k}}(X_{0}) \to X_{0}$.
	Hence this morphism induces a bijection between the irreducible components.
	Since the smooth $k$-scheme $X_{0}$ is the disjoint union of irreducible components
	by \cite[Tag 0357]{Sta20}, it follows that
	$\mathfrak{R}_{\ff_{k}}(X_{0})$ is the disjoint union of irreducible components.
	Iterating this for $\mathfrak{R}_{\ff_{k}}^{n}(X_{0})$ for all $n$,
	we get the result. 
\end{proof}

\begin{proposition} \label{prop: etale component scheme}
    Let $X$ be a relatively perfectly smooth $k$-scheme.
    Let $k^{\sep}$ be a separable closure of $k$.
    Then the natural action of $\Gal(k^{\sep} / k)$ on
    the set of connected (or irreducible) components $\pi_{0}(X_{k^{\sep}})$
    has open stabilizer group at each point.
    View $\pi_{0}(X_{k^{\sep}})$ as an \'etale scheme over $k$
    via this action.
    Then the natural morphism $X_{k^{\sep}} \to \pi_{0}(X_{k^{\sep}})$ over $k^{\sep}$
    canonically descends to $k$,
    and the resulting morphism $X \to \pi_{0}(X_{k^{\sep}})$ over $k$ is
    faithfully flat.
\end{proposition}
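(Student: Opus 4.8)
The plan is to reduce the three assertions to the classical structure theory of geometric connected components of schemes of finite type over a field, transported along the faithfully flat projections $X_{0}^{\RP}\to X_{0}$ provided by Proposition~\ref{prop: components of relatively perfectly smooth schemes}.

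First I would localize on $X$. Since relative perfection preserves open immersions (Remark~\ref{rem: properties of RP}), $X$ is covered by open subschemes of the form $V^{\RP}$ with $V$ a smooth affine $k$-scheme of finite type; and the construction of the scheme of connected components over $k$ is local for the Zariski topology, in that for an open cover $\{U_{\alpha}\}$ of $X$ the $\Gal(k^{\sep}/k)$-set $\pi_{0}(X_{k^{\sep}})$ is the coequalizer of $\coprod_{\alpha,\beta}\pi_{0}((U_{\alpha}\cap U_{\beta})_{k^{\sep}})\rightrightarrows\coprod_{\alpha}\pi_{0}((U_{\alpha})_{k^{\sep}})$ (because $\pi_{0}$, being a left adjoint, commutes with the coequalizer presenting $X$ by an open cover), and $X\to\pi_{0}(X_{k^{\sep}})$ is glued from the morphisms $U_{\alpha}\to\pi_{0}((U_{\alpha})_{k^{\sep}})$. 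It therefore suffices to treat the case $X=X_{0}^{\RP}$ with $X_{0}$ a smooth affine $k$-scheme of finite type.

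Next I would transport the question to $X_{0}$. By Proposition~\ref{prop: components of relatively perfectly smooth schemes} the canonical morphism $q\colon X=X_{0}^{\RP}\to X_{0}$ is faithfully flat, and for an open $Z\subseteq X_{0}$ the universal property of Proposition~\ref{p.rp} together with the functoriality of relative perfection identifies $q^{-1}(Z)$ with $Z^{\RP}$; since the maps $\mathfrak{R}_{\ff_{k}}(Z)\to Z$ have geometrically irreducible fibres by the computation in the proof of Proposition~\ref{prop: components of relatively perfectly smooth schemes}, iterating shows that $Z^{\RP}$ is irreducible, hence connected, whenever $Z$ is. As the smooth scheme $X_{0}$ is the disjoint union of its (open) connected components, $X_{0}^{\RP}$ is then the disjoint union of their relative perfections, so $q$ induces a bijection $\pi_{0}(X_{0}^{\RP})\isomto\pi_{0}(X_{0})$, and likewise after base change to $k^{\sep}$, giving a $\Gal(k^{\sep}/k)$-equivariant bijection $\pi_{0}(X_{k^{\sep}})\isomto\pi_{0}((X_{0})_{k^{\sep}})$. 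It then suffices to prove the assertions for $X_{0}$ and precompose the resulting faithfully flat morphism $X_{0}\to E$, $E:=\pi_{0}((X_{0})_{k^{\sep}})$ regarded as an \'etale $k$-scheme, with $q$. For $X_{0}$ smooth of finite type this is classical: $X_{0}\otimes_{k}\overline{k}$ is noetherian, so $\pi_{0}((X_{0})_{\overline{k}})=\pi_{0}((X_{0})_{k^{\sep}})$ is finite and the Galois action on it factors through a finite quotient, whence the point-stabilizers are open; the associated finite \'etale $k$-scheme $E$ --- the disjoint union, over the connected components of $X_{0}$, of the spectra of their fields of constants --- carries a canonical morphism $X_{0}\to E$ whose base change to $k^{\sep}$ is the component map; and $X_{0}\to E$ is faithfully flat, being surjective and, \'etale-locally on $E$, a morphism from a scheme to the spectrum of a field. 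This last, classical input I would simply cite.

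The step requiring the most care is the Zariski patching: one must check that the scheme of connected components over $k$ glues along an open cover, that the resulting $\Gal(k^{\sep}/k)$-set still has open point-stabilizers --- which is automatic, since a subgroup of $\Gal(k^{\sep}/k)$ containing an open subgroup is open --- and that faithful flatness is preserved under gluing. None of this is deep, but it is where the possible non-quasi-compactness of $X$ (for instance when $X=\Gr^{\RP}(\mathcal{G})$ for a N\'eron lft model $\mathcal{G}$) has to be accommodated. The remaining structural facts about $q$ --- the identity $q^{-1}(Z)=Z^{\RP}$ and the irreducibility of $Z^{\RP}$, for which one also uses that an inverse limit of irreducible schemes with dominant affine transition maps is irreducible --- follow readily from Propositions~\ref{p.rp} and \ref{prop: components of relatively perfectly smooth schemes}.
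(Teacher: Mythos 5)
Your proposal is correct and follows essentially the same strategy as the paper: reduce the stabilizer claim to the quasi-compact (affine) case, use Proposition~\ref{prop: components of relatively perfectly smooth schemes} to identify $\pi_0(X_{k^{\sep}})$ with $\pi_0((X_0)_{k^{\sep}})$ for $X_0$ smooth of finite type, and invoke the classical theory of geometric connected components there (the paper cites Stacks Project Tag 038E where you cite the classical facts directly). The paper dismisses the descent and flatness assertions as ``standard,'' whereas you spell out the Zariski patching and the local flatness argument; the extra detail is consistent with the paper's intended proof.
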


\begin{proof}
    The statement about stabilizers reduces to
    the case where $X$ is quasi-compact (or even affine),
    in which case it follows from
    Proposition \ref{prop: components of relatively perfectly smooth schemes}
    and \cite[Tag 038E]{Sta20}.
    The statements about descent and faithful flatness are standard.
\end{proof}
As usual, we frequently write this \'etale scheme $\pi_{0}(X_{k^{\sep}})$
simply by $\pi_{0}(X)$ by abuse of notation.
The points of the underlying topological space of the scheme $\pi_{0}(X)$
bijectively correspond to the connected components of $X$.

The following shows that relatively perfectly smooth group schemes over $k$
and relative perfections of smooth group schemes over $k$ are the same thing.

\begin{proposition} \label{prop: RPS implies RPn of smooth, separable points are dense}
	Let $G$ be a relatively perfectly smooth group scheme over $k$.
	Then $G$ is the relative perfection of a smooth group scheme over $k$.
	The set of points of $G$ whose residue fields are finite separable extensions of $k$ is dense in $G$.
\end{proposition}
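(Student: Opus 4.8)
I would prove the two assertions separately; the first is the substantial one, and the second will follow from the local structure of relatively perfectly smooth schemes, independently of it. For the first assertion the strategy is to split $G$ into its identity component and its component group. By Proposition~\ref{prop: components of relatively perfectly smooth schemes}, $G$ is the disjoint union of its connected components, so the identity component $G^{\circ}$ (the component of the identity section) is an open and closed subscheme, and it is a subgroup scheme because $m(G^{\circ}\times_{k}G^{\circ})$ and $\iota(G^{\circ})$ are connected and contain the identity. Being connected and relatively perfectly smooth, $G^{\circ}$ is irreducible (again by Proposition~\ref{prop: components of relatively perfectly smooth schemes}); by Remark~\ref{rem: relative perfectness and Frobenius} it is geometrically reduced, and, being the identity component of a group scheme over a field, it is geometrically irreducible. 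Choose an affine---hence quasi-compact---open neighbourhood $U$ of the identity. For any point $x\in G^{\circ}$, the fibre over $x$ of the multiplication $m\colon U\times_{k}U\to G^{\circ}$ is, via the automorphism $(a,b)\mapsto(a,ab)$ of $G^{\circ}\times_{k}G^{\circ}$, identified with the intersection $U_{\kappa(x)}\cap(x\cdot U^{-1})_{\kappa(x)}$ of two nonempty open subschemes of the irreducible scheme $G^{\circ}_{\kappa(x)}$; hence the fibre is nonempty and $x$ lies in the image of $m|_{U\times_{k}U}$. Thus $G^{\circ}$ is the continuous image of the quasi-compact scheme $U\times_{k}U$, hence quasi-compact, and Proposition~\ref{prop: RP finite type group is a RPn of smooth group} gives $G^{\circ}\cong(G^{\circ}_{0})^{\RP}$ for a quasi-compact smooth group scheme $G^{\circ}_{0}$ over $k$.

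Next, by Proposition~\ref{prop: etale component scheme} the component group $\pi_{0}(G)$ is an \'etale $k$-scheme; it inherits a group structure, and the faithfully flat morphism $G\to\pi_{0}(G)$ realizes $G$ as an extension $0\to G^{\circ}\to G\to\pi_{0}(G)\to 0$ in $\Ab(k_{\RP})$ (the kernel is the preimage of the identity point, which is $G^{\circ}$; the map is an epimorphism because over $k^{\sep}$ every component of $G_{k^{\sep}}$ is smooth and hence has a rational point, so the $G^{\circ}$-torsor $G\to\pi_{0}(G)$ is \'etale-locally split). Let $f\colon\Spec k_{\Et}\to\Spec k_{\RP}$ be the morphism of sites from the proof of Proposition~\ref{prop: cohomology of relative perfection}, so $f^{\ast}$ and $f_{\ast}$ are exact, $f^{\ast}\pi_{0}(G)=\pi_{0}(G)$ and $f_{\ast}G^{\circ}_{0}=G^{\circ}$. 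Exactly as for the first isomorphism in Proposition~\ref{prop: extensions of relative perfection} (which uses only these exactness properties, not quasi-compactness of the source), the adjunction yields
\[
\Ext_{k_{\RP}}^{1}(\pi_{0}(G),G^{\circ})\;\cong\;\Ext_{k_{\Et}}^{1}(\pi_{0}(G),G^{\circ}_{0}).
\]
Let $G_{0}$ be the extension of $\pi_{0}(G)$ by $G^{\circ}_{0}$ in $\Ab(k_{\Et})$ corresponding to the class of $G$. It is representable by a smooth $k$-group scheme: over each component $\Spec\kappa$ of $\pi_{0}(G)$ it is a $G^{\circ}_{0,\kappa}$-torsor split by a finite separable extension of $\kappa$, hence a scheme by effective descent (using that $G^{\circ}_{0}$ is quasi-compact and separated), and it is smooth over $k$ as a torsor under the smooth group $G^{\circ}_{0}$ over the \'etale $k$-scheme $\pi_{0}(G)$. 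Applying the exact functor $f_{\ast}$ to this extension gives the extension of $\pi_{0}(G)$ by $G^{\circ}$ of class $f_{\ast}[G_{0}]=[G]$, so $G_{0}^{\RP}=f_{\ast}G_{0}\cong G$; this proves the first assertion.

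For the density statement the claim is Zariski-local on $G$, and $G$ is Zariski-locally the relative perfection $X_{0}^{\RP}$ of a smooth $k$-scheme $X_{0}$ of finite type, so it suffices to prove that the points of $X_{0}^{\RP}=\varprojlim_{n}\mathfrak{R}_{\ff_{k}}^{n}(X_{0})$ with residue field finite separable over $k$ are dense. A nonempty open contains the preimage of a nonempty open $V_{n}\subseteq\mathfrak{R}_{\ff_{k}}^{n}(X_{0})$ for some $n$; since $\mathfrak{R}_{\ff_{k}}^{n}(X_{0})$ is smooth over $k$, pick a closed point $x_{n}\in V_{n}$ with residue field $L$ finite separable over $k$. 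Then $x_{n}$ lifts to a compatible system $(x_{m})_{m\ge n}$ with $\kappa(x_{m})=L$ for all $m$: Zariski-locally $\mathfrak{R}_{\ff_{k}}^{m}(X_{0})$ is \'etale over an affine space and $\mathfrak{R}_{\ff_{k}}^{m+1}(X_{0})\to\mathfrak{R}_{\ff_{k}}^{m}(X_{0})$ is then the base change of $\mathfrak{R}_{\ff_{k}}(\A_{k}^{N})\to\A_{k}^{N}$, whose fibre over a point with separable residue field $\kappa$ is defined, for a $p$-basis $\{b_{i}\}$ of $k$, by equations $\sum_{i}c_{i}^{p}b_{i}=(\text{given})$ that are solvable over $\kappa$ because $\{b_{i}\}$ remains a $p$-basis of $\kappa$; so the fibre over $x_{m}$ has a $\kappa(x_{m})$-rational point, necessarily of residue field $L$. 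The limit point $x=(x_{m})$ then has residue field $\varinjlim_{m}\kappa(x_{m})=L$, finite separable over $k$, and lies in the given open.

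\textbf{Main obstacle.} The essential point is the quasi-compactness of $G^{\circ}$---the relatively perfect analogue of the theorem that the identity component of a group scheme locally of finite type over a field is of finite type---which in the argument above rests on knowing that $G^{\circ}$ is geometrically irreducible. The subsequent reassembly is routine once the $\Ext$-comparison is available, the only mild subtlety being that the component group $\pi_{0}(G)$ need not be quasi-compact.
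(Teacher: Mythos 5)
Your proof is correct and follows the same overall strategy as the paper: split $G$ into $G^{\circ}$ and $\pi_{0}(G)$, apply Proposition~\ref{prop: RP finite type group is a RPn of smooth group} to $G^{\circ}$, and reassemble; then handle density via the presentation $G=\varprojlim\mathfrak{R}_{\ff_{k}}^{n}(G_{0})$.

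The one place where you take a genuinely different route is the reassembly step. The paper works directly with the affine faithfully flat morphism $G^{\circ}\to H$ (where $G^{\circ}\cong H^{\RP}$), takes its kernel $N$, forms the smooth group scheme $G_{0}:=G/N$, and then applies $\RP$ to the exact sequence $0\to H\to G/N\to\pi_{0}(G)\to 0$ (using Proposition~\ref{prop: RPn of smooth surj is surj over the RP site} to keep it exact) to conclude $G\cong(G/N)^{\RP}$. You instead invoke the adjunction $\Ext^{1}_{k_{\RP}}(\pi_{0}(G),G^{\circ})\cong\Ext^{1}_{k_{\Et}}(\pi_{0}(G),G^{\circ}_{0})$ and realize the corresponding extension as a smooth group scheme by finite separable descent over each component of $\pi_{0}(G)$. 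The two constructions produce the same $G_{0}$, and both must address representability of the extension: the paper does so implicitly by asserting $G/N$ is a scheme, whereas you spell out the descent argument. Your version is perhaps cleaner conceptually (no need to worry about the pro-infinitesimal kernel $N$), at the modest cost of invoking the $\Ext$-comparison; the paper's is more concrete and self-contained within its own chain of lemmas.

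For the density statement your argument is also essentially the paper's, just unwound: the paper reduces to $k$ separably closed and uses $\mathfrak{R}_{\ff_{k}}^{n}(G_{0})(k)\cong G(k)$, while you work directly with a point of finite separable residue field $L$ in a finite level and lift it along the tower, observing that the fibres of $\mathfrak{R}_{\ff_{k}}(\A^{N})\to\A^{N}$ acquire $L$-rational points because a $p$-basis of $k$ remains one in $L$. This has the small advantage of sidestepping the passage to $k^{\sep}$ and back. Your proof of quasi-compactness and geometric irreducibility of $G^{\circ}$ is correct but is exactly the argument behind the Stacks Project tag 0B7R that the paper simply cites, so nothing new there.
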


\begin{proof}
	Denote the identity component of $G$ by $G^{0}$.
 It is geometrically irreducible and quasi-compact
	by \cite[Tag 0B7R]{Sta20}.
	The \'etale $k$-scheme $\pi_{0}(G)$ in
	Proposition \ref{prop: etale component scheme}
	has a canonical group scheme structure
	such that $G \to \pi_{0}(G)$ is a group scheme morphism.
	Its kernel is $G^{0}$ since $G^{0}$ is geometrically irreducible.
	Therefore $\pi_{0}(G)$ is the (fpqc) group scheme quotient $G / G^{0}$.
	By Proposition \ref{prop: RP finite type group is a RPn of smooth group},
	the group $G^{0}$ is the relative perfection of a quasi-compact
	smooth group scheme $H$ over $k$,
	which has to be connected by Proposition \ref{prop: components of relatively perfectly smooth schemes}.
	The natural morphism $G^{0} \to H$ is affine faithfully flat
	by the proof of Proposition \ref{prop: components of relatively perfectly smooth schemes}.
	Let $N$ be its kernel.
	We have an exact sequence
		\[
				0
			\to
				H
			\to
				G / N
			\to
				\pi_{0}(G)
			\to
				0
		\]
	of smooth group schemes over $k$.
	Applying $\RP$ yields an exact sequence in $\Ab(k_{\RP})$
	since $H$ is smooth (Proposition \ref{prop: RPn of smooth surj is surj over the RP site}).
	Hence $G \cong (G / N)^{\RP}$,
	so $G$ is the relative perfection of
	the smooth group scheme $G_{0} := G / N$ over $k$.
	
	For the statement about density, we may assume that $k$ is separably closed.
	For every $n$, the group of rational points $\mathfrak{R}_{\ff_{k}}^{n}(G_{0})(k) \cong G(k)$
	is dense in $\mathfrak{R}_{\ff_{k}}^{n}(G_{0})$
	since $\mathfrak{R}_{\ff_{k}}^{n}(G_{0})$ is smooth.
	Taking the inverse limit in $n$, we know that $G(k)$ is also dense in $G$.
\end{proof}

The precomposition with the inclusion functor
$(\RPSSch / k) \into (\RPSch / k)$
defines an exact functor
$\Ab(k_{\RP}) \to \Ab(k_{\RPS})$.

\begin{proposition} \label{prop: Ext comparison from RP to RPS}
	Let $G$ and $H$ be relatively perfectly smooth group schemes over $k$.
	Let $j \ge 0$ be an integer.
	Consider the homomorphism
		\[
				\Ext_{k_{\RP}}^{j}(G, H)
			\to
				\Ext_{k_{\RPS}}^{j}(G, H)
		\]
	induced by the exact functor $\Ab(k_{\RP}) \to \Ab(k_{\RPS})$ above.
	This homomorphism is an isomorphism.
\end{proposition}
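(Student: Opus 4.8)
The plan is to reduce the claim, via a functorial resolution, to a comparison of the cohomology of relatively perfectly smooth $k$-schemes computed on the two sites, and then to identify both with ordinary small \'etale cohomology.

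First I would write $G = G_{0}^{\RP}$ and $H = H_{0}^{\RP}$ with $G_{0}$ and $H_{0}$ smooth group schemes over $k$, using Proposition \ref{prop: RPS implies RPn of smooth, separable points are dense}. Next, just as in the proof of Proposition \ref{prop: extensions of relative perfection}, I would fix a Deligne--Scholze functorial resolution $M(G) \to G$, with each term a finite direct sum of sheaves $\Z[G^{m}]$; the schemes $G^{m}$ are relatively perfectly smooth since $(\RPSSch / k)$ is closed under products over $\Spec k$. The category $(\RPSSch / k)$ is a full subcategory of $(\RPSch / k)$ carrying the induced \'etale topology: every \'etale scheme over a relatively perfectly smooth $k$-scheme is again relatively perfectly smooth by \cite[Lemma 1.2, Corollary 1.9]{Kat86} and the topological invariance of the \'etale site, so every \'etale covering of an object of $(\RPSSch / k)$ in $(\RPSch / k)$ is already a covering in $(\RPSSch / k)$. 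Hence the restriction functor $\Ab(k_{\RP}) \to \Ab(k_{\RPS})$ is exact, commutes with sheafification, and sends $\Z[G^{m}]$ (formed on $k_{\RP}$) to $\Z[G^{m}]$ (formed on $k_{\RPS}$); thus it carries the resolution $M(G) \to G$ on $k_{\RP}$ to the corresponding one on $k_{\RPS}$. The two hyper-$\Ext$ spectral sequences
\begin{gather*}
	E_{1}^{pq} = \bigoplus_{m} \Ext_{k_{\RP}}^{q}(\Z[G^{m}], H) \Longrightarrow \Ext_{k_{\RP}}^{p + q}(G, H),
	\\
	E_{1}^{pq} = \bigoplus_{m} \Ext_{k_{\RPS}}^{q}(\Z[G^{m}], H) \Longrightarrow \Ext_{k_{\RPS}}^{p + q}(G, H),
\end{gather*}
together with the morphism between them induced by restriction, then reduce the statement to the following: for every relatively perfectly smooth $k$-scheme $X$ (in particular $X = G^{m}$) and every $q \ge 0$, the restriction map $\Ext_{k_{\RP}}^{q}(\Z[X], H) \to \Ext_{k_{\RPS}}^{q}(\Z[X], H)$ is an isomorphism.

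For the group on the left, Proposition \ref{prop: cohomology of relative perfection} (through the morphism of sites $\Spec k_{\Et} \to \Spec k_{\RP}$) identifies $\Ext_{k_{\RP}}^{q}(\Z[X], H)$ with the ordinary small \'etale cohomology $H^{q}(X_{\et}, H_{0})$ of the scheme $X$. So it suffices to establish the analogue on the relatively perfectly smooth site, namely $\Ext_{k_{\RPS}}^{q}(\Z[X], H) \isomto H^{q}(X_{\et}, H_{0})$, compatibly with the previous isomorphism. Here I would argue locally at $X$: the small \'etale site $X_{\et}$ embeds as a full subcategory of the localized site $(\RPSSch / k)_{/X}$ carrying the induced topology (again because \'etale $X$-schemes are relatively perfectly smooth); the restriction of sheaves along this embedding sends $H$ to $H_{0}$ and intertwines the functor of global sections over $X$ on $(\RPSSch / k)_{/X}$ with that on $X_{\et}$; and one has to show that this restriction is acyclic for global sections, i.e.\ that the big-versus-small comparison holds inside the relatively-perfectly-smooth world. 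This last point is where the work lies, and I would reduce it to the tower of smooth schemes $\mathfrak{R}_{\ff_{k}}^{n}(X_{0})$ (when $X = X_{0}^{\RP}$), for which \'etale cohomology is well behaved by \cite[Expos\'e VII, Corollaire 5.9]{AGV72}, together with a Mayer--Vietoris step to pass from quasi-compact $X$ to general $X$.

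The main obstacle is precisely this comparison $\Ext_{k_{\RPS}}^{q}(\Z[X], H) \cong H^{q}(X_{\et}, H_{0})$. The difficulty is that, unlike $(\RPSch / k)$, the category $(\RPSSch / k)$ does not admit general fibre products (it is closed under products over $\Spec k$ and under \'etale base change, but a fibre product of relatively perfectly smooth schemes over a relatively perfectly smooth base need not be relatively perfectly smooth), so one cannot invoke \cite[Chapter II, Proposition 2.6]{Mil80} to produce a morphism of sites $\Spec k_{\Et} \to \Spec k_{\RPS}$ and argue as in Proposition \ref{prop: cohomology of relative perfection}; the comparison must be carried out by hand, localizing at $X$ and passing through the smooth tower. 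Granting it, the morphism of hyper-$\Ext$ spectral sequences above is an isomorphism on $E_{1}$, hence on $E_{\infty}$, hence on the abutments, which is the asserted isomorphism $\Ext_{k_{\RP}}^{j}(G, H) \isomto \Ext_{k_{\RPS}}^{j}(G, H)$.
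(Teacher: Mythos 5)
Your proposal follows the paper's argument almost exactly: both proofs use a Deligne--Scholze functorial resolution $M(G) \to G$, set up the two hyper-$\Ext$ spectral sequences, and reduce to showing that the $E_{1}$-terms agree, i.e.\ that for a relatively perfectly smooth $k$-scheme $X$ one has $\Ext_{k_{\RP}}^{q}(\Z[X], H) \cong \Ext_{k_{\RPS}}^{q}(\Z[X], H)$, both being $H^{q}(X_{\et}, H)$. The paper simply asserts the latter (``these sites give the same cohomology groups''); you correctly identify that this assertion does require a small argument because, as you point out, $(\RPSSch / k)$ lacks general fibre products, so there is no morphism of sites $\Spec k_{\Et} \to \Spec k_{\RPS}$ and Proposition \ref{prop: cohomology of relative perfection} cannot be transported verbatim.

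Where your proposal diverges, and weakens, is in the fix you propose for this point. You suggest ``localizing at $X$ and passing through the smooth tower $\Weil_{\ff_{k}}^{n}(X_{0})$, together with a Mayer--Vietoris step,'' but this is both unnecessary and not clearly a complete argument (the tower identification in Proposition \ref{prop: cohomology of relative perfection} was deduced \emph{from} the site comparison, so routing back through it is circular-flavoured, and the Mayer--Vietoris step is unspecified). The clean way to close the gap is exactly the localization you begin to describe, but without the tower: for $X$ relatively perfectly smooth, the inclusion $X_{\et} \into (\RPSSch / k)_{/X}$ \emph{is} left exact. Its source has finite limits (the final object $X$ and fibre products of \'etale $X$-schemes), and these are preserved because any fibre product of \'etale $X$-schemes is again \'etale over $X$ and hence relatively perfectly smooth and is a fibre product in $(\RPSch / k)_{/X}$ as well, so a fortiori in the full subcategory $(\RPSSch / k)_{/X}$ whenever the latter fibre product exists. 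By \cite[Chapter II, Proposition 2.6]{Mil80} applied to this \emph{localized} inclusion one obtains a morphism of sites $(\RPSSch / k)_{/X} \to X_{\et}$ whose pushforward is restriction and is exact, which yields $\Ext_{k_{\RPS}}^{q}(\Z[X], H) \cong H^{q}(X_{\et}, H)$ exactly as in the proof of Proposition \ref{prop: cohomology of relative perfection}. The same localized argument on the $\RP$ side recovers the other identification, and the comparison of spectral sequences then finishes the proof, as you intended.
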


\begin{proof}
	Let $M(A)$ be a choice of a Deligne-Scholze functorial resolution of an abelian group $A$
	as in the proof of Proposition \ref{prop: extensions of relative perfection}.
	Define a complex $M(G)$ in $\Ab(k_{\RP})$ to be the \'etale sheafification of the complex of presheaves
	$X \mapsto M(G(X))$ (where $X$ runs over relatively perfect $k$-schemes),
	which is a resolution of $G$.
	By restriction, this also defines a resolution of $G$ in $\Ab(k_{\RPS})$.
	Consider the spectral sequences
		\begin{gather*}
					E_{1}^{i j}
				=
					\Ext_{k_{\RP}}^{j}(M_{i}(G), H)
				\Longrightarrow
					\Ext_{k_{\RP}}^{i + j}(G, H),
			\\
					E_{1}^{i j}
				=
					\Ext_{k_{\RPS}}^{j}(M_{i}(G), H)
				\Longrightarrow
					\Ext_{k_{\RPS}}^{i + j}(G, H).
		\end{gather*}
	The functor $\Ab(k_{\RP}) \to \Ab(k_{\RPS})$ defines a morphism of spectral sequences
	from the first one to the second one compatible with the $E_{\infty}$-terms.
	The $E_{1}$-terms are described by the cohomology of finite products of $G$ with coefficients in $H$
	over $\Spec k_{\RP}$ and over $\Spec k_{\RPS}$.
	For any $m \ge 0$, these sites give the same cohomology groups $H^{j}(G^{m}_{\et}, H)$.
	Hence this morphism of spectral sequences is an isomorphism,
	and the result follows.
\end{proof}

\begin{proposition} \label{prop: RPS groups in RPS site}
	The group schemes relatively perfectly of finite presentation over $k$
	form an abelian subcategory of $\Ab(k_{\RPS})$
	closed under extensions.
\end{proposition}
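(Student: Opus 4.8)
The plan is to descend Proposition~\ref{prop: RPn of smooth groups forms an abelian category} from $\Ab(k_{\RP})$ to $\Ab(k_{\RPS})$ along the exact functor $\Ab(k_{\RP}) \to \Ab(k_{\RPS})$ introduced just above, using the $\Ext$-comparison of Proposition~\ref{prop: Ext comparison from RP to RPS}. First note that, by Proposition~\ref{prop: RP finite type group is a RPn of smooth group}, a group scheme relatively perfectly of finite type over $k$ is the relative perfection of a quasi-compact smooth group scheme over $k$; in particular it is relatively perfectly smooth, so it lies in $(\RPSSch / k)$ as well as in $(\RPSch / k)$, and the representable sheaf it defines on $\Spec k_{\RP}$ is carried by the functor $\Ab(k_{\RP}) \to \Ab(k_{\RPS})$ to the representable sheaf it defines on $\Spec k_{\RPS}$. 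Taking $j = 0$ in Proposition~\ref{prop: Ext comparison from RP to RPS} gives $\Hom_{k_{\RP}}(G, H) \isomto \Hom_{k_{\RPS}}(G, H)$ for any two such group schemes $G, H$, so the functor is fully faithful on the full subcategory of $\Ab(k_{\RPS})$ spanned by them.

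Next I would treat kernels and cokernels. Let $\varphi \colon G \to H$ be a morphism of group schemes relatively perfectly of finite type over $k$, viewed in $\Ab(k_{\RPS})$; by the fullness above it is the image of a morphism $\tilde\varphi$ in $\Ab(k_{\RP})$. Since $\Ab(k_{\RP}) \to \Ab(k_{\RPS})$ is exact, the kernel, image and cokernel of $\varphi$ in $\Ab(k_{\RPS})$ are the images under the functor of the kernel, image and cokernel of $\tilde\varphi$ in $\Ab(k_{\RP})$, and these are group schemes relatively perfectly of finite type over $k$ by Proposition~\ref{prop: RPn of smooth groups forms an abelian category}. The zero object and finite biproducts visibly stay in the subcategory, and the canonical morphism from coimage to image is an isomorphism (being so in the ambient abelian category $\Ab(k_{\RPS})$). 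Hence these group schemes form an abelian subcategory of $\Ab(k_{\RPS})$.

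Finally, for closure under extensions, let $0 \to H \to E \to G \to 0$ be exact in $\Ab(k_{\RPS})$ with $G, H$ relatively perfectly of finite type. Its class in $\Ext_{k_{\RPS}}^{1}(G, H)$ comes, through the isomorphism of Proposition~\ref{prop: Ext comparison from RP to RPS} with $j = 1$, from a class in $\Ext_{k_{\RP}}^{1}(G, H)$, that is, from an extension $0 \to H \to E' \to G \to 0$ in $\Ab(k_{\RP})$; by Proposition~\ref{prop: extensions of relative perfection} (equivalently, by the extension-closure part of Proposition~\ref{prop: RPn of smooth groups forms an abelian category}) the middle term $E'$ is a group scheme relatively perfectly of finite type over $k$. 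Applying the exact functor to the sequence for $E'$ yields an extension of $G$ by $H$ in $\Ab(k_{\RPS})$ whose class equals that of $E$, so $E \cong E'$ there and $E$ is relatively perfectly of finite type.

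There is no real obstacle here: the whole argument is a formal transfer of the already-proved Proposition~\ref{prop: RPn of smooth groups forms an abelian category} along the exact, fully faithful comparison functor. The only points needing a little care are the bookkeeping in the first paragraph — checking that the functor $\Ab(k_{\RP}) \to \Ab(k_{\RPS})$ really matches up the representable sheaves of a given relatively perfectly of finite type group scheme on the two sites — and the use of Proposition~\ref{prop: Ext comparison from RP to RPS} in both degree $0$ (for fullness) and degree $1$ (for the bijection on extension classes); both are immediate from what has been established.
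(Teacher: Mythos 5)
Your proof is correct and takes the same route as the paper, whose proof is simply the one-line remark that the statement follows from Propositions~\ref{prop: RPn of smooth groups forms an abelian category} and~\ref{prop: Ext comparison from RP to RPS}; you have just spelled out the formal transfer along the exact, fully faithful comparison functor that the authors leave implicit.
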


\begin{proof}
	This follows from Propositions \ref{prop: RPn of smooth groups forms an abelian category}
	and \ref{prop: Ext comparison from RP to RPS}.
\end{proof}


\section{Construction of the relatively perfect cycle class map}
\label{sec: Relatively perfect cycle class maps}

Let $K$ be a complete discrete valuation field of mixed characteristic $(0, p)$
with residue field $k$ such that $[k : k^{p}] < \infty$.
Denote its ring of integers by $\Order_{K}$ with maximal ideal $\ideal{p}_{K}$.
We denote by $\Gr_{n}^{\RP}$ and $\Gr^{\RP}$ the relatively perfect Greenberg functors with respect to $\Order_{K} / \ideal{p}_{K}^{n + 1} \onto k$ and $\Order_{K} \onto k$, respectively.
We denote the Kato canonical lifting functor $h^{\Order_{K}}$ simply by $h$.
As we saw at the end of Section \ref{s.cl}, for a relatively perfect $k$-algebra $Q$, the $\Order_{K}$-algebra $h(Q)$ is characterized
as a unique (up to unique isomorphism) complete flat $\Order_{K}$-algebra
with $h(Q) \otimes_{\Order_{K}} k \cong Q$.

\begin{proposition} \label{prop: properties of Gr} \mbox{}
	Let $X$ be an $\Order_{K}$-scheme locally of finite type.
	Let $n \ge 0$ be an integer.
	\begin{enumerate}
		\item \label{item: Gr is RP finte type}
			The $k$-scheme $\Gr_{n}^{\RP}(X)$ is locally relatively perfectly of finite presentation.
		\item \label{item: Gr for smooth scheme}
			Assume $X$ is smooth.
			Then $\Gr_{n}^{\RP}(X)$ is relatively perfectly smooth over $k$.
			Consider the morphism $\Gr_{n + 1}^{\RP}(X) \to \Gr_{n}^{\RP}(X)$.
			Zariski locally on $\Gr_{n}^{\RP}(X)$,
			the scheme $\Gr_{n + 1}^{\RP}(X)$ is the relative perfection
			of an affine space.
			The morphism $\Gr^{\RP}(X) \to \Gr_{n}^{\RP}(X)$ admits a section
			Zariski locally.
		\item \label{item: components of Gr}
			Assume $X$ is smooth.
			The natural morphisms induce bijections between the sets of (open) irreducible components of
			$\Gr^{\RP}(X)$, $\Gr_{n}^{\RP}(X)$,
			$(X \times_{\Order_{K}} k)^{\RP}$ and $X \times_{\Order_{K}} k$.
	\end{enumerate}
\end{proposition}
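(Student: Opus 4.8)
The plan is to deduce the three assertions in turn from the structure results of Sections~\ref{s.existence} and \ref{sec: Greenberg transform of infinite level}, applied to $\Spec k \into \Spec \Order_K/\ideal{p}_K^{n+1}$ with the filtration $\mathcal{I}_i = \ideal{p}_K^{i+1}/\ideal{p}_K^{n+1}$, which is one of the two running examples satisfying Conditions~\eqref{i.pr}, \eqref{i.fr}, \eqref{i.id} of Section~\ref{s.existence} (so in particular $\Gr_n^{\RP}(X)$ exists by Proposition~\ref{p.gr}).

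For~\eqref{item: Gr is RP finte type}: by Proposition~\ref{p.px} the scheme $\Gr_n^{\RP}(X)$ is covered by the open subschemes $\Gr_n^{\RP}(U)$ with $U\subseteq X$ affine open, so I may assume $X=\Spec B$ is affine of finite type over $\Order_K$. Then I would follow the proofs of Lemmas~\ref{l.Aone} and~\ref{l.GrAff}: $\Gr_n^{\RP}(\A_{\Order_K}^1)$ is the relative perfection of a \emph{finite}-dimensional affine space over $k$, so $\Gr_n^{\RP}(\A_{\Order_K}^l)\cong\Gr_n^{\RP}(\A_{\Order_K}^1)^l$ is relatively perfectly of finite type for every finite $l$; writing $B$ as a quotient of $\Order_K[x_1,\dots,x_l]$ by finitely many relations, $\Gr_n^{\RP}(X)$ is the equalizer of two morphisms $\Gr_n^{\RP}(\A_{\Order_K}^l)\rightrightarrows\Gr_n^{\RP}(\A_{\Order_K}^s)$, hence a closed subscheme of $\Gr_n^{\RP}(\A_{\Order_K}^l)$ (affine schemes being separated), hence itself relatively perfectly of finite type.

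For~\eqref{item: Gr for smooth scheme}: I argue by induction on $n$ that $\Gr_n^{\RP}(X)$ is relatively perfectly smooth. For $n=0$ one has $\Gr_0^{\RP}(X)=(X\times_{\Order_K}k)^{\RP}$ with $X\times_{\Order_K}k$ smooth over $k$. For the step from $n$ to $n+1$, Proposition~\ref{p.SmSch} shows that Zariski locally on $X$, hence (via the cover $\{\Gr_n^{\RP}(U)\}$) Zariski locally on $\Gr_n^{\RP}(X)$, the morphism $\Gr_{n+1}^{\RP}(X)\to\Gr_n^{\RP}(X)$ is the projection $(\A_T^m)^{\RP}\times_T\Gr_n^{\RP}(X)\to\Gr_n^{\RP}(X)$; localizing further so that the base is $(W_0)^{\RP}$ for a smooth $k$-scheme $W_0$ (inductive hypothesis), and using that relative perfection commutes with fibre products (Remark~\ref{rem: properties of RP}\eqref{RP and limits}, together with $T^{\RP}=T$), this local model is $(\A_T^m)^{\RP}\times_T(W_0)^{\RP}\cong(\A_{W_0}^m)^{\RP}$, the relative perfection of the smooth $k$-scheme $\A_{W_0}^m$; this also yields the asserted local structure of $\Gr_{n+1}^{\RP}(X)\to\Gr_n^{\RP}(X)$. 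The existence of Zariski-local sections of $\Gr^{\RP}(X)\to\Gr_n^{\RP}(X)$ is Proposition~\ref{prop: structure of infinite level Gr for smooth X}, which applies since $\Order_K$ is local.

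For~\eqref{item: components of Gr}: I consider the chain $\Gr^{\RP}(X)\to\Gr_n^{\RP}(X)\to\cdots\to\Gr_0^{\RP}(X)=(X\times_{\Order_K}k)^{\RP}\to X\times_{\Order_K}k$ and show each arrow induces a bijection on irreducible components. The scheme $X\times_{\Order_K}k$ is the disjoint union of its open irreducible components (smooth over a field, \cite[Tag 0357]{Sta20}), and so are $(X\times_{\Order_K}k)^{\RP}$ and $\Gr_n^{\RP}(X)$ by~\eqref{item: Gr for smooth scheme} and Proposition~\ref{prop: components of relatively perfectly smooth schemes}; for these three, irreducible and connected components coincide. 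The arrow $(X\times_{\Order_K}k)^{\RP}\to X\times_{\Order_K}k$ is handled directly by Proposition~\ref{prop: components of relatively perfectly smooth schemes}. Each arrow $\Gr_{i+1}^{\RP}(X)\to\Gr_i^{\RP}(X)$ is, Zariski locally on the target, the projection $(\A_T^m)^{\RP}\times_T(-)\to(-)$; since $(\A_k^m)^{\RP}$ is irreducible (Proposition~\ref{prop: components of relatively perfectly smooth schemes}) with a $k$-rational point, hence geometrically connected, this arrow is faithfully flat, open, and has geometrically connected fibres, and the argument in the proof of Proposition~\ref{prop: components of relatively perfectly smooth schemes} applies: an open connected component $C$ upstairs maps into a unique component $D$ downstairs, $\pi(C)$ is open, fibrewise connectedness forces $D$ to be the disjoint union of the opens $\pi(C)$, and connectedness of $D$ leaves only one such $C$. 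Composing over $i$ disposes of $\Gr_n^{\RP}(X)\to\Gr_0^{\RP}(X)$. Finally $\Gr^{\RP}(X)=\varprojlim_m\Gr_m^{\RP}(X)$ with all transition morphisms faithfully flat (Proposition~\ref{p.SmSch}) and hence dominant, so — as in the passage to $\varprojlim_n\mathfrak{R}_{\ff_k}^n$ at the end of the proof of Proposition~\ref{prop: components of relatively perfectly smooth schemes} — $\Gr^{\RP}(X)$ is the disjoint union, over the irreducible components of $\Gr_n^{\RP}(X)$, of the inverse limits of the corresponding towers of irreducible components, each such limit being irreducible; these are the open irreducible components of $\Gr^{\RP}(X)$, and they biject with those of $\Gr_n^{\RP}(X)$. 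The delicate point is precisely this last step: one must check that an inverse limit of irreducible (quasi-compact quasi-separated) schemes along affine dominant transition morphisms is irreducible with open irreducible components, and that the fibres $(\A_k^m)^{\RP}$ and $(\A_k^{\N})^{\RP}$ stay connected after an arbitrary base field extension (which holds because they are inverse limits of affine spaces with faithfully flat transition maps). All the finite-level inputs needed are already established in the earlier sections.
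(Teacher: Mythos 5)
Your overall route matches the paper's: reduce to affine charts via Proposition~\ref{p.px} for~\eqref{item: Gr is RP finte type}; use Propositions~\ref{p.SmSch} and~\ref{prop: structure of infinite level Gr for smooth X} for~\eqref{item: Gr for smooth scheme}; and use the chain of faithfully flat projections together with Proposition~\ref{prop: components of relatively perfectly smooth schemes} for~\eqref{item: components of Gr}. The parts you fill in (the induction on $n$ in~\eqref{item: Gr for smooth scheme} using $(\A_T^m)^{\RP}\times_T(W_0)^{\RP}\cong(\A_{W_0}^m)^{\RP}$, and the component bijections level by level in~\eqref{item: components of Gr}) are correct and are exactly the details the paper leaves implicit.

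There is, however, a genuine gap in your treatment of~\eqref{item: Gr is RP finte type}, at the final ``hence itself relatively perfectly of finite type''. Being a closed subscheme of $\Gr_n^{\RP}(\A_{\Order_K}^l)\cong(\A_k^{M})^{\RP}$ (which is relatively perfect and relatively perfectly of finite type) does \emph{not} by itself imply being the relative perfection of a finite-type $k$-scheme; that is an extra statement requiring proof, and you neither prove nor cite it. What you actually need is to identify the equalizer in question with $E_n^{\RP}$ for some finite-type $k$-scheme $E_n$. Concretely: the two morphisms $\Gr_n^{\RP}(\A_{\Order_K}^l)\rightrightarrows\Gr_n^{\RP}(\A_{\Order_K}^s)$ between relative perfections of finite-dimensional affine spaces correspond, by adjunction, to morphisms $(\A_k^{M})^{\RP}\to\A_k^{M'}$; by Proposition~\ref{prop: relatively perfectly of finite type means limit preserving} these factor through $\Weil_{\ff_k}^{n}(\A_k^{M})\to\A_k^{M'}$ for some finite $n$. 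Letting $E_n$ be the equalizer of those two finite-level morphisms (a closed, hence finite-type, subscheme of $\Weil_{\ff_k}^{n}(\A_k^{M})$), one checks — by testing against relatively perfect schemes, using that $\RP$ is a right adjoint and that $(\Weil_{\ff_k}^{n}(\A_k^{M}))^{\RP}=(\A_k^{M})^{\RP}$ — that $E_n^{\RP}$ is the equalizer of the original pair, and therefore $\Gr_n^{\RP}(X)=E_n^{\RP}$ is relatively perfectly of finite type. Without an argument of this kind the claim is unjustified.

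The ``delicate point'' you flag at the end of~\eqref{item: components of Gr} is real but harmless: the paper's own one-line proof relies on the same iteration that is glossed in the last sentence of the proof of Proposition~\ref{prop: components of relatively perfectly smooth schemes}, and your explicit treatment (filtered inverse limits along affine dominant maps stay irreducible because the colimit of domains with injective transition maps is a domain; idempotents in a filtered colimit come from a finite stage, giving connectedness after any field base change) is correct. Finally, a small point of interpretation in~\eqref{item: Gr for smooth scheme}: what the statement's phrase ``the relative perfection of an affine space'' means here is the local model $(\A_T^m)^{\RP}\times_T U$ for an open $U\subseteq\Gr_n^{\RP}(X)$; your further refinement to $(\A_{W_0}^m)^{\RP}$ for a smooth $W_0$ is a valid, slightly stronger description and poses no problem.
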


\begin{proof}
	\eqref{item: Gr is RP finte type}
	This follows from the proofs of
	Lemmas \ref{l.Aone} and \ref{l.GrAff} and Proposition \ref{p.gr}.
	
	\eqref{item: Gr for smooth scheme}
	This follows from Propositions \ref{p.SmSch}
	and \ref{prop: structure of infinite level Gr for smooth X}.
	
	\eqref{item: components of Gr}
	This follows from \eqref{item: Gr for smooth scheme}
	and Proposition \ref{prop: components of relatively perfectly smooth schemes}.
\end{proof}

\begin{definition}
	Let $G$ be a group scheme over $\Order_{K}$.
	Let $n \ge 1$ be an integer.
	Define $U^{n} \Gr^{\RP}(G)$ to be the kernel of the natural morphism
	$\Gr^{\RP}(G) \to \Gr_{n - 1}^{\RP}(G)$.
\end{definition}

This notation mimics the usual notation for the $n$-th principal units
$U_{K}^{n} = 1 + \ideal{p}_{K}^{n}$.

\begin{proposition} \label{prop: multiplying p on filtration of Gr}
	Let $G$ be a smooth group scheme over $\Order_{K}$.
	Let $e_{K}$ be the absolute ramification index of $K$.
	Let $n > e_{K} / (p - 1)$ be an integer.
	Then the multiplication-by-$p$ map on $\Gr^{\RP}(G)$ restricts to an isomorphism $U^{n} \Gr^{\RP}(G) \isomto U^{n + e_{K}} \Gr^{\RP}(G)$.
\end{proposition}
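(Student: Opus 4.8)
The plan is to reduce the assertion to the behaviour of $[p]$ on the graded pieces of the congruence filtration $U^{\bullet}\Gr^{\RP}(G)$ — which are computed by Proposition~\ref{p.SmGp} — and then to propagate the conclusion by a d\'evissage and a passage to the inverse limit.

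First I would fix notation for the filtration. By Proposition~\ref{p.adjinf} we have $\Gr^{\RP}(G) = \invlim_{N}\Gr_{N}^{\RP}(G)$, and for smooth $G$ the transition morphisms, as well as all the projections $\Gr_{N}^{\RP}(G)\to\Gr_{m}^{\RP}(G)$ and $\Gr^{\RP}(G)\to\Gr_{m}^{\RP}(G)$, are faithfully flat by Propositions~\ref{p.SmSch} and~\ref{prop: properties of Gr}. Put $U^{m}_{N} = \Ker(\Gr_{N}^{\RP}(G)\to\Gr_{m-1}^{\RP}(G))$ for $1\le m\le N+1$, so that $U^{m}\Gr^{\RP}(G) = \invlim_{N}U^{m}_{N}$. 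Each $U^{m}_{N}$ carries the finite filtration $U^{m}_{N}\supseteq U^{m+1}_{N}\supseteq\cdots\supseteq U^{N+1}_{N}=0$, and Proposition~\ref{p.SmGp}, together with the faithful flatness just recalled, provides short exact sequences $0\to U^{m+1}_{N}\to U^{m}_{N}\to\mathrm{gr}^{m}\to 0$ in $\Ab(k_{\RP})$, where $\mathrm{gr}^{m} := (\mathrm{Lie}(G\times_{\Order_{K}}k)\otimes_{k}\ideal{p}_{K}^{m}/\ideal{p}_{K}^{m+1})^{\RP}$ does not depend on $N$. The same diagram chases give, for $1\le a\le b$, canonical isomorphisms $U^{a}\Gr^{\RP}(G)/U^{b}\Gr^{\RP}(G)\cong U^{a}_{b-1}$ and an identification $U^{a}\Gr^{\RP}(G)\isomto\invlim_{b}U^{a}\Gr^{\RP}(G)/U^{b}\Gr^{\RP}(G)$.

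The heart of the proof, and the step I expect to be the main obstacle, is the computation of $[p]$ on the $\mathrm{gr}^{m}$. Let $\hat{G}$ be the completion of $G$ along its identity section; choosing formal coordinates, $\hat{G}\cong\mathrm{Spf}\,\Order_{K}[[X_{1},\dots,X_{d}]]$ with a formal group law $F$, and $[p]_{F}(X)-pX$ has order $\ge 2$. I would invoke the standard fact that, over the $\Z_{(p)}$-algebra $\Order_{K}$, the series $[p]_{F}(X)$ is congruent modulo $p$ to a power series in $X_{1}^{p},\dots,X_{d}^{p}$ (because on a commutative formal group in characteristic $p$ the multiplication-by-$p$ map factors through the relative Frobenius); combined with the order estimate, this lets one write $[p]_{F}(X) = pX + p\,Q(X) + R(X)$ with $Q,R\in\Order_{K}[[X_{1},\dots,X_{d}]]^{d}$ of orders $\ge 2$ and $\ge p$ respectively. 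Now evaluate on a $d$-tuple of elements of $\ideal{p}_{K}^{m}B$, where $B$ is any $\ideal{p}_{K}$-adically complete flat $\Order_{K}$-algebra and $m\ge n$: since $p\Order_{K}=\ideal{p}_{K}^{e_{K}}$, the linear term $pX$ lies in $\ideal{p}_{K}^{m+e_{K}}B$, while $p\,Q(X)\in\ideal{p}_{K}^{2m+e_{K}}B$ and $R(X)\in\ideal{p}_{K}^{pm}B$ both lie in $\ideal{p}_{K}^{m+e_{K}+1}B$. It is precisely the estimate on $R$ that uses the hypothesis $n > e_{K}/(p-1)$, which yields $(p-1)m\ge(p-1)n>e_{K}$ and hence $pm\ge m+e_{K}+1$. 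Applying this with $B=h(Q)$ for a relatively perfect $k$-algebra $Q$, and using that a point of $G$ over $h(Q)$ congruent to the identity modulo $\ideal{p}_{K}^{m}$ is a point of $\hat{G}$ with coordinates in $\ideal{p}_{K}^{m}h(Q)$, I would deduce: $[p]$ carries $U^{m}\Gr^{\RP}(G)$ into $U^{m+e_{K}}\Gr^{\RP}(G)$, and likewise $U^{m}_{N}$ into $U^{m+e_{K}}_{N}$, for every $m\ge n$; and the induced morphism $\mathrm{gr}^{m}\to\mathrm{gr}^{m+e_{K}}$ is $\id_{(\mathrm{Lie}(G\times_{\Order_{K}}k))^{\RP}}$ tensored with multiplication by $p$, the latter being an isomorphism $\ideal{p}_{K}^{m}/\ideal{p}_{K}^{m+1}\isomto\ideal{p}_{K}^{m+e_{K}}/\ideal{p}_{K}^{m+e_{K}+1}$ of one-dimensional $k$-vector spaces. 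In particular $\mathrm{gr}^{m}\to\mathrm{gr}^{m+e_{K}}$ is an isomorphism for all $m\ge n$.

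It remains to assemble these facts. Because $[p]$ sends $U^{M}\Gr^{\RP}(G)$ into $U^{M+e_{K}}\Gr^{\RP}(G)$, it induces a morphism $U^{n}\Gr^{\RP}(G)/U^{M}\Gr^{\RP}(G)\to U^{n+e_{K}}\Gr^{\RP}(G)/U^{M+e_{K}}\Gr^{\RP}(G)$, that is, a morphism $U^{n}_{M-1}\to U^{n+e_{K}}_{M+e_{K}-1}$; both sides carry filtrations of the same length $M-n$, with successive quotients $\mathrm{gr}^{n},\dots,\mathrm{gr}^{M-1}$ and $\mathrm{gr}^{n+e_{K}},\dots,\mathrm{gr}^{M+e_{K}-1}$, matched up by $[p]$. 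An induction on the filtration length using the five lemma in $\Ab(k_{\RP})$ shows that $[p]\colon U^{n}_{M-1}\isomto U^{n+e_{K}}_{M+e_{K}-1}$ for every $M$. Passing to the inverse limit over $M$ — legitimate since each of the two filtrations is complete and separated by the last identification of the second paragraph — yields the desired isomorphism $[p]\colon U^{n}\Gr^{\RP}(G)\isomto U^{n+e_{K}}\Gr^{\RP}(G)$. Apart from the formal-group computation, the only points needing care are the sheaf-theoretic bookkeeping of the second paragraph (exactness of the graded sequences and the quotient identifications), all of which rest on the faithful flatness of the transition morphisms $\Gr_{i+1}^{\RP}(G)\to\Gr_{i}^{\RP}(G)$.
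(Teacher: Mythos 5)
Your proposal is correct, and the core computation (the decomposition $[p]_{F}(X)=pX+pQ(X)+R(X)$ with $Q$ of order $\ge 2$ and $R$ of order $\ge p$, and the resulting congruence $[p]_{F}(X)\equiv pX\pmod{\ideal{p}_{K}^{m+e_{K}+1}}$ for $m\ge n$) is exactly the formal-group fact the paper relies on, but your overall route is organized differently. The paper's proof is very short: it fixes a relatively perfect $k$-algebra $R$, observes that it suffices to prove bijectivity of $p$ on $R$-valued points for all such $R$ (a Yoneda-style reduction, valid because $U^{n}\Gr^{\RP}(G)$ and $U^{n+e_{K}}\Gr^{\RP}(G)$ are relatively perfect $k$-schemes), rewrites the claim as $p\colon\Ker(G(A)\onto G(A/A\ideal{p}_{K}^{n}))\isomto\Ker(G(A)\onto G(A/A\ideal{p}_{K}^{n+e_{K}}))$ for $A=h(R)$, and then cites Serre's theorem on the filtration of a formal group over $\Order_{K}$, noting that the same argument applies verbatim with the complete flat $\Order_{K}$-algebra $h(R)$ in place of $\Order_{K}$. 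You instead spell out the formal-group estimate and then prove the isomorphism by d\'evissage on the congruence filtration — passing to the graded pieces from Proposition~\ref{p.SmGp}, applying the five lemma in $\Ab(k_{\RP})$, and taking an inverse limit over a complete, separated filtration. This extra layering is logically unnecessary once you have the estimate: injectivity and surjectivity on $R$-points follow directly from the valuation bound and successive approximation, as in Serre, which is what the paper's terse argument does. One minor point worth making explicit in your write-up: the identification of the congruence-to-identity locus of $G(h(Q))$ with $\hat{G}$-points requires $G(h(Q))\cong\invlim_{j}G(h(Q)/\ideal{p}_{K}^{j}h(Q))$, which is the Bhatt--Gabber algebraization already built into Proposition~\ref{p.adjinf}; the section $\Spec h(Q)\to G$ need not a priori factor through an affine chart around the identity, and it is the formal-scheme picture that saves this.
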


\begin{proof}
	Let $R$ be a relatively perfect $k$-algebra.
	We want to show that $p \colon U^{n} \Gr^{\RP}(G)(R) \isomto U^{n + e_{K}} \Gr^{\RP}(G)(R)$.
	Let $A = h(R)$ be the Kato canonical lifting of $R$ to $\Order_{K}$.
	Then $A$ is a complete flat $\Order_{K}$-algebra with
	$A / A \ideal{p}_{K} \cong R$.
	The group of $R$-valued points $U^{n} \Gr^{\RP}(G)(R)$
	is the kernel of the map $G(A) \onto G(A / A  \ideal{p}_{K}^{n})$,
	so what we want to show is
		\[
				p
			\colon
				\Ker \bigl( G(A) \onto G(A / A  \ideal{p}_{K}^{n}) \bigr)
			\isomto
				\Ker \bigl( G(A) \onto G(A / A  \ideal{p}_{K}^{n+e_{K}})  \bigr).
		\]
	If $R = k$ and hence $A = \Order_{K}$,
	then this statement is true by \cite[Part II, Chapter IV, Section 9, Theorem 4]{Ser06}.
	The same proof works for a general $R$. \end{proof}

\begin{proposition} \label{prop: Gr mod p power is RP algebraic group}
	Let $G$ be a smooth group scheme over $\Order_{K}$
	such that the group of geometric points of $\pi_{0}(G \times_{\Order_{K}} k)$ is finitely generated.
	Then for any integer $m \ge 0$,
	the sheaf $\Gr^{\RP}(G) / p^{m} \Gr^{\RP}(G) \in \Ab(k_{\RP})$ is
	an affine group scheme relatively perfectly of finite presentation over $k$.
\end{proposition}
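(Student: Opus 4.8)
The plan is to show first that $\Gr^{\RP}(G)/p^{m}\Gr^{\RP}(G)$ is relatively perfectly of finite type over $k$, and then to deduce affineness separately; the case $m=0$ is trivial, so assume $m\ge 1$. Throughout I would use freely that the group schemes relatively perfectly of finite type over $k$ form an abelian subcategory of $\Ab(k_{\RP})$ closed under extensions (Proposition \ref{prop: RPn of smooth groups forms an abelian category}), so that kernels (Proposition \ref{prop: kernel of RPn of smooth group is also}), cokernels, images and extensions of such groups are again of this kind.

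First I would cut off the infinite ``tail'' of $\Gr^{\RP}(G)$. Fix an integer $n>e_{K}/(p-1)$. Applying Proposition \ref{prop: multiplying p on filtration of Gr} $m$ times shows that multiplication by $p^{m}$ maps $U^{n}\Gr^{\RP}(G)$ isomorphically onto $U^{n+me_{K}}\Gr^{\RP}(G)$; in particular it is injective on $U^{n}\Gr^{\RP}(G)$, with cokernel there $U^{n}\Gr^{\RP}(G)/U^{n+me_{K}}\Gr^{\RP}(G)=\Ker\bigl(\Gr_{n+me_{K}-1}^{\RP}(G)\to\Gr_{n-1}^{\RP}(G)\bigr)$. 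This last group carries a finite filtration whose graded pieces are the kernels of the morphisms $\Gr_{j}^{\RP}(G)\to\Gr_{j-1}^{\RP}(G)$, each of which is $\bigl(\mathrm{Lie}(G\times_{\Order_{K}}k)\otimes_{k}\ideal{p}_{K}^{j}/\ideal{p}_{K}^{j+1}\bigr)^{\RP}\cong(\Ga^{d})^{\RP}$ by Proposition \ref{p.SmGp} (where $d$ is the relative dimension of $G/\Order_{K}$ along the identity section), hence relatively perfectly of finite type and affine. Therefore $U^{n}\Gr^{\RP}(G)/p^{m}$ is relatively perfectly of finite type. Feeding the short exact sequence $0\to U^{n}\Gr^{\RP}(G)\to\Gr^{\RP}(G)\to\Gr_{n-1}^{\RP}(G)\to 0$ into the snake lemma for multiplication by $p^{m}$, and using $U^{n}\Gr^{\RP}(G)[p^{m}]=0$, exhibits $\Gr^{\RP}(G)/p^{m}$ as an extension of $\Gr_{n-1}^{\RP}(G)/p^{m}$ by the cokernel of a morphism $\Gr_{n-1}^{\RP}(G)[p^{m}]\to U^{n}\Gr^{\RP}(G)/p^{m}$. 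So it remains to show that $\Gr_{n-1}^{\RP}(G)/p^{m}$ and $\Gr_{n-1}^{\RP}(G)[p^{m}]$ are relatively perfectly of finite type.

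This is where the hypothesis on $\pi_{0}$ enters. The $k$-scheme $\Gr_{n-1}^{\RP}(G)$ is relatively perfectly smooth by Proposition \ref{prop: properties of Gr}\,\eqref{item: Gr for smooth scheme}, so by Proposition \ref{prop: RPS implies RPn of smooth, separable points are dense} it equals $G_{0}^{\RP}$ for a smooth $k$-group scheme $G_{0}$, whose identity component $G_{0}^{0}$ is quasi-compact; since $\RP$ preserves the open immersion $G_{0}^{0}\into G_{0}$ and takes connected schemes to connected schemes (Proposition \ref{prop: components of relatively perfectly smooth schemes}), $(G_{0}^{0})^{\RP}$ is the identity component of $\Gr_{n-1}^{\RP}(G)$ and is relatively perfectly of finite type. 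As $G_{0}\onto G_{0}/G_{0}^{0}=\pi_{0}(G_{0})$ is smooth surjective and $\pi_{0}(G_{0})$ is étale (hence its own relative perfection), Proposition \ref{prop: RPn of smooth surj is surj over the RP site} yields a short exact sequence $0\to(G_{0}^{0})^{\RP}\to\Gr_{n-1}^{\RP}(G)\to\pi_{0}(G_{0})\to 0$ in $\Ab(k_{\RP})$, with $\pi_{0}(G_{0})\cong\pi_{0}(G\times_{\Order_{K}}k)$ by Proposition \ref{prop: properties of Gr}\,\eqref{item: components of Gr}. Since the group of geometric points of $\pi_{0}(G\times_{\Order_{K}}k)$ is finitely generated, both $\pi_{0}(G_{0})/p^{m}$ and $\pi_{0}(G_{0})[p^{m}]$ are finite étale $k$-group schemes, hence relatively perfectly of finite type. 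Running the snake lemma for $p^{m}$ on the displayed sequence shows that $\Gr_{n-1}^{\RP}(G)/p^{m}$ is an extension of $\pi_{0}(G_{0})/p^{m}$ by a quotient of $(G_{0}^{0})^{\RP}/p^{m}$, hence relatively perfectly of finite type; moreover $\Gr_{n-1}^{\RP}(G)[p^{m}]=(G_{0}[p^{m}])^{\RP}$ by left exactness of $\RP$, and $G_{0}[p^{m}]$ is quasi-compact since it maps into the finite group scheme $\pi_{0}(G_{0})[p^{m}]$, so $(G_{0}[p^{m}])^{\RP}=\bigl((G_{0}[p^{m}])^{\natural}\bigr)^{\RP}$ (Proposition \ref{prop: geometrically reduced part has the same RPn}) is relatively perfectly of finite type. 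Combining with the previous paragraph, $\Gr^{\RP}(G)/p^{m}$ is relatively perfectly of finite type.

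For affineness, write $\Gr^{\RP}(G)/p^{m}\cong L^{\RP}$ with $L$ a quasi-compact smooth $k$-group scheme (Proposition \ref{prop: RP finite type group is a RPn of smooth group}). Since $L^{\RP}$ is killed by $p^{m}$ and $L^{\RP}\to L$ is faithfully flat (Proposition \ref{prop: components of relatively perfectly smooth schemes}), hence an epimorphism of schemes, multiplication by $p^{m}$ on $L$ is zero. Base changing to $\closure{k}$, the identity component $L_{\closure{k}}^{0}$ is a smooth connected commutative group scheme killed by $p^{m}$; by Chevalley's structure theorem its abelian-variety quotient is an abelian variety killed by $p^{m}$, hence trivial, so $L_{\closure{k}}^{0}$ is affine, and as $\pi_{0}(L_{\closure{k}})$ is finite the whole $L_{\closure{k}}$ is affine; so $L$ is affine by faithfully flat descent, and therefore so is $L^{\RP}=\Gr^{\RP}(G)/p^{m}$ by Remark \ref{rem: properties of RP}\,\eqref{RP preserves qc}. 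The main obstacle throughout is that $\Gr^{\RP}(G)$ itself is not of finite type and may have infinitely many connected components, so one has to combine the finitization provided by Proposition \ref{prop: multiplying p on filtration of Gr} with the finite-generation hypothesis on $\pi_{0}$ while remaining inside the abelian subcategory of relatively perfectly finite-type groups; the one genuinely geometric input is in the affineness step, namely that no nonzero abelian variety is killed by $p^{m}$.
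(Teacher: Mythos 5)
Your proof is correct, but it takes a genuinely different route from the paper's in two places, and the paper's reduction is noticeably more economical.

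For the reduction to finite level, the paper observes directly that the natural morphism $\Gr^{\RP}(G)/p^{m}\to\Gr_{N}^{\RP}(G)/p^{m}$ is an \emph{isomorphism} for $N=n'+me_{K}-1$ with $n'>e_{K}/(p-1)$: indeed $U^{N+1}\Gr^{\RP}(G)=p^{m}\,U^{n'}\Gr^{\RP}(G)\subseteq p^{m}\Gr^{\RP}(G)$ by Proposition \ref{prop: multiplying p on filtration of Gr}, so the infinite tail dies in the quotient. This cuts the whole problem down to the scheme $\Gr_{N}^{\RP}(G)$, after which a single application of the connected--\'etale snake finishes the ``relatively perfectly of finite type'' assertion. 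You instead run the snake lemma on $0\to U^{n}\Gr^{\RP}(G)\to\Gr^{\RP}(G)\to\Gr_{n-1}^{\RP}(G)\to 0$ and analyze each term; this works, but it forces you to also establish that $\Gr_{n-1}^{\RP}(G)[p^{m}]$ is relatively perfectly of finite type (via $(G_{0}[p^{m}])^{\RP}=\bigl((G_{0}[p^{m}])^{\natural}\bigr)^{\RP}$), a step the paper's direct identification renders unnecessary. In other words, your $U^{n}\Gr^{\RP}(G)/p^{m}$ computation is morally the same bound on the tail that the paper uses, but you keep the extension data around instead of noticing that the tail contributes nothing modulo $p^{m}$.

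For affineness, the paper simply invokes \cite[Section 9.2, Theorem 1]{BLR90}: a connected smooth commutative $k$-group killed by a power of $p$ is affine. You reprove this from scratch via Chevalley's structure theorem over $\closure{k}$ (the abelian quotient is $p^{m}$-torsion, hence trivial), finiteness of $\pi_{0}$, and faithfully flat descent, then pass to $L^{\RP}$ using Remark \ref{rem: properties of RP}\,\eqref{RP preserves qc}. This is a correct and more self-contained argument, at the cost of some length. Both approaches use, as you note, the faithful flatness of $L^{\RP}\to L$ to transfer $p^{m}$-torsion to $L$.
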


\begin{proof}
    For $n' > e_{K} / (p - 1)$,
    the natural morphism
        \[
                \Gr^{\RP}(G) / p^{m} \Gr^{\RP}(G)
            \to
                \Gr_{n' + m e_{K} - 1}^{\RP}(G) / p^{m} \Gr_{n' + m e_{K} - 1}^{\RP}(G)
        \]
    in $\Ab(k_{\RP})$ is an isomorphism by
	Propositions \ref{prop: properties of Gr} \eqref{item: Gr for smooth scheme}
	and \ref{prop: multiplying p on filtration of Gr}.
	Write $n = n' + m e_{K} - 1$.
	We have $\pi_{0}(\Gr_{n}^{\RP}(G)) \isomto \pi_{0}(G \times_{\Order_{K}} k)$
	by Proposition \ref{prop: properties of Gr} \eqref{item: components of Gr}.
	Hence its group of geometric points is finitely generated and $\Gr_{n}^{\RP}(G)^{0} =\Gr_{n}^{\RP}(G^{0})$ is relatively perfectly of finite presentation over $k$.
	The connected-\'etale sequence for $\Gr_{n}^{\RP}(G)$ induces an exact sequence
		\begin{align*}
			&		\pi_{0}(\Gr_{n}^{\RP}(G))[p^{m}]
				\to
					\Gr_{n}^{\RP}(G)^{0} / p^{m} \Gr_{n}^{\RP}(G)^{0}
			\\
			&	\to
					\Gr_{n}^{\RP}(G) / p^{m} \Gr_{n}^{\RP}(G)
				\to
					\pi_{0}(\Gr_{n}^{\RP}(G)) / p^{m} \pi_{0}(\Gr_{n}^{\RP}(G))
				\to
					0
		\end{align*}
	in $\Ab(k_{\RP})$, where $[p^{m}]$ denotes the part killed by $p^{m}$.
	All the terms except $\Gr_{n}^{\RP}(G) / p^{m} \Gr_{n}^{\RP}(G)$ are
	already shown to be group schemes relatively perfectly of finite presentation over $k$.
	Hence so is $\Gr_{n}^{\RP}(G) / p^{m} \Gr_{n}^{\RP}(G)$
	by Proposition \ref{prop: RPn of smooth groups forms an abelian category}.
 By Proposition \ref{prop: RP finite type group is a RPn of smooth group},
this group is the relative perfection of a quasi-compact smooth group scheme $H_{0}$ over $k$:
        \[
                \Gr_{n}^{\RP}(G) / p^{m} \Gr_{n}^{\RP}(G)
            =
                H_{0}^{\RP}.
        \]
    Note, in particular, that $H_{0}^{\RP}$ is killed by $p^{m}$.
    Since the canonical map $H_{0}^{\RP}\to H_{0}$ is a faithfully flat morphism of $k$-group schemes by Remark \ref{rem: properties of RP} \eqref{RP and limits} and Proposition \ref{prop: components of relatively perfectly smooth schemes}, $H_{0}$ is also killed by $p^{m}$.
	A connected smooth group scheme over $k$ killed by a power of $p$
	is affine by \cite[Section 9.2, Theorem 1]{BLR90}.
        Hence the identity component of $H_{0}$ is affine.
        As $H_{0}$ is of finite type over $k$, it follows that
        $H_{0}$ itself is affine.
        Therefore its relative perfection
        $\Gr_{n}^{\RP}(G) / p^{m} \Gr_{n}^{\RP}(G)$
        is affine.
\end{proof}

For any $m \ge 0$, recall the nearby cycle functor
$R^{m} \Psi \colon \Ab(K_{\Et}) \to \Ab(k_{\RPS})$
from \cite[Section 3]{KS19}:

\begin{definition}
	Let $F \in \Ab(K_{\Et})$ be a sheaf.
	Let $m \ge 0$.
	Define $R^{m} \Psi F \in \Ab(k_{\RPS})$ to be
	the \'etale sheafification of the presheaf
		\[
			R \mapsto H^{m}(h(R)_{K}, F),
		\]
	where $R$ runs over relatively perfectly smooth $k$-algebras
	and $h(R)_{K} = h(R) \otimes_{\Order_{K}} K$
	is the generic fiber of the Kato canonical lifting $h(R)$.
	The functors $\{R^{m} \Psi\}_{m \ge 0}$ form a cohomological functor.
\end{definition}

Actually this is a proposition \cite[Corollary 3.3]{KS19}
rather than the definition itself in \cite[Section 3]{KS19}.
The above characterization is sufficient for this paper.

The following proves Theorem \ref{thm: RP cycle class map}
\eqref{item: representability of nearby cycle}.

\begin{proposition} \label{prop: nearby cycle is RP algebraic group}
	Let $N$ be a $p$-primary finite Galois module over $K$.
	For any $m \ge 0$, the sheaf $R^{m} \Psi N \in \Ab(k_{\RPS})$
	is  an affine group scheme relatively perfectly of finite presentation over $k$.
\end{proposition}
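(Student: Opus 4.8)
The plan is to reduce the statement, by d\'evissage, to the case where $N$ is a Tate twist of $\Z/p\Z$, for which $R^m\Psi$ is computed in \cite[Proposition 6.1]{KS19}, and to assemble the general case using that the group schemes relatively perfectly of finite type over $k$ form an abelian subcategory of $\Ab(k_{\RPS})$ closed under extensions (Proposition \ref{prop: RPS groups in RPS site}). Affineness need not be tracked along the way: since $N$ is killed by $p^n$ for some $n$, so is each $R^m\Psi N$ (as $R^m\Psi$ is additive), and hence, once $R^m\Psi N$ is known to be a group scheme relatively perfectly of finite type over $k$, it is automatically affine by the argument at the end of the proof of Proposition \ref{prop: Gr mod p power is RP algebraic group} (a quasi-compact smooth group scheme over $k$ killed by a power of $p$ is affine). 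I also use that $\{R^m\Psi\}_{m\ge 0}$ is the universal (effaceable) cohomological $\delta$-functor extending $R^0\Psi$: an injective object of $\Ab(K_{\Et})$ restricts to a flasque, hence cohomologically trivial, sheaf on the small \'etale site of every $K$-scheme, so $R^m\Psi$ annihilates injectives for $m\ge 1$; together with the vanishing of $R^m\Psi N$ for $m$ large (which holds for torsion $N$, from the cohomological dimension of $h^{\Order_K}(R)_K$, cf. \cite{KS19}), this furnishes long exact sequences and convergent hyper-derived-functor spectral sequences.

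\emph{Step 1: reduction to $pN=0$.} Filtering $N$ by $p$-power torsion gives $0=N^{(0)}\subseteq\cdots\subseteq N^{(s)}=N$ with each $N^{(i)}/N^{(i-1)}$ killed by $p$. Inducting on $s$ via the long exact sequence of $R^*\Psi$ attached to $0\to N^{(s-1)}\to N\to N/N^{(s-1)}\to 0$ and Proposition \ref{prop: RPS groups in RPS site}, we reduce to the case $pN=0$.

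\emph{Step 2: reduction to an induced module.} Assume $pN=0$, and choose a finite Galois extension $L/K$, with group $\Gamma$ and residue field $\ell$, trivializing the Galois action on $N$; note $[\ell:\ell^p]=[k:k^p]<\infty$ (the imperfection degree being invariant under finite field extensions), so \cite[Proposition 6.1]{KS19} is available over $L$. Viewing $N$ as a finitely generated $\F_p[\Gamma]$-module, choose a resolution $\cdots\to P_1\to P_0\to N\to 0$ by finite free $\F_p[\Gamma]$-modules. Under the equivalence between \'etale sheaves over $K$ split by $L$ and finite $\Gamma$-modules, $\F_p[\Gamma]$ corresponds to the Weil restriction $\Weil_{L/K}((\Z/p\Z)_L)=(j_L)_*(\Z/p\Z)$ along the finite morphism $j_L\colon\Spec L\to\Spec K$, so each $P_i$ is a finite direct sum of copies of $\Weil_{L/K}((\Z/p\Z)_L)$. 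The hyper-derived-functor spectral sequence with $E_1$-page the $R^t\Psi(P_i)$ and abutment $R^*\Psi(N)$ — convergent since $R^m\Psi N=0$ for $m$ large — together with Proposition \ref{prop: RPS groups in RPS site}, reduces us to the case $N=\Weil_{L/K}((\Z/p\Z)_L)$.

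\emph{Step 3: the base case.} It remains to show that $R^m\Psi\bigl(\Weil_{L/K}((\Z/p\Z)_L)\bigr)$ is relatively perfectly of finite type over $k$, and I would identify it with the Weil restriction along $\Spec\ell\to\Spec k$ of the corresponding sheaf $R^m\Psi^L\bigl((\Z/p\Z)_L\bigr)$ over $\ell$. For a relatively perfectly smooth $k$-algebra $R$, finite base change for \'etale cohomology gives $H^m\bigl(h^{\Order_K}(R)_K,(j_L)_*(\Z/p\Z)\bigr)\cong H^m\bigl((\Order_L\otimes_{\Order_K}h^{\Order_K}(R))_L,\Z/p\Z\bigr)$, and $\Order_L\otimes_{\Order_K}h^{\Order_K}(R)$ is a complete flat $\Order_L$-algebra with special fibre $R\otimes_k\ell$, which is the base change along $k\to\ell$ of a smooth $k$-scheme; comparing it with the Kato canonical lifting over $\Order_L$ of the relatively perfectly smooth $\ell$-algebra $(R\otimes_k\ell)^{\RP}$ identifies the \'etale sheafification with $\Weil_{\ell/k}\bigl(R^m\Psi^L((\Z/p\Z)_L)\bigr)$. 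Now $R^m\Psi^L((\Z/p\Z)_L)$ is a group scheme relatively perfectly of finite type over $\ell$ by \cite[Proposition 6.1]{KS19} applied over $L$, with $\Z/p\Z=\mu_p^{\otimes 0}$ a Tate twist; and Weil restriction along the finite morphism $\Spec\ell\to\Spec k$ — an ordinary finite \'etale Weil restriction along the separable part of $\ell/k$, and an iterate of Weil restrictions along $\ff_k$ (Section \ref{s.rp}) along the purely inseparable part — carries group schemes relatively perfectly of finite type over $\ell$ to ones over $k$, since Weil restriction along a finite locally free morphism preserves finite type and affineness (\cite[Section 7.6]{BLR90}) and commutes with relative perfection (the latter being an inverse limit of Weil restrictions along Frobenius). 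This completes the proof. I expect the principal obstacle to be precisely Step 3: making rigorous that $R\Psi$ commutes with the finite base change $\Order_K\to\Order_L$ and with Weil restriction along the possibly inseparable residue extension $\ell/k$, and that this Weil restriction preserves relative perfection of finite type.
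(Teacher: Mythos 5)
Your approach is correct in outline but genuinely different from the paper's, and the difference is worth recording. The paper does not use a spectral sequence: after choosing $L \supseteq K(\zeta_p)$ trivializing $N$, it passes to the fixed field $M$ of a $p$-Sylow subgroup of $\Gal(L/K)$. Over $M$ the group $\Gal(L/M)$ is a $p$-group, so (by the $p$-group fixed-point lemma, \cite[Ch.\ IX, \S4, Lemma 4]{Ser79}) $N$ is filtered by trivial $\Z/p\Z$'s; this gives the result over $M$ directly from \cite[Proposition 6.1]{KS19} and Proposition \ref{prop: RPS groups in RPS site}. It then identifies $R^m\Psi_K(\Weil_{M/K}(N))$ with $\Weil_{k'/k}(R^m\Psi_M(N))$ — exactly your Step 3, with $M,k'$ in place of $L,\ell$, justified by Lemma \ref{lem: Weil restriction commutes with etale sheafification} — and finally uses that $[M:K]$ is prime to $p$ to exhibit $N$ as a \emph{direct summand} of $\Weil_{M/K}(N)$. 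The direct-summand step entirely replaces your free-resolution/hyper-Ext spectral sequence and its attendant convergence concerns: the paper never needs a cohomological-dimension bound for $R^\bullet\Psi$, whereas your Step 2 relies on $R^q\Psi$ vanishing for $q$ large on the $P_i$'s (available from \cite[Proposition 6.1]{KS19} via Step 3, so not circular, but an extra ingredient). Another consequence of the $p$-Sylow choice: since $[k':k]$ divides $[M:K]$, which is prime to $p$, the residue extension $k'/k$ is automatically separable — so the paper in fact only needs the separable case of the Weil-restriction step, though it phrases the lemma in full generality. Your $\ell/k$ can genuinely be inseparable, so you do need that case; your sketch ("an iterate of Weil restrictions along $\ff_k$") is not quite right as stated, since a general purely inseparable finite extension is not a power of $\Spec\ff_k$. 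The correct justification for your claim that $\Weil_{\ell/k}$ carries relative perfections of smooth group schemes over $\ell$ to ones over $k$ is that $\Weil_{\ell/k}\circ\Weil_{\ff_\ell}=\Weil_{\ff_k}\circ\Weil_{\ell/k}$ — because the composite morphisms $\Spec\ell\xrightarrow{\ff_\ell}\Spec\ell\to\Spec k$ and $\Spec\ell\to\Spec k\xrightarrow{\ff_k}\Spec k$ are the same — and Weil restriction along a composite is the composite of Weil restrictions; taking $\varprojlim_n$ then shows $\Weil_{\ell/k}$ commutes with relative perfection. With that repair, and with the convergence of your spectral sequence underwritten by the vanishing from Step 3, your proof is a valid alternative that trades the paper's Sylow trick for a more systematic dévissage over the group algebra.
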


\begin{proof}
	Let $L / K$ be a finite Galois extension containing a primitive $p$-th root of unity $\zeta_{p}$
	such that $N$ has trivial Galois action over $L$.
	Let $M$ be a subextension of $L / K$ corresponding to a $p$-Sylow subgroup of $\Gal(L / K)$.
	We have $\zeta_{p} \in M$.
	Let $k'$ be the residue field of $M$,
	which is a finite, not necessarily separable extension of $k$.
	For a relatively perfect $k'$-algebra $R'$,
	let $h^{\Order_{M}}(R')$ be the Kato canonical lifting of $R'$ to $\Order_{M}$
	and set $h^{\Order_{M}}(R')_{M} = h^{\Order_{M}}(R') \otimes_{\Order_{M}} M$.
	Let $R^{m} \Psi_{M} \colon \Ab(M_{\Et}) \to \Ab(k'_{\RPS})$ be the nearby cycle functor for $M$.
	For any integer $q \ge 0$, let $\Omega_{k'}^{q} = \Omega_{k' / \F_{p}}^{q}$ be
	the module of $q$-th absolute K\"ahler differentials of $k'$.
	As $[k' : k'^{p}] < \infty$, this is a finite-dimensional vector space over $k'$.
	We view it as a relatively perfect group scheme over $k'$
	isomorphic to the relative perfection of a finite direct sum of the additive algebraic group over $k'$.
	Let $\Omega_{k', d = 0}^{q}$ be the kernel of the differential
	$d \colon \Omega_{k'}^{q} \to \Omega_{k'}^{q + 1}$.
	Let $C \colon \Omega_{k', d = 0}^{q} \to \Omega_{k'}^{q}$ be the Cartier operator
	(\cite[Chapter 0, Section 2.1]{Ill79})
	as a morphism of sheaves on $\Spec k'_{\RP}$
	(\cite[Section 3]{Kat86}).
	By restriction, it defines a morphism of sheaves on $\Spec k'_{\RPS}$,
	which we denote by the same symbol $C$.
	Let $\nu(q)_{k'}$ be the kernel of $C - 1 \colon \Omega_{k', d = 0}^{q} \to \Omega_{k'}^{q}$.
	It is an affine group scheme relatively perfectly of finite presentation over $k'$
	by Proposition \ref{prop: RPS groups in RPS site}.
	
	Since $\Gal(L / M)$ is a $p$-group and $N$ is $p$-primary,
	 \cite[Chapter IX, Section 4, Lemma 4]{Ser79}, (ii) $\Rightarrow$ (i),
	shows that if the part of $N$ killed by $p$ is non-zero,
	then it contains a copy of the trivial $\Gal(L / M)$-module $\Z / p \Z$.
	It follows that
	the $\Gal(L / M)$-module $N$ admits a filtration
	whose successive subquotients are isomorphic to the trivial Galois module $\Z / p \Z$.
	Since $\zeta_{p} \in M$, any Tate twist of $\Z / p \Z$ over $M$ is isomorphic to $\Z / p \Z$.
	By \cite[Proposition 6.1]{KS19},
	we know that $R^{m} \Psi_{M}(\Z / p \Z)$ admits a filtration
	where every successive subquotient is isomorphic to
	either $\Omega_{k'}^{q}$, $d \Omega_{k'}^{q}$ or $v(q)_{k'}$ for some $q$
	(and this $q$ depends on the subquotient).
	Hence by Proposition \ref{prop: RPS groups in RPS site},
	we know that $R^{m} \Psi_{M} N$ is an affine group scheme
	relatively perfectly of finite presentation over $k'$.
	
	Let $\Weil_{M / K} \colon \Ab(M_{\Et}) \to \Ab(K_{\Et})$
	and $\Weil_{k' / k} \colon \Ab(k'_{\RPS}) \to \Ab(k_{\RPS})$ be
	the Weil restriction functors.
	They are exact functors by \cite[Tags 03QP (2) and 03YX]{Sta20}.
	The sheaf $R^{m} \Psi_{K}(\Weil_{M / K}(N)) \in \Ab(k_{\RPS})$ is
	the \'etale sheafification of the presheaf
		\[
				R
			\mapsto
				H^{m}(h^{\Order_{K}}(R)_{K}, \Weil_{M / K}(N))
			\cong
				H^{m}(h^{\Order_{K}}(R)_{K} \otimes_{K} M, N),
		\]
	where $R$ runs over relatively perfectly smooth $k$-algebras.
	The functor $\Weil_{k' / k}$ commutes with \'etale sheafification
	by Lemma \ref{lem: Weil restriction commutes with etale sheafification} below.
	Hence the sheaf $\Weil_{k' / k}(R^{m} \Psi_{M}(N)) \in \Ab(k_{\RPS})$ is
	the \'etale sheafification of the presheaf
		\[
				R
			\mapsto
				H^{m}(h^{\Order_{M}}(R \otimes_{k} k')_{M}, N).
		\]
	These sheaves are canonically isomorphic
	since $h^{\Order_{K}}(R) \otimes_{\Order_{K}} \Order_{M}$
	is a complete flat $\Order_{M}$-algebra whose reduction is $R \otimes_{k} k'$
	and so gives the Kato canonical lifting $h^{\Order_{M}}(R \otimes_{k} k')$.
	Since $R^{m} \Psi_{M}(N)$ is  an affine  group scheme
	relatively perfectly of finite presentation over $k'$,
	its Weil restriction to $k$ is  an affine group scheme
	relatively perfectly of finite presentation over $k$.
	Therefore $R^{m} \Psi_{K}(\Weil_{M / K}(N))$ is  an affine  group scheme
	relatively perfectly of finite presentation over $k$.
	
	Since $[M : K]$ is prime to $p$ and $N$ is $p$-primary,
	we know that $N$ is a direct summand of $\Weil_{M / K}(N)$
	as $\Gal(L / K)$-modules or in $\Ab(K_{\Et})$.
	Hence $R^{m} \Psi_{K}(N)$ is a direct summand of $R^{m} \Psi_{K}(\Weil_{M / K}(N))$.
	  By Proposition  \ref{prop: RPS groups in RPS site}, we know that $R^{m} \Psi_{K}(N)$ is  an affine group scheme
	relatively perfectly of finite presentation over $k$.
\end{proof}

\begin{lemma} \label{lem: Weil restriction commutes with etale sheafification}
    Let $k' / k$ be a finite extension.
    \begin{enumerate}
        \item \label{item: descent of refinement of etale coverings}
            Let $R$ be a $k$-algebra.
            Let $S'$ be a faithfully flat \'etale $R \tensor_{k} k'$-algebra.
            Then there exist a faithfully flat \'etale $R$-algebra $S$
            and an $R \tensor_{k} k'$-algebra homomorphism
            $S' \to S \tensor_{k} k'$.
                  \item \label{item: Weil restriction commutes with etale sheafification}
            Let $\Weil_{k' / k} \colon \Ab(k'_{\RPS}) \to \Ab(k_{\RPS})$
            be the Weil restriction functor.
            Denote its restriction to the presheaf categories
            by the same symbols $\Weil_{k' / k}$.
            Denote the \'etale sheafification functor by $\mathsf{a}$.
            Let $P$ be a presheaf on $\Spec k'_{\RPS}$.
            Then the natural morphism 
            $\mathsf{a}(\Weil_{k' / k}(P)) \to \Weil_{k' / k}(\mathsf{a}(P))$
            is an isomorphism.
    \end{enumerate}
\end{lemma}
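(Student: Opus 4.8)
The plan is to prove \eqref{item: descent of refinement of etale coverings} by a Weil restriction argument and then deduce \eqref{item: Weil restriction commutes with etale sheafification} by identifying the presheaf-level functor $\Weil_{k' / k}$ with restriction along the base change functor $u \colon (\RPSSch / k) \to (\RPSSch / k')$, $Y \mapsto Y \times_{k} k'$, and checking that $u$ is cocontinuous for the \'etale topologies; cocontinuity of $u$ is exactly a relatively perfectly smooth version of \eqref{item: descent of refinement of etale coverings}, so the two parts are genuinely of the same nature.

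For \eqref{item: descent of refinement of etale coverings}: since $k' / k$ is finite, $\Spec k' \to \Spec k$ is finite locally free, hence so is $\pi \colon \Spec(R \otimes_{k} k') \to \Spec R$. Given a faithfully flat \'etale $R \otimes_{k} k'$-algebra $S'$, I would form the Weil restriction $W = \Weil_{\pi}(\Spec S')$ along $\pi$, which exists as an affine $R$-scheme since $\Spec S'$ is affine over $\Spec(R \otimes_{k} k')$, and which is smooth over $\Spec R$ by \cite[Section 7.6, Proposition 5]{BLR90} because $\Spec S'$ is. One then checks that $W \to \Spec R$ is surjective: for an algebraically closed field $\Omega$ and a point $\Spec \Omega \to \Spec R$, the set $W(\Omega)$ is the set of $\Spec(R \otimes_{k} k')$-sections of $\Spec S' \times_{\Spec(R \otimes_{k} k')} \Spec(\Omega \otimes_{k} k') \to \Spec(\Omega \otimes_{k} k')$; as $\Omega$ is perfect, $\Omega \otimes_{k} k'$ is a finite product of Artinian local rings with residue field $\Omega$, and a faithfully flat \'etale scheme over such a ring has a section (choose a point over the residue field and lift it through the nilpotent thickening by formal \'etaleness), so $W(\Omega) \neq \emptyset$. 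Thus $W \to \Spec R$ is smooth and surjective, so by \cite[Corollary (17.16.3) (ii)]{Gro67} there is a faithfully flat \'etale $R$-algebra $S$ and an $R$-morphism $\Spec S \to W$; composing the base change $\Spec(S \otimes_{k} k') \to W \times_{\Spec R} \Spec(R \otimes_{k} k')$ with the canonical morphism $W \times_{\Spec R} \Spec(R \otimes_{k} k') \to \Spec S'$ yields the desired $R \otimes_{k} k'$-algebra homomorphism $S' \to S \otimes_{k} k'$.

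For \eqref{item: Weil restriction commutes with etale sheafification}, the presheaf-level functor is $\Weil_{k' / k}(P)(Y) = P(Y \times_{k} k')$, i.e.\ restriction along $u$; this $u$ commutes with fibre products and sends \'etale coverings to \'etale coverings, so it is continuous. Computing the \'etale sheafification by the usual $(-)^{+}$ construction, it suffices to show $u$ is cocontinuous: for each $Y \in (\RPSSch / k)$, every \'etale covering of $Y \times_{k} k'$ should be refined by a family $\{Y_{j} \times_{k} k' \to Y \times_{k} k'\}$ coming from an \'etale covering $\{Y_{j} \to Y\}$. Granting this, the coverings of the form $\{u(Y_{j}) \to u(Y)\}$ are cofinal among coverings of $u(Y)$, and since $u$ preserves the relevant fibre products one has $\check{H}^{0}(\{Y_{j} \to Y\}, \Weil_{k' / k}(P)) = \check{H}^{0}(\{u(Y_{j}) \to u(Y)\}, P)$; hence $(\Weil_{k' / k}(P))^{+}(Y) = P^{+}(u(Y)) = \Weil_{k' / k}(P^{+})(Y)$, and iterating gives $\mathsf{a}(\Weil_{k' / k}(P)) = \Weil_{k' / k}(\mathsf{a}(P))$, compatibly with the natural morphism.

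To prove the cocontinuity I would work Zariski-locally on $Y$, reducing to $Y = \Spec B$ with $B = B_{0}^{\RP}$ for a smooth affine $k$-algebra $B_{0}$; then $Y \times_{k} k'$ is affine, hence quasi-compact, so any \'etale covering of it is refined by a single faithfully flat \'etale $(B \otimes_{k} k')$-algebra $S'$, and \eqref{item: descent of refinement of etale coverings} with $R = B$ produces a faithfully flat \'etale $B$-algebra $S$ with a $(B \otimes_{k} k')$-algebra homomorphism $S' \to S \otimes_{k} k'$. The one remaining point, which I expect to be the main obstacle, is that $\Spec S$ must be arranged to be relatively perfectly smooth over $k$: writing $B = \varinjlim_{n} D_{n}$ with $D_{n} = \mathcal{O}(\mathfrak{R}_{\ff_{k}}^{n}(\Spec B_{0}))$ and affine transition maps, the \'etale $B$-algebra $S$ descends to an \'etale $D_{n}$-algebra $S_{n}$ for $n$ large by standard limit arguments; then $\Spec S_{n}$ is smooth over $k$ since $\mathfrak{R}_{\ff_{k}}^{n}(\Spec B_{0})$ is (Weil restriction of a smooth scheme along the finite locally free $\ff_{k}^{n}$, \cite[Section 7.6, Proposition 5]{BLR90}), and $\Spec S = (\Spec S_{n})^{\RP}$ by \cite[Corollary 1.9]{Kat86} together with $(\Spec D_{n})^{\RP} = \mathfrak{R}_{\ff_{k}}^{n}(\Spec B_{0})^{\RP} = (\Spec B_{0})^{\RP} = Y$, so $\Spec S$ is relatively perfectly smooth over $k$. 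Gluing the coverings obtained on the affine opens of $Y$ then yields the required covering of $Y$ and completes the proof.
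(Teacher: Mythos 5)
Your proof is correct and takes essentially the same approach as the paper: Weil restriction along the finite locally free morphism $\Spec(R\otimes_k k')\to\Spec R$ for part \eqref{item: descent of refinement of etale coverings}, and the $(-)^{+}$-twice description of \'etale sheafification together with the cofinality of pulled-back coverings supplied by part \eqref{item: descent of refinement of etale coverings} for part \eqref{item: Weil restriction commutes with etale sheafification}. The one (harmless) inefficiency is in part \eqref{item: descent of refinement of etale coverings}: the paper invokes directly that Weil restriction along a finite locally free map preserves faithfully flat \'etale morphisms (\cite[Proposition A.5.2 (4), Corollary A.5.4 (1)]{CGP15}), so $\Weil_{\pi}(\Spec S')$ itself can serve as $\Spec S$, whereas you establish only smoothness and surjectivity of $W\to\Spec R$ and then pass to a further \'etale refinement via \cite[Corollary (17.16.3) (ii)]{Gro67}.
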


\begin{proof}
    \eqref{item: descent of refinement of etale coverings}
    Take $\Spec S = \Weil_{R \tensor_{k} k' / R}(\Spec S')$,
    which is faithfully flat \'etale over $R$
    by \cite[Proposition A.5.2 (4), Corollary A.5.4 (1)]{CGP15}.
    Then the counit of adjunction defining the Weil restriction
    gives an $R \tensor_{k} k'$-algebra
    homomorphism $S' \to S \tensor_{k} k'$.
   
    \eqref{item: Weil restriction commutes with etale sheafification}
    The \'etale sheafification functor $\mathsf{a}$ is given by
    the zeroth \'etale \v{C}ech cohomology presheaf functor $\Check{\underline{H}}^{0}$
    applied twice (\cite[Chapter III, Remark 2.2 (c)]{Mil80}).
    Hence it is enough to show that the natural morphism
    $\Check{\underline{H}}^{0}(\Weil_{k' / k}(P)) \to
    \Weil_{k' / k}(\Check{\underline{H}}^{0}(P))$
    is an isomorphism.
   Let $R$ be a relatively perfectly smooth $k$-algebra.
    The value of the presheaf $\Check{\underline{H}}^{0}(\Weil_{k' / k}(P))$
    at $R$ is given by
        \begin{equation} \label{eq: Cech of Weil}
            \dirlim_{S / R}
                \Ker \bigl(
                        P(S \tensor_{k} k')
                    \to
                        P((S \tensor_{R} S) \tensor_{k} k')
                \bigr),
        \end{equation}
    where $S$ runs through faithfully flat \'etale $R$-algebras
    (and we introduce an equivalence relation on the category of such algebras
    by mutual refinements as in \cite[Remark 2.2 (a)]{Mil80}).
    Note that $(S \tensor_{R} S) \tensor_{k} k'$ can also be written as
    $(S \tensor_{k} k') \tensor_{R \tensor_{k} k'} (S \tensor_{k} k')$.
    The value of the presheaf $\Weil_{k' / k}(\Check{\underline{H}}^{0}(P))$
    at $R$ is given by
        \begin{equation} \label{eq: Weil of Cech}
            \dirlim_{S' / R \tensor_{k} k'}
                \Ker \bigl(
                        P(S')
                    \to
                        P(S' \tensor_{R \tensor_{k} k'} S')
                \bigr),
        \end{equation}
    where $S'$ runs through faithfully flat \'etale $R \tensor_{k} k'$-algebras.
    We have a natural homomorphism from \eqref{eq: Cech of Weil}
    to \eqref{eq: Weil of Cech} given by
    the map $S \mapsto S \tensor_{k} k'$ on the index sets.
    This map is cofinal by
    Statement \eqref{item: descent of refinement of etale coverings}
    (after the equivalence relation indicated above).
    Therefore this homomorphism is an isomorphism.
    Thus 
    $\Check{\underline{H}}^{0}(\Weil_{k' / k}(P)) \to
    \Weil_{k' / k}(\Check{\underline{H}}^{0}(P))$
    is an isomorphism.
\end{proof}

For a group scheme $\mathcal{G}$ over $\Order_{K}$,
the scheme $\Gr^{\RP}(\mathcal{G})$ as a functor in relatively perfect $k$-algebras $R$
is given by $R \mapsto \mathcal{G}(h(R))$.
The sheaf
$R^{0} \Psi(\mathcal{G} \times_{\Order_{K}} K) \in \Ab(k_{\RPS})$
is the \'etale sheafification of the presheaf
$R \mapsto \mathcal{G}(h(R)_{K})$,
where $R$ runs over relatively perfectly smooth $k$-algebras.
Therefore we have a canonical morphism
	\[
			\Gr^{\RP}(\mathcal{G})
		\to
			R^{0} \Psi(\mathcal{G} \times_{\Order_{K}} K)
	\]
in $\Ab(k_{\RPS})$.

\begin{definition} \label{def: RP cycle class map mod a p power}
	Let $G$ be a semi-abelian variety over $K$ with N\'eron model $\mathcal{G}$.
	Let $m \ge 0$ be an integer.
	Consider the exact sequence
            \begin{equation} \label{eq: Kummer sequence}
                0 \to G[p^{m}] \to G \stackrel{p^{m}}{\to} G \to 0
            \end{equation}
	and the induced morphisms
		\[
				\Gr^{\RP}(\mathcal{G}) / p^{m} \Gr^{\RP}(\mathcal{G})
			\to
				(R^{0} \Psi G) / p^{m} (R^{0} \Psi G)
			\to
				R^{1} \Psi(G[p^{m}])
		\]
	in $\Ab(k_{\RPS})$.
	Define a morphism
		\[
				\cl_{p, m}
			\colon
				\Gr^{\RP}(\mathcal{G}) / p^{m} \Gr^{\RP}(\mathcal{G})
			\to
				R^{1} \Psi(G[p^{m}]).
		\]
	by the composite of the above two morphisms.
	We call it the \emph{relatively perfect cycle class map mod $p^{m}$} for $G$ (in degree $1$).
\end{definition}

Note that it is not clear whether $\Gr^{\RP}(\mathcal{G}) \to R^{0} \Psi G$ is an isomorphism or not.
Also note that the Bhatt-Gabber algebraization theorem
used in Section \ref{sec: Greenberg transform of infinite level}
is crucial here:
there is a priori no morphism from
the inverse limit $\invlim_{n} \mathcal{G}(h^{\Order_{K} / \ideal{p}_{K}^{n}}(R))$
to $G(h^{\Order_{K}}(R)_{K})$.
We identified the former with $\mathcal{G}(h^{\Order_{K}}(R))$ by the aforementioned theorem
and then used the morphism from $\mathcal{G}(h^{\Order_{K}}(R))$ to the latter.%
\footnote{This means that if we do not use the Bhatt-Gabber theorem
and instead use the Greenberg transform of the formal completion
of $\mathcal{G}$ along the special fiber,
namely $R \mapsto \invlim_{n} \mathcal{G}(h^{\Order_{K} / \ideal{p}_{K}^{n}}(R))$,
then the target of the cycle class map will have to be
the \'etale sheafification of some rigid-analytic version
$R \mapsto \text{``} H^{1} \bigl( (\operatorname{Spf} h(R))_{K}^{\mathrm{rig}}, G[p^{n}] \bigr) \text{''}$.
Note that the canonical lifting $h(R)$ is highly non-noetherian.
Therefore it would take much effort to make this precise
and prove the representability of this \'etale sheafification.}
The representability of $\Gr^{\RP}(G)$ was proved by using its inverse limit presentation
$\invlim_{n} \Gr_{n}^{\RP}(G)$.

\begin{proposition} \label{prop: Gr to nearby cycle}
	Let the notation be as in Definition \ref{def: RP cycle class map mod a p power}.
	\begin{enumerate}
		\item \label{item: cycle class map is injective}
			The morphism $\cl_{p, m}$ is a closed immersion between
			 affine group schemes relatively perfectly of finite presentation over $k$.
		\item \label{item: points of cokernel of cycle class map}
			Let $k^{\sep}$ be a separable closure of $k$.
			Let $K^{\ur}$ be the corresponding unramified extension of $K$
			with completion $\Hat{K}^{\ur}$.
			Then the $k^{\sep}$-valued points of the exact sequence
				\begin{equation} \label{eq: cycle class exact sequence}
						0
					\to
						\Gr^{\RP}(\mathcal{G}) / p^{m} \Gr^{\RP}(\mathcal{G})
					\stackrel{\cl_{p, m}}{\to}
						R^{1} \Psi(G[p^{m}])
					\to
						\Coker(\cl_{p, m})
					\to
						0
				\end{equation}
			is canonically isomorphic to the exact sequence
				\[
						0
					\to
						G(\Hat{K}^{\ur}) / p^{m} G(\Hat{K}^{\ur})
					\to
						H^{1}(\Hat{K}^{\ur}, G[p^{m}])
					\to
						H^{1}(\Hat{K}^{\ur}, G)[p^{m}]
					\to
						0
				\]
			coming from
                the sequence \eqref{eq: Kummer sequence}.
		\item \label{item: vanishing of H one implies cycle class map invertible}
			Assume that $H^{1}(\Hat{K}^{\ur}, G)[p^{\infty}] = 0$.
			Then the morphism $\cl_{p, m}$ is an isomorphism.
			This includes the case of $G = \Gm$.%
                \footnote{
                        A little more generally,
                        the case where $G$ is a torus split by a tamely ramified extension of $K$.
                        See also \cite[Corollary 4.1.10]{Bra04}.
                }
	\end{enumerate}
\end{proposition}

An equivalent way to phrase
Statement \eqref{item: points of cokernel of cycle class map}
is that the restriction of the sequence \eqref{eq: cycle class exact sequence}
to the small \'etale site of $k$
is the exact sequence
    \[
            0
        \to
            i^{\ast} j_{\ast}(G) / p^{m} i^{\ast} j_{\ast}(G)
        \to
            i^{\ast} R^{1} j_{\ast}(G[p^{m}])
        \to
            (i^{\ast} R^{1} j_{\ast}(G))[p^{m}]
        \to
            0
        \]
coming from \eqref{eq: Kummer sequence},
where $j \colon \Spec K_{\et} \to \Spec \Order_{K, \et}$
and $i \colon \Spec k_{\et} \to \Spec \Order_{K, \et}$
are the natural morphisms on the small \'etale sites.

\begin{proof}
	\eqref{item: cycle class map is injective}
	The group of geometric points of $\pi_{0}(\mathcal{G} \times_{\Order_{K}} k)$ is
	finitely generated   as proven for example in    \cite[Proposition 3.5]{HN11}). 
	Hence the sheaves $\Gr^{\RP}(\mathcal{G}) / p^{m} \Gr^{\RP}(\mathcal{G})$ and
	$R^{1} \Psi(G[p^{m}])$ are affine group schemes
	relatively perfectly of finite presentation over $k$
	by Proposition \ref{prop: Gr mod p power is RP algebraic group} and
	Proposition \ref{prop: nearby cycle is RP algebraic group}, respectively.
	Their sets of points whose residue fields are finite separable extensions of $k$ are dense
	by Proposition \ref{prop: RPS implies RPn of smooth, separable points are dense}.
	Therefore it is enough to show that the map is injective on $k'$-valued points
	for every finite separable $k' / k$.
 The map
		\begin{equation} \label{eq: finite separable points of cycle class map}
				\mathcal{G}(h(k')) / p^{m} \mathcal{G}(h(k'))
			\isomto
				G(h(k')_{K}) / p^{m} G(h(k')_{K})
			\to
				H^{1}(h(k')_{K}, G[p^{m}])
		\end{equation}
	is injective.
	Here the first isomorphism comes from the universal property of N\'eron models since $h(k')$ is \'etale over the henselian ring $h(k)=\cO_K$ by Remark \ref{rem.cl} \eqref{i.etale lifting}.
	Hence the result follows.
	
	\eqref{item: points of cokernel of cycle class map}
	Note that $h(k')_{K}$ for a finite separable $k' / k$ is
	the finite unramified extension of $K$ with residue extension $k' / k$.
	Hence the results follows from \eqref{eq: finite separable points of cycle class map}.
	
	\eqref{item: vanishing of H one implies cycle class map invertible}
	This follows from the previous two statements.
\end{proof}

This proposition, together with the next definition,
proves Theorem \ref{thm: RP cycle class map}
\eqref{item: existence of cycle class map}.
 Recall from Remark \ref{rem: properties of RP} \eqref{RP and limits}
that a filtered inverse limit of affine relatively perfect group schemes
is an affine relatively perfect group scheme.

\begin{definition}\label{def: RP p-adic cycle class map}
	Let $G$ be a semi-abelian variety over $K$ with N\'eron model $\mathcal{G}$.
	\begin{enumerate}
		\item
			The relatively perfect cycle class maps $\cl_{p, m}$ mod $p^{m}$
			form a direct system in $m$ via natural morphisms.
			Define a morphism
				\[
						\cl_{p, \infty}
					\colon
						\Gr^{\RP}(\mathcal{G}) \otimes_{\Z} \Q_{p} / \Z_{p}
					\into
						R^{1} \Psi(G[p^{\infty}])
				\]
			in $\Ab(k_{\RPS})$ to be its direct limit
			(where $[p^{\infty}]$ denotes the $p$-primary torsion part).
			We call it the \emph{relatively perfect cycle class map $\otimes \, \Q_{p} / \Z_{p}$}.
		\item
		     Define $R^{1} \Psi(T_{p} G)$ to be the inverse limit in $n$ of
		    the relatively perfect affine group schemes $R^{1} \Psi(G[p^{n}])$
		    over $k$.
		\item
			The morphisms $\cl_{p, m}$ also form an inverse system in $m$ via other natural morphisms.
			Define a morphism
				\[
						\cl_{p}^{\wedge}
					\colon
						\Gr^{\RP}(\mathcal{G})^{\wedge_{p}}
					\into
						R^{1} \Psi(T_{p} G)\]
			to    be the inverse limit as a morphism of  relatively perfect affine group schemes over $k$.
			We call it the \emph{relatively perfect $p$-adic cycle class map}.
		\item
        	 Define a morphism
        		\[
        				\cl_{p}
        			\colon
        				\Gr^{\RP}(\mathcal{G})
        			\to
        				R^{1} \Psi(T_{p} G)
        		\]
        	to be the composite of the natural morphism
	$\Gr^{\RP}(\mathcal{G}) \to \Gr^{\RP}(\mathcal{G})^{\wedge}$
        	and the morphism $\cl_{p}^{\wedge}$ above.
        	We call it the \emph{relatively perfect cycle class map} for $G$
        	(in degree $1$).
        	It is a morphism of relatively perfect group schemes over $k$. 
	\end{enumerate}
\end{definition}

Proposition \ref{prop: Gr to nearby cycle}
\eqref{item: points of cokernel of cycle class map}
proves Theorem \ref{thm: RP cycle class map}
\eqref{item: points of cycle class map}.
Proposition \ref{prop: Gr to nearby cycle}
\eqref{item: cycle class map is injective}, together with the following proposition,
proves Theorem \ref{thm: RP cycle class map}
\eqref{item: inj and isom for cycle class map}:

\begin{proposition}\label{prop: cycle class map for tori}
	Let $G = T$ be a torus over $K$ with N\'eron model $\mathcal{T}$.
	Then $\Coker(\cl_{p, m}) \to \Coker(\cl_{p, m + 1})$ is a closed immersion
	for all $m$
	and an isomorphism for all large $m$.%
        \footnote{
            The proof below shows that $m$ can be taken to be
            the exponent of the maximal power
            of $p$ that divides the ramification index of the minimal splitting
            field of $G$ over $K$.
            }
	In particular, $\Coker(\cl_{p, \infty})$ is a group scheme
	relatively perfectly of finite presentation over $k$
	and $\cl_{p}^{\wedge}$ is an isomorphism.
\end{proposition}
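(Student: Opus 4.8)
The plan is to reduce the whole statement to an elementary computation with the Kummer sequence on $k^{\sep}$-points, using density of separable-rational points.

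First I would record the soft facts I need. By Proposition \ref{prop: Gr to nearby cycle} \eqref{item: cycle class map is injective} each $\cl_{p, m}$ is a closed immersion of affine group schemes relatively perfectly of finite type over $k$, so by Proposition \ref{prop: RPS groups in RPS site} the quotient $\Coker(\cl_{p, m})$ is again such a group scheme, hence (Propositions \ref{prop: RP finite type group is a RPn of smooth group} and \ref{prop: RPS implies RPn of smooth, separable points are dense}) relatively perfectly smooth with a dense set of points of residue field finite separable over $k$. Two consequences: a morphism between such group schemes is a monomorphism, equivalently a closed immersion, if and only if it is injective on $k^{\sep}$-points (its kernel is again relatively perfectly smooth, and such a group with no nontrivial $k^{\sep}$-point is trivial by density); and a closed immersion between such group schemes that is surjective on $k^{\sep}$-points is an isomorphism (the target is reduced and the source then contains a dense set of its points). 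Thus it suffices to compute, on $k^{\sep}$-points, the direct and inverse transition maps on the family $\{\Coker(\cl_{p, m})\}$. By Proposition \ref{prop: Gr to nearby cycle} \eqref{item: points of cokernel of cycle class map}, $\Coker(\cl_{p, m})(k^{\sep}) = H^{1}(\Hat{K}^{\ur}, T)[p^{m}]$ and the short exact sequence $0 \to \Gr^{\RP}(\mathcal{T}) / p^{m} \Gr^{\RP}(\mathcal{T}) \stackrel{\cl_{p, m}}{\to} R^{1} \Psi(T[p^{m}]) \to \Coker(\cl_{p, m}) \to 0$ becomes, on $k^{\sep}$-points, the Kummer sequence $0 \to T(\Hat{K}^{\ur}) / p^{m} \to H^{1}(\Hat{K}^{\ur}, T[p^{m}]) \to H^{1}(\Hat{K}^{\ur}, T)[p^{m}] \to 0$.

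Next I would unwind Definition \ref{def: RP p-adic cycle class map}: the direct transition map is induced by the inclusion $T[p^{m}] \into T[p^{m + 1}]$ and the inverse one by $T[p^{m + 1}] \stackrel{p}{\to} T[p^{m}]$. Chasing the corresponding maps of Kummer sequences then shows that on $k^{\sep}$-points the direct transition $\Coker(\cl_{p, m}) \to \Coker(\cl_{p, m + 1})$ is the natural inclusion $H^{1}(\Hat{K}^{\ur}, T)[p^{m}] \into H^{1}(\Hat{K}^{\ur}, T)[p^{m + 1}]$, while the inverse transition $\Coker(\cl_{p, m + 1}) \to \Coker(\cl_{p, m})$ is multiplication by $p$. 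Injectivity of the former gives that $\Coker(\cl_{p, m}) \to \Coker(\cl_{p, m + 1})$ is a closed immersion for all $m$. Now pick a finite extension $L / \Hat{K}^{\ur}$ splitting $T$; then $H^{1}(L, T) = 0$ by Hilbert 90 for the split torus $T_{L}$, so restriction--corestriction shows $H^{1}(\Hat{K}^{\ur}, T)$ is killed by $[L : \Hat{K}^{\ur}]$. Putting $M = v_{p}([L : \Hat{K}^{\ur}])$, the group $H^{1}(\Hat{K}^{\ur}, T)[p^{m}]$ stabilizes for $m \ge M$, so for such $m$ the transition map is a closed immersion that is bijective on $k^{\sep}$-points, hence an isomorphism. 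This proves the first assertion, and in particular $\Coker(\cl_{p, \infty}) = \dirlim_{m} \Coker(\cl_{p, m}) \cong \Coker(\cl_{p, M})$ is a group scheme relatively perfectly of finite type over $k$.

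Finally, to see that $\cl_{p}^{\wedge}$ is an isomorphism I would pass to $\invlim_{m}$ in the above short exact sequences (with the transition morphisms of Definition \ref{def: RP p-adic cycle class map}). For $m \ge M$ each $\Coker(\cl_{p, m})$ is isomorphic to a fixed relatively perfectly smooth group scheme $C$ with $C(k^{\sep})$ killed by $p^{M}$; since $C$ is reduced with dense separable-rational points, multiplication by $p^{M}$ on $C$ is the zero morphism. As the inverse transition maps on the $\Coker(\cl_{p, m})$ are multiplication by $p$, any $M$ consecutive of them at levels $\ge M$ compose to zero, hence the transition map from any sufficiently high level to a fixed one is the zero morphism, and so $\invlim_{m} \Coker(\cl_{p, m}) = 0$. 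Since $\invlim_{m}$ is left exact it follows that $\Gr^{\RP}(\mathcal{T})^{\wedge_{p}} = \invlim_{m} \Gr^{\RP}(\mathcal{T}) / p^{m} \isomto \invlim_{m} R^{1} \Psi(T[p^{m}]) = R^{1} \Psi(T_{p} T)$, and this isomorphism is $\cl_{p}^{\wedge}$. I expect the main obstacle to be not any deep input but the bookkeeping in the middle paragraph: one must carefully distinguish, throughout the Kummer sequence, the middle term $H^{1}(\Hat{K}^{\ur}, T[p^{m}])$ from the quotient $H^{1}(\Hat{K}^{\ur}, T)[p^{m}]$ when tracing how the functorial maps act, and one must check that the canonical identification of Proposition \ref{prop: Gr to nearby cycle} \eqref{item: points of cokernel of cycle class map} is compatible with all the transition morphisms of Definition \ref{def: RP p-adic cycle class map}.
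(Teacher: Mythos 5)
Your proposal is correct and follows essentially the same strategy as the paper: reduce to $k^{\sep}$-points via density, identify the cokernels with $H^1(\Hat K^{\ur}, T)[p^m]$ via the Kummer sequence, and then bound $H^1$ by the order of a splitting extension. The paper uses inflation–restriction where you use restriction–corestriction (both give the same $[L:K]$-torsion bound), and you spell out in more detail the reduction to $k^{\sep}$-points and the final deduction that $\invlim_m \Coker(\cl_{p,m}) = 0$ forces $\cl_p^{\wedge}$ to be an isomorphism — steps the paper's terser proof leaves implicit.
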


\begin{proof}
	It is enough to show that
	$\Coker(\cl_{p, m})(k^{\sep}) \to \Coker(\cl_{p, m + 1})(k^{\sep})$
	is an injection for all $m$ and an isomorphism for all large $m$.
	We have
		\[
				\Coker(\cl_{p, m})(k^{\sep})
			\cong
				H^{1}(\Hat{K}^{\ur}, T)[p^{m}]
		\]
	by Proposition \ref{prop: Gr to nearby cycle}
	\eqref{item: points of cokernel of cycle class map}.
	Hence the injectivity follows.
	For the invertibility for large $m$, we may assume that $k$ is separably closed
	and it is enough to show that $H^{1}(K, T)$ is killed by multiplication by some positive integer.
	Let $L / K$ be a finite Galois extension that trivializes $T$.
	Set $G = \Gal(L / K)$.
	The exact sequence
		\[
				0
			\to
				H^{1}(G, T(L))
			\to
				H^{1}(K, T)
			\to
				H^{1}(L, T)^{G}
		\]
	and the vanishing of $H^{1}(L, \Gm)$ shows that
	$H^{1}(G, T(L)) \isomto H^{1}(K, T)$.
	The group $H^{1}(G, T(L))$ is killed by multiplication by $[L : K]$.
	This proves the result.
\end{proof}

This finishes the proof of Theorem \ref{thm: RP cycle class map}.


\end{document}